\def\var{\mbox{Var}}
\def\1{{\mathbf 1}}
\def\N{{\mathbb{N}}}
\def\E{{\mathbb{E}}}
\def\Pin{{\boldsymbol{\Pi}}}
\newtheorem{thrm}{Theorem}[section]
\newtheorem{prop}[thrm]{Proposition}
\newtheorem{lemma}[thrm]{Lemma}
\newtheorem{cor}[thrm]{Corollary}
\newtheorem{defi}[thrm]{Definition}
\newtheorem{remark}{Remark}[section]
\def\K{\mathcal{K}}
\def\F{{\mathcal{F}}}
\def\H{\mathcal{H}}
\begin{document}

\begin{frontmatter}
\title{Minimax adaptive tests for the Functional Linear model}
\runtitle{Functional tests}

\begin{aug}
  \author{\fnms{Nadine} \snm{Hilgert}\thanksref{t1} \ead[label=h]{nadine.hilgert@supagro.inra.fr}}, 
    \author{\fnms{Andr\'{e}} \snm{Mas}\thanksref{t2} \ead[label=m]{mas@math.univ-montp2.fr}}
 \and          
\author{Nicolas Verzelen\thanksref{t1} \ead[label=v]{nicolas.verzelen@supagro.inra.fr}}

\begin{keyword}[class=AMS] 
\kwd[Primary ]{62J05}  
\kwd[; secondary ]{62G10}
\end{keyword} 
 
\begin{keyword} 
\kwd{Functional linear regression} 
\kwd{eigenfunction}
\kwd{principal component analysis}
\kwd{adaptive testing}
\kwd{minimax hypothesis testing}
\kwd{minimax separation rate}
\kwd{multiple testing}
\kwd{ellipsoid}
\kwd{goodness-of-fit}
\end{keyword}

\affiliation{INRA and Universit\'e Montpellier 2}

\thankstext{t1}{
 INRA, UMR  729 MISTEA,
F-34060 Montpellier, FRANCE.  \newline \printead{h,v}}
 
\thankstext{t2}{
Universit\'{e} Montpellier 2, F-34000 Montpellier, FRANCE. \printead{m}}

  \runauthor{Hilgert et al.}

\end{aug}
%

\begin{abstract}. We introduce two novel procedures to test the nullity of the slope function in the functional linear model with real output. The test statistics combine multiple testing ideas and random projections of the input data through functional Principal Component Analysis. Interestingly, the procedures are completely data-driven and do not require any prior knowledge on the smoothness of the slope nor on the smoothness of the covariate functions. The levels and powers against local alternatives  are assessed in a nonasymptotic setting. This allows us to prove that these procedures are minimax adaptive (up to an unavoidable $\log\log n$ multiplicative term) to the unknown regularity of the slope. As a side result, the minimax separation distances of the slope are derived for a large range of regularity classes. A numerical study illustrates these 
theoretical results.
 \end{abstract}

\end{frontmatter}

\section{Introduction}

Consider the following functional linear regression model where the scalar response $Y$ is related to a square integrable random function $X(.)$ through
\begin{equation}\label{modelinitial}
 Y=\omega+ \int_{\mathcal{T}} X(t)\theta(t)dt+\epsilon\ .
\end{equation}
Here, $\omega$ is a constant, denoting the intercept of the model, $\mathcal{T}$ is the domain of $X(.)$, $\theta(.)$ is an unknown function representing the slope function, and $\epsilon$ is a centered random noise variable. 
In functional linear regression, much interest focuses on the nonparametric estimation of $\theta(.)$ in (\ref{modelinitial}), given an {\it i.i.d.} sample $\left({X}_{i},  {Y}_{i},\right)  _{1\leq i\leq n}$ of $(X,Y)$.
Testing whether $\theta$ belongs to a given finite dimensional linear subspace $\cal{V}$ {is a 
question} that arises in different problems such as dimension reduction, goodness-of-fit analysis, or lack-of-effect tests of a functional variable. If the properties of estimators of $\theta$ are widely discussed in the literature, there is still a great need to have generic test procedures supported by strong theoretical properties. This is the problem addressed in the present paper.

Let us reformulate the functional  model (\ref{modelinitial}) as a generic linear regression model in an infinite dimensional space.
 The random 
function  $X$ is assumed to belong to some separable Hilbert space
henceforth denoted $\mathcal{H}$ endowed with
the inner product $\left\langle .,.\right\rangle $. Examples of $\H$ include  $\mathcal{L}^{2}(  [  0,1]  )  $ or
Sobolev space $\mathcal{W}_{2}^{m}([0,1])$. 
For the sake of clarity, we consider that $\omega=0$ and that $X$ and $Y$ are centered. Thus, assuming that  $\theta$ also belongs to $\mathcal{H}$,  the statistical model (\ref{modelinitial}) is rephrased as  
\begin{eqnarray}
 \label{modelLFRM}
Y = \langle X,\theta\rangle  + \epsilon\ ,
\end{eqnarray}
where $\epsilon$ is a centered random variable independent from $X$ with unknown variance $\sigma^2$.
In the sequel, we note $\bf X$ and $\bf Y$ the size $n$ vectors of i.i.d. observations $X_i$ and $Y_i$ ($1\leq i\leq n $), while $\boldsymbol{\epsilon}$ stands for the size $n$ vector of the noise. 

In essence, testing a linear hypothesis of the form ``$\theta\in \cal{V}$'' is as difficult as testing ``$\theta=0$'' when a parametric estimator of $\theta$ in $\cal{V}$ is computed. Therefore we consider 
the problem of testing:
\[H_0:\ \text{``}\theta=0\text{''} \quad\quad \text{against}\quad \quad H_1:\ ``\theta\neq 0\text{''} \]
given an i.i.d. sample $(\bf X, \bf Y)$  from model (\ref{modelLFRM}).
The extension to general subspaces $\cal{V}$ is developed in the discussion section. \\

Most testing procedures are based on ideas that have been originally developed for the estimation of $\theta$. 
We briefly review the main approaches and the corresponding results in estimation. 

A first class of procedures is based on the minimization of a least-square type criterion penalized by a roughness term that assesses the ``plausibility'' of $\theta$. Such approaches include smoothing spline estimators~\cite{CFS03,CKS}, thresholding projection estimators~\cite{CaJ10}, or reproducing kernel Hilbert space methods~\cite{yuan2010rkhs}. A second class of procedures is based  on the functional principal components analysis (PCA) of ${\bf X}$~\cite{CLT,HH07}. It consists in estimating $\theta$ in a finite dimensional space spanned by the $k$ first eigenfunctions of the empirical covariance operator of ${\bf X}$. The main difference with the previous class of estimators lies in the fact that the finite dimensional space is estimated from the observations of the process $X$. See the survey~\cite{CS10} and references therein for an overview of these two approaches.

The theoretical properties of these classes of estimators have been investigated from different viewpoints: prediction~\cite{CFS03,CLT,CKS,yuan2010rkhs} (estimation of $\langle X_{n+1},\theta\rangle$ where $X_{n+1}$ follows the same distribution as $X$), pointwise prediction~\cite{cai-hall2006} (estimation of $\langle x,\theta\rangle$ for a fixed $x\in\H$) or the inverse problem~\cite{CoJ10,HH07} (estimation of $\theta$). For these three objectives, optimal rates of convergence have been derived and some of the aforementioned procedures have been shown to asymptotically achieve this rate~\cite{cai-hall2006,CKS,yuan2010rkhs,HH07}. Recently, some non-asymptotic results have emerged~\cite{CoJ10,CoJ11} for estimation procedures that rely on a prescribed basis of functions (e.g. splines). Most of these estimation  procedures rely on tuning parameters whose optimal value depend on quantities such as the noise variance, or the smoothness of $\theta$. In fact, there is a longstanding gap in the literature between 
theory, where the variance $\sigma^2$, the smoothness of $\theta$ and the smoothness of the covariance operator of $X$ are generally assumed to be known, and practice where they are unknown.

The literature on tests in the functional linear model is scarce.  In~\cite{cardot_test}, Cardot et al. introduced a test statistic based on the $k$ first components of the functional PCA of ${\bf X}$. Its limiting distribution is derived under $H_0$ and  the power of the corresponding test is proved to converge to one under $H_1$.  The main drawback of the procedure is that the number $k$ of components involved in the statistic has to be set. 
As for estimation, setting $k$ is arguably a difficult problem. 
To bypass this calibration issue, one may apply a permutation approach~\cite{CGS} or use bootstrap methodologies~\cite{CuFra,GMMC}.
While the levels of the corresponding tests are asymptotically controlled, there is again no theoretical guarantee on the power. \\

In this paper, our objective is to introduce automatic testing procedures whose powers are optimal from a nonasymptotic viewpoint. 

As a first step, we  introduce in Section \ref{section_param_test} 
Fisher-type non-adaptive tests, $T_{\alpha,k}$, corresponding to projections of ${\bf Y}$ on the $k$ first principal components of ${\bf X}$. 
We study their levels and powers in Sections \ref{section_param_test} and \ref{secion_power_minim.param_test}.
Under moment assumptions on $\epsilon$ and mild assumptions on the covariance of $X$, the level is smaller than $\alpha$ up to a $\log^{-1}(n)$ additional term, and a sharp control of the power is provided. Such results are comparable to state of the art results in nonparametric regression~\cite{baraud03,spokoiny96}. In our setting, the main difficulty in the proof is to control the randomness of the principal components of ${\bf X}$. The arguments rely on the perturbation theory of operators. While other estimation or testing procedures based on the Karhunen-Lo\`eve expansion have only been analyzed in an asymptotic setting~\cite{cai-hall2006,cardot_test,HH07}, our nonasymptotic results rely on less restrictive assumptions on  $X$ than those commonly used in the literature.
 In Section \ref{secion_power_minim.param_test}, we assess the optimality of the parametric test $T_{\alpha,k}$ in the minimax sense.
The notion of minimaxity of a level-$\alpha$ test $T_{\alpha}$ is related to the {\it separation distance}  of $T_{\alpha}$ over some  class of functions $\Theta$ (e.g. a Sobolev ball). Intuitively, the power of a reasonable test $T_{\alpha}$ should be large when the norm of $\theta$ is large while the power of $T_{\alpha}$ is close to $\alpha$ when $\theta$ is close to $0$. For the problem of testing $H_0$: ``${\theta}=0$'' against $H_{1,\Theta}$: ``$\theta\in\Theta\setminus\{0\}$'', the separation distance corresponds to the smallest distance $\rho$ such that $T_{\alpha}$ rejects $H_0$ with probability larger than $1-\beta$ for all $\theta\in\Theta$ whose norm is larger than $\rho$. The smaller the separation distance, the more powerful the test $T_{\alpha}$ is. The minimax separation distance over $\Theta$ is the smallest separation distance that is achieved by a level-$\alpha$ test. A test achieving this minimax separation distance is said to be minimax over $\Theta$. 
and minimax separation distances are formalized  in Section \ref{section_minimax_KL}. 
In the nonparametric regression setting, minimax separation distances have been derived in an asymptotic~\cite{Ingster93a,Ingster93b,Ingster93c} and a nonasymptotic~\cite{baraudminimax} setting. 
In this paper,  the separation distances of our testing procedures are nonasymptotically controlled. We derive minimax separation distance in the functional model (\ref{modelLFRM}) for a wide class of ellipsoids. 
We show that the parametric test $T_{\alpha,k}$ achieves the optimal rate of detection when the dimension $k$ is suitably chosen.\\ 

In practice, the regularity of $\theta$ is unknown.  However, the choice of $k$ in $T_{\alpha,k}$ depends on unknown quantities such as the regularity of $X$ or the regularity of $\theta$. Thus, assuming a priori that the function $\theta$ belongs to a particular smoothness class $\Theta$ and building an optimal test over $\Theta$ may lead to poor performances, for instance  if $\theta \notin \Theta$. For this reason, a more ambitious issue is to build a minimax adaptive testing procedure, that is a procedure which is simultaneously minimax for a wide range of regularity classes $\Theta$. Minimax adaptive testing procedures have already been studied in the nonparametric regression setting, from an asymptotic~\cite{spokoiny96} and a nonasymptotic~\cite{baraud03} viewpoint. As a second step, we combine the parametric tests $T_{\alpha,k}$ with multiple testing techniques in the spirit of \cite{baraud03}. 
 Two such multiple testing procedures are introduced in Section \ref{mult.test.proc}. They are completely data-driven: no tuning parameters are required, whose optimal values depend on $\theta$, the distribution of $X$ or on $\sigma$. Their levels and  powers are analyzed from a nonasymptotic viewpoint in Sections \ref{mult.test.proc} and \ref{section_rates_mult.test.proc}. We prove that our mulitiple testing procedures are  simultaneously minimax over the class of ellipsoids aforementioned(up to an unavoidable $\log\log n$ factor). As in the estimation setting~\cite{HH07}, the minimax separation distances involve the common regularity of $\theta$ and $X$.

\medskip

The two multiple testing procedures are illustrated and compared by simulations in Section \ref{section_simulation}. Extensions of the approach are discussed in Section \ref{section_discussion}. Section \ref{section_proofs} contains the main proofs while the lemmas involving perturbation theory are given in Section \ref{section_perturb}. All the technical and side results are postponed to appendices.

\section{Preliminaries }\label{section_preliminaries}

\subsection{Notations}
We remind that $\langle . ,.\rangle $ and $\|.\| $ respectively refer to the inner product and the corresponding norm in the Hilbert $\H$. In contrast, $\langle . ,.\rangle_{n}$ and $\|.\|_{n}$ stand for the inner product and the Euclidean norm in $\mathbb{R}^n$. Furthermore, $\otimes$ refers to the tensor product. We assume henceforth that $X$ is centered and has a second moment that is $\mathbb{E}[\|{\bf X}\| ^2]<\infty$. The covariance operator of $X$ is defined as the linear operator $\Gamma$ defined on $\H$ as follows:
\[\Gamma h= \mathbb{E}[X\otimes X h]= \mathbb{E}[\langle h,X\rangle X]\ , \quad h\in \H\ .\]  It is well  known that $\Gamma$ is a symmetric, positive trace-class hence Hilbert-Schmidt operator, which implies that $\Gamma$ is diagonalizable in an orthonormal basis.  We denote $(\lambda_j)_{j\geq 1}$ the non-increasing sequence of eigenvalues of $\Gamma$, while the sequence $(V_j)_{j\geq 1}$ stands for a corresponding sequence of eigenfunctions. It follows that $\Gamma$ decomposes as  
$\Gamma=\sum_{j=1}^{\infty}\lambda_j V_j\otimes V_j$. For any integer $k\geq 1$, we note $\Gamma_k= \sum_{j=1}^k \lambda_j V_j\otimes V_j$ the  operator such that $\Gamma_kh=\Gamma h$ for $h\in\mathrm{Vect}(V_1,\ldots,V_k)$ and $\Gamma_k h=0$ if $h\in \ (V_1,\ldots,V_k)^\perp$.

In the sequel, $C$, $C_1$,$\ldots$ denote positive universal constants that may vary
from line to line. The notation $C(.)$ specifies
the dependency on some quantities.

\subsection{Karhunen-Lo\`{e}ve expansion and functional PCA}

We recall here a  classical tool of functional data analysis :
the {\bf Karhunen-Lo\`{e}ve expansion}, denoted KL expansion in the sequel. 
\begin{defi}
There exists an expansion of  $X$ in the basis $(V_j)_{j\geq 1}$: 
$X=\sum\left\langle X,V_{j}%
\right\rangle  V_{j}$.
The real random variables $\left\langle X,V_{j}\right\rangle $ are centered (when $X$ is centered), uncorrelated, and with variance $\lambda_{j}$. As a consequence, there exists a collection $(\eta^{(j)})_{j\geq 1}$ of random variables that are centered, uncorrelated, and with unit variance such that
\begin{equation}
X=\sum_{j=1}^{+\infty}\sqrt{\lambda_{j}}\eta^{(j)}V_{j}\ .\label{KL-exp}%
\end{equation}
The decomposition is called the KL-expansion of $X$.
\end{defi}
The eigenfunction $V_{j}$ is the $j$-th principal direction whose amount of variance coincides with $\lambda_{j}$.
When $X$ is a Gaussian process, the $\left(  \eta^{(j)}\right)  _{j\in\mathbb{N}}$ form an i.i.d
sequence with $\eta^{(1)}\sim\mathcal{N}\left(  0,1\right) $. 
If the eigenfunctions $(V_{j})$ and the eigenvalues $(\lambda_j)$ are unknown in practice, they can be estimated from the data using functional principal component analysis. In the sequel, we note $\widehat{\Gamma}_n$  the empirical covariance operator defined by 
\[\widehat{\Gamma}_nh=\frac{1}{n}\sum_{i=1}^n  X_i\otimes  X_i h= \frac{1}{n}\sum_{i=1}^n \langle  X_i,h\rangle   X_i \ ,\quad h\in\H\ . \]
Functional PCA allows to estimate $\left(\lambda_{j},V_{j}\right)$, $j\geq 1$ by diagonalizing the empirical covariance operator $\widehat{\Gamma}_n$. These empirical counterparts of $(\lambda_{j},V_{j})$ are denoted $(\widehat{\lambda}_{j},\widehat{V_{j}})$ in the sequel.

Functional PCA is usually applied as a dimension reduction technique. One of its appealing features relies on its ability to capture most of the variance of $X$ by a $k$-dimensional projection on the space $\mathrm{Vect}(\widehat{V}_1,\ldots, \widehat{V}_k)$. For this reason, PCA is at the core of many procedures for functional data. After the seminal paper by Dauxois et al.~\cite{DPR}, the convergence of the random eigenelements $(\widehat{\lambda}_{j},\widehat{V_{j}})$ has been assessed from an asymptotic point of view~\cite{HHN06,  HHN09, HVial06,MaMe03}. One issue with such a dimension reduction method is the choice of the tuning parameter $k$, whose optimal value usually depends on unknown quantities. Besides plugging the $(\widehat{\lambda}_{j},\widehat{V_{j}})$ into linear estimates creates non-linearity and usually introduces stochastic dependence.


\section{Parametric test}
\label{section_param_test}

\subsection{Definition}
\label{param.stat}

In the sequel, $k$ denotes a positive integer smaller than $n/2$.
As a first step, we consider the parametric testing problem of
the hypotheses: 
\begin{equation}\label{eq:definition_hypothesis_parametric}
H_0:\
``\theta=0"
\quad\text{against}\quad H_{1,k}:\
``\theta\in\text{Vect}[(V_{j})_{j=1,\ldots,k}]\setminus\{0\}"\
.
\end{equation}
Given a dimension $k$ of the Karhunen-Lo\`eve expansion, we note $\hat{k}^{KL}$ as $k\wedge \mathrm{Rank}(\widehat{\Gamma}_n)$. In order to introduce the parametric statistic, let us restate the functional linear model into a finite dimensional linear model. We consider the response vector ${\bf Y}$ of size $n$, the $n\times \hat{k}^{KL}$ design matrix ${\bf W}$ defined by ${\bf W}_{i,j}= \langle X_i,\widehat{V}_j\rangle$ for  $i=1,\ldots n$, $j=1,\ldots \hat{k}^{KL}$, the parameter  vector $\vartheta$ defined by $\vartheta_j=\langle \theta,\widehat{V}_j\rangle$, $j=1,\ldots \hat{k}^{KL}$, and the size $n$ noise vector  $\tilde{\boldsymbol{\epsilon}}$ defined by $\tilde{\epsilon}_i=
\epsilon_i+ \langle X_i,\theta\rangle - [{\bf W}\vartheta]_i$. The functional linear model is equivalently written as
\begin{eqnarray*}
 {\bf Y} = {\bf W} \vartheta + \tilde{\boldsymbol{\epsilon}}\ . 
\end{eqnarray*}
Intuitively,  testing ``$\vartheta=0$" is a reasonable proxy  for testing $H_0$ against $H_{1,k}$. For this reason, we propose a Fisher-type statistic.

\begin{defi} 
In the sequel, $\widehat{\Pin}_k$ stands for the orthogonal projection in $\mathbb{R}^n$ onto the
space generated by the $\hat{k}^{KL}$ columns of ${\bf W}$.
For any $k\leq n/2$, we consider the statistic $\phi_{k}({\bf Y},{\bf X})$ defined by
\begin{eqnarray}\label{definition_phi_KL}
 \phi_{k}({\bf Y},{\bf X}):=
\frac{\|\widehat{\Pin}_k {\bf
Y}\|^2_n}{\|{\bf Y}-\widehat{\Pin}_k {\bf Y}\|_n^2/(n-\hat{k}^{KL})}\  .
\end{eqnarray}
\end{defi}

The main difference with a classical Fisher statistic comes from the fact that the projection $\widehat{\Pin}_k$ is {\it random}. This projector is built using the $\hat{k}^{KL}$ first  directions $(\widehat{V}_1,\widehat{V_2},\ldots, \widehat{V}_{\hat{k}^{KL}})$ of the
empirical Karhunen-Lo\`eve expansion of ${\bf X}$. Let us call $\widetilde{\bf \Pi}_k$ the orthogonal projector in $\mathbb{R}^n$ onto the space spanned by $(\langle X_i,V_j\rangle )_{i=1,\ldots n}$, $j=1,\ldots, k$. If we knew the basis $(V_j)$, $j\geq 1$ in advance, we would  use this orthogonal projector instead of $\widehat{\bf \Pi}_k$. We shall prove that, under $H_0$, $\phi_{k}({\bf Y},{\bf X})/\hat{k}^{KL}$ behaves like a Fisher distribution with $(\hat{k}^{KL},n-\hat{k}^{KL})$ degrees of freedom.
\begin{defi}[Parametric tests] Fix $\alpha\in (0,1)$
We reject  $H_0$ against $H_{1,k}$ when the statistic
\begin{eqnarray}\label{test_parametrique_T.alpha.k}
 T_{\alpha,k}:=\phi_{k}({\bf Y},{\bf X})- \hat{k}^{KL}\bar{\mathcal{F}}^{-1}_{\hat{k}^{KL},n-\hat{k}^{KL}}(\alpha)\ .
\end{eqnarray}
is positive.
\end{defi}

\begin{remark}[Other interpretations of $\phi_{k}({\bf Y},{\bf X})$]
Consider $\widehat{\theta}_k$ the least-squares estimator of $\theta$ in the space generated by $\widehat{V}_j$, $j=1,\ldots, \hat{k}^{KL}$.
It is proved in Section \ref{section_proof_lien_cardot} that $\|\widehat{\Pin}_k{\bf Y}\|_n^2=\|\widehat{\Gamma}^{1/2}_n\widehat{\theta}_k\|^2$. Thus, the numerator of (\ref{definition_phi_KL}) corresponds to some norm of $\widehat{\theta}_k$. Intuitively, the larger $\widehat{\theta}_k$, the larger the statistic $\phi_{k}({\bf Y},{\bf X})$ is. Furthermore, $\|\widehat{\Pin}_k{\bf Y}\|_n^2$ also expresses as the numerator of the statistic $D_n$ considered in Cardot et al.~\cite{cardot_test} (see Section \ref{section_proof_lien_cardot} for details).

\end{remark}
\begin{remark}From the considerations above, we see that the transformed parameter $\Gamma^{1/2} \theta$ naturally occurs in the definition of $\phi_{k}({\bf Y},{\bf X})$. In fact, hypotheses $H_{0}$ and $H_{1,k}$ remain unchanged, if we replace $\theta$ by $\Gamma^{1/2} \theta$ in \eqref{eq:definition_hypothesis_parametric} as soon as $\Gamma$ is injective. The crucial role of this synthetic parameter is underlined in \cite{Meister11} where the functional linear regression model is proved to be asymptotically equivalent to a white noise model with signal $\Gamma^{1/2} \theta$.
\end{remark}

\subsection{Size}\label{section_level} 

We study the type I error of the parametric tests $T_{\alpha,k}$.  On one hand, we  control exactly the size of the tests when the noise $\epsilon$ is normally distributed.  On the other hand, we  bound the size of the tests when the noise is only constrained to admit a fourth moment. 

\subsubsection{Gaussian noise}

\begin{equation*}
{\bf A.1}\hspace{3cm} \epsilon\text{ follows a Gaussian distribution }
\mathcal{N}(0,\sigma^2)\ .  \hspace{2.5cm}
\end{equation*}

\begin{prop}[Size of $T_{\alpha,k} $ under Gaussian errors]\label{prop_niveau_gaussien_T.alpha.k}
Under Assumption ${\bf A.1}$ and if $k\leq n/2$, we have for any $n\geq 2$, $\mathbb{P}_{0}(T_{\alpha,k} > 0) =  \alpha$.
\end{prop}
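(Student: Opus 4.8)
The plan is to exploit the fact that, under $H_0$, the response reduces to pure noise while all the randomness of the projector comes from the design, so that everything can be made deterministic by conditioning on $\mathbf{X}$. Indeed, when $\theta=0$ the model \eqref{modelLFRM} gives $\mathbf{Y}=\boldsymbol{\epsilon}$, and under Assumption ${\bf A.1}$ we have $\boldsymbol{\epsilon}\sim\mathcal{N}(0,\sigma^2 I_n)$ independent of $\mathbf{X}$. The key observation is that the projector $\widehat{\Pin}_k$, its rank $\hat{k}^{KL}$, and the critical value $\bar{\mathcal{F}}^{-1}_{\hat{k}^{KL},n-\hat{k}^{KL}}(\alpha)$ are all measurable functions of $\mathbf{X}$ alone; hence they become constants once we condition on $\mathbf{X}$, and the entire argument reduces to a standard Fisher computation performed conditionally.

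First I would record that, conditionally on $\mathbf{X}$, the matrix $\widehat{\Pin}_k$ is an orthogonal projector of rank exactly $r:=\hat{k}^{KL}$ (on the full-probability event $\hat{k}^{KL}\ge 1$). This follows because the columns of $\mathbf{W}$ are orthogonal for $\langle\cdot,\cdot\rangle_n$, with $\langle \mathbf{W}_{\cdot,j},\mathbf{W}_{\cdot,l}\rangle_n=\langle \widehat{\Gamma}_n\widehat{V}_j,\widehat{V}_l\rangle=\widehat{\lambda}_j\delta_{jl}$, and since $\widehat{\lambda}_j>0$ for $j\le \hat{k}^{KL}\le \mathrm{Rank}(\widehat{\Gamma}_n)$, the $\hat{k}^{KL}$ columns are nonzero and mutually orthogonal, hence linearly independent.

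Next I apply Cochran's theorem to the Gaussian vector $\boldsymbol{\epsilon}$ conditionally on $\mathbf{X}$: the quantities $\|\widehat{\Pin}_k\boldsymbol{\epsilon}\|_n^2$ and $\|(I-\widehat{\Pin}_k)\boldsymbol{\epsilon}\|_n^2$ are independent and distributed as $\sigma^2\chi^2_{r}$ and $\sigma^2\chi^2_{n-r}$ respectively (up to the common normalization of $\|\cdot\|_n$, which cancels in the ratio). Consequently, conditionally on $\mathbf{X}$,
\[
\frac{\phi_{k}(\mathbf{Y},\mathbf{X})}{\hat{k}^{KL}}=\frac{\|\widehat{\Pin}_k\boldsymbol{\epsilon}\|_n^2/r}{\|(I-\widehat{\Pin}_k)\boldsymbol{\epsilon}\|_n^2/(n-r)}
\]
follows a Fisher distribution with $(r,n-r)$ degrees of freedom. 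The constraint $k\le n/2$ guarantees $n-r\ge n/2\ge 1$, so this law is well defined.

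Finally, by the very definition of $\bar{\mathcal{F}}^{-1}_{r,n-r}(\alpha)$ as the upper $\alpha$-quantile of $\mathcal{F}_{r,n-r}$, the conditional rejection probability is
\[
\mathbb{P}_{0}\big(T_{\alpha,k}>0\mid \mathbf{X}\big)=\mathbb{P}_{0}\Big(\phi_{k}/\hat{k}^{KL}>\bar{\mathcal{F}}^{-1}_{r,n-r}(\alpha)\ \big|\ \mathbf{X}\Big)=\alpha,
\]
and the tower property gives $\mathbb{P}_{0}(T_{\alpha,k}>0)=\mathbb{E}_{\mathbf{X}}[\alpha]=\alpha$. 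The one delicate point, and the place where the design of the test matters, is that both the degrees of freedom $r=\hat{k}^{KL}$ and the critical value are themselves random; the argument goes through precisely because the quantile appearing in $T_{\alpha,k}$ is evaluated at the \emph{same} conditionally fixed $\hat{k}^{KL}$, so the conditional probability equals $\alpha$ for every realization of $\mathbf{X}$ and the outer averaging is trivial.
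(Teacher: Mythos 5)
Your proof is correct and follows essentially the same route as the paper's: condition on $\mathbf{X}$ so that $\widehat{\Pin}_k$ and $\hat{k}^{KL}$ become fixed, observe that $\phi_k(\mathbf{Y},\mathbf{X})/\hat{k}^{KL}$ is then exactly Fisher $(\hat{k}^{KL},n-\hat{k}^{KL})$ under $H_0$, and integrate out $\mathbf{X}$. Your added details (orthogonality of the columns of $\mathbf{W}$, Cochran's theorem, and the remark that the quantile is evaluated at the same conditionally fixed $\hat{k}^{KL}$) simply make explicit what the paper's one-line argument leaves implicit.
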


Observe that this control does not require any assumption on the process $X$.

\subsubsection{Non-Gaussian noise}

In this part, the noise $\epsilon$ is only assumed to admit a fourth order moment, but we perform additional assumptions on $X$ and $k$.
\begin{eqnarray*}
{\bf B.1}\hspace{3.5cm} \sup_{j\geq 1}\E[(\eta^{(j)})^4]\leq C_1\text{  and  }\frac{\mathbb{E}\left[\epsilon^4\right]}{\sigma^4}\leq C_2\ , \hspace{2.5cm} 
\end{eqnarray*}
where $C_1$ and $C_2$ are two positive constants.
\begin{eqnarray*}
 {\bf B.2}\hspace{0.4cm}\text{For some $\gamma>0$, }\left(j\lambda_j((\log^{1+\gamma}j)\vee 1)\right)_{j\geq 1}\text{ is decreasing and } \mathrm{Ker}\Gamma=\left\{0\right\}.\ 
\end{eqnarray*}
\begin{eqnarray*}
 {\bf B.3}\hspace{4.4cm} k\leq n^{1/4}/\log^{4}(n)\ .\hspace{4.4cm} 
\end{eqnarray*}
Assumption ${\bf B.1}$ is classical, since we need to control second order moments for the empirical
covariance operator $\widehat{\Gamma}_{n}$. This comes down to inspecting the behavior of
the fourth order moments of the $\eta^{(j)}$'s. The second part of ${\bf B.2}$ ensures  that the framework is truly functional. 
The first part of ${\bf B.2}$ is mild and holds for an $X$ that may have very irregular paths (it holds for the Brownian motion for which $\lambda_j \propto j^{-2}$) and for classical examples of eigenvalue sequences: with polynomial decay, exponential decay, or even Laurent sequences such as $\lambda_j=j^{-\delta}\cdot \log^{-\nu}\left(j\right)$ for $\delta>1$ and $\nu\geq0$. In fact, ${\bf B.2}$ is less restrictive than assumptions commonly used in the literature~\cite{cai-hall2006,cardot_test,HH07} since it does not require any spacing control between the eigenvalues.

The  restriction ${\bf B.3}$ on the dimension of the projection is classical for the analysis of statistical procedures based on the Karhunen-Lo\`eve expansion. 
If we knew the eigenfunctions $V_k$ of $\Gamma$ in advance, we could consider larger dimensions $k$. The estimation of the eigenfunctions $V_k$ becomes more difficult when $k$ increases. By considering dimensions $k$ that satisfy Assumption ${\bf B.3}$, we prove in the next theorem that the {\it random} projector $\widehat{{\bf \Pi}}_k$ concentrates well around its mean. It may be noticed that this assumption links $k$ and $n$ independently from the eigenvalues hence from any prior knowledge on the data.

\begin{thrm}[Size of $T_{\alpha,k}$]\label{thrm_niveau_T.alpha.k}
Under Assumptions ${\bf B.1-3}$, 
there exist positive constants $C(\alpha,\gamma)$ and $C_2$  such that the following holds.
For any $n\geq C_2$, we have
\begin{eqnarray*}
\mathbb{P}_{0}\left[T_{\alpha,k}> 0\right]\leq  \alpha+ \frac{C(\alpha,\gamma)}{\log(n)}\ .
\end{eqnarray*}
\end{thrm}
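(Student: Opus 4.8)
The plan is to reduce the non-Gaussian case to the exact computation of Proposition~\ref{prop_niveau_gaussien_T.alpha.k} by a conditioning-and-comparison argument, absorbing the discrepancy into the $C(\alpha,\gamma)/\log(n)$ term via the perturbation theory of Section~\ref{section_perturb}. First I would condition on $\mathbf{X}$: since $\boldsymbol{\epsilon}$ is independent of $\mathbf{X}$, both the projector $\widehat{\Pin}_k$ and the integer $\hat{k}^{KL}$ become deterministic, and under $H_0$ we have $\mathbf{Y}=\boldsymbol{\epsilon}$. Writing $r=\hat{k}^{KL}$, $P=\widehat{\Pin}_k$ and $q=\bar{\mathcal{F}}^{-1}_{r,n-r}(\alpha)$, the rejection event $\{T_{\alpha,k}>0\}$ is, after clearing the denominator, exactly $\{\boldsymbol{\epsilon}^\top M\boldsymbol{\epsilon}>0\}$ for the symmetric matrix $M=(n-r)P-qr(\mathrm{Id}-P)$, a single quadratic form in $\boldsymbol{\epsilon}$ whose matrix is frozen once $\mathbf{X}$ is fixed. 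Proposition~\ref{prop_niveau_gaussien_T.alpha.k}, applied conditionally, gives that for a Gaussian vector $g\sim\mathcal{N}(0,\sigma^2\mathrm{Id}_n)$ one has $\mathbb{P}[g^\top M g>0\mid\mathbf{X}]=\alpha$ exactly, so it suffices to bound $\mathbb{E}_{\mathbf{X}}\big|\mathbb{P}[\boldsymbol{\epsilon}^\top M\boldsymbol{\epsilon}>0\mid\mathbf{X}]-\alpha\big|$ by $C(\alpha,\gamma)/\log(n)$.

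Second, on a favorable event $\mathcal{A}$ for $\mathbf{X}$ I would run a Lindeberg/Berry--Esseen swapping argument to compare the conditional law of $\boldsymbol{\epsilon}^\top M\boldsymbol{\epsilon}$ with that of $g^\top Mg$ at the threshold $0$. Expanding $\boldsymbol{\epsilon}^\top M\boldsymbol{\epsilon}=\sum_i M_{ii}\epsilon_i^2+\sum_{i\ne j}M_{ij}\epsilon_i\epsilon_j$ and replacing the $\epsilon_i$ by $g_i$ one coordinate at a time, the first two moments match, so the swapping error is driven by the excess fourth moment of $\epsilon$ (bounded by ${\bf B.1}$) weighted by $\sum_i M_{ii}^2$, normalized by the standard deviation of $\boldsymbol{\epsilon}^\top M\boldsymbol{\epsilon}$, which is of order $n\sqrt{r}\,\sigma^2$. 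The geometric inputs needed are that $P$ has rank exactly $r=k$ and that its leverage entries $P_{ii}$ are delocalized, i.e.\ $P_{ii}\approx r/n$; these keep $\sum_i M_{ii}^2$ small relative to the variance and make the swapping error polynomially small in $n$.

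Third, and this is where the real work lies, I would use the perturbation lemmas of Section~\ref{section_perturb} to show that the favorable event $\mathcal{A}$ (full rank of $\widehat{\Pin}_k$, delocalization of its eigenvectors, and closeness to the oracle projector $\widetilde{\Pin}_k$ built from the true eigenfunctions $V_j$) has probability at least $1-C(\alpha,\gamma)/\log(n)$. Since only ${\bf B.1}$ (fourth moments of the $\eta^{(j)}$) is available, $\|\widehat{\Gamma}_n-\Gamma\|$ can be controlled only through second-moment (Chebyshev-type) bounds; propagating these through the eigenprojector perturbation, using the decay and gap structure encoded in ${\bf B.2}$, produces a bad-event probability of order $k^4\,\mathrm{polylog}(n)/n$. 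The role of ${\bf B.3}$, $k\le n^{1/4}/\log^4(n)$, is precisely to calibrate $k$ so that this quantity is $O(1/\log(n))$, which then dominates the polynomially small Berry--Esseen error and yields the stated bound.

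The main obstacle is the third step: transferring the favorable structure of the oracle projector $\widetilde{\Pin}_k$ (whose coordinates $(\langle X_i,V_j\rangle)_i=(\sqrt{\lambda_j}\,\eta_i^{(j)})_i$ are i.i.d.\ across $i$, hence automatically delocalized) to the data-driven projector $\widehat{\Pin}_k$, while keeping every tail bound at the level permitted by the mere fourth-moment assumption ${\bf B.1}$ and the weak, spacing-free eigenvalue assumption ${\bf B.2}$. Controlling the perturbation of the top-$k$ eigenspace of the empirical covariance (equivalently Gram) operator without lower bounds on individual spectral gaps is the delicate point, and is exactly what forces the dimension restriction ${\bf B.3}$ and leaves the residual $1/\log(n)$ in the level.
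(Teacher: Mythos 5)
Your architecture --- condition on $\mathbf{X}$, fold the Fisher test into the single quadratic form $\boldsymbol{\epsilon}^\top M\boldsymbol{\epsilon}$ with $M=(n-r)\widehat{\Pin}_k-qr(\mathrm{Id}-\widehat{\Pin}_k)$, compare to the exact Gaussian level of Proposition \ref{prop_niveau_gaussien_T.alpha.k}, and pay for the discrepancy with perturbation theory calibrated by ${\bf B.3}$ --- is reasonable and genuinely different from the paper's route, and the reformulation that absorbs the denominator into $M$ is a nice economy (the paper treats the denominator separately in Lemma \ref{lemma_3}). But the middle step has a gap that, as stated, would make the argument fail. The conditional comparison of $\boldsymbol{\epsilon}^\top\widehat{\Pin}_k\boldsymbol{\epsilon}$ with $g^\top\widehat{\Pin}_kg$ is really a multivariate CLT for the $k$-vector $U^\top\boldsymbol{\epsilon}$, where $U$ is an orthonormal basis of the range of $\widehat{\Pin}_k$; conditionally on $\mathbf{X}$ the summands $U_{i\cdot}\epsilon_i$ are independent but \emph{not} identically distributed across $i$, and the Berry--Esseen error is governed by $\sum_i\|U_{i\cdot}\|^3=\sum_i(\widehat{\Pin}_k)_{ii}^{3/2}$, not by $\sum_iM_{ii}^2$ weighted by a fourth moment as you claim (and a naive third-order Lindeberg expansion of the scalar quadratic form additionally produces purely diagonal terms of the type $M_{ii}^3\epsilon_i^6$, which a fourth-moment assumption does not control without truncation). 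The delocalization $(\widehat{\Pin}_k)_{ii}\approx k/n$ that you posit as part of the favorable event is not available at that strength under ${\bf B.1}$--${\bf B.2}$: the $\eta^{(j)}$ are merely uncorrelated with bounded fourth moments, so a Chebyshev-plus-union bound over the $n$ rows only yields $\max_i(\widehat{\Pin}_k)_{ii}\lesssim k/\sqrt{n}$, and the resulting worst-case bound $k^{1/4}\sum_i(\widehat{\Pin}_k)_{ii}^{3/2}$ does not tend to zero even under ${\bf B.3}$.

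This is precisely the difficulty the paper's decomposition is designed to sidestep: it does \emph{not} compare the realized projector conditionally, but instead compares the oracle statistic $\|\sqrt{n}A_k\Delta_n\|^2=\sum_{j\le k}\bigl(n^{-1/2}\sum_i\eta_i^{(j)}\epsilon_i\bigr)^2$ to a $\chi^2_k$ \emph{unconditionally}, applying Bentkus' multivariate Berry--Esseen theorem to the i.i.d.\ vectors $W_i=(\eta_i^{(j)}\epsilon_i)_{j\le k}$ (Lemma \ref{lemma_1}); the i.i.d.-across-$i$ structure means the error only involves the population moment $\mathbb{E}\|W_1\|^3\le k^{3/2}\cdot(\text{moments})$ rather than a realized maximum of leverage scores. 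The gap to the data-driven statistic, $\|\sqrt{n}(\widehat{A}_k-A_k)\Delta_n\|$, is then handled in mean via the perturbation bounds (Lemmas \ref{lemma_2} and \ref{lemlem1}), and the threshold shift is absorbed by Lemma \ref{lemma_bidouillage_proba2}. To repair your proof you would have to replace the uniform delocalization event by a bound on $\mathbb{E}\bigl[\sum_i(\widehat{\Pin}_k)_{ii}^{3/2}\bigr]$ integrated against the conditional Berry--Esseen estimate --- and controlling that expectation already requires relating $\widehat{\Pin}_k$ to its oracle counterpart, i.e., essentially reconstructing the paper's oracle-plus-perturbation split.
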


\begin{remark}
In the proof of Theorem \ref{thrm_niveau_T.alpha.k}, we show that, under $H_0$, the distribution of  $\phi_{k}({\bf Y},{\bf X})$  is close to a $\chi^2$  distribution with $k$ degrees of freedom. The arguments rely on perturbation theory for random operators (see Section \ref{section_perturb}). 
\end{remark}

\section{Power and minimaxity of $T_{\alpha,k} $}
\label{secion_power_minim.param_test}

Intuitively, the larger the signal-to-noise ratio $\mathbb{E}\left[\langle X,\theta\rangle ^2\right]/\sigma^2=\|\Gamma^{1/2}\theta\| ^2/\sigma^2$ is, the easier we can reject $H_0$. For this reason, we study  how large $\|\Gamma^{1/2}\theta\| ^2$ has to be, so that the test $T_{\alpha,k}$ rejects $H_0$ with probability larger than $1-\beta$ for a prescribed positive number $\beta$. We provide such type II errors under moment assumption of $\epsilon$. Additional controls of the power when $\epsilon$ follows a Normal distribution are stated in Appendix \ref{sec:power:gaussian}.

\subsection{Power of $T_{\alpha,k}$}

\begin{equation*}
 {\bf B.4}\hspace{4cm} \sup_{j\geq 1}\E[(\eta^{(j)})^8]\leq C\ .\hspace{4cm} 
\end{equation*}

\begin{thrm}[Power under non-Gaussian errors]\label{thrm_power_KL_fixed}
Let $\alpha$ and $\beta$ be fixed. Under ${\bf B.1-4}$, 
there exist positive constants $C(\gamma)$, $C_1$, $C_2$, and $C_3$ such that the following holds. Assume that $\alpha\geq e^{-\sqrt{n}}$,   $\beta\geq C(\gamma)/\log(n)$,  and that $n\geq C_3$. Then, $\mathbb{P}_{\theta}(T_{\alpha,k}>0)\geq 1-\beta$ for any $\theta$  satisfying
\begin{equation}\label{definition_puissance_complete}
\|\Gamma^{1/2}\theta\| ^2 \geq 
C_1\|(\Gamma^{1/2}-\Gamma_k^{1/2})\theta\|^2 + C_2\frac{\sigma^2}{n}\left(\sqrt{k\log\left(\frac{1}{\alpha \beta}\right)}+\log\left(\frac{1}{\alpha \beta}\right)\right)\ .
\end{equation}
 \end{thrm}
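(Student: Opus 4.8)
## Proof Plan

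The plan is to condition on the design $\mathbf{X}$ (equivalently on $\widehat{\mathbf{\Pi}}_k$ and $\widetilde{\mathbf{\Pi}}_k$) and analyze the numerator and denominator of $\phi_k(\mathbf{Y},\mathbf{X})$ separately, so that the test statistic can be compared to its rejection threshold. The rejection region is $\{\phi_k > \hat{k}^{KL}\bar{\mathcal{F}}^{-1}_{\hat{k}^{KL},n-\hat{k}^{KL}}(\alpha)\}$, so the first step is to obtain a quantile bound: since $\alpha \geq e^{-\sqrt{n}}$ and $\hat{k}^{KL}\leq k$ is small relative to $n$ by ${\bf B.3}$, I would show $\hat{k}^{KL}\bar{\mathcal{F}}^{-1}_{\hat{k}^{KL},n-\hat{k}^{KL}}(\alpha)\leq C\bigl(k + \sqrt{k\log(1/\alpha)}+\log(1/\alpha)\bigr)$ using standard tail estimates for the $\chi^2$ (and hence Fisher) quantiles. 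This converts the problem into showing that $\|\widehat{\mathbf{\Pi}}_k\mathbf{Y}\|_n^2$ exceeds the threshold times the (essentially constant) denominator with probability at least $1-\beta$.

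The core difficulty is that $\widehat{\mathbf{\Pi}}_k$ is random and differs from the oracle projector $\widetilde{\mathbf{\Pi}}_k$ built on the true eigenfunctions. First I would decompose $\mathbf{Y}=\langle\mathbf{X},\theta\rangle+\boldsymbol{\epsilon}$ (abusing notation for the coordinatewise signal) and write
\[
\|\widehat{\mathbf{\Pi}}_k\mathbf{Y}\|_n^2 \geq \tfrac{1}{2}\|\widehat{\mathbf{\Pi}}_k\langle\mathbf{X},\theta\rangle\|_n^2 - \|\widehat{\mathbf{\Pi}}_k\boldsymbol{\epsilon}\|_n^2 ,
\]
treating the signal term as the driver of power and the noise term as fluctuation. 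For the signal term, the expectation of $\|\widetilde{\mathbf{\Pi}}_k\langle\mathbf{X},\theta\rangle\|_n^2$ is of order $n\|\Gamma_k^{1/2}\theta\|^2$, and replacing $\widetilde{\mathbf{\Pi}}_k$ by $\widehat{\mathbf{\Pi}}_k$ costs a perturbation term controlled by the operator-theoretic lemmas of Section \ref{section_perturb}; the residual $\|(\Gamma^{1/2}-\Gamma_k^{1/2})\theta\|^2$ in \eqref{definition_puissance_complete} is precisely the price of projecting onto only $k$ directions. The hard part will be showing that, under ${\bf B.1}$ and ${\bf B.4}$ (the eighth-moment bound, needed here rather than just the fourth moment), the random fluctuations of both $\|\widehat{\mathbf{\Pi}}_k\langle\mathbf{X},\theta\rangle\|_n^2$ around its mean and of the perturbation $\|(\widehat{\mathbf{\Pi}}_k-\widetilde{\mathbf{\Pi}}_k)\langle\mathbf{X},\theta\rangle\|_n^2$ are negligible, since these involve quadratic forms in the non-Gaussian variables $\eta^{(j)}$ coupled to the data-dependent projector.

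For the noise term, conditionally on $\mathbf{X}$ the variable $\|\widehat{\mathbf{\Pi}}_k\boldsymbol{\epsilon}\|_n^2$ is a quadratic form in $\boldsymbol{\epsilon}$ with rank $\hat{k}^{KL}$; using ${\bf B.1}$ (fourth moment of $\epsilon$) and a Chebyshev or Bernstein-type argument I would bound it by $C\sigma^2(k+\log(1/\beta))/n$ on an event of probability $1-\beta/C$. The denominator $\|\mathbf{Y}-\widehat{\mathbf{\Pi}}_k\mathbf{Y}\|_n^2/(n-\hat{k}^{KL})$ concentrates around $\sigma^2$ (plus the signal residual, which only helps), again by a fourth-moment concentration argument. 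Assembling these pieces via a union bound over the $O(1)$ many events, and absorbing the constants, yields that the condition \eqref{definition_puissance_complete} forces $T_{\alpha,k}>0$ with probability at least $1-\beta$. The requirement $\beta\geq C(\gamma)/\log(n)$ enters because the perturbation control on $\widehat{\mathbf{\Pi}}_k$ inherited from Theorem \ref{thrm_niveau_T.alpha.k} holds only up to a $\log^{-1}(n)$ failure probability, so $\beta$ cannot be taken below that floor.
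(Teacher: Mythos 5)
Your overall architecture (quantile bound via Lemma 1 of \cite{baraud03}, perturbation control of $\widehat{\Pin}_k$, concentration of the denominator) matches the paper's, but the central decomposition
\[
\|\widehat{\Pin}_k {\bf Y}\|_n^2 \;\geq\; \tfrac12\|\widehat{\Pin}_k \langle {\bf X},\theta\rangle\|_n^2 - \|\widehat{\Pin}_k\boldsymbol{\epsilon}\|_n^2
\]
contains a gap that destroys the claimed rate. The Fisher threshold satisfies $k\bar{\F}^{-1}_{k,n-k}(\alpha)\approx k + C(\sqrt{k\log(1/\alpha)}+\log(1/\alpha))$, and under the alternative the numerator concentrates around $k\sigma^2 + n\|\Gamma_k^{1/2}\theta\|^2$: the leading $k\sigma^2$ contributed by the projected noise is exactly what cancels the leading $k$ in the threshold, leaving a required signal of order $\sigma^2(\sqrt{k\log(1/\alpha\beta)}+\log(1/\alpha\beta))$. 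By subtracting an upper bound on $\|\widehat{\Pin}_k\boldsymbol{\epsilon}\|_n^2\approx k\sigma^2$ instead of retaining this term (with a two-sided deviation bound around $k\sigma^2$), you can only conclude rejection when $n\|\Gamma_k^{1/2}\theta\|^2\gtrsim k\sigma^2$, i.e.\ a separation of order $k/n$ rather than $\sqrt{k}/n$; this proves a strictly weaker statement than \eqref{definition_puissance_complete} and would not yield the minimax rates of Corollary \ref{cor:minimax}.

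The paper avoids this by treating the numerator as a single object: it writes $\|\widehat{\Pin}_k{\bf Y}\|_n^2=\|\sqrt{n}\widehat{A}_k\Delta_n\|^2$, approximates $\widehat{A}_k$ by $A_k$ through perturbation theory (Lemma \ref{lemme_principal_puissance}), and shows via a multivariate Berry--Esseen inequality (Lemma \ref{lemma_A1}) that $\|\sqrt{n}A_k\Delta_n\|^2$ is stochastically close to a non-central $\chi^2_k$ with non-centrality of order $n\|\Gamma_k^{1/2}\theta\|^2/\sigma^2$, whence the lower bound $k\sigma^2+\frac{n}{5}\|\Gamma_k^{1/2}\theta\|^2-2\sigma^2\sqrt{k\log(2/\beta)}-10\sigma^2\log(2/\beta)$ in which the $k\sigma^2$ survives to cancel the threshold. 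This route also resolves a second difficulty your plan underestimates: with only a fourth moment on $\epsilon$, a conditional Chebyshev bound on the quadratic form $\|\widehat{\Pin}_k\boldsymbol{\epsilon}\|_n^2$ yields deviations scaling like $k/\sqrt{\beta}$ rather than the $\sqrt{k\log(1/\beta)}$ needed, which is why the comparison to a (non-central) $\chi^2$ law --- for which sharp deviation inequalities are available --- is carried out unconditionally via Berry--Esseen rather than conditionally on ${\bf X}$.
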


\begin{remark}
If we knew that $\theta$ belongs to the space spanned by the $k$ first eigenvectors $(V_1,\ldots, V_k)$ and if we knew these $k$ eigenvectors in advance, then we could  consider the statistic defined by 
$$\widetilde{\phi}_k({\bf X},{\bf Y}):= \frac{\|\widetilde{\bf \Pi}_k{\bf Y}\|_n^2}{\|{\bf Y}-\widetilde{\bf \Pi}_k{\bf Y}\|_n^2}-\bar{\mathcal{F}}^{-1}_{k,n-k}(\alpha)\ , $$
where $\widetilde{\bf \Pi}_k$ is the projection in $\mathbb{R}^n$ onto the space spanned by $(\langle X_i,V_j\rangle )_{i=1,\ldots n},$ $j=1,\ldots, k$.
The corresponding test is optimal in the minimax sense and rejects $H_0$ with probability larger than $1-\beta$ when 
\begin{equation}\label{puissance_parametrique}
\|\Gamma^{1/2}\theta\| ^2\geq C(\alpha,\beta)\sqrt{k}\sigma^2/n\ . 
\end{equation}
See~\cite{VV10} for a proof when $X$ is a Gaussian process and $\epsilon$ follows a Gaussian distribution, the extension to non Gaussian processes being straightforward.
 In (\ref{definition_puissance_complete}), we  recover an additional term $\|(\Gamma^{1/2}-\Gamma_k^{1/2})\theta\|^2$ because we do not assume that $\theta$ belongs to the space spanned by $(V_1,\ldots, V_k)$. The statistic $\phi_k({\bf Y},{\bf X})$ only captures the projection of $\theta$ onto $\mathrm{span}(V_1,\ldots,V_k)$. 
In fact, the test $T_{\alpha,k}$ rejects with large probability when 
\[\|\Gamma^{1/2}_k\theta\|^2= \|\Gamma^{1/2}\theta\|^2-\|(\Gamma^{1/2}-\Gamma_k^{1/2})\theta\|^2\] 
is large.
\end{remark}

\begin{remark}[Joint regularity of $\Gamma$ and $\theta$]  Looking more precisely at the bias term, we obtain
\[\|(\Gamma^{1/2}-\Gamma_k^{1/2})\theta\|^2=\sum_{j=k+1}^{\infty}\lambda_j\langle \theta,V_j\rangle^2\ .\]
Consequently, the bias term does not only depend on the rate of convergence of the eigenvalues of $\Gamma$, it also depends on the behavior of the sequence $\lambda_j\langle\theta,V_j\rangle^2$. In other words, the joint regularity of the covariance operator $\Gamma$ and of $\theta$ (in the expansion of $(V_j)$, $j\geq 1$) plays a role in the bias term. For a fixed $\theta$, the power of $T_{\alpha,k}$ is large for a tuning parameter $k$ that achieves a trade-off between the bias term $\|(\Gamma^{1/2}-\Gamma_k^{1/2})\theta\|^2$ and a variance term $\sqrt{k}\sigma^2/n$.
\end{remark}

\subsection{Minimax separation distance over an ellipsoid}\label{section_minimax_KL}

In this section, we assess the optimality of the procedure $T_{\alpha,k}$.  To this end, we study the optimal power of a level-$\alpha$ test, when $\theta$ is assumed to have a known regularity.

\begin{defi}[Ellipsoids]\label{definition_ellipsoid}
Given a non increasing sequence $(a_i)_{i\geq 1}$ and a positive number $R>0$, we define the ellipsoid $\mathcal{E}_a(R)$ by
\begin{eqnarray*}
 \mathcal{E}_a(R):= \left\{\theta\in\mathcal{H}: \quad \sum_{k=1}^{+\infty} \frac{\langle \theta,V_k\rangle ^2}{a_k^2}\leq R^2\sigma^2\right\} \ . 
\end{eqnarray*} 
\end{defi}

The ellipsoid $\mathcal{E}_a(R)$ contains all the elements $\theta\in\mathcal{H}$ that have a given regularity in the basis $(V_k)$, $k\geq 1$. In other words, it prescribes the rate of convergence of $\langle\theta,V_k\rangle $ towards $0$. The faster $a_k$ goes to zero, the more regular $\theta$ is assumed to be.

We take some positive numbers $\alpha$ and $\beta$ such that $\alpha+\beta<1$.
Let us consider a test $T$ taking its values in $\{0,1\}$. For any subset $\mathcal{C}\subset \mathcal{H}\times\mathbb{R}^+$,  
$ \boldsymbol{\beta}\left[T; \mathcal{C} \right]$ 
denotes the supremum of type II errors of the test $T$ for all  parameters $(\theta,\sigma)\in \mathcal{C}$:
\[\boldsymbol{\beta}\left[T; \mathcal{C} \right]:=\sup_{(\theta,\sigma)\in\mathcal{C}}\mathbb{P}_{\theta}[T= 	0]\ .\]

The {\bf $(\alpha,\beta)$-separation distance} of an $\alpha$-level test $T$ over 
the ellipsoid $\mathcal{E}_a(R)$, noted  $\rho[T;\mathcal{E}_a(R)]$ is the minimal number $\rho>0$ such that $T$ rejects $H_0$ with probability larger than $1-\beta$ for all $\theta\in\mathcal{E}_a(R)$ and $\sigma>0$ such that $\|\Gamma^{1/2}\theta\| ^2/\sigma^2\geq \rho^2$. Hence, $\rho[T;\mathcal{E}_a(R)]$ corresponds to the minimal distance such that the hypotheses $\{\theta=0,\ \sigma>0\}$ and $\{\theta\in\mathcal{E}_a(R),\ \sigma>0,\|\Gamma^{1/2}\theta\| ^2/\sigma^2\geq \rho^2\}$ are well separated by $T$.
\[\rho[T;\mathcal{E}_a(R)]:= \inf\left\{\rho>0,\quad \boldsymbol{\beta}\left[T; \left\{\theta\in\mathcal{E}_a(R), \sigma>0, \frac{\|\Gamma^{1/2}\theta\| ^2}{\sigma^2}\geq \rho^2\right\}\right]\leq \beta\right\}\ .\] 
By definition, $T$ has a power larger than $1-\beta$ for all $\theta\in\mathcal{E}_a(R)$ and $\sigma>0$ such that $\|\Gamma^{1/2}\theta\| ^2/\sigma^2\geq \rho^2[T,\mathcal{E}_a(R)]$.

\begin{defi}[Minimax Separation distance]\label{definition_minimax_separation_distance}
We consider 
\begin{equation}
 \rho^*[\alpha;\mathcal{E}_a(R)]:=\inf_{T_{\alpha}}\rho[T_{\alpha};\mathcal{E}_a(R)]\ ,
\end{equation}
where the infimum run over all level-$\alpha$ tests. This quantity is called the {\bf $(\alpha, \beta)$-minimax separation distance} over the ellipsoid $\mathcal{E}_a(R)$.

\end{defi}

\begin{remark}
The notion of $(\alpha, \beta)$-minimax separation distance is a non asymptotic counterpart of the detection boundaries studied in the Gaussian sequence model~\cite{donoho04}. Furthermore, as the variance $\sigma^2$ is unknown, 
this definition of the minimax separation distance considers the power of the testing  procedures for all possible values of $\sigma^2$.
\end{remark}

\begin{prop}[Minimax lower bound over an ellipsoid]\label{prop_minoration}
There exists a constant $C(\alpha,\beta)$ such that the following holds.
Let us assume that $X$ is a Gaussian process and that $\epsilon$ follows a Gaussian distribution. For any ellipsoid $\mathcal{E}_a(R)$, 
we have 
\begin{equation}\label{eq:minoration_minimax}
  \rho^*[\alpha;\mathcal{E}_a(R)]\geq \rho^2_{a,R,n}:= \sup_{k\geq 1}\left[C(\alpha,\beta)\left(\frac{\sqrt{k}}{n}\right)\wedge \left(R^2a_k^2\lambda_k\right)\right]\ .
\end{equation}

In other words, for any test $T_{\alpha}$ of  level $\alpha$, we have
\begin{eqnarray*}
 \boldsymbol{\beta}\left[T_{\alpha}; \left\{\theta\in\mathcal{E}_a(R), \sigma>0, \frac{\|\Gamma^{1/2}\theta\| ^2}{\sigma^2} \geq \rho^2_{a,R,n}\right\}\right]\geq \beta\ .
\end{eqnarray*}
\end{prop}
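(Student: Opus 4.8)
The plan is to reduce the composite testing problem to a two-point or two-prior testing problem and then invoke the classical lower-bound machinery based on the $\chi^2$-divergence (or total variation) between the null distribution and a mixture of alternatives. Since the bound is a supremum over $k$, I would fix an arbitrary index $k\geq 1$ and exhibit a hard alternative concentrated on the single direction $V_k$ (or on a prior supported near $V_k$), then optimize over $k$ at the very end. The key quantity to control is the separation $\|\Gamma^{1/2}\theta\|^2/\sigma^2$, which for $\theta$ proportional to $V_k$ equals $\lambda_k\langle\theta,V_k\rangle^2/\sigma^2$; the ellipsoid constraint forces $\langle\theta,V_k\rangle^2\leq R^2\sigma^2 a_k^2$, which is exactly what produces the $R^2 a_k^2\lambda_k$ term on the right-hand side of \eqref{eq:minoration_minimax}. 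The competing $\sqrt{k}/n$ term should emerge from the statistical cost of detecting a signal spread over $k$ coordinates.

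First I would set up the reduction precisely. For a fixed level-$\alpha$ test $T_\alpha$, lower-bounding $\boldsymbol{\beta}[T_\alpha;\cdot]$ amounts to showing that no test can simultaneously have small type I error under $H_0$ (i.e. $\theta=0$) and small type II error uniformly over the alternative set. I would choose a prior $\mu$ on the alternative parameters $\theta$, all lying in $\mathcal{E}_a(R)$ with $\|\Gamma^{1/2}\theta\|^2/\sigma^2\geq\rho^2_{a,R,n}$, and compare $\mathbb{P}_0$ with the mixture $\bar{\mathbb{P}}_\mu=\int \mathbb{P}_\theta\,d\mu(\theta)$. The standard inequality states that if the $\chi^2$-distance (or the $L^2$-distance of likelihood ratios) between $\mathbb{P}_0$ and $\bar{\mathbb{P}}_\mu$ is bounded by a function of $\alpha,\beta$, then $\alpha+\boldsymbol{\beta}[T_\alpha;\cdot]\geq 1$ cannot be improved, forcing $\boldsymbol{\beta}\geq\beta$. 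Because $X$ is Gaussian and $\epsilon$ is Gaussian, the data $(\mathbf{X},\mathbf{Y})$ have an explicit joint Gaussian law conditionally on $\mathbf{X}$: given $\mathbf{X}$, $\mathbf{Y}\sim\mathcal{N}(\mathbf{X}\theta,\sigma^2 I_n)$ in the appropriate coordinates, which makes the likelihood-ratio computation tractable. I would condition on $\mathbf{X}$, compute the conditional $\chi^2$-divergence (which for Gaussian shifts is an exponential moment of the signal), and then average over the law of $\mathbf{X}$.

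For the single-direction regime I would take $\mu$ as a two-point prior $\theta=\pm r V_k$ with $r^2=R^2\sigma^2 a_k^2$, so that the ellipsoid constraint is saturated and the separation equals $R^2 a_k^2\lambda_k$; the sign randomization kills the linear term in the likelihood expansion and leaves a controllable quadratic $\chi^2$ term. For the $\sqrt{k}/n$ regime I would instead use a prior spreading the signal with random signs over the first $k$ directions $V_1,\dots,V_k$, the classical construction in the Gaussian sequence model, where detecting a $k$-sparse Gaussian shift of per-coordinate energy $\asymp\sigma^2/\sqrt{\cdot}$ costs $\sqrt{k}$. The $\chi^2$-divergence of such a mixture against the null behaves like $\prod_j\cosh(\cdot)$, and bounding it by a constant depending only on $(\alpha,\beta)$ pins down the constant $C(\alpha,\beta)$ and yields the $\sqrt{k}/n$ contribution. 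Taking the minimum of the two regimes for each $k$ and then the supremum over $k$ gives \eqref{eq:minoration_minimax}.

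The hard part will be the averaging over the randomness of $\mathbf{X}$, since the effective design and the projection depend on $\mathbf{X}$ through the Gram structure $(\langle X_i,V_j\rangle)_{i,j}$, and the conditional $\chi^2$-divergence involves inverses and exponentials of random quadratic forms in the $\eta^{(j)}$'s. I expect to need concentration of the empirical covariance in the directions $V_1,\dots,V_k$ (the same perturbation-theoretic control alluded to in Section \ref{section_perturb}) to show that the random quadratic forms stay close to their Gaussian-sequence-model counterparts with overwhelming probability, so that the mixture $\chi^2$-bound survives the expectation over $\mathbf{X}$ up to harmless constants. Handling this dependence cleanly — rather than the purely deterministic-design Gaussian sequence computation — is the main technical obstacle; everything else is the standard two-point versus mixture lower-bound calculation.
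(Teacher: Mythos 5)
Your overall strategy (restrict to alternatives supported on $V_1,\dots,V_k$, use the ellipsoid constraint to cap the admissible radius at $R^2a_k^2\lambda_k$, pay a $\sqrt{k}/n$ detection cost via a randomized prior, then optimize over $k$) is the right one and matches the paper's in spirit. However, two points need fixing. First, your two-regime, two-prior construction does not deliver the claimed bound. The single-direction two-point prior $\theta=\pm rV_k$ with $r^2=R^2\sigma^2a_k^2$ has separation $R^2a_k^2\lambda_k$, but a one-dimensional spike is undetectable only when its separation is $O(1/n)$; in the regime $1/n\ll R^2a_k^2\lambda_k\le C\sqrt{k}/n$ (where $R^2a_k^2\lambda_k$ is the minimum and hence the quantity you must certify) that prior \emph{is} detectable, so it proves nothing. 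The correct construction is a \emph{single} prior spread over $\mathrm{Vect}(V_1,\dots,V_k)$ at radius $r_k^2=C(\alpha,\beta)\sqrt{k}/n\wedge R^2a_k^2\lambda_k$: the $\wedge$ with $R^2a_k^2\lambda_k$ serves only to guarantee ellipsoid membership, not as a second detection boundary. Verifying that membership for a spread prior requires checking $\sum_{j\le k}\langle\theta,V_j\rangle^2/a_j^2\le R^2\sigma^2$, which uses the monotonicity of the sequence $(\lambda_ja_j^2)$ (namely $\lambda_ja_j^2\ge\lambda_ka_k^2$ for $j\le k$, so $\sum_{j\le k}\langle\theta,V_j\rangle^2/a_j^2\le(\lambda_ka_k^2)^{-1}\sum_{j\le k}\lambda_j\langle\theta,V_j\rangle^2\le r_k^2\sigma^2/(\lambda_ka_k^2)\le R^2\sigma^2$); you only check the constraint at the single coordinate $k$, which is insufficient for the spread prior.

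Second, the step you flag as the main obstacle --- controlling the $\chi^2$-divergence of the mixture after averaging over the random design $\mathbf{X}$ --- is exactly where the paper does no work at all: having reduced the problem to the exact $k$-dimensional linear model $Y=\sum_{j\le k}\langle X,V_j\rangle\langle\theta,V_j\rangle+\epsilon$ with Gaussian covariates (no approximation is involved, since $\theta$ lies in $\mathrm{Vect}(V_1,\dots,V_k)$ and $(\langle X,V_j\rangle)_{j\le k}$ is an exact Gaussian vector), it invokes the known minimax lower bound $C(\alpha,\beta)\sqrt{k}/n$ for testing in Gaussian-design regression (Proposition 4.2 of \cite{VV10}). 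If you redo that computation yourself, no perturbation theory or concentration of $\widehat{\Gamma}_n$ is needed: the relevant exponential moments of the likelihood ratio are exact Gaussian integrals in the variables $\langle X_i,V_j\rangle$. As written, your proposal leaves the quantitatively decisive step unproved and anticipates machinery that is not actually required.
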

Consequently, the  $(\alpha,\beta)$ minimax-separation distance over $\mathcal{E}_a(R)$ is lower bounded by  $\rho^2_{a,R,n}$. The next proposition states the corresponding  upper bound.
\begin{cor}[Minimax upper bound]\label{prop_majoration_ellipsoide}
 Under ${\bf B.1,2,4}$, there exists positive constants $C(\gamma)$, $C_2$, $C_3(\alpha,\gamma)$, and $C_4(\alpha,\beta)$ such that the following holds. 
Given an ellipsoid $\mathcal{E}_a(R)$, we define 
\begin{equation}\label{definition_kn}
k^*_n:= \inf\left\{ k\geq 1 ,\  a^2_k\lambda_k R^2\leq \frac{\sqrt{k}}{n}\right\}\ . 
\end{equation}
Assume that $\alpha\geq e^{-\sqrt{n}}$, $\beta\geq C(\gamma)/\log(n)$,  $n\geq C_2$, and $k^*_n\leq n^{1/4}/\log^{4}(n)$. Then, the test $T_{\alpha,k_n^*}$ has a size smaller than $\alpha+ C_3(\alpha,\gamma)/\log(n)$ and is minimax over $\mathcal{E}_a(R)$:

\begin{equation}\label{majoration_Tk}
 \boldsymbol{\beta}\left[T_{\alpha,k^*_n}; \left\{\theta\in\mathcal{E}_a(R), \sigma>0, \frac{\|\Gamma^{1/2}\theta\| ^2}{\sigma^2}\geq C_4(\alpha,\beta)\rho^2_{a,R,n}\right\}\right]\leq \beta\ .
\end{equation}
\end{cor}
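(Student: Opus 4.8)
The plan is to treat the two assertions — the size bound and the power bound — separately, and to obtain each by instantiating the general fixed-$k$ results at $k=k^*_n$. The hypothesis $k^*_n\leq n^{1/4}/\log^4(n)$ is precisely Assumption \textbf{B.3} for $k=k^*_n$, so the size bound $\mathbb{P}_{0}[T_{\alpha,k^*_n}>0]\leq \alpha+C_3(\alpha,\gamma)/\log(n)$ follows immediately from Theorem~\ref{thrm_niveau_T.alpha.k}. The whole difficulty lies in the power statement, which I would derive from Theorem~\ref{thrm_power_KL_fixed} applied with $k=k^*_n$: it suffices to show that the lower bound $\|\Gamma^{1/2}\theta\|^2/\sigma^2\geq C_4(\alpha,\beta)\rho^2_{a,R,n}$ forces condition \eqref{definition_puissance_complete}, namely
\[
\|\Gamma^{1/2}\theta\|^2\geq C_1\|(\Gamma^{1/2}-\Gamma_{k^*_n}^{1/2})\theta\|^2+C_2\frac{\sigma^2}{n}\left(\sqrt{k^*_n\log\tfrac{1}{\alpha\beta}}+\log\tfrac{1}{\alpha\beta}\right).
\]

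The main step is to control the bias term $\|(\Gamma^{1/2}-\Gamma_{k^*_n}^{1/2})\theta\|^2=\sum_{j>k^*_n}\lambda_j\langle\theta,V_j\rangle^2$ uniformly over the ellipsoid. The key structural fact is that the sequence $(a_j^2\lambda_j)_{j\geq 1}$ is non-increasing, since $(a_j)$ is non-increasing by definition of $\mathcal{E}_a(R)$ and $(\lambda_j)$ is the non-increasing sequence of eigenvalues of $\Gamma$. Writing $\lambda_j\langle\theta,V_j\rangle^2=(a_j^2\lambda_j)\,\langle\theta,V_j\rangle^2/a_j^2$ and factoring out the supremum of $a_j^2\lambda_j$ over $j>k^*_n$, I would bound
\[
\sum_{j>k^*_n}\lambda_j\langle\theta,V_j\rangle^2\leq (a_{k^*_n}^2\lambda_{k^*_n})\sum_{j>k^*_n}\frac{\langle\theta,V_j\rangle^2}{a_j^2}\leq (a_{k^*_n}^2\lambda_{k^*_n})R^2\sigma^2\leq \frac{\sqrt{k^*_n}}{n}\sigma^2,
\]
where the middle inequality uses $\theta\in\mathcal{E}_a(R)$ and the last one is exactly the defining inequality of $k^*_n$ in \eqref{definition_kn}. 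Hence the bias is at most $\sqrt{k^*_n}\,\sigma^2/n$, and the whole right-hand side of the power condition is of order $\sqrt{k^*_n}\,\sigma^2/n$ up to factors depending on $\alpha,\beta$.

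It then remains to compare this with $\rho^2_{a,R,n}\sigma^2$. Lower-bounding the supremum in \eqref{eq:minoration_minimax} at $k=k^*_n-1$ — for which $R^2a_{k^*_n-1}^2\lambda_{k^*_n-1}>\sqrt{k^*_n-1}/n$ by minimality of $k^*_n$ — gives $\rho^2_{a,R,n}\geq c(\alpha,\beta)\sqrt{k^*_n-1}/n$, which is of order $\sqrt{k^*_n}/n$. Since $\alpha$ and $\beta$ are fixed, $\log(1/\alpha\beta)$ is a constant, so the bias plus variance terms are at most $C(\alpha,\beta)\sqrt{k^*_n}\,\sigma^2/n\leq C_4(\alpha,\beta)\rho^2_{a,R,n}\sigma^2$ for a suitable $C_4(\alpha,\beta)$, which closes the argument. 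The degenerate case $k^*_n=1$ is harmless: the ellipsoid then forces $\|\Gamma^{1/2}\theta\|^2/\sigma^2\leq R^2a_1^2\lambda_1\leq \rho^2_{a,R,n}$, so the set in \eqref{majoration_Tk} is empty once $C_4>1$ and the bound holds vacuously. The main obstacle I anticipate is the bookkeeping of constants — matching the $C(\alpha,\beta)$ hidden in $\rho^2_{a,R,n}$ against those produced by Theorem~\ref{thrm_power_KL_fixed}, and checking that the $\log(1/\alpha\beta)$ terms are genuinely absorbed into $C_4(\alpha,\beta)$ throughout the admissible ranges $\alpha\geq e^{-\sqrt n}$ and $\beta\geq C(\gamma)/\log n$ rather than only for a single fixed value; the monotonicity of $a_j^2\lambda_j$ is the one fact that makes the bias estimate clean.
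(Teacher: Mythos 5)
Your proof is correct and follows exactly the route the paper intends: the paper states that this corollary ``is a straightforward consequence of Theorem \ref{thrm_power_KL_fixed}'', and your argument is precisely that derivation, with the size bound read off from Theorem \ref{thrm_niveau_T.alpha.k}, the bias controlled via the monotonicity of $(a_j^2\lambda_j)$ and the defining inequality of $k^*_n$, and $\rho^2_{a,R,n}$ lower-bounded at $k^*_n-1$. The only (harmless) imprecision is in the degenerate case $k^*_n=1$, where emptiness of the alternative set requires $C_4(\alpha,\beta)$ large enough relative to the constant $C(\alpha,\beta)$ hidden in $\rho^2_{a,R,n}$ rather than merely $C_4>1$.
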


This corollary is a straightforward consequence of Theorem \ref{thrm_power_KL_fixed}. Hence, the test $T_{\alpha,k^*_n}$ is minimax over $\mathcal{E}_a(R)$, that is, its $(\alpha,\beta)$-separation distance equals (up to a multiplicative constant) the $(\alpha,\beta)$ minimax separation distance. Interestingly, the upper bound (\ref{majoration_Tk}) does not require the error $\epsilon$ to be normally distributed.

\begin{remark}
As a consequence, the $(\alpha,\beta)$-minimax separation distance over $\mathcal{E}_a(R)$ is of order
\begin{equation*}
\rho^2_{a,R,n}:= \sup_{k\geq 1}\left[C(\alpha,\beta)\left(\frac{\sqrt{k}}{n}\right)\wedge \left(R^2a_k^2\lambda_k\right)\right] \ .
\end{equation*}
It depends on the behavior of the non-increasing sequence $(\lambda_ka_k^2)$, where the sequence of eigenvalues $(\lambda_k)$ prescribes the ``regularity" of the process $X$ and  the sequence $(a_k)$ prescribes the regularity of $\theta$. In order to grasp the quantity $\rho^2_{a,R,n}$, let us specify some examples of sequences $\lambda_ka_k^2$:
\end{remark}

\begin{cor}\label{cor:minimax}
{\bf Polynomial decay.} If $\lambda_ka_k^2=k^{-s}$ with $s>7/2$, then the $(\alpha,\beta)$-minimax separation is of order $R^{2/(1+2s)}n^{-2s/(1+2s)}$. This rate is achieved by the test $T_{\alpha,k}$ with $k\asymp (R^2n)^{2/(1+2s)}$.\\
{\bf Exponential decay}. If $\lambda_ka_k^2=e^{-sk}$ with $s>0$, then the $(\alpha,\beta)$-separation distance of  $T_{\alpha}^{(1)}$ over $\mathcal{E}_a(R)$ is of order $\frac{\sqrt{\log(n)}}{\sqrt{s}n}$.
 This rate is achieved by the test $T_{\alpha,k}$ with $k\asymp \log(n)/s$.
\end{cor}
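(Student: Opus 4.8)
The plan is to derive both asymptotics by combining the lower bound of Proposition \ref{prop_minoration} with the matching upper bound of Corollary \ref{prop_majoration_ellipsoide}; the only genuine work is to evaluate the supremum defining $\rho^2_{a,R,n}$ and to check that the optimizing dimension obeys the constraint $k^*_n\leq n^{1/4}/\log^4(n)$ under which the upper bound is valid. The key structural observation is that, in $\sup_{k\geq 1}[C(\alpha,\beta)\sqrt{k}/n \wedge R^2 a_k^2\lambda_k]$, the first term $\sqrt{k}/n$ is increasing in $k$ while the second term $R^2 a_k^2\lambda_k$ is decreasing in $k$. Hence the pointwise minimum first increases and then decreases, and its supremum is attained, up to a multiplicative constant, at the integer $k^\star$ closest to the balance point where the two sequences cross, i.e. where $\sqrt{k}/n \asymp R^2 a_k^2\lambda_k$. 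This crossing index is exactly (up to constants) the quantity $k^*_n$ defined in \eqref{definition_kn}, so solving the balance equation simultaneously yields the order of $\rho^2_{a,R,n}$ and of $k^*_n$.

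For the polynomial case $a_k^2\lambda_k=k^{-s}$, I would solve $\sqrt{k}/n = R^2 k^{-s}$, giving $k^{s+1/2}\asymp R^2 n$, hence $k^\star\asymp (R^2 n)^{2/(1+2s)}$. Substituting back into the increasing branch yields $\rho^2_{a,R,n}\asymp \sqrt{k^\star}/n \asymp R^{2/(1+2s)}n^{-2s/(1+2s)}$, which is the announced rate, attained by $T_{\alpha,k}$ with $k\asymp(R^2 n)^{2/(1+2s)}$. It then remains to verify the admissibility constraint: with $R$ fixed one has $k^\star\asymp n^{2/(1+2s)}$, and $n^{2/(1+2s)}\log^4(n)\leq n^{1/4}$ for $n$ large exactly when $2/(1+2s)<1/4$, that is when $s>7/2$. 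This is precisely where the hypothesis $s>7/2$ enters: it guarantees $k^*_n\leq n^{1/4}/\log^4(n)$ so that Corollary \ref{prop_majoration_ellipsoide} applies, while the lower bound of Proposition \ref{prop_minoration} shows the rate cannot be improved.

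For the exponential case $a_k^2\lambda_k=e^{-sk}$, the balance equation is $\sqrt{k}/n=R^2 e^{-sk}$, i.e. $sk=\log n+\log R^2-\tfrac12\log k$. To leading order $k^\star\asymp \log(n)/s$, and plugging into $\sqrt{k}/n$ gives $\rho^2_{a,R,n}\asymp \sqrt{\log(n)/s}/n=\sqrt{\log n}/(\sqrt{s}\,n)$, attained at $k\asymp \log(n)/s$. Here $k^\star\asymp\log(n)/s\leq n^{1/4}/\log^4(n)$ for every fixed $s>0$ and $n$ large, so no lower restriction on $s$ is needed, the dependence on $s$ surfacing only through the constant $1/\sqrt{s}$. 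The main obstacle is of an elementary but careful nature: one must show that the subleading logarithmic correction $-\tfrac12\log k$ in the balance equation, together with the rounding of $k^\star$ to an integer, does not alter the order $\log(n)/s$ of the optimal dimension nor the order $\sqrt{\log n}/(\sqrt{s}\,n)$ of the separation distance. The monotonicity argument above makes this rigorous, since any integer within a bounded factor of the crossing point produces the same order for $\min(\sqrt{k}/n, R^2 a_k^2\lambda_k)$.
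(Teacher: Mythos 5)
Your proposal is correct and follows exactly the route the paper intends: the corollary is obtained by specializing the lower bound of Proposition \ref{prop_minoration} and the upper bound of Corollary \ref{prop_majoration_ellipsoide} to the two decay regimes, solving the balance equation $\sqrt{k}/n \asymp R^2 a_k^2\lambda_k$ to locate $k^*_n$ and the order of $\rho^2_{a,R,n}$, and checking the admissibility constraint $k^*_n\leq n^{1/4}/\log^4(n)$, which is precisely where $s>7/2$ enters (as the paper's remark after the corollary confirms). Your treatment of the crossing-point argument and of the harmless logarithmic correction in the exponential case is the same computation the paper leaves implicit.
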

\begin{remark}
The condition $s>7/2$ in the polynomial regime arises because of Assumption ${\bf B.3}$ ($k\leq n^{1/4}/\log^4(n)$). This restriction is related to the difficulty to reliably estimate the eigenvalues $\lambda_k$ and eigenfunctions $V_k$ when $k$ is large (see Lemma \ref{lemlem1} and its proof). If the process $X$ and the function $\theta$ are less regular ($s<7/2$), our theory only allows us to take $k=n^{1/4}/\log^4(n)$ in $T_{\alpha,k}$ which leads to a  rate of testing of order (up to $\log$ terms) $n^{-s/4}R^2$  while the minimax lower bound is of order $R^{2/(1+2s)}n^{-2s/(1+2s)}$.  Note that similar restrictions also occur in state-of-the-art results for estimation. For instance, Condition (3.3) in  Hall and Horowitz \cite{HH07} amounts to $s>3$.
\end{remark}

In conclusion, $T_{\alpha,k}$ achieves the optimal rate of detection when $k$ is suitably chosen. However, the choice of $k$ depends on unknown quantities such as the regularity of $X$ or the regularity of $\theta$. Taking $k$ too small does not allow to detect non-zero $\theta$ such that the bias $\|(\Gamma^{1/2}-\Gamma_k^{1/2})\theta\|^2$ in \eqref{definition_puissance_complete} is too large. In contrast, taking $k$ too large leads to a large variance term $\sqrt{k}/n$ in \eqref{definition_puissance_complete}. The best $k$ corresponds to the trade-off between the bias term and the variance term in \eqref{definition_puissance_complete}. In the following, we introduce a procedure that nearly achieves this trade-off without requiring any prior knowledge of the regularity of $\Gamma$ or $\theta$.

\section{A multiple testing procedure}
\label{mult.test.proc}

\subsection{Definition}

In the sequel, $\mathcal{K}_n$  stands for a ``dyadic" collection of dimensions
defined by 
\begin{eqnarray}\label{collection_dimension}
\mathcal{K}_n=\{2^0,2^1,2^2,2^3\ldots,\bar{k}_n\}\ , 
\end{eqnarray}
where $\bar{k}_n$ is a power of $2$ that will be fixed later. As $k$ cannot be a priori chosen, we evaluate the statistic $\phi_{k}({\bf Y},{\bf X})$ for all $k$ belonging to a collection $\mathcal{K}_n$. This choice of the collection $\mathcal{K}_n$ is discussed in the next section.

\begin{defi}[KL-Test]\label{definition_KL_test}

We reject $H_0$: ``$\theta=0$" when the statistic
\begin{eqnarray}\label{definition_procedure_KL}
 T_{\alpha}:= \sup_{ k\in\mathcal{K}_n,\ k\leq \mathrm{Rank}(\widehat{\Gamma}_n)
}\left[\phi_{k}({\bf Y},{\bf X})-
\hat{k}^{KL}\bar{\F}_{\hat{k}^{KL},n-\hat{k}^{KL}}^{-1}\{\alpha_{\K_n}({\bf X})\}\right]\ 
\end{eqnarray}
is positive, where the  weight $\alpha_{\K_n}({\bf
X})$ is chosen according to one of the procedures $P_1$
and $P_2$ explained below.
\noindent
~\\ 
${\bf P_1}$: (Bonferroni) 
$\alpha_{\K_n}({\bf X})$ is equal to $ 
\alpha/|\mathcal{K}_n|$.
~\\
${\bf P_2}$: Let  ${\bf Z}$ be a standard Gaussian vector of size $n$. 
We take $\alpha_{\K_n} ({\bf X})= q_{{\bf
X},
\alpha}$,
the $\alpha$-quantile of the distribution of the random variable 
\begin{eqnarray}  
\inf_{k \in \mathcal{K}_n} \bar{\F}_{\hat{k}^{KL},n-\hat{k}^{KL}}\left[
\phi_k({\bf Z},{\bf X})/\hat{k}^{KL}\right]\label{methode_quantile} 
\end{eqnarray} 
conditionally to ${\bf X}$. 
\end{defi}
In the sequel, $T^{(1)}_{\alpha}$ (resp. $T^{(2)}_{\alpha}$) refers to the statistic $T_{\alpha}$, defined with Procedure $P_1$ (resp. $P_2$). $T^{(1)}_{\alpha}$ corresponds to a Bonferroni multiple testing procedure. In contrast $T^{(2)}_{\alpha}$ handles better the dependence between the statistics $\phi_k$, by using an ad-hoc quantile $q_{{\bf X},\alpha}$. We compare  these two tests in Section \ref{sub.sec_compT1_T2}.
This multiple testing approach has already been considered in the non-parametric fixed design regression setting~\cite{baraud03}.

\begin{remark}{\bf [Computation of $q_{{\bf X},\alpha}$]}
Let $Z$ be 
a standard Gaussian random vector of size $n$ 
independent of ${\bf X}$. As ${\boldsymbol{\epsilon}}$ is independent of 
${\bf X}$, the distribution of (\ref{methode_quantile}) conditionally to ${\bf 
  X}$ is the same as the 
distribution of  
$$\inf_{k \in \mathcal{K}_n} \bar{\mathcal{F}}_{\hat{k}^{KL},n-\hat{k}^{KL}} \left(\frac{ \| \widehat{\Pin}_k Z 
  \|_n^2 /\hat{k}^{KL} }{ \|Z - \widehat{\Pin}_k  Z  \|_n^2/(n-\hat{k}^{KL})}\right)$$ 
conditionally to ${\bf X}$. As a consequence, one can simulate a random variable that follows the same distribution as (\ref{methode_quantile}) conditionally to ${\bf X}$.
Hence, the quantile $q_{{\bf X},\alpha}$ is  easily worked out applying a Monte-Carlo approach.
\end{remark}

\begin{remark}{\bf [Choice of $\bar{k}_n$]}
 In practice, we advise to take $\bar{k}_n=2^{\lfloor \log_2 n\rfloor -1}$ which lies between $n/4$ and $n/2$. This choice is supported by practical experiences and results obtained in sections \ref{subsection_level_mult.test.proc} and Appendix \ref{sec:power:gaussian}.
Nevertheless, some of the theoretical results will require to take a slightly smaller value for $\bar{k}_n$.
\end{remark}

\subsection{Size of the tests}
\label{subsection_level_mult.test.proc}

\begin{prop}[Size of $T^{(1)}_{\alpha}$ and $T^{(2)}_{\alpha}$ under Gaussian errors]\label{prop_niveau_gaussien}
Under Assumption ${\bf A.1}$ and if $\bar{k}_n\leq n/2$, we have for any $n\geq 2$,
\begin{equation*}
 \mathbb{P}_{0}(T^{(1)}_{\alpha} > 0) \leq  \alpha\ ,\quad\quad\quad
 \mathbb{P}_{0}(T^{(2)}_{\alpha} > 0) = \alpha\ . 
\end{equation*}
\end{prop}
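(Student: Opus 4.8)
The plan is to work conditionally on $\mathbf{X}$ throughout. Under $H_0$ we have $\theta=0$, hence $\mathbf{Y}=\boldsymbol{\epsilon}$, and under Assumption ${\bf A.1}$ the vector $\boldsymbol{\epsilon}\sim\mathcal{N}(0,\sigma^2 I_n)$ is independent of $\mathbf{X}$. Once a realization of $\mathbf{X}$ is fixed, every eigenelement $\widehat{V}_j$, the integer $\mathrm{Rank}(\widehat{\Gamma}_n)$, each $\hat{k}^{KL}=k\wedge\mathrm{Rank}(\widehat{\Gamma}_n)$, and each projector $\widehat{\Pin}_k$ becomes deterministic. The first ingredient I would record is the exact pivotal statement already exploited in Proposition \ref{prop_niveau_gaussien_T.alpha.k}: for each admissible $k$, the vectors $\widehat{\Pin}_k\boldsymbol{\epsilon}$ and $\boldsymbol{\epsilon}-\widehat{\Pin}_k\boldsymbol{\epsilon}$ are independent Gaussians supported on orthogonal subspaces of dimensions $\hat{k}^{KL}$ and $n-\hat{k}^{KL}$, so by Cochran's theorem $\phi_k(\boldsymbol{\epsilon},\mathbf{X})/\hat{k}^{KL}$ follows, conditionally on $\mathbf{X}$, a Fisher distribution with $(\hat{k}^{KL},n-\hat{k}^{KL})$ degrees of freedom. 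Consequently $\bar{\mathcal{F}}^{-1}_{\hat{k}^{KL},n-\hat{k}^{KL}}(\cdot)$ is its exact upper quantile and $\bar{\mathcal{F}}_{\hat{k}^{KL},n-\hat{k}^{KL}}(\phi_k(\boldsymbol{\epsilon},\mathbf{X})/\hat{k}^{KL})$ is uniform on $(0,1)$.

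For the Bonferroni procedure $P_1$ the weight $\alpha_{\mathcal{K}_n}(\mathbf{X})=\alpha/|\mathcal{K}_n|$ is deterministic, so I would write $\{T^{(1)}_\alpha>0\}$ as the union, over the admissible indices $k\in\mathcal{K}_n$ with $k\le\mathrm{Rank}(\widehat{\Gamma}_n)$, of the single-scale events $\{\phi_k(\boldsymbol{\epsilon},\mathbf{X})/\hat{k}^{KL}>\bar{\mathcal{F}}^{-1}_{\hat{k}^{KL},n-\hat{k}^{KL}}(\alpha/|\mathcal{K}_n|)\}$, each of conditional probability exactly $\alpha/|\mathcal{K}_n|$ by the pivotal statement. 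A union bound over at most $|\mathcal{K}_n|$ indices gives $\mathbb{P}_0(T^{(1)}_\alpha>0\mid\mathbf{X})\le\alpha$, and integrating over $\mathbf{X}$ yields the first claim.

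The exact calibration of $P_2$ is the delicate point where I would concentrate the argument. Since $\phi_k(\cdot,\mathbf{X})$ is a ratio of quadratic forms, it is invariant under scaling of its first argument; hence, conditionally on $\mathbf{X}$, the statistic $\inf_{k}\bar{\mathcal{F}}_{\hat{k}^{KL},n-\hat{k}^{KL}}(\phi_k(\boldsymbol{\epsilon},\mathbf{X})/\hat{k}^{KL})$ and its counterpart built from the auxiliary vector $\mathbf{Z}$ share the same conditional law, which is by definition the law whose $\alpha$-quantile is $q_{\mathbf{X},\alpha}$. Applying the decreasing function $\bar{\mathcal{F}}_{\hat{k}^{KL},n-\hat{k}^{KL}}$ to the rejection inequality shows that $\{T^{(2)}_\alpha>0\}$ coincides with $\{\inf_{k}\bar{\mathcal{F}}_{\hat{k}^{KL},n-\hat{k}^{KL}}(\phi_k(\boldsymbol{\epsilon},\mathbf{X})/\hat{k}^{KL})<q_{\mathbf{X},\alpha}\}$, so the conditional rejection probability equals the probability that the $\mathbf{Z}$-version of this infimum falls strictly below its own $\alpha$-quantile. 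The main obstacle is to secure that this probability is exactly $\alpha$, not merely $\le\alpha$: I would argue that each marginal $\bar{\mathcal{F}}_{\hat{k}^{KL},n-\hat{k}^{KL}}(\phi_k(\mathbf{Z},\mathbf{X})/\hat{k}^{KL})$ is uniform, hence atomless, so the minimum over the finite collection $\mathcal{K}_n$ also has an atomless conditional law and its distribution function is continuous at $q_{\mathbf{X},\alpha}$; exactness of the quantile then gives $\mathbb{P}_0(T^{(2)}_\alpha>0\mid\mathbf{X})=\alpha$, and integrating over $\mathbf{X}$ finishes the proof. A minor bookkeeping point I would verify is that the index set $\{k\in\mathcal{K}_n:k\le\mathrm{Rank}(\widehat{\Gamma}_n)\}$ governing the supremum in $T_\alpha$ matches, conditionally on $\mathbf{X}$, the one defining $q_{\mathbf{X},\alpha}$; this is harmless since for $k\ge\mathrm{Rank}(\widehat{\Gamma}_n)$ the value of $\phi_k$ no longer depends on $k$, leaving the relevant infimum and supremum unchanged.
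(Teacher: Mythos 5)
Your proof is correct and follows essentially the same route as the paper's: condition on $\mathbf{X}$, use the exact Fisher distribution of each $\phi_k(\boldsymbol{\epsilon},\mathbf{X})/\hat{k}^{KL}$ under $H_0$, apply a union bound for $P_1$, and invoke the definition of $q_{\mathbf{X},\alpha}$ together with the equality in law of the $\boldsymbol{\epsilon}$- and $\mathbf{Z}$-versions of the statistic for $P_2$. The additional care you take with the atomlessness of the infimum of the conditional $p$-values (to ensure the level of $T^{(2)}_{\alpha}$ is exactly $\alpha$ rather than at most $\alpha$) fills in a step the paper asserts without comment, but does not change the argument.
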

If the noise $\epsilon$ follows a Gaussian distribution, the size of $T_{\alpha}^{(2)}$ is exactly $\alpha$, while the size of $T_{\alpha}^{(1)}$ is smaller than $\alpha$ because of the Bonferroni correction. Let us now control the size of $T_{\alpha}^{(1)}$ assuming that $\epsilon$ admit a finite fourth moment. 
\begin{eqnarray*}
 {\bf B'.3}\hspace{4.4cm}\bar{k}_n\leq n^{1/4}/\log^{4}(n)\ .\hspace{4.4cm} 
\end{eqnarray*}
The assumption ${\bf B'.3}$ is the counterpart of {\bf B.3} for a multiple testing procedure. Next, we state the counterpart of Theorem \ref{thrm_niveau_T.alpha.k} for $T_{\alpha}^{(1)}$.

\begin{thrm}[Size of $T^{(1)}_{\alpha}$]\label{thrm_niveau}
Under Assumptions ${\bf B.1}$, ${\bf B.2}$, and ${\bf B'.3}$, there exist positive constants $C(\alpha,\gamma)$ and $C_2$  such that the following holds.
For any $n\geq C_2$, we have
\begin{eqnarray*}
\mathbb{P}_{0}\left[T^{(1)}_{\alpha}> 0\right]\leq  \alpha+ \frac{C(\alpha,\gamma)}{\log(n)}\ .
\end{eqnarray*}
\end{thrm}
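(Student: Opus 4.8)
The plan is to run the Bonferroni union bound \emph{conditionally} on $\mathbf X$, treating the Gaussian case exactly and paying only for the non-Gaussian discrepancy plus a single good event on $\mathbf X$. Under $H_0$ we have $\mathbf Y=\boldsymbol{\epsilon}$, and since $T^{(1)}_\alpha$ rejects as soon as one dimension exceeds its threshold,
\[
\mathbb{P}_0[T^{(1)}_\alpha>0]\le \sum_{k\in\mathcal{K}_n}\mathbb{P}_0\Big[\phi_k(\boldsymbol{\epsilon},\mathbf X)>\hat{k}^{KL}\,\bar{\mathcal{F}}^{-1}_{\hat{k}^{KL},n-\hat{k}^{KL}}(\alpha/|\mathcal{K}_n|)\Big].
\]
Note that naively bounding each summand by Theorem~\ref{thrm_niveau_T.alpha.k} at level $\alpha/|\mathcal{K}_n|$ fails: since $|\mathcal{K}_n|\asymp\log_2 n$, the remainder $|\mathcal{K}_n|\,C(\alpha/|\mathcal{K}_n|,\gamma)/\log n$ is of constant order, not $O(1/\log n)$. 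The point is that the $1/\log(n)$ deficit is not an artifact of each individual level but comes from a single perturbation event on $\mathbf X$; once that event is isolated, the per-dimension Gaussian levels recombine \emph{exactly}.

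First I would introduce a Gaussian surrogate $\mathbf Z\sim\mathcal{N}(0,\sigma^2 I_n)$ independent of $\mathbf X$. Conditionally on $\mathbf X$ the projector $\widehat{\Pin}_k$ is a deterministic orthogonal projector of rank $\hat{k}^{KL}$, so $\|\widehat{\Pin}_k\mathbf Z\|_n^2$ and $\|\mathbf Z-\widehat{\Pin}_k\mathbf Z\|_n^2$ are independent $\sigma^2\chi^2$ variables with $\hat{k}^{KL}$ and $n-\hat{k}^{KL}$ degrees of freedom, whence $\phi_k(\mathbf Z,\mathbf X)/\hat{k}^{KL}$ is exactly Fisher-distributed and
\[
\mathbb{P}_0\Big[\phi_k(\mathbf Z,\mathbf X)>\hat{k}^{KL}\,\bar{\mathcal{F}}^{-1}_{\hat{k}^{KL},n-\hat{k}^{KL}}(\alpha/|\mathcal{K}_n|)\ \Big|\ \mathbf X\Big]=\frac{\alpha}{|\mathcal{K}_n|}
\]
for \emph{every} realization of $\mathbf X$, with no remainder. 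Summing these exact conditional levels over $k\in\mathcal{K}_n$ returns precisely $\alpha$; this is exactly the content of the Gaussian Proposition~\ref{prop_niveau_gaussien}, and it is why the Bonferroni weight $\alpha/|\mathcal{K}_n|$ restores the nominal level.

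Next I would transfer this to $\boldsymbol{\epsilon}$ through the decomposition $\mathbb{P}_0[T^{(1)}_\alpha>0]\le \mathbb{P}[\Omega_n^c]+\mathbb{E}\big[\mathbf 1_{\Omega_n}\,\mathbb{P}_0(T^{(1)}_\alpha>0\mid\mathbf X)\big]$, where $\Omega_n$ is the good event of Section~\ref{section_perturb}, built from a single concentration of $\widehat{\Gamma}_n$ around $\Gamma$, on which simultaneously for all $k\in\mathcal{K}_n$ one has $\hat{k}^{KL}=k$, the random projector $\widehat{\Pin}_k$ close to $\widetilde{\Pin}_k$, and well-spread leverages (diagonal entries of $\widehat{\Pin}_k$). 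On $\Omega_n$ the conditional Bonferroni bound reduces the task to comparing, for each fixed $k$, the tail of the quadratic form $\boldsymbol{\epsilon}^T\widehat{\Pin}_k\boldsymbol{\epsilon}/n$ with its Gaussian counterpart under $\mathbf Z$ at the threshold $\hat{k}^{KL}\bar{\mathcal{F}}^{-1}(\alpha/|\mathcal{K}_n|)$. A conditional Berry–Esseen estimate for quadratic forms, using the fourth-moment control ${\bf B.1}$ together with $\hat{k}^{KL}\le\bar k_n\le n^{1/4}/\log^4 n$ from ${\bf B'.3}$, bounds each discrepancy by a polynomial power of $n$; summed over the $|\mathcal{K}_n|\asymp\log_2 n$ dimensions this stays $o(1/\log n)$. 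What remains is $\mathbb{P}[\Omega_n^c]\le C(\gamma)/\log n$, which is exactly the perturbation deficit already present in Theorem~\ref{thrm_niveau_T.alpha.k}, whose rate is pinned by the decay of $(j\lambda_j\log^{1+\gamma}j)$ in ${\bf B.2}$.

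The main obstacle is twofold and lies in the uniform-in-$k$ accounting. One must verify that on $\Omega_n$ the conditional non-Gaussian deviation at the \emph{Bonferroni} level $\alpha/|\mathcal{K}_n|$ — a deeper tail than $\alpha$ — remains $o(1/\log^2 n)$ uniformly over $k\in\mathcal{K}_n$, so that the $|\mathcal{K}_n|$-fold summation is harmless; this is where the small rank $\hat{k}^{KL}\le n^{1/4}/\log^4 n$ is essential, as it forces the quadratic-form approximation error to be polynomially small and hence to beat $1/\log^2 n$. Simultaneously one must check that the shared event $\Omega_n$ retains failure probability $O(1/\log n)$ at the largest admissible dimension $\bar k_n$, which is precisely what ${\bf B'.3}$ (the exact analogue of ${\bf B.3}$ with $\bar k_n$ in place of $k$) guarantees via the Section~\ref{section_perturb} lemmas. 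Since the binding remainder $\mathbb{P}[\Omega_n^c]$ is a single $\mathbf X$-probability common to all dimensions rather than a per-test bias, no extra $\log n$ factor is incurred, and one recovers $\mathbb{P}_0[T^{(1)}_\alpha>0]\le\alpha+C(\alpha,\gamma)/\log(n)$.
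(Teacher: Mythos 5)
Your skeleton is the right one and matches the paper's: a union bound over $\mathcal{K}_n$ at level $\alpha/|\mathcal{K}_n|$, the correct diagnosis that invoking Theorem \ref{thrm_niveau_T.alpha.k} off the shelf at level $\alpha/|\mathcal{K}_n|$ produces a useless $|\mathcal{K}_n|\cdot C/\log n$ remainder, and the conclusion that each per-$k$ discrepancy must be driven down to $O(1/\log^2 n)$ so that the $|\mathcal{K}_n|\asymp \log n$-fold summation still yields $O(1/\log n)$. This is exactly how the paper assembles the final bound (via \eqref{inegalite_finale} and Lemma \ref{lemma_bidouillage_proba}, the relative correction $\frac{\alpha}{|\mathcal{K}_n|}\,C(\alpha)/\log n$ summing to $\alpha C(\alpha)/\log n$).

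The gap is in the central comparison step, which you assert rather than prove. You propose a ``conditional Berry--Esseen estimate for quadratic forms'' comparing the law of $\boldsymbol{\epsilon}^{T}\widehat{\Pin}_k\boldsymbol{\epsilon}$ given $\mathbf X$ to a $\sigma^2\chi^2_k$. No such lemma is available here, and it is not routine: the error of that approximation is governed by the leverages $(\widehat{\Pin}_k)_{ii}$ of a projector built from the \emph{random} eigenfunctions $\widehat V_j$, and establishing that these are uniformly small on an event of probability $1-O(1/\log n)$, under only the fourth-moment condition ${\bf B.1}$ on the $\eta^{(j)}$, is itself a substantial perturbation-theoretic task that your good event $\Omega_n$ simply postulates. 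The paper sidesteps this entirely by a different decomposition: it rewrites $\|\widehat{\Pin}_k{\bf Y}\|_n^2=\|\sqrt n\,\widehat A_k\Delta_n\|^2$, replaces the random $\widehat A_k$ by the deterministic $A_k$ (Lemma \ref{lemma_2}, the technical heart, resting on the contour-integral estimate of Lemma \ref{lemlem1}), and then applies Bentkus's multivariate Berry--Esseen \emph{unconditionally} to the i.i.d.\ sum $\sqrt n A_k\Delta_n=n^{-1/2}\sum_i(\eta_i^{(j)}\epsilon_i)_{j\le k}$ (Lemma \ref{lemma_1}). Note also that you only compare numerators: the statistic is a ratio, and for non-Gaussian $\epsilon$ the numerator and the residual sum of squares are no longer independent, so the exact Fisher law does not survive conditioning; the paper needs Lemma \ref{lemma_3} to concentrate the denominator and Lemma \ref{lemma_bidouillage_proba} to absorb its fluctuation into a $(1-t)$ deflation of the Fisher quantile. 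Finally, your claim that the per-$k$ discrepancies are ``polynomially small'' is stronger than what holds or is needed: in the paper several of them (the Markov step in Lemma \ref{lemma_2}, the term $(\sqrt e/\log n)^k$, the denominator control) are only $O(1/\log^2 n)$, and the final $C(\alpha,\gamma)/\log n$ is assembled by summing these over $\mathcal{K}_n$, not solely from a single $\mathbb{P}[\mathcal{A}_n]$.
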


\subsection{Comparison of $T^{(1)}_{\alpha}$ and $T^{(2)}_{\alpha}$}
\label{sub.sec_compT1_T2}
The test $T^{(2)}_{\alpha}$ is always more powerful
 than  $T^{(1)}_{\alpha}$ as shown in the next proposition.

\begin{prop}\label{prop_puissance_comparaison}
For any parameter $\theta\neq 0$, the tests
$T^{(1)}_{\alpha}$ and
$T^{(2)}_{\alpha}$ satisfy
\begin{eqnarray}\label{comparaison_puissance} 
\mathbb{P}_{\theta}\left(\left. T^{(2)}_{\alpha} >0\right|{\bf
X}\right)\geq 
\mathbb{P}_{\theta}\left(\left. T^{(1)}_{\alpha}>0 \right|{\bf 
  X}\right)\,\, \, \, {\bf X}\ a.s.\ . 
\end{eqnarray}  
\end{prop}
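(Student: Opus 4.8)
I want to show that, conditionally on $\mathbf{X}$ (almost surely), the rejection probability of the quantile-based test $T^{(2)}_\alpha$ dominates that of the Bonferroni test $T^{(1)}_\alpha$ for every fixed $\theta\neq 0$. The plan is to exhibit a pointwise (in $\omega$, conditionally on $\mathbf{X}$) inequality between the two rejection events: I will argue that the event $\{T^{(1)}_\alpha>0\}$ is contained in the event $\{T^{(2)}_\alpha>0\}$, which immediately yields the probability comparison after conditioning on $\mathbf{X}$.

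**The key reduction.** Both tests reject as soon as some $k\in\mathcal{K}_n$ has $\phi_k(\mathbf{Y},\mathbf{X})$ exceeding a data-driven threshold; they differ only in how that threshold is calibrated. The decisive step is to rewrite each test on the common scale of the $\bar{\mathcal{F}}$-transform. Using that $\bar{\mathcal{F}}_{\hat k^{KL},\,n-\hat k^{KL}}$ is strictly decreasing, rejecting $H_0$ via the statistic $\phi_k(\mathbf{Y},\mathbf{X})-\hat k^{KL}\bar{\mathcal{F}}^{-1}_{\hat k^{KL},n-\hat k^{KL}}(u)>0$ for some threshold level $u$ is equivalent to the event $\bar{\mathcal{F}}_{\hat k^{KL},n-\hat k^{KL}}\!\left[\phi_k(\mathbf{Y},\mathbf{X})/\hat k^{KL}\right]<u$. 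Taking the infimum over $k\in\mathcal{K}_n$, the combined test rejects precisely when
\[
\inf_{k\in\mathcal{K}_n,\ k\leq \mathrm{Rank}(\widehat{\Gamma}_n)}\bar{\mathcal{F}}_{\hat k^{KL},n-\hat k^{KL}}\!\left[\phi_k(\mathbf{Y},\mathbf{X})/\hat k^{KL}\right]<\alpha_{\mathcal{K}_n}(\mathbf{X})\ ,
\]
where $\alpha_{\mathcal{K}_n}(\mathbf{X})=\alpha/|\mathcal{K}_n|$ for $T^{(1)}_\alpha$ and $\alpha_{\mathcal{K}_n}(\mathbf{X})=q_{\mathbf{X},\alpha}$ for $T^{(2)}_\alpha$. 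Since the left-hand side is identical for both tests, the comparison reduces to showing that the Bonferroni level never exceeds the quantile level, i.e. $\alpha/|\mathcal{K}_n|\leq q_{\mathbf{X},\alpha}$ almost surely.

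**The main obstacle.** Proving $\alpha/|\mathcal{K}_n|\leq q_{\mathbf{X},\alpha}$ is the heart of the argument. By definition, $q_{\mathbf{X},\alpha}$ is the $\alpha$-quantile, conditional on $\mathbf{X}$, of $W:=\inf_{k\in\mathcal{K}_n}\bar{\mathcal{F}}_{\hat k^{KL},n-\hat k^{KL}}[\phi_k(\mathbf{Z},\mathbf{X})/\hat k^{KL}]$ for a standard Gaussian $\mathbf{Z}$. I will lower-bound $q_{\mathbf{X},\alpha}$ via a union bound. For each fixed $k$, under the Gaussian surrogate the quantity $\bar{\mathcal{F}}_{\hat k^{KL},n-\hat k^{KL}}[\phi_k(\mathbf{Z},\mathbf{X})/\hat k^{KL}]$ is exactly uniform on $(0,1)$ (this is the same exact-Fisher-calibration fact used in Proposition \ref{prop_niveau_gaussien_T.alpha.k}, applied conditionally on $\mathbf{X}$ to the Gaussian vector $\mathbf{Z}$). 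Hence $\mathbb{P}(W<u\mid \mathbf{X})\leq\sum_{k\in\mathcal{K}_n}\mathbb{P}\big(\bar{\mathcal{F}}_{\hat k^{KL},n-\hat k^{KL}}[\phi_k(\mathbf{Z},\mathbf{X})/\hat k^{KL}]<u\,\big|\,\mathbf{X}\big)=|\mathcal{K}_n|\,u$ for every $u\in(0,1)$. Taking $u=\alpha/|\mathcal{K}_n|$ gives $\mathbb{P}(W<\alpha/|\mathcal{K}_n|\mid\mathbf{X})\leq\alpha$, so $\alpha/|\mathcal{K}_n|$ sits below the $\alpha$-quantile of $W$, that is $\alpha/|\mathcal{K}_n|\leq q_{\mathbf{X},\alpha}$ almost surely.

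**Conclusion.** Combining the two steps, the rejection event of $T^{(1)}_\alpha$ is contained in that of $T^{(2)}_\alpha$ conditionally on $\mathbf{X}$, for any realization of the noise. Integrating over the noise conditionally on $\mathbf{X}$ yields \eqref{comparaison_puissance}. The only point requiring care is the boundary behaviour of the quantile: because $W$ may have atoms (through the random integer $\hat k^{KL}=k\wedge\mathrm{Rank}(\widehat{\Gamma}_n)$ and possible ties), I will use the left-continuous (generalized inverse) definition of the quantile consistently, so that $\mathbb{P}(W<q_{\mathbf{X},\alpha}\mid\mathbf{X})\leq\alpha$ holds and the containment of events, rather than merely of probabilities, is preserved; this is what guarantees the inequality holds $\mathbf{X}$-almost surely and not just in expectation.
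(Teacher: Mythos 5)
Your proof is correct and is essentially the argument the paper relies on: the paper's own proof of this proposition is a one-line deferral to Proposition 3.2 of \cite{VV10}, and that argument proceeds exactly as yours does --- rewrite both rejection regions on the common scale $\{\inf_{k}\bar{\F}_{\hat k^{KL},n-\hat k^{KL}}[\phi_k({\bf Y},{\bf X})/\hat k^{KL}]<u\}$ and deduce $\alpha/|\mathcal{K}_n|\leq q_{{\bf X},\alpha}$ from a union bound using the exact conditional Fisher distribution of $\phi_k({\bf Z},{\bf X})/\hat k^{KL}$ given ${\bf X}$. Your closing remark on fixing the quantile convention (so that $\mathbb{P}(W<q_{{\bf X},\alpha}\,|\,{\bf X})\leq\alpha$ and the threshold comparison is a deterministic inequality rather than one holding only up to null sets of the calibration variable) is the right way to close the one genuinely delicate point.
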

On one hand, the choice of Procedure $P_1$ is valid even for a non-Gaussian noise and avoids the computation
of the quantile $q_{{\bf X},\alpha}$. On the other hand, the test $T_{\alpha}^{(2)}$ has a size exactly $\alpha$ when the error is Gaussian and  is more powerful than the corresponding test with Procedure $P_1$. This comparison is numerically illustrated  in Section \ref{section_simulation}.

\section{Power and adaptation  of $T_{\alpha}^{(1)}$ }
\label{section_rates_mult.test.proc}

Since  $T^{(2)}_{\alpha}$ is always more powerful
 than $T^{(1)}_{\alpha}$, we only consider the power and the minimax optimality of $T_{\alpha}^{(1)}$. 


\begin{thrm}[Power under non-Gaussian errors]\label{thrm_power_KL_fixed_multiple}
Let $\alpha$ and $\beta$ be fixed. Under ${\bf B.1-2}$, ${\bf B'.3}$, ${\bf B.4}$, 
there exist positive constants $C(\gamma)$, $C_1$, $C_2$, and $C_3$ such that the following holds. Assume that $\alpha\geq e^{-\sqrt{n}}$,   $\beta\geq C(\gamma)/\log(n)$,  and that $n\geq C_3$. Then, $\mathbb{P}_{\theta}(T_{\alpha}^{(1)}>0)\geq 1-\beta$ for any $\theta$  satisfying
\begin{equation}\label{definition_puissance_complete_multiple}
\|\Gamma^{1/2}\theta\| ^2 \geq \inf_{k\in\mathcal{K}_n} 
C_1\|(\Gamma^{1/2}-\Gamma_k^{1/2})\theta\| ^2 + C_2\frac{\sigma^2}{n}\left(\sqrt{k\log\left(\frac{\log n}{\alpha \beta	}\right)}+\log\left(\frac{\log n}{\alpha \beta}\right)\right)\ .
 \end{equation}
 \end{thrm}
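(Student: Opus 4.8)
The plan is to reduce the statement to the already-established single-test power bound (Theorem \ref{thrm_power_KL_fixed}) by a Bonferroni containment-of-events argument, so that no new probabilistic work is needed. First I would unwind the definition \eqref{definition_procedure_KL} of $T^{(1)}_\alpha$: with Procedure $P_1$ the weight is $\alpha_{\mathcal{K}_n}(\mathbf{X})=\alpha/|\mathcal{K}_n|$, hence, writing $\alpha':=\alpha/|\mathcal{K}_n|$,
\[
\{T^{(1)}_\alpha>0\}=\bigcup_{k\in\mathcal{K}_n,\ k\leq\mathrm{Rank}(\widehat{\Gamma}_n)}\{T_{\alpha',k}>0\}\ ,
\]
because $T^{(1)}_\alpha>0$ exactly when the supremum over $k$ of $\phi_k(\mathbf{Y},\mathbf{X})-\hat{k}^{KL}\bar{\mathcal{F}}^{-1}_{\hat{k}^{KL},n-\hat{k}^{KL}}(\alpha')$ is positive, i.e. when at least one summand is positive. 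In particular the event $\{T^{(1)}_\alpha>0\}$ contains $\{T_{\alpha',k}>0\}$ for every admissible fixed $k$, so $\mathbb{P}_\theta(T^{(1)}_\alpha>0)\geq\mathbb{P}_\theta(T_{\alpha',k}>0)$. (Under ${\bf B.2}$ one has $\mathrm{Rank}(\widehat{\Gamma}_n)=n$ almost surely, so the rank restriction is not binding for the small dimension selected below.)

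Next I would fix $k^\star\in\mathcal{K}_n$ to be an index attaining (or nearly attaining) the infimum on the right-hand side of \eqref{definition_puissance_complete_multiple}, and apply Theorem \ref{thrm_power_KL_fixed} to the single test $T_{\alpha',k^\star}$ with the reduced level $\alpha'$. Its hypotheses transfer directly: Assumptions ${\bf B.1}$, ${\bf B.2}$, ${\bf B.4}$ are unchanged, the constraint on $\beta$ is unchanged, and ${\bf B.3}$ holds for $k^\star$ since ${\bf B'.3}$ forces $k^\star\leq\bar{k}_n\leq n^{1/4}/\log^4(n)$. The one point deserving care is the level lower bound: as $\mathcal{K}_n$ is dyadic, $|\mathcal{K}_n|\leq\log_2(\bar{k}_n)+1\leq C\log(n)$, so $\alpha'=\alpha/|\mathcal{K}_n|\geq e^{-\sqrt{n}}/(C\log n)\geq e^{-2\sqrt{n}}$ for $n$ large; this still satisfies the (non-tight) requirement of Theorem \ref{thrm_power_KL_fixed} after harmlessly adjusting the universal constant in the exponent. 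The theorem then gives $\mathbb{P}_\theta(T_{\alpha',k^\star}>0)\geq 1-\beta$ as soon as
\[
\|\Gamma^{1/2}\theta\|^2\geq C_1\|(\Gamma^{1/2}-\Gamma_{k^\star}^{1/2})\theta\|^2+C_2\frac{\sigma^2}{n}\left(\sqrt{k^\star\log\left(\frac{1}{\alpha'\beta}\right)}+\log\left(\frac{1}{\alpha'\beta}\right)\right)\ .
\]

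It remains to convert the level-dependent logarithm into the form appearing in \eqref{definition_puissance_complete_multiple}. Since $\log(1/(\alpha'\beta))=\log|\mathcal{K}_n|+\log(1/(\alpha\beta))$ and $\log|\mathcal{K}_n|\leq\log\log n+C$, one gets $\log(1/(\alpha'\beta))\leq C\log(\log n/(\alpha\beta))$ for $n$ large enough (using $\alpha,\beta<1$, so the argument stays above a fixed constant). Substituting this bound into both the $\sqrt{k^\star\log(1/(\alpha'\beta))}$ term and the $\log(1/(\alpha'\beta))$ term and renaming the constants, the condition above is implied by $\|\Gamma^{1/2}\theta\|^2\geq C_1\|(\Gamma^{1/2}-\Gamma_{k^\star}^{1/2})\theta\|^2+C_2\frac{\sigma^2}{n}(\sqrt{k^\star\log(\log n/(\alpha\beta))}+\log(\log n/(\alpha\beta)))$, which by the choice of $k^\star$ is in turn implied by the infimum hypothesis \eqref{definition_puissance_complete_multiple}. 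Combining with the containment inequality of the first step yields $\mathbb{P}_\theta(T^{(1)}_\alpha>0)\geq 1-\beta$, as claimed.

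I expect no genuinely hard analytic step here: all of the delicate content, namely the perturbation-theoretic control of the random projector $\widehat{\Pin}_k$ and the eighth-moment bounds under ${\bf B.4}$, is already packaged inside Theorem \ref{thrm_power_KL_fixed}. The only real obstacle is the bookkeeping of the level through the Bonferroni correction, where one must verify that replacing $\alpha$ by $\alpha/|\mathcal{K}_n|$ inflates the logarithmic factor only by an additive $\log\log n$ (and not by a polynomial factor) and that the lower bound $\alpha\geq e^{-\sqrt{n}}$ is robust to this division; both are guaranteed precisely by the dyadic structure $|\mathcal{K}_n|=O(\log n)$.
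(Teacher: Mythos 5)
Your reduction of Theorem \ref{thrm_power_KL_fixed_multiple} to Theorem \ref{thrm_power_KL_fixed} via the identity $\{T^{(1)}_\alpha>0\}=\bigcup_{k}\{T_{\alpha/|\mathcal{K}_n|,k}>0\}$ is sound, and it is organized differently from the paper's argument: the paper proves the multiple-test bound \emph{directly}, lower-bounding $\phi_{k}({\bf Y},{\bf X})$ through Lemmas \ref{lemma_A1}, \ref{lemme_principal_puissance} and \ref{lemma_A3} and comparing with the Fisher-quantile bound \eqref{eq:quantile} evaluated at level $\alpha/|\mathcal{K}_n|$, and only afterwards obtains Theorem \ref{thrm_power_KL_fixed} by rerunning the same computation with a single test. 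Your route is more modular and makes the $\log\log n$ inflation transparent as pure Bonferroni bookkeeping over the dyadic collection with $|\mathcal{K}_n|=O(\log n)$; the paper's route avoids having to re-verify the hypotheses of the single-test theorem at the reduced level.

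Two points in your write-up need repair, both fixable with material already in the paper. First, Theorem \ref{thrm_power_KL_fixed} is stated under $\alpha\geq e^{-\sqrt{n}}$, whereas $\alpha/|\mathcal{K}_n|$ is only guaranteed to exceed $e^{-2\sqrt{n}}$ for large $n$; you therefore cannot invoke the theorem as a black box, but must observe (as you hint) that its proof uses the condition only through $\log(1/\alpha)\leq\sqrt{n}$ in the quantile bound, and that $\log(|\mathcal{K}_n|/\alpha)\leq 2\sqrt{n}$ suffices there --- which is exactly the inequality the paper checks before \eqref{eq:quantile}. Second, your claim that $\mathrm{Rank}(\widehat{\Gamma}_n)=n$ almost surely under ${\bf B.2}$ is not justified: ${\bf B.2}$ concerns $\Gamma$, and the $\eta^{(j)}$ are only assumed uncorrelated with bounded moments, so the sample curves need not be linearly independent almost surely. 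What you actually need is that $k^\star\leq\mathrm{Rank}(\widehat{\Gamma}_n)$ (equivalently $\hat{k}^{KL}=k^\star$) with high probability, and this follows from the event $\overline{\mathcal{A}}_n$ of Lemma \ref{lemme_convergence_valeurs_propres_empirique}, which guarantees $\hat{k}^{KL}=k$ for all $k\leq\bar{k}_n$ outside an exceptional event of probability $C(\gamma)/\log^2(n)$, absorbed into the requirement $\beta\geq C(\gamma)/\log(n)$. This is precisely the opening step of the paper's own proof.
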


\begin{remark} Comparing Theorems \ref{thrm_power_KL_fixed} and \ref{thrm_power_KL_fixed_multiple}, we observe that the rejection region $T_{\alpha}^{(1)}$ almost contains all the rejection regions of the tests $T_{\alpha,k}$ for all $k\in\mathcal{K}_n$.  The price to pay for this feature is an additional  $\sqrt{\log\log n}$ in the variance term of (\ref{definition_puissance_complete_multiple}):
\[\frac{\sigma^2}{n}\left(\sqrt{k\log\left(\frac{\log n}{\alpha \beta	}\right)}+\log\left(\frac{\log n}{\alpha \beta}\right)\right)\]
This $\log\log(n)$ term corresponds to the quantity $\log(|\mathcal{K}_n|)$. If we had used a collection of the form $\{1,\ldots ,\bar{k}_n\}$ instead of $\mathcal{K}_n$ the $\log\log(n)$ would have been replaced by a $\log(n)$. We prove below that this $\log\log n$ term  is in fact unavoidable for an adaptive procedure.
\end{remark}
As for $T_{\alpha,k}$, additional controls of the power when $\epsilon$ follows a Normal distribution are stated in Appendix \ref{sec:power:gaussian}. Let us now consider  the power of $T_{\alpha}^{(1)}$ over ellipsoids $\mathcal{E}_{a}(R)$. In the sequel, $\lfloor .\rfloor$ stands for the integer part, while $\log_2(.)$ corresponds to the binary logarithm.
\begin{cor}[Power of $T_{\alpha}^{(1)}$ over ellipsoids]\label{coro_adaptation}
Under ${\bf B.1}$, ${\bf B.2}$,  and ${\bf B.4}$, there exist positive constants $C(\gamma)$, $C_1$, $C_2$, $C_3(\alpha,\beta)$, and $C_4(\alpha,\beta)$ such that the following holds. Assume that $\alpha\geq e^{-\sqrt{n}}$, that $\beta\geq C(\gamma)/\log(n)$, and $n\geq C_2$. Consider the test $T_{\alpha}^{(1)}$ with $\bar{k}_n=2^{\lfloor \log_2 [n^{1/4}/\log^4(n)] \rfloor}$. Fix  any ellipsoid $\mathcal{E}_a(R)$.
\begin{enumerate}
 \item   
 We have $\mathbb{P}_{\theta}(T_{\alpha}^{(1)}>0)\geq 1- \beta$ for any $\theta\in\mathcal{E}_a(R)$ satisfying 
\begin{equation*}
\frac{\|\Gamma^{1/2}\theta\| ^2}{\sigma^2}\geq C_3(\alpha,\beta) \inf_{k=1,2,4,\ldots,\bar{k}_n} 
\left[\lambda_{k+1}a^2_{k+1}R^2+\frac{\sigma^2}{n}\left(\sqrt{k\log\log n}+\log\log n\right)\right]\ .
\end{equation*}
\item Consider $k^*_n$ as in (\ref{definition_kn}). 
If $\log\log(n)\leq k_n^*\leq  \bar{k}_n$, then $\mathbb{P}_{\theta}(T_{\alpha}^{(1)}>0)\geq 1- \beta$ for any $\theta\in\mathcal{E}_a(R)$ satisfying 
\begin{equation*}
 \frac{\|\Gamma^{1/2}\theta\| ^2}{\sigma^2}\geq C_4(\alpha,\beta)\sqrt{\log\log n}\rho^2_{a,R,n}\ ,
\end{equation*}
where $\rho_{a,R,n}$ is defined in \eqref{eq:minoration_minimax}
\end{enumerate}

\end{cor}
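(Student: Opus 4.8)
The plan is to derive both claims directly from Theorem \ref{thrm_power_KL_fixed_multiple}, which guarantees $\mathbb{P}_{\theta}(T_{\alpha}^{(1)}>0)\geq 1-\beta$ as soon as $\|\Gamma^{1/2}\theta\|^2$ exceeds the infimum over $k\in\mathcal{K}_n$ of a bias-plus-variance expression. The first task is to control the bias term $\|(\Gamma^{1/2}-\Gamma_k^{1/2})\theta\|^2=\sum_{j>k}\lambda_j\langle\theta,V_j\rangle^2$ uniformly over $\mathcal{E}_a(R)$. Since $(\lambda_j a_j^2)$ is non-increasing, for any $\theta\in\mathcal{E}_a(R)$ I would bound
\[
\sum_{j>k}\lambda_j\langle\theta,V_j\rangle^2=\sum_{j>k}\lambda_j a_j^2\,\frac{\langle\theta,V_j\rangle^2}{a_j^2}\leq \lambda_{k+1}a_{k+1}^2\sum_{j>k}\frac{\langle\theta,V_j\rangle^2}{a_j^2}\leq \lambda_{k+1}a_{k+1}^2R^2\sigma^2\ .
\]
Dividing the whole power condition by $\sigma^2$ and plugging this bound into \eqref{definition_puissance_complete_multiple} replaces the bias term by $\lambda_{k+1}a_{k+1}^2R^2$, and the variance term becomes $\frac{\sigma^2}{n}(\sqrt{k\log\log n}+\log\log n)$ after noting $\log((\log n)/(\alpha\beta))\asymp \log\log n$ in the prescribed regime for $\alpha,\beta$. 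Taking the infimum over $k\in\mathcal{K}_n=\{1,2,4,\ldots,\bar{k}_n\}$ yields claim (1), with $C_3(\alpha,\beta)$ absorbing the constants $C_1,C_2$ from the theorem.

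For claim (2), the goal is to show that the discrete infimum in (1) is, up to the $\sqrt{\log\log n}$ inflation, comparable to the continuous minimax quantity $\rho^2_{a,R,n}=\sup_{k\geq 1}[C(\alpha,\beta)(\sqrt{k}/n)\wedge(R^2a_k^2\lambda_k)]$. By Proposition \ref{prop_minoration} and the definition \eqref{definition_kn} of $k^*_n$, the optimal trade-off in the continuous problem occurs near $k=k^*_n$, where the bias $R^2a_k^2\lambda_k$ and variance $\sqrt{k}/n$ are balanced; one checks $\rho^2_{a,R,n}\asymp \sqrt{k^*_n}/n$. I would evaluate the infimum in (1) at the dyadic index $k$ closest to $k^*_n$ from below, say $k=2^{\lfloor\log_2 k^*_n\rfloor}$, which lies in $\mathcal{K}_n$ precisely because of the assumption $\log\log n\leq k^*_n\leq\bar{k}_n$. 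At this $k$ the variance term is of order $\frac{\sigma^2}{n}\sqrt{k^*_n\log\log n}\asymp\sqrt{\log\log n}\,\rho^2_{a,R,n}\sigma^2$ (the $\log\log n$ additive piece being dominated thanks to $k^*_n\geq\log\log n$), while the bias term $\lambda_{k+1}a_{k+1}^2R^2$ is at most $\lambda_{k^*_n}a_{k^*_n}^2R^2\asymp\rho^2_{a,R,n}$ by monotonicity of $(\lambda_j a_j^2)$ and the definition of $k^*_n$. Summing the two contributions gives the bound $C_4(\alpha,\beta)\sqrt{\log\log n}\,\rho^2_{a,R,n}$.

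The main obstacle is the bookkeeping in claim (2): one must verify that replacing the continuous optimum $k^*_n$ by the nearest dyadic point distorts both the bias and the variance only by universal multiplicative constants. For the variance this is immediate since $k\mapsto\sqrt{k}$ changes by at most a factor $\sqrt{2}$ between consecutive dyadic integers; for the bias, monotonicity of $(\lambda_j a_j^2)$ ensures $\lambda_{k+1}a_{k+1}^2\leq\lambda_{k^*_n}a_{k^*_n}^2$, but I must confirm that the dyadic point stays within one doubling of $k^*_n$ so that no regularity of the sequence beyond monotonicity is needed. The remaining subtlety is checking that the additive $\log\log n$ term in the variance is genuinely lower order, which is exactly what the hypothesis $k^*_n\geq\log\log n$ provides, and that $k^*_n\leq\bar{k}_n$ guarantees the relevant dyadic index indeed belongs to $\mathcal{K}_n$ so that Theorem \ref{thrm_power_KL_fixed_multiple} applies.
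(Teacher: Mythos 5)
Your overall strategy is exactly the intended one: the paper gives no detailed argument here (it simply declares the corollary a direct consequence of the power theorem for $T_\alpha^{(1)}$), and your reduction of claim (1) via $\sum_{j>k}\lambda_j\langle\theta,V_j\rangle^2\leq\lambda_{k+1}a_{k+1}^2R^2\sigma^2$ and the absorption of $\log(\log n/(\alpha\beta))$ into $C_3(\alpha,\beta)\log\log n$ is correct. Claim (2) via $\rho^2_{a,R,n}\asymp\sqrt{k^*_n}/n$ and evaluation at a dyadic index near $k^*_n$ is also the right idea.

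There is, however, a concrete misstep in claim (2): you choose the dyadic index \emph{below} $k^*_n$, namely $k=2^{\lfloor\log_2 k^*_n\rfloor}$, and then assert $\lambda_{k+1}a_{k+1}^2\leq\lambda_{k^*_n}a_{k^*_n}^2$ ``by monotonicity.'' Monotonicity of the non-increasing sequence $(\lambda_j a_j^2)$ gives that inequality only when $k+1\geq k^*_n$; for your rounded-down choice one typically has $k+1<k^*_n$, where the definition of $k^*_n$ forces $\lambda_{k+1}a_{k+1}^2R^2>\sqrt{k+1}/n$ with \emph{no} upper bound — the sequence may drop abruptly at $j=k^*_n$ (e.g.\ $\lambda_ja_j^2R^2=1$ for $j<k^*_n$ and $=\sqrt{k^*_n}/n$ afterwards), in which case the bias at your $k$ is of order $1$ while $\rho^2_{a,R,n}\asymp\sqrt{k^*_n}/n$. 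The fix is to round \emph{up}: take $k=2^{\lceil\log_2 k^*_n\rceil}\leq 2k^*_n$, so that $k+1>k^*_n$ and monotonicity plus the defining inequality $a_{k^*_n}^2\lambda_{k^*_n}R^2\leq\sqrt{k^*_n}/n$ give $\lambda_{k+1}a_{k+1}^2R^2\leq\sqrt{k^*_n}/n\leq C\rho^2_{a,R,n}$; the variance inflates only by $\sqrt{2}$, and this $k$ still lies in $\mathcal{K}_n$ because $\bar{k}_n$ is itself a power of $2$ with $k^*_n\leq\bar{k}_n$. With that single change of rounding direction the argument goes through as you describe.
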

This is a direct consequence of Theorem \ref{thrm_power_KL_fixed}.

\begin{remark}
If we compare Corollary \ref{coro_adaptation} with the minimax lower bound of Proposition \ref{prop_minoration}, we observe that the separation distance only matches up to a factor of order $\sqrt{\log\log n}$. As a consequence, $T_{\alpha,k}$ is  almost minimax over all ellipsoids  $\mathcal{E}_a(R)$ satisfying $\log\log(n)\leq k_n^*\leq  \bar{k}_n$. Next, we prove that this $\sqrt{\log\log(n)}$ term loss is unavoidable when the ellipsoid  $\mathcal{E}_a(R)$ is unknown.  
\end{remark}

\begin{prop}[Minimax lower bounds over a collection of nested ellipsoids]\label{prop_minoration_adaptation}
There exists a positive constant $C(\alpha,\beta)$ such that the following holds.
Let us assume that $X$ is a Gaussian process, that the noise $\epsilon$ follows a Gaussian distribution, and that the rank of $\Gamma$ is infinite. For any ellipsoid $\mathcal{E}_a(R)$, 
we set
\begin{eqnarray*}
 \tilde{\rho}^2_{a,R,n}:= \sup_{k\geq 1}\left[C(\alpha,\beta)\left(\frac{\sqrt{\log\log (k\vee 3)}\sqrt{k}}{n}\right)\wedge \left(R^2a_k^2\lambda_k\right)\right]\ . 
\end{eqnarray*}
For any non increasing sequence $(a_k)_{k\geq 1}$ and any test $T$ of level $\alpha$, we have
\begin{eqnarray*}
 \boldsymbol{\beta}\left[T; \bigcup_{R>0}\left\{\theta\in\mathcal{E}_a(R), \sigma>0,\  \frac{\|\Gamma^{1/2}\theta\| ^2}{\sigma^2} \geq \tilde{\rho}^2_{a,R,n}\right\}\right]\geq \beta\ .
\end{eqnarray*}
\end{prop}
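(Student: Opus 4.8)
The plan is to prove this adaptive minimax lower bound by reducing the testing problem to a Bayesian two-point (or rather multi-point) argument, combining the classical single-ellipsoid lower bound machinery with a union over a logarithmically-spaced family of scales. The statement asserts that no level-$\alpha$ test can be simultaneously powerful at the rate $\rho^2_{a,R,n}$ across all radii $R>0$; the extra $\sqrt{\log\log(k\vee 3)}$ factor inside $\tilde\rho^2_{a,R,n}$ is the unavoidable price of adaptation. Since the claim quantifies over a \emph{union} $\bigcup_{R>0}$, the key is that a single test must control type II error uniformly over a whole scale of alternatives, and this forces a loss relative to the non-adaptive bound of Proposition~\ref{prop_minoration}.

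\medskip

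\noindent\textbf{Step 1 (Reduction to Gaussian sequence model).} Since $X$ is a Gaussian process and $\epsilon\sim\mathcal N(0,\sigma^2)$, projecting the observations $Y_i$ onto the true eigendirections $V_j$ reduces the problem to a Gaussian sequence/white-noise model with signal $\Gamma^{1/2}\theta$, as noted in the Meister remark following~\eqref{definition_phi_KL}. In the coordinates $\beta_j:=\sqrt{\lambda_j}\langle\theta,V_j\rangle$, the squared detection functional becomes $\|\Gamma^{1/2}\theta\|^2=\sum_j\beta_j^2$, while the ellipsoid constraint $\theta\in\mathcal E_a(R)$ reads $\sum_j \beta_j^2/(a_j^2\lambda_j)\le R^2\sigma^2$. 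I would work entirely in this sequence model, where each observation coordinate has variance of order $\sigma^2/n$.

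\medskip

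\noindent\textbf{Step 2 (A prior concentrated at a single resolution).} For a fixed integer $k$, consider priors $\pi_k$ supported on signals whose mass lives on the coordinates $j\in\{k+1,\ldots,2k\}$ (a dyadic block), each coordinate carrying a random sign and a common magnitude calibrated so that the signal sits exactly on the boundary $\|\Gamma^{1/2}\theta\|^2/\sigma^2\asymp \tilde\rho^2_{a,R,n}$ while respecting the ellipsoid constraint of the appropriate radius. The standard chi-square / second-moment method (as in Baraud~\cite{baraudminimax} or Ingster~\cite{Ingster93a}) shows that a single such prior produces a $\chi^2$-divergence between the null and the mixture that is bounded, yielding the non-adaptive rate $\sqrt{k}/n$. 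The new ingredient is the $\sqrt{\log\log}$ gain.

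\medskip

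\noindent\textbf{Step 3 (The adaptive sharpening --- the main obstacle).} This is the crux. To obtain the extra $\sqrt{\log\log(k)}$ factor, I would superimpose priors $\pi_{k}$ across a geometrically increasing family of scales $k\in\{k_1,k_2,\ldots,k_M\}$ with $M\asymp\log\log n$ (for instance $k_i\asymp 2^{2^i}$ so that $\log\log k_i$ grows linearly in $i$), and set the common radius scale so that each individual alternative is detectable only at the \emph{boosted} level $\sqrt{\log\log k}\cdot\sqrt{k}/n$. A single level-$\alpha$ test attempting to detect all of them simultaneously faces, in effect, a composite alternative made of $M$ nearly-orthogonal bumps; the Bayes risk lower bound then follows from a moderate-deviation control of the maximum of $M$ weakly dependent $\chi^2$-type statistics, where the $\sqrt{2\log M}=\sqrt{2\log\log\log\ldots}$ threshold inflates the required signal. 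The hard part is controlling the cross-terms in the second moment $\E_0[(d\pi/d\P_0)^2]$ across different scales: one must show the mixture over scales still has bounded (or slowly growing) chi-square distance to the null, so that the $M$ blocks contribute essentially independently and the worst case is governed by the largest bump, forcing the $\sqrt{\log M}$ penalty. I expect the delicate estimate to be bounding $\prod_i\big(1+\text{(per-scale }\chi^2\text{ contribution)}\big)$ and verifying that the boosted magnitudes keep this product controlled precisely because of the $\log\log$ spacing.

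\medskip

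\noindent Finally, assembling these pieces, for the worst radius $R$ in the union the mixture is indistinguishable from $H_0$ at level $\alpha$ with type II error at least $\beta$, which gives $\boldsymbol\beta[T;\bigcup_R\{\cdots\}]\ge\beta$ and hence the claimed lower bound $\tilde\rho^2_{a,R,n}$. The supremum over $k$ in the definition of $\tilde\rho^2_{a,R,n}$ reflects optimizing the single active scale, while the $\sqrt{\log\log(k\vee3)}$ records the adaptation cost incurred by the union over resolutions.
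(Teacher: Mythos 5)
Your high-level strategy (a mixture of priors over resolution levels, analysed by the second-moment method, with the adaptation penalty coming from the number of scales) is the right family of ideas, but it is not what the paper does: the paper simply observes that the union over $R>0$ of the alternatives contains, for every $k$, the sphere of signals supported on $\mathrm{Vect}(V_1,\ldots,V_k)$ at separation $C(\alpha,\beta)\sqrt{k\log\log (k\vee 3)}/n$, and then invokes Proposition 5.5 of \cite{VV10} (letting the ambient dimension go to infinity), which is exactly the adaptive lower bound over a nested family of linear spaces in the Gaussian-design regression model with unknown variance. Your proposal attempts to reprove that key ingredient from scratch, and the quantitative core of the attempt fails.

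The concrete problem is Step 3. With scales $k_i\asymp 2^{2^i}$ and $M\asymp\log\log n$ components mixed with uniform weights, the second-moment computation for orthogonal blocks gives $\E_0[(d\pi/d\P_0)^2]-1=\frac{1}{M^2}\sum_i(\E_0[L_i^2]-1)$ with $\E_0[L_i^2]\approx\exp(c\,n^2r_i^4/k_i)$; keeping this bounded forces $n^2r_i^4/k_i\lesssim\log M$, i.e.\ a boost of only $\sqrt{\log M}\asymp\sqrt{\log\log\log n}$, not the claimed $\sqrt{\log\log k_i}\asymp\sqrt{i}$ (with your calibration the sum contains $e^{cM}/M^2$, which diverges). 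The scale-dependent penalty $\sqrt{\log\log k}$ requires geometrically spaced scales $k_i=2^i$ together with \emph{non-uniform, summable} mixture weights $w_i$ (e.g.\ $w_i\propto i^{-1}\log^{-2}(i\vee 2)$), so that $\sum_i w_i^2\bigl(\exp(b_i^2)-1\bigr)$ remains bounded when $b_i^2=\log\log k_i\asymp\log i$; this in turn fixes how small $C(\alpha,\beta)$ must be. Your ``product of $(1+\text{per-scale contribution})$'' corresponds to a product prior activating all blocks simultaneously, which yields an even weaker bound, and the ``maximum of $M$ statistics'' heuristic is an upper-bound argument, not a lower-bound one. A secondary gap: the reduction to a Gaussian sequence model via the asymptotic equivalence of \cite{Meister11} is not licit in this nonasymptotic setting; the likelihood-ratio computations must be carried out in the Gaussian-design regression model itself (with the additional wrinkle that $\sigma$ is unknown, handled in \cite{VV10} by perturbing $(\theta,\sigma)$ at fixed $\var(Y)$), which is precisely what the cited Proposition 5.5 supplies.
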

As a consequence, there is a  $\sqrt{\log \log n }$ price to pay if we  simultaneously  consider a nested collection of ellipsoids. Such impossibility for perfect adaptation has already been observed for the testing problem  in the classical nonparametric regression framework~\cite{spokoiny96}.

\begin{remark}
In order to compare the lower and upper bounds of Proposition \ref{prop_minoration_adaptation} and Corollary \ref{coro_adaptation}, let us  specify the sequence $\lambda_ka_k^2$:
\begin{itemize}
\item  Polynomial decay. If $\lambda_ka_k^2=k^{-s}$, then the $(\alpha,\beta)$-separation distance of  $T_{\alpha}^{(1)}$ over $\mathcal{E}_a(R)$ is of  order $$R^{2/(1+2s)}\left(\frac{\sqrt{\log\log(n)}}{n}\right)^{2s/(1+2s)}\ ,$$
for $s>7/2$. By Proposition \ref{prop_minoration_adaptation}, this rate is optimal for adaptation.
\item  Exponential decay. If $\lambda_ka_k^2=e^{-sk}$, then the $(\alpha,\beta)$-separation distance of  $T_{\alpha}^{(1)}$ over $\mathcal{E}_a(R)$ is of  order $$\frac{\sqrt{\log(n)\log\log (n)}}{\sqrt{s}n} ,$$
for any $s>0$. By Proposition \ref{prop_minoration_adaptation}, this rate is almost optimal for adaptation (up to a $\sqrt{\log\log(n)/\log\log\log n}$ term).
\end{itemize}

\end{remark}
In conclusion, the procedure $T_{\alpha}^{(1)}$ is adaptive to the unknown regularity of $\theta$, to the unknown regularity of the eigenvalues $(\lambda_k)_{k\geq 1}$ and to the unknown noise variance $\sigma^2$. Interestingly, the minimax rate of testing depends on the decay of the non-increasing sequence $(\lambda_ka^2_k)_{k\geq 1}$.

\section{Simulations}\label{section_simulation}

\subsection{Experiments}

{\bf Setting}. The performances of the procedures $T_{\alpha}^{(1)}$ and $T_{\alpha}^{(2)}$ are  illustrated for various choices of the function $\theta$. In all experiments, the noise $\epsilon$ follows a standard Gaussian distribution with unit variance, while the process $X$ is a Brownian motion defined on $[0,1]$. The eigenfunctions and eigenvalues of the covariance operator of the Brownian motion have been computed in Ash \& Gardner~\cite{Ash-Gardner}:
\begin{equation*}
\lambda_j=\frac{1}{(j-0.5)^2 \pi^2} \quad \text{ and } \quad V_j(t)=\sqrt{2} \sin\big( (j-0.5) \pi t \big) \; , t \in [0,1] \; , \quad j=1,2,\dots
\end{equation*}
In practice $X(t)$ has been simulated using a truncated version of the Karhunen Lo\`eve expansion $ \sum_{j=1}^{100}\sqrt{\lambda_j}\eta^{(j)} V_j(t)$, where 
 the $\left(\eta^{(j)}\right)_{j \in \N }$ form an i.i.d. sequence of standard normal variables. The function $X(t)$ is observed on $1000$ evenly spaced points in $[0,1]$.

{\bf Testing procedure}. For each experiment, we perform the tests $T_{\alpha}^{(1)}$ (procedure $P_1$) and $T_{\alpha}^{(2)}$ (procedure $P_2$) with $\bar{k}_n=2^{\lfloor \log_2 n -1\rfloor}$. The quantile $q_{{\bf X},\alpha}$ involved in $P_2$ is computed by Monte Carlo simulations. For each experiment, we use 1000 random simulations to estimate this quantile.

{\bf Choice of $\theta$.}
\begin{enumerate}
 \item In the first experiment, we fix $\theta=0$ as a way to evaluate the sizes of the testing procedures.

\item In the second experiment, we build directly the function $\theta$ in the KL basis of $X$. The set $\Theta_{KL}$ is made of all the functions $\theta_{B,\xi}$ with $B>0$, $\xi>0$, and 
\begin{equation}
\label{theta-KL-sim}
\theta_{B,\xi}(t):=\frac{ B}{\sqrt{\sum_{k= 1}^{+\infty}k^{-2\xi-1}}} \; \sum_{j=1}^{100}{j^{-\xi-0.5} V_j(t)}\ ,
\end{equation}
where $\xi$ is a smoothness parameter. 
Observe that $B$ stands for the $l_2$ norm of the function $\theta_{B,\xi}$. As shown on Figure \ref{fig-theta-KL}, the smoothness of $\theta_{B,\xi} \in \Theta_{KL}$ increases with $\xi$. For this experiment, we have an explicit expression of the joint regularity of $\theta$ and $\Gamma$: \[\|(\Gamma^{1/2}-\Gamma_k^{1/2})\theta\| ^2= \frac{B^2}{ \pi^2\left(\sum_{l= 1}^{+\infty}l^{-2\xi-1}\right)}\sum_{j=k+1}^{100} (j-0.5)^{-2}j^{-2\xi-1}\ .\]
In practice, we fix $\xi= 0.1,\ 0.5,\ 1$ and $B=0.1,\ 0.5,\ 1$.

\begin{figure}[!ht]
\centering
\includegraphics[height=7cm,width=8cm]{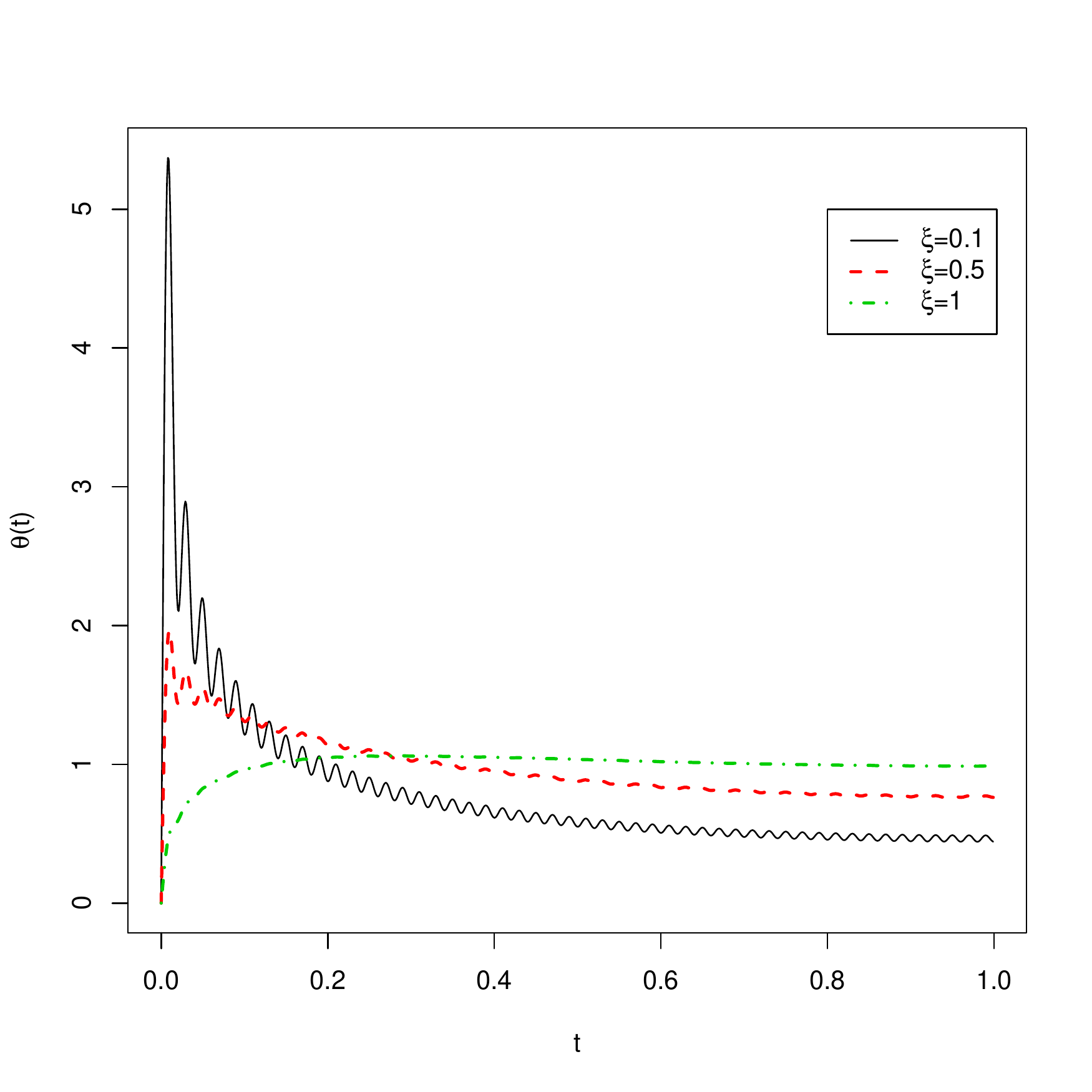}
\caption{Three functions $\theta$ in $\Theta_{KL}$ when $B=1$.}\label{fig-theta-KL}
\end{figure}

\item In the third experiment, we  consider the set $\Theta_G$ of functions
\begin{equation*}
 \theta_{B,\tau}(t) = B\exp\left[-\frac{(t-0.5)^2}{2\tau^2}\right]  \left[ \int_{0}^1 \exp\left[-\frac{(x-0.5)^2}{\tau^2}\right] dx\right]^{-1/2}\ ,
\end{equation*}
with $B>0$ and $\tau>0$. Here, $B$ stands for the $l_2$ norm of $\theta_{B,\tau}$ and $\tau$ is a smoothness parameter. In fact,  $\theta_{B,\tau}(t)$ corresponds (up to a constant) to the density of a normal variable with mean $0.5$ and variance $\tau^2$. As $\tau$ decreases to $0$, $\theta_{B,\tau}$ converges to a Dirac function centered on $0.5$. 
In practice, we fix $\tau=0.01,\ 0.02,\ 0.05$ and $B=0.5,\ 1,\ 2$.
 \end{enumerate}
{\bf Number of experiments.} We have set $n=100$ and $n=500$. For each set of parameters $(n,B,\xi)$ or $(n,B,\tau)$, 10 000 trials  were run to estimate the percentages of rejection of $H_0$ (ie. the percentages of positive values of $T_{\alpha}^{(1)}$ and $T_{\alpha}^{(2)}$ with $\alpha=5\%$), along with their 95\% confidence intervals.

\subsection{Results}
The two procedures $P_1$ and $P_2$ have been  implemented in R~\cite{Rcran} on a 3 GHz Intel Xeon processor, with a 4000KB cache size and 8GB total physical memory. 

\begin{table}[!ht]
\caption{First simulation study: Null hypothesis is true. Percentages of rejection of $H_0$ and 95\% confidence intervals} \label{tableR=0}
\begin{tabular}{c|cc|cc|}
\cline{2-5}
& \multicolumn{2}{c}{} & \multicolumn{2}{|c|}{}\\[-0.2cm]
& \multicolumn{2}{c}{$n=100$} & \multicolumn{2}{|c|}{$n=500$} \\[0.1cm]
\hline
\multicolumn{1}{|c|}{}&&&&\\[-0.3cm]
\multicolumn{1}{|c|}{$T_{\alpha}^{(1)}$} & 3.47 & ($\pm$ 0.36)&  2.61  & ($\pm$ 0.31)  \\[0.1cm]
\cline{1-5}
\multicolumn{1}{|c|}{}&&&&\\[-0.3cm]
\multicolumn{1}{|c|}{$T_{\alpha}^{(2)}$} &  4.97 &  ($\pm$ 0.43)&  5.26 & ($\pm$ 0.44) \\[0.1cm]
\cline{1-5}
\end{tabular}
\end{table}

{\bf First setting}. The percentages of rejection of $T_{\alpha}^{(1)}$ and $T_{\alpha}^{(2)}$ under $H_0$ with $n=100$ and $n=500$ are provided in Table \ref{tableR=0}. As expected, the size of $T_{\alpha}^{(1)}$ decreases when $n$ increases because we pay a price for the Bonferroni correction. The size of $T_{\alpha}^{(2)}$ remains close to the nominal level $\alpha=5\%$.

{\bf Second setting}. Tables \ref{tableKLn100} and \ref{tableKLn500} depict the results for $\theta\in\Theta_{KL}$ with $n=100$ and $n=500$ respectively. 
As expected, the power of the procedures is increasing with $B$ as $\|\theta\| $ becomes larger. Furthermore, the power also increases with $\xi$. This corroborates the rates stated in Section \ref{section_rates_mult.test.proc}, since the function $\theta_{B,\xi}$ becomes smoother when $\xi$ increases. In every setting the test  $T_{\alpha}^{(2)}$ with the second procedure performs better than $T_{\alpha}^{(1)}$.

{\bf Third setting}. The results of the last experiment are provided  in Tables \ref{tableGn100} for $n=100$ and \ref{tableGn500} for $n=500$. Again, the power is increasing with $B$, $n$ and $\tau$. Here,  $\tau$ does not directly correspond to the rate of convergence of the sequence $(\int_{0}^1\theta_{B,\tau}V_j(t)dt)$, $j\geq 1$ as $\xi$ does in the last example. Nevertheless, it is difficult to detect a function $\theta_{B,\tau}$ when $\tau$ decreases, that is when $\theta_{B,\tau}$ becomes 
close to a Dirac function.

In each setting, the test under $P_2$ is more powerful than the test under $P_1$. Nevertheless, the procedure $P_2$ is slightly slower to compute as it requires the evaluations of  the quantile $q_{{\bf X},\alpha}$ by a Monte-Carlo method. Under $P_1$, the mean computation time is $9$ seconds for $n=100$ and 12 seconds for $n=500$. In contrast, it  respectively equals  $11$ and $18$ seconds under $P_2$.


\begin{table}[!ht]
\caption{Second simulation study: $\theta \in \Theta_{KL}$, $n=100$. Percentages of rejection of $H_0$ and 95\% confidence intervals} \label{tableKLn100}
\begin{tabular}{cc|cc|cc|cc|}
\cline{3-8}
& & \multicolumn{2}{c}{} & \multicolumn{2}{|c|}{} & \multicolumn{2}{|c|}{}\\[-0.2cm]
& & \multicolumn{2}{c}{$B=0.1$} & \multicolumn{2}{|c|}{$B=0.5$} & \multicolumn{2}{|c|}{$B=1$}\\[0.1cm]
\hline
\multicolumn{1}{|c|}{\multirow{3}{*}{$\xi=0.1$}} & \multicolumn{1}{|c|}{} &&&&&&\\[-0.3cm]
\multicolumn{1}{|c|}{} &
\multicolumn{1}{|c|}{$T_{\alpha}^{(1)}$} & 3.88 & ($\pm$ 0.38) & 21.41& ($\pm$ 0.8)  &77.24 & ($\pm$ 0.82)\\[0.1cm]
\cline{2-8}
\multicolumn{1}{|c|}{} &&&&&&\\[-0.3cm]
\multicolumn{1}{|c|}{} &
\multicolumn{1}{|c|}{$T_{\alpha}^{(2)}$} & 5.8 & ($\pm$ 0.46) & 26.38& ($\pm$ 0.86)  & 81.78 & ($\pm$ 0.76) \\[0.1cm]
\cline{1-8}
\multicolumn{1}{|c|}{\multirow{3}{*}{$\xi=0.5$}} &  \multicolumn{1}{|c|}{} &&&&&&\\[-0.3cm]
\multicolumn{1}{|c|}{} &
\multicolumn{1}{|c|}{$T_{\alpha}^{(1)}$}  & 4.74 & ($\pm$ 0.42) & 46.47&($\pm$ 0.98)  &98.68 & ($\pm$ 0.22)\\[0.1cm]
\cline{2-8}
\multicolumn{1}{|c|}{} &&&&&&\\[-0.3cm]
\multicolumn{1}{|c|}{} &
\multicolumn{1}{|c|}{$T_{\alpha}^{(2)}$} & 6.65 & ($\pm$ 0.49) & 52.79& ($\pm$ 0.98)  & 99.06 & ($\pm$ 0.19)\\[0.1cm]
\cline{1-8}
\multicolumn{1}{|c|}{\multirow{3}{*}{$\xi=1$}} & \multicolumn{1}{|c|}{} &&&&&&\\[-0.3cm]
\multicolumn{1}{|c|}{} & 
\multicolumn{1}{|c|}{$T_{\alpha}^{(1)}$} &  4.8 & ($\pm$ 0.42) & 62.67& ($\pm$ 0.95) & 99.75& ($\pm$ 0.1)\\[0.1cm]
\cline{2-8}
\multicolumn{1}{|c|}{} &&&&&&\\[-0.3cm]
\multicolumn{1}{|c|}{} &
\multicolumn{1}{|c|}{$T_{\alpha}^{(2)}$} & 7.07 & ($\pm$ 0.5) & 68.3& ($\pm$ (0.91) & 99.84 & ($\pm$ 0.08)\\[0.1cm]
\cline{1-8}
\end{tabular}
\end{table}
\begin{table}[!ht]
\caption{Second simulation study: $\theta\in \Theta_{KL}$, $n=500$. Percentages of rejection of $H_0$ and 95\% confidence intervals} \label{tableKLn500}
\begin{tabular}{cc|cc|cc|cc|}
\cline{3-8}
& & \multicolumn{2}{c}{} & \multicolumn{2}{|c|}{} & \multicolumn{2}{|c|}{}\\[-0.2cm]
& & \multicolumn{2}{c}{$B=0.1$} & \multicolumn{2}{|c|}{$B=0.5$} & \multicolumn{2}{|c|}{$B=1$}\\[0.1cm]
\hline
\multicolumn{1}{|c|}{\multirow{4}{*}{$\xi=0.1$}} & \multicolumn{1}{|c|}{} &&&&&&\\[-0.3cm]
\multicolumn{1}{|c|}{} &
\multicolumn{1}{|c|}{$T_{\alpha}^{(1)}$} &  5.17 & ($\pm$ 0.43) & 86.98& ($\pm$ 0.66) &100 & ($\pm$ 0)\\[0.1cm]
\cline{2-8}
\multicolumn{1}{|c|}{} &&&&&&\\[-0.3cm]
\multicolumn{1}{|c|}{} &
\multicolumn{1}{|c|}{$T_{\alpha}^{(2)}$} &  8.48 & ($\pm$ 0.55) & 90.89& ($\pm$ 0.56)  & 100 &($\pm$ 0)\\[0.1cm]
\cline{1-8}
\multicolumn{1}{|c|}{\multirow{3}{*}{$\xi=0.5$}} &  \multicolumn{1}{|c|}{} &&&&&&\\[-0.3cm]
\multicolumn{1}{|c|}{} & 
\multicolumn{1}{|c|}{$T_{\alpha}^{(1)}$} &  8.81 &  ($\pm$ 0.56) & 99.85& ($\pm$ 0.08) &100 &($\pm$ 0) \\[0.1cm]
\cline{2-8}
\multicolumn{1}{|c|}{} &&&&&&\\[-0.3cm]
\multicolumn{1}{|c|}{} &
\multicolumn{1}{|c|}{$T_{\alpha}^{(2)}$} &  13.07 & ($\pm$ 0.66) & 99.88& ($\pm$ 0.07) & 100 & ($\pm$ 0)\\[0.1cm]
\cline{1-8}
\multicolumn{1}{|c|}{\multirow{3}{*}{$\xi=1$}} &  \multicolumn{1}{|c|}{} &&&&&&\\[-0.3cm]
\multicolumn{1}{|c|}{} & 
\multicolumn{1}{|c|}{$T_{\alpha}^{(1)}$} & 11.38 & ($\pm$ 0.62) & 99.99& ($\pm$ 0.02) &100 & ($\pm$ 0)\\[0.1cm]
\cline{2-8}
\multicolumn{1}{|c|}{} &&&&&&\\[-0.3cm]
\multicolumn{1}{|c|}{} &
\multicolumn{1}{|c|}{$T_{\alpha}^{(2)}$}  & 16.13 & ($\pm$ 0.72) & 100& ($\pm$ 0)  & 100 &($\pm$ 0)\\[0.1cm]
\cline{1-8}
\end{tabular}
\end{table}

\begin{table}[!ht]
  \caption{Third simulation study: $\theta \in \Theta_G$, $n=100$. Percentage of rejection of $H_0$ and 95\% confidence interval} \label{tableGn100}
\begin{tabular}{cc|cc|cc|cc|}
\cline{3-8}
& & \multicolumn{2}{c}{} & \multicolumn{2}{|c|}{} & \multicolumn{2}{|c|}{}\\[-0.2cm]
& & \multicolumn{2}{c}{$B=0.5$} & \multicolumn{2}{|c|}{$B=1$} & \multicolumn{2}{|c|}{$B=2$}\\[0.1cm]
\hline
\multicolumn{1}{|c|}{\multirow{3}{*}{$\tau=0.01$}} & \multicolumn{1}{|c|}{} &&&&&&\\[-0.3cm]
\multicolumn{1}{|c|}{} & 
\multicolumn{1}{|c|}{$T_{\alpha}^{(1)}$} &  4.94 & ($\pm$ 0.42) & 11.85&  ($\pm$ 0.63) &46.69 & ($\pm$ 0.98)\\[0.1cm]
\cline{2-8}
\multicolumn{1}{|c|}{} &&&&&&\\[-0.3cm]
\multicolumn{1}{|c|}{} &
\multicolumn{1}{|c|}{$T_{\alpha}^{(2)}$}  & 7.25 &  ($\pm$ 0.51) & 15.49&   ($\pm$ 0.71) & 53.56 & ($\pm$ 0.98)\\[0.1cm]
\cline{1-8}
\multicolumn{1}{|c|}{\multirow{3}{*}{$\tau=0.02$}} &  \multicolumn{1}{|c|}{} &&&&&&\\[-0.3cm]
\multicolumn{1}{|c|}{} & 
\multicolumn{1}{|c|}{$T_{\alpha}^{(1)}$} &  7.33 & ($\pm$ 0.51)  & 23.09&  ($\pm$ 0.83) &80.26 &  ($\pm$ 0.78)\\[0.1cm]
\cline{2-8}
\multicolumn{1}{|c|}{} &&&&&&\\[-0.3cm]
\multicolumn{1}{|c|}{} &
\multicolumn{1}{|c|}{$T_{\alpha}^{(2)}$} &  10 &  ($\pm$ 0.59) & 28.54&  ($\pm$ 0.89)  & 84.04 & ($\pm$ 0.72)\\[0.1cm]
\cline{1-8}
\multicolumn{1}{|c|}{\multirow{3}{*}{$\tau=0.05$}} &  \multicolumn{1}{|c|}{} &&&&&&\\[-0.3cm]
\multicolumn{1}{|c|}{} & 
\multicolumn{1}{|c|}{$T_{\alpha}^{(1)}$} &  13.85 &  ($\pm$ 0.68) & 56.51&   ($\pm$ 0.97)&99.48 &  ($\pm$ 0.14)\\[0.1cm]
\cline{2-8}
\multicolumn{1}{|c|}{} &&&&&&\\[-0.3cm]
\multicolumn{1}{|c|}{} &
\multicolumn{1}{|c|}{$T_{\alpha}^{(2)}$} &  18.13 & ($\pm$ 0.76) & 63.09& ($\pm$ 0.95)  & 99.65 &($\pm$ 0.12)\\[0.1cm]
\cline{1-8}
\end{tabular}
\end{table}
\begin{table}[!ht]
  \caption{Third simulation study: $\theta \in \Theta_G$, $n=500$. Percentage of rejection of $H_0$ and 95\% confidence interval} \label{tableGn500}
\begin{tabular}{cc|cc|cc|cc|}
\cline{3-8}
& & \multicolumn{2}{c}{} & \multicolumn{2}{|c|}{} & \multicolumn{2}{|c|}{}\\[-0.2cm]
& & \multicolumn{2}{c}{$B=0.5$} & \multicolumn{2}{|c|}{$B=1$} & \multicolumn{2}{|c|}{$B=2$}\\[0.1cm]
\hline
\multicolumn{1}{|c|}{\multirow{3}{*}{$\tau=0.01$}} & \multicolumn{1}{|c|}{} &&&&&&\\[-0.3cm]
\multicolumn{1}{|c|}{} & 
\multicolumn{1}{|c|}{$T_{\alpha}^{(1)}$} &  12.41 & ($\pm$ 0.65) & 54.6& ($\pm$ 0.98) &99.75 &($\pm$ 0.1) \\[0.1cm]
\cline{2-8}
\multicolumn{1}{|c|}{} &&&&&&\\[-0.3cm]
\multicolumn{1}{|c|}{} &
\multicolumn{1}{|c|}{$T_{\alpha}^{(2)}$} &  17.99 & ($\pm$ 0.75) & 63.16& ($\pm$ 0.95)  & 99.98 &($\pm$ 0.07)\\[0.1cm]
\cline{1-8}
\multicolumn{1}{|c|}{\multirow{3}{*}{$\tau=0.02$}} &  \multicolumn{1}{|c|}{} &&&&&&\\[-0.3cm]
\multicolumn{1}{|c|}{} & 
\multicolumn{1}{|c|}{$T_{\alpha}^{(1)}$} &  26.11 & ($\pm$ 0.86) &88.91& ($\pm$ 0.62) &100 & ($\pm$ 0)\\[0.1cm]
\cline{2-8}
\multicolumn{1}{|c|}{} &&&&&&\\[-0.3cm]
\multicolumn{1}{|c|}{} &
\multicolumn{1}{|c|}{$T_{\alpha}^{(2)}$} &  33.95 & ($\pm$ 0.93) & 92.62& ($\pm$ 0.51)  & 100 &($\pm$ 0)\\[0.1cm]
\cline{1-8}
\multicolumn{1}{|c|}{\multirow{3}{*}{$\tau=0.05$}} &  \multicolumn{1}{|c|}{} &&&&&&\\[-0.3cm]
\multicolumn{1}{|c|}{} & 
\multicolumn{1}{|c|}{$T_{\alpha}^{(1)}$} &  65.38 & ($\pm$ 0.93) & 99.95& ($\pm$ 0.04) &100 & ($\pm$ 0)\\[0.1cm]
\cline{2-8}
\multicolumn{1}{|c|}{} &&&&&&\\[-0.3cm]
\multicolumn{1}{|c|}{} &
\multicolumn{1}{|c|}{$T_{\alpha}^{(2)}$} &  72.74 & ($\pm$ 0.87) & 99.99&  ($\pm$ 0.02) & 100 &($\pm$ 0)\\[0.1cm]
\cline{1-8}
\end{tabular}
\end{table}

\vspace{0.5cm}

\section{Discussion}\label{section_discussion}

Two multiple testing procedures of the nullity of the slope function $\theta$ have been proposed in this paper. They are completely data-driven and benefit from optimal properties assessed in a nonasymptotic setting. We address here some extensions of our results.

\vspace{0.2cm}

Although we focused on the null-hypothesis ``$H_0$: $\theta=0$'', our approach easily extends to  linear hypotheses $H_{0,\cal{V}}$: ``$\theta \in \cal{V}$'', where $\cal{V}$ is a given finite dimensional subspace of $\H$ of dimension $p<n/2$. As previously, the procedure relies on parametric statistics for testing $H_{0,\cal{V}}$ against $H_{1,k,\cal{V}}: ``\theta\in (\text{Vect}(V_1,\ldots,V_k)+\cal{V})\setminus V$'', where $k$ is a positive integer. We consider the $n\times \hat{k}^{KL}$ design matrix ${\bf W}$ defined by ${\bf W}_{i,j}= \langle {\bf X}_i,\widehat{V}_j\rangle$ for  $i=1,\ldots n$, $j=1,\ldots \hat{k}^{KL}$. The space generated by the $\hat{k}^{KL}$ columns of the matrix $W$ is denoted $\underline{\cal{W}}_{\hat{k}^{KL}}$. Considering a basis $\left(\xi_1,\dots,\xi_p\right)$ of $\cal{V}$, we define $\underline{\cal{V}}_p$ as the space generated by the $p$ columns of the matrix whose $(ij)^{th}$ element is  $<{\bf X}_i,\xi_j>$.  In the sequel, $\widehat{\Pin}_{k,\cal{V}}$ stands for the 
orthogonal projection in $\mathbb{R}^n$ onto  $\underline{\cal{V}}_p^{\perp}\cap \underline{\cal{W}}_{\hat{k}^{KL}}$ of dimension less or equal to $\hat{k}^{KL}$, while $\widehat{\Pin}_{\cal{V}}$ stands for the orthogonal projection onto $\underline{\cal{V}}_p$. Then, we consider the following parametric statistic:
\begin{eqnarray}\label{definition_phi_KL.V}
 \phi_{k,\cal{V}}({\bf Y},{\bf X}):=
\frac{\|\widehat{\Pin}_{k,\cal{V}} {\bf
Y}\|^2_n}{\|{\bf Y}-\widehat{\Pin}_{k,\cal{V}} {\bf Y}-\widehat{\Pin}_{\cal{V}} {\bf Y}\|_n^2/[n-\dim(\underline{\cal{V}}_p +\underline{\cal{W}}_{\hat{k}^{KL}})]}\  .
\end{eqnarray}
Under $H_{0,\cal{V}}$, $\phi_{k,\cal{V}}({\bf Y},{\bf X})/\hat{k}^{KL}$ behaves like a Fisher distribution with $(\dim(\underline{\cal{V}}_p^{\perp} \cap\underline{\cal{W}}_{\hat{k}^{KL}}),n-\dim(\underline{\cal{V}}_p +\underline{\cal{W}}_{\hat{k}^{KL}}))$ degrees of freedom. The proof is the same as that for $\phi_k({\bf Y},{\bf X})$. In typical situations, we have $\dim(\underline{\cal{V}}_p^{\perp} \cap\underline{\cal{W}}_{\hat{k}^{KL}})=k$  and $\dim(\underline{\cal{V}}_p +\underline{\cal{W}}_{\hat{k}^{KL}})=k+p$.
We reject $H_{0,\cal{V}}$ when the statistic
\begin{eqnarray*}
 T_{\alpha,\cal{V}}:= \sup_{ k\in\mathcal{K}_n,\ k\leq \mathrm{Rank}(\widehat{\Gamma}_n)
}\left[\phi_{k,\cal{V}}({\bf Y},{\bf X})-
\hat{k}^{KL}\bar{\F}_{\dim(\underline{\cal{V}}_p^{\perp} \cap\underline{\cal{W}}_{\hat{k}^{KL}}),n-\dim(\underline{\cal{V}}_p +\underline{\cal{W}}_{\hat{k}^{KL}})}^{-1}\{\alpha_{\K_n}({\bf X})\}\right]\ 
\end{eqnarray*}
is positive, where the  weight $\alpha_{\K_n}({\bf X})$ is chosen according to procedure $P_1$ (Bonferroni) or a slight variation of $P_2$ (Monte-Carlo). All the results stated for $T^{(1)}_{\alpha}$ and $T^{(2)}_{\alpha}$ are still valid with $T_{\alpha,\cal{V}}$. The extension to affine subspaces $\cal{V}$ is also possible.

\medskip
The power of $T_{\alpha}^{(1)}$ has been analysed over the collection of ellipsoids $\mathcal{E}_a(R)$. The considered ellipsoids describing the nonparametric alternatives are determined by the principal directions $(V_j)_{j\geq 1}$, which are generally unknown. In fact, for some functions  $\theta$ that are well represented by a prescribed basis  (as wavelet, spline or Fourier basis) and whose expansion in the eigenfunction basis decreases slowly, projecting the data onto the Karhunen-Lo\`eve expansion is not necessarily best suited. Alternatively, one can adopt a similar approach in the context of a prescribed basis (as wavelet, spline or Fourier basis) instead of the eigenfunctions basis discussed above. The size and the power of the corresponding procedures are in fact easier to derive than for a Karhunen-Lo\`eve approach as we do not have to control the randomness of the basis.  We refer for instance to \cite{baraud03} for such results in a fixed design regression problem. As 
$\theta$ is unknown, the best choice of basis (prescribed or estimated by PCA) is also unknown. A solution is to combine testing procedures based on different basis.




\section{Main proofs}\label{section_proofs}
	
In this section, we emphasize the core of the proofs. Arguments based on perturbation theory are introduced in the next section. All the technical and side results are postponed to Appendix \ref{section_proof_minimax}--\ref{section_technique}.

\subsection{Additional notations}

Given any integer $k<\mathrm{Rank}(\Gamma)$, we recall that  $\Gamma_k=\sum_{j=1}^k\lambda_i V_j\otimes V_j$, where $\otimes$ stands for the tensor product. Similarly, $\widehat{\Gamma}_{n,k}:=\sum_{j=1}^k\widehat{\lambda}_i \widehat{V}_j\otimes \widehat{V}_j$ denotes its empirical counterpart. For any  $k<\mathrm{Rank}(\Gamma)$, we note $\Pi_k$ the orthogonal projection in $\H$ onto the space spanned by $V_j$, $j=1,\ldots, k$, while $\widehat{\Pi}_k$ stands for the orthogonal projection onto the space spanned by $\widehat{V}_j$, $j=i,\ldots, k\wedge \mathrm{Rank}(\widehat{\Gamma}_n)$.

In order to translate the definition of the testing procedure into functional data analysis framework, we shall use   
 $\Delta=\mathbb{E}(\langle X,.\rangle Y)$. We note $\Delta_n=\sum_{i=1}^n\langle X_i,.\rangle {\bf Y}_i/n$ its empirical counterpart. For any $k\leq \mathrm{Rank}(\Gamma)$, we note
$A_k= \sum_{j=1}^k \lambda_j^{-1/2}\langle V_j,.\rangle V_j$ and $\widehat{A}_{k}= \sum_{j=1}^{k\wedge \mathrm{Rank}(\widehat{\Gamma}_n)} \widehat{\lambda}_j^{-1/2}\langle\widehat{V}_j,.\rangle \widehat{V}_j$ its empirical counterpart.

Let $S$ be a bounded linear operator on the Hilbert space $\H$. The corresponding operator norm will be denoted $\left\|\cdot\right\|_{\infty}$ where $\left\|S\right\|_{\infty}=\sup_{x\in \bf{B}\left(0,1\right)}\left\|S\left(x\right)\right\| $ and $\bf{B}\left(0,1\right)$ stands for the unit ball of $\H$.
Let $T$ be a Hilbert-Schmidt operator.
$\left\Vert \cdot\right\Vert _{HS}$ denotes the Hilbert-Schmidt norm and
$\mathrm{tr}$ stands for the classical trace (defined for trace-class
operators). We recall that $\left\Vert T\right\Vert _{HS}^{2}=\mathrm{tr}\left(  T^{\ast
}T\right)  $.

In the sequel, we note $\bar{\chi}_{k}(u)$ the probability that a $\chi^2$ variable with $k$ degrees of freedom is larger than $u$, while $\bar{\chi}_k^{-1}(u)$ denotes the  $1-u$ quantile of a $\chi^2$ random variable.

\subsection{Connection between $\phi_k({\bf Y},{\bf X})$ and the procedure of Cardot et al.~\cite{cardot_test}}\label{section_proof_lien_cardot}
In fact, the numerator of the statistic $\phi_k$ is exactly the same as the test statistic $\|\sqrt{n}\widehat{A}_{k}\Delta_n\| ^2$ introduced by Cardot et al.~\cite{cardot_test}, that is:
\begin{equation}\label{definition_phi_KL_alternative}
\phi_k({\bf Y},{\bf X})=\frac{\|\widehat{\Pin}_k {\bf
Y}\|^2_n}{\|{\bf Y}-\widehat{\Pin}_k {\bf Y}\|_n^2/(n-\hat{k}^{KL})}=
\frac{\|\sqrt{n}\widehat{A}_{k}\Delta_n\| ^2}{\|{\bf Y}
-\widehat{\Pin}_k {\bf
Y}\|^2_n/(n-\hat{k}^{KL})} \ . 
\end{equation}

\begin{proof}[Proof of Equation (\ref{definition_phi_KL_alternative})]
Consider the least-squares $\widehat{\theta}_k$ estimator of $\theta$ in the space generated by $\widehat{V}_j$, $j=1,\ldots, \hat{k}^{KL}$. It follows that $\|\widehat{\Pin}_k{\bf Y}\|_n^2 =
n\langle\widehat{\theta}_k,\widehat{\Gamma}_n\widehat{\theta}_{k}\rangle $.
Since $\widehat{\theta}_k= \widehat{\Gamma}_{n,k}^{-}\Delta_n$ where $\widehat{\Gamma}_{n,k}^{-}$ is the Moore-Penrose pseudo-inverse of $\widehat{\Gamma}_{n,k}$, we obtain
$$
\|\widehat{\Pin}_k{\bf Y}\|_n^2 =  n \langle\widehat{\Gamma}_{n,k}^{-}\Delta_n,\widehat{\Gamma}_n\widehat{\Gamma}_{n,k}^{-}\Delta_n\rangle 
=  n\langle\widehat{A}_{k}\Delta_n,\widehat{A}_{k}\widehat{\Gamma}_n\widehat{\Gamma}_{n,k}^{-}\Delta_n\rangle 
=  n\|\widehat{A}_{k}\Delta_n\| ^2\  .$$
\end{proof}

\subsection{Proof of the type I error bounds}

We first prove Propositions \ref{prop_niveau_gaussien_T.alpha.k} and \ref{prop_niveau_gaussien}. Afterwards, we derive Theorem  \ref{thrm_niveau}. Finally,  we explain how to adapt the arguments for Theorem \ref{thrm_niveau_T.alpha.k}.

\begin{proof}[Proof of Propositions \ref{prop_niveau_gaussien_T.alpha.k} and  \ref{prop_niveau_gaussien}]
Let us assume that $\epsilon$ follows a Gaussian distribution and that $\theta=0$.
Conditionally on ${\bf X}$, the statistic $\phi_{k}({\bf Y},{\bf X})/\hat{k}$ defined in (\ref{definition_KL_test}) follows a Fisher distribution with $(\hat{k},n-\hat{k})$ degrees of freedom.  Hence, conditionally on ${\bf X}$, the test $T_{\alpha,k}$ has a size exactly $\alpha$. Conditionally on ${\bf X}$,  $T_{\alpha}^{(1)}$ is a Bonferroni procedure of Fisher statistics and its size is smaller than $\alpha$. Reintegrating with respect to  ${\bf X}$, we derive that the size of $T_{\alpha}^{(1)}$ is smaller than $\alpha$. 
Let us turn to the second result. The quantity  $q_{{\bf X},\alpha}$ satisfies 
$$\mathbb{P}_{0}\left(\left. \sup_{k \in \mathcal{K}_n} \left\{  \frac{(n-\hat{k}) \| 
      \widehat{\Pin}_{k} \boldsymbol{\epsilon} \|_n^2 }{ \hat{k}\|\boldsymbol{\epsilon} -  
  \widehat{\Pin}_{k}\boldsymbol{\epsilon} \|_n^2 }  - 
\bar{\F}^{-1}_{\hat{k}, n-\hat{k}}\left(q_{{\bf X},\alpha}\right)\right\} >0 \right|{\bf X}    \right)= \alpha\ ,$$ 
which implies that $\mathbb{P}_{0}(T_{\alpha}^{(2)}|{\bf X})=\alpha \quad {\bf X}$ a.s.
\end{proof}

\begin{proof}[Proof of Theorem \ref{thrm_niveau}]

First, we state that $\hat{k}^{KL}=k$ with large probability.
\begin{lemma}\label{lemme_convergence_valeurs_propres_empirique}
Consider the event $\mathcal{A}_n$ defined by 
\begin{eqnarray}
\mathcal{A}_{n}  &  =&\left\{  \sup_{1\leq j\leq \bar{k}_{n}}\frac{\left\vert
\widehat{\lambda}_{j}-\lambda_{j}\right\vert }{\min\left\{  \lambda_{j}-\lambda_{j+1},\lambda_{j-1}-\lambda_{j}\right\}}\geq 1/2\right\}\ . \label{definition_an}
\end{eqnarray}
 Under Assumptions ${\bf B.2}$ and ${\bf B.3}$, we have 
\begin{equation}\label{eq_controle_an}
 \mathbb{P}(\mathcal{A}_n)\leq C(\gamma)\left[\frac{\bar{k}^3_n\log^{2}(\bar{k}_n\vee e)}{n}\right]\leq C(\gamma)\frac{1}{\log^2(n)}\ ,
\end{equation}
where $\gamma$ is a positive constant involved in Assumption ${\bf B.2}$.
\end{lemma}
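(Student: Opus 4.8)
The plan is to bound $\P(\mathcal{A}_n)$ by a union over the indices $j\le \bar{k}_n$, to control each eigenvalue deviation $|\widehat{\lambda}_j-\lambda_j|$ at the \emph{sharp, index-dependent} scale $\lambda_j/\sqrt{n}$ rather than at the crude Weyl scale $\|\widehat{\Gamma}_n-\Gamma\|_\infty\asymp 1/\sqrt n$, to extract lower bounds on the spectral gaps from the monotonicity in ${\bf B.2}$, and finally to use ${\bf B.3}$ to convert the resulting polynomial-in-$\bar{k}_n$ bound into a $1/\log^2(n)$ estimate.

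First I would reduce to a per-index statement. Setting $\delta_j:=\min\{\lambda_j-\lambda_{j+1},\lambda_{j-1}-\lambda_j\}$, the definition of $\mathcal{A}_n$ gives
\[\mathcal{A}_n\subseteq \bigcup_{j=1}^{\bar{k}_n}\Big\{|\widehat{\lambda}_j-\lambda_j|\ge \tfrac12\delta_j\Big\},\qquad \text{so}\qquad \P(\mathcal{A}_n)\le \sum_{j=1}^{\bar{k}_n}\P\!\left(|\widehat{\lambda}_j-\lambda_j|\ge \tfrac12\delta_j\right).\]
Next I would lower bound the gaps. Writing $w_j:=j((\log^{1+\gamma}j)\vee 1)$, Assumption ${\bf B.2}$ states exactly that $c_j:=\lambda_j w_j$ is non-increasing. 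Since $w_{j+1}-w_j\ge 1$, this yields $\lambda_j-\lambda_{j+1}\ge c_{j+1}(w_j^{-1}-w_{j+1}^{-1})=\lambda_{j+1}(w_{j+1}-w_j)/w_j\ge \lambda_{j+1}/w_{j+1}$ and, symmetrically, $\lambda_{j-1}-\lambda_j\ge \lambda_j/w_j$. A one-line case split (according to whether $\lambda_{j+1}\ge \lambda_j/2$) then gives $\lambda_j^2/\delta_j^2\le C\,w_j^2$; importantly, this uses only the monotonicity of $c_j$ and never a positive lower bound on the spacing itself, which is the sense in which ${\bf B.2}$ dispenses with spacing assumptions.

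The concentration step is where I would invoke the perturbation lemmas of Section \ref{section_perturb} (in particular Lemma \ref{lemlem1}). The leading term of the deviation is $\langle(\widehat{\Gamma}_n-\Gamma)V_j,V_j\rangle=\tfrac{\lambda_j}{n}\sum_{i=1}^n((\eta^{(j)}_i)^2-1)$, whose variance is at most $C\lambda_j^2/n$ by the fourth-moment control ${\bf B.1}$, while the nonlinear remainder is controlled at strictly smaller order through a bound of the type $\|\widehat{\Gamma}_n-\Gamma\|_\infty^2/\delta_j$ together with moment estimates on $\widehat{\Gamma}_n-\Gamma$. Markov's inequality then gives $\P(|\widehat{\lambda}_j-\lambda_j|\ge \delta_j/2)\le C\lambda_j^2/(n\delta_j^2)\le C\,w_j^2/n$, and summing over $j\le\bar{k}_n$ with $w_j\asymp j\log^{1+\gamma}(j)$ produces $\P(\mathcal{A}_n)\le C(\gamma)\,\bar{k}_n^3\log^{2}(\bar{k}_n\vee e)/n$ (up to the exact logarithmic exponent, which is harmless). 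The second inequality of \eqref{eq_controle_an} is then immediate from ${\bf B.3}$, since $\bar{k}_n\le n^{1/4}/\log^4(n)$ forces $\bar{k}_n^3/n\le \log^{-12}(n)\,n^{-1/4}$, far smaller than $1/\log^2(n)$.

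The hard part will be the concentration estimate at the scale $\lambda_j/\sqrt n$. The exact Weyl inequality $|\widehat{\lambda}_j-\lambda_j|\le\|\widehat{\Gamma}_n-\Gamma\|_\infty$ only delivers the $j$-independent scale $1/\sqrt n$, and feeding this into the sum is hopeless: with $\lambda_j\asymp j^{-2}$ one gets $\frac{1}{n}\sum_{j\le\bar k_n}w_j^2/\lambda_j^2\asymp \bar k_n^6/n$, which diverges under ${\bf B.3}$. Recovering the index-dependent scale therefore genuinely requires the first-order expansion of the perturbed eigenvalue together with a \emph{uniform} control of the second-order remainder over all $j\le\bar{k}_n$; this is precisely where the operator perturbation theory of Section \ref{section_perturb} must be combined with the gap bounds derived from ${\bf B.2}$, and where the moment assumptions on the $\eta^{(j)}$ are consumed.
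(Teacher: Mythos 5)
Your high-level architecture (union bound over $j\le\bar k_n$, spectral-gap lower bounds extracted from the monotonicity of $j\lambda_j\log^{1+\gamma}j$, a per-index deviation bound of order $w_j^2/n$, and the final summation under ${\bf B.3}$) reproduces the shape and the end result of the paper's argument, and your observation that the Weyl-type bound $|\widehat\lambda_j-\lambda_j|\le\|\widehat\Gamma_n-\Gamma\|_\infty$ is hopelessly too crude is exactly right. The gap computation from ${\bf B.2}$ is also sound.

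However, the concentration step contains a genuine gap. You assert $\P(|\widehat\lambda_j-\lambda_j|\ge\delta_j/2)\le C\lambda_j^2/(n\delta_j^2)$ by splitting off the linear term $\langle(\widehat\Gamma_n-\Gamma)V_j,V_j\rangle$ and controlling the remainder by ``a bound of the type $\|\widehat\Gamma_n-\Gamma\|_\infty^2/\delta_j$''. That remainder bound does not close the argument under ${\bf B.3}$. Take $\lambda_j\asymp j^{-2}$ (the Brownian motion, explicitly allowed by ${\bf B.2}$), so $\delta_j\asymp j^{-3}$. Requiring the remainder to be below $\delta_j/4$ forces $\|\widehat\Gamma_n-\Gamma\|_\infty^2\lesssim\delta_j^2\asymp j^{-6}$, and since $\mathbb{E}\|\widehat\Gamma_n-\Gamma\|_{HS}^2\asymp 1/n$, Markov gives only $\P(\text{remainder}\ge\delta_j/4)\lesssim j^6/n$; summing over $j\le\bar k_n$ yields $\bar k_n^7/n$, which diverges for $\bar k_n= n^{1/4}/\log^4 n$. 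So the ``strictly smaller order'' claim fails precisely in the regime the lemma is designed to cover. The fix — and this is what the paper actually does, and why its proof cannot be recast as first-order-expansion-plus-crude-remainder — is to measure the perturbation in the \emph{weighted} form $T_n(z)=(zI-\Gamma)^{-1/2}(\widehat\Gamma_n-\Gamma)(zI-\Gamma)^{-1/2}$ on the contour $\mathcal{B}_j$: because $\mathbb{E}\langle(\widehat\Gamma_n-\Gamma)V_l,V_k\rangle^2\le C\lambda_l\lambda_k/n$, the numerators decay at the same rate as the resolvent denominators, and Lemmas \ref{DomEigen} and \ref{perturb1} give $\mathbb{E}\sup_{z\in\mathcal{B}_j}\|T_n(z)\|_{HS}^2\le C(\gamma)[j\log j]^2/n$, which is the $w_j^2/n$ rate you need but cannot reach through $\|\widehat\Gamma_n-\Gamma\|_\infty$. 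The paper then localizes the empirical eigenvalues not by expanding $\widehat\lambda_j$ but by a Riesz-projection counting argument (Lemma \ref{lemma00}): on the complement of $\{\sup_{z}\|T_n(z)\|_\infty\ge 1/2\}$ a Neumann-series bound on $S_n(z)=(I-T_n(z))^{-1}$ shows that each disc $\mathcal{B}_j$ contains exactly one eigenvalue of $\widehat\Gamma_n$ and the auxiliary discs $\mathcal{B}'_j$ (plus the event $\{\widehat\lambda_1<(3\lambda_1-\lambda_2)/2\}$) contain none — a matching step your union-bound reduction takes for granted, since identifying the $j$-th \emph{ordered} empirical eigenvalue with the one near $\lambda_j$ requires exactly this exclusion of spurious eigenvalues in between.
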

This result, proved in Appendix \ref{section_perturb_technical}, relies on the perturbation theory of random operators. Observe that under the event $\overline{\mathcal{A}}_n$, we have $\hat{k}^{KL}=k$ for all $k\leq \bar{k}_n$. Consequently, we can replace $\hat{k}^{KL}$ by $k$ in the definition of the test statistic up to an event of probability less than $C(\gamma)/\log(n)$.
In the sequel, we use the alternative expression (\ref{definition_phi_KL_alternative}) of $\phi_{k}$ and we replace $\hat{k}^{KL}$ by $k$.
The proof is split into three main lemmas \ref{lemma_1} - \ref{lemma_3}. The first lemma, states that $\|\sqrt{n}A_k\Delta_n\| ^2/\sigma^2$ behaves like a $\chi^2$ distribution. Its proof (Appendix \ref{section:berry_essen}) relies on a multivariate Berry-Esseen theorem. The second lemma, which tells us that $\|\sqrt{n}A_k\Delta_n\| ^2/\sigma^2$ is close to $\|\sqrt{n}\widehat{A}_{k}\Delta_n\| ^2/\sigma^2$ is proved below. The third lemma, proved in Appendix \ref{section_technique}, states that $\|{\bf Y}-\widehat{\Pin}_k {\bf Y}\|_n^2/n$ concentrates well around $\sigma^2$. 

\begin{lemma}\label{lemma_1} Assume that ${\bf B.1}$ and ${\bf B'.3}$ hold.
For any $k\geq 1$ and any $x>0$, we have 
\begin{eqnarray*}
|\mathbb{P}\left(\|\sqrt{n}A_k\Delta_n\| ^2\geq
x\right)- \bar{\chi}_k(x/\sigma^2)|&\leq& C
\frac{k^{3/2}}{\sqrt{n}}\frac{\mathbb{E}\left[\epsilon^4\right]^{3/4}}{\sigma^3}
\sup_ { 1\leq j\leq k}\mathbb{E}\left[(\eta^{(j)})^4\right]^{3/4} \leq \frac{C}{\log^2 (n)} \ ,
\end{eqnarray*}
uniformly over all $k\leq \bar{k}_n$.
\end{lemma}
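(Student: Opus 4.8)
The plan is to reduce the claim to a quantitative central limit theorem for the squared Euclidean norm of a normalized sum of i.i.d.\ vectors. Throughout we work under $H_0$, so that $Y_i=\epsilon_i$ and $\Delta_n=\frac1n\sum_{i=1}^n\epsilon_i X_i$. Using $\langle V_j,X_i\rangle=\sqrt{\lambda_j}\,\eta_i^{(j)}$ from the KL expansion \eqref{KL-exp} together with the definition of $A_k$, one obtains $\lambda_j^{-1/2}\langle V_j,\Delta_n\rangle=\frac1n\sum_i\epsilon_i\eta_i^{(j)}$, whence
\begin{equation*}
\|\sqrt{n}A_k\Delta_n\|^2=\sum_{j=1}^k\Big(\frac{1}{\sqrt{n}}\sum_{i=1}^n\epsilon_i\eta_i^{(j)}\Big)^2=\Big\|\frac{1}{\sqrt{n}}\sum_{i=1}^n\xi_i\Big\|^2,\qquad \xi_i:=\big(\epsilon_i\eta_i^{(1)},\dots,\epsilon_i\eta_i^{(k)}\big)\in\R^k.
\end{equation*}
The vectors $\xi_1,\dots,\xi_n$ are i.i.d.\ and centered, and since $\epsilon$ is independent of $X$ while the $(\eta^{(j)})$ are uncorrelated with unit variance, their common covariance is exactly $\sigma^2 I_k$. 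Hence $\sigma^{-1}\xi_i$ has identity covariance, and $\|\sqrt{n}A_k\Delta_n\|^2/\sigma^2$ is the squared norm of the normalized sum $\sigma^{-1}n^{-1/2}\sum_i\xi_i$. Note that only uncorrelatedness of the $\eta^{(j)}$ is needed here, not independence.

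The first step is then to invoke a multivariate Berry--Esseen theorem over the family of Euclidean balls centered at the origin: one compares $\mathbb{P}(\|\sigma^{-1}n^{-1/2}\sum_i\xi_i\|\ge r)$ with the Gaussian probability $\mathbb{P}(\|Z\|\ge r)$ for $Z\sim\mathcal N(0,I_k)$, using that $\|Z\|^2\sim\chi^2_k$ so the latter equals $\bar{\chi}_k(r^2)$. Taking $r^2=x/\sigma^2$ and passing to complements yields
\begin{equation*}
\big|\mathbb{P}(\|\sqrt{n}A_k\Delta_n\|^2\ge x)-\bar{\chi}_k(x/\sigma^2)\big|\le C\,\frac{\E\|\sigma^{-1}\xi_1\|^3}{\sqrt{n}},
\end{equation*}
uniformly in $x$. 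The crucial point is to employ a version of the theorem whose error is governed by the third absolute moment $\E\|\sigma^{-1}\xi_1\|^3$ \emph{without} an additional power of the dimension $k$ (as is available for origin-centered balls), since a spurious factor $k^{1/4}$ would destroy the claimed exponent $k^{3/2}$.

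The second step is to bound this third moment. Factoring $\epsilon_1$ out and using independence,
\begin{equation*}
\E\|\xi_1\|^3=\E|\epsilon_1|^3\;\E\Big[\Big(\sum_{j=1}^k(\eta_1^{(j)})^2\Big)^{3/2}\Big].
\end{equation*}
By concavity of $t\mapsto t^{3/4}$ (Jensen), $\E|\epsilon_1|^3\le\E[\epsilon_1^4]^{3/4}$ and $\E|\eta^{(j)}|^3\le\E[(\eta^{(j)})^4]^{3/4}$; by the power-mean inequality $(\sum_{j=1}^k a_j)^{3/2}\le k^{1/2}\sum_{j=1}^k a_j^{3/2}$ applied to $a_j=(\eta_1^{(j)})^2$, the second expectation is at most $k^{3/2}\sup_{1\le j\le k}\E[(\eta^{(j)})^4]^{3/4}$. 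Combining gives $\E\|\sigma^{-1}\xi_1\|^3\le k^{3/2}\sigma^{-3}\E[\epsilon^4]^{3/4}\sup_{1\le j\le k}\E[(\eta^{(j)})^4]^{3/4}$, which is exactly the first inequality of the lemma.

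Finally, the second inequality follows by inserting the assumptions. Under ${\bf B.1}$ one has $\E[\epsilon^4]^{3/4}/\sigma^3=(\E[\epsilon^4]/\sigma^4)^{3/4}\le C_2^{3/4}$ and $\sup_j\E[(\eta^{(j)})^4]^{3/4}\le C_1^{3/4}$, so the bound reduces to $C\,k^{3/2}/\sqrt{n}$; under ${\bf B'.3}$, $k\le\bar{k}_n\le n^{1/4}/\log^4(n)$ gives $k^{3/2}/\sqrt{n}\le n^{-1/8}/\log^6(n)\le C/\log^2(n)$ uniformly over $k\le\bar{k}_n$, as required. The main obstacle is the first step: securing a dimension-robust quantitative CLT for the squared norm with the sharp $k^{3/2}$ scaling, which is why the detailed argument is deferred to Appendix \ref{section:berry_essen}.
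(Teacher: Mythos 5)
Your proposal is correct and follows essentially the same route as the paper: it reduces $\|\sqrt{n}A_k\Delta_n\|^2/\sigma^2$ to the squared norm of the normalized i.i.d.\ sum of the vectors $(\epsilon_i\eta_i^{(1)},\dots,\epsilon_i\eta_i^{(k)})$ with covariance $\sigma^2 I_k$, applies the ball version of Bentkus's multivariate Berry--Esseen theorem (the "second part" of Theorem 1.1 in \cite{bentkus03}, precisely the dimension-robust version you correctly insist on), and bounds the third moment by $k^{3/2}\E[\epsilon^4]^{3/4}\sup_j\E[(\eta^{(j)})^4]^{3/4}$ exactly as the paper does, before invoking ${\bf B'.3}$. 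Your derivation of the third-moment bound via the power-mean and Lyapunov inequalities simply spells out a step the paper states without proof.
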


\begin{lemma}\label{lemma_2}
Assume that ${\bf B.1}$--${\bf B'.3}$ hold. Writing  $x_{n,k}=1/(k\log^2(n))$, we have for all  $k\leq \bar{k}_n$, and all $n\geq 5$,
\begin{equation}\label{eq:upper_boundAkAchapeauk}
 \mathbb{P}\left[\|\sqrt{n}\widehat{A}_{k}\Delta_n\| ^2 \geq (1- x_{n,k})^{-1}\|\sqrt{n}A_k\Delta_n\| ^2\right]\leq \mathbb{P}\left[\mathcal{A}_n\right]+  \frac{C(\gamma)}{\log^2 (n)}+\left(\frac{\sqrt{e}}{\log(n)}\right)^k\ . 
\end{equation}
\end{lemma}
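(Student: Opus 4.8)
\textbf{Reformulation and reduction to $\overline{\mathcal{A}}_n$.}
The plan is to treat the two statistics multiplicatively. On the event $\overline{\mathcal{A}}_n$ of Lemma~\ref{lemme_convergence_valeurs_propres_empirique} we have $\hat{k}^{KL}=k$ for every $k\leq \bar{k}_n$, so $\widehat{A}_{k}$ has rank exactly $k$ and $A_k$, $\widehat{A}_{k}$ act on comparable $k$-dimensional subspaces; restricting to this event is what produces the term $\mathbb{P}[\mathcal{A}_n]$ in the bound. Writing $R:=(\widehat{A}_{k}-A_k)\Delta_n$ and using the triangle inequality,
\[
\|\sqrt{n}\widehat{A}_{k}\Delta_n\|\leq \|\sqrt{n}A_k\Delta_n\|+\|\sqrt{n}R\|\ ,
\]
so that $\|\sqrt{n}\widehat{A}_{k}\Delta_n\|^2\leq (1+\|R\|/\|A_k\Delta_n\|)^2\,\|\sqrt{n}A_k\Delta_n\|^2$. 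Since $(1+u)^2\leq (1-x_{n,k})^{-1}$ as soon as $u\leq x_{n,k}/2$ (for $x_{n,k}$ small), it suffices to prove that, outside an event of the prescribed probability, $\|R\|\leq (x_{n,k}/2)\,\|A_k\Delta_n\|$, i.e. a \emph{relative} control of the perturbation $R$ by the oracle quantity $A_k\Delta_n$. I would isolate the two ingredients of such a relative bound: a lower bound on $\|A_k\Delta_n\|$ and an absolute upper bound on $\|R\|$.

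\textbf{Lower bound on the oracle quantity.}
By Lemma~\ref{lemma_1}, the law of $\|\sqrt{n}A_k\Delta_n\|^2/\sigma^2$ is within $C/\log^2(n)$ in Kolmogorov distance of a $\chi^2$ law with $k$ degrees of freedom. Consequently,
\[
\mathbb{P}\Big[\|\sqrt{n}A_k\Delta_n\|^2\leq \sigma^2\tfrac{k}{\log^2(n)}\Big]\leq 1-\bar{\chi}_k\Big(\tfrac{k}{\log^2(n)}\Big)+\frac{C}{\log^2(n)}\ ,
\]
and the $\chi^2_k$ lower-tail estimate $\mathbb{P}[\chi^2_k\leq u]\leq (eu/k)^{k/2}$ evaluated at $u=k/\log^2(n)$ gives $1-\bar{\chi}_k(k/\log^2 n)\leq (e/\log^2 n)^{k/2}=(\sqrt{e}/\log n)^{k}$. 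This is exactly where the term $(\sqrt{e}/\log(n))^{k}$ originates, and it also explains the calibration $x_{n,k}=1/(k\log^2(n))$: on the complementary event $\|A_k\Delta_n\|^2\geq \sigma^2 k/\log^2(n)$, the target $\|R\|\leq (x_{n,k}/2)\|A_k\Delta_n\|$ reduces to the \emph{absolute} requirement $\|\sqrt{n}R\|^2\lesssim \sigma^2 n/(k\log^6(n))$.

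\textbf{Upper bound on the perturbation.}
It then remains to show $\|\sqrt{n}(\widehat{A}_{k}-A_k)\Delta_n\|^2\lesssim \sigma^2 n/(k\log^6(n))$ outside an event of probability $C(\gamma)/\log^2(n)$. I would derive this from the perturbation lemmas of Section~\ref{section_perturb}: on $\overline{\mathcal{A}}_n$ the eigenvalues satisfy $|\widehat{\lambda}_j-\lambda_j|\leq \tfrac12\min\{\lambda_{j-1}-\lambda_j,\lambda_j-\lambda_{j+1}\}$ and the eigenprojectors $\widehat{\Pi}_k$ are close to $\Pi_k$, which yields a quantitative bound on $\widehat{A}_{k}-A_k$. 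Decomposing this operator into a part coming from the eigenvalue errors $\widehat{\lambda}_j-\lambda_j$ and a part coming from the eigenfunction errors $\widehat{V}_j-V_j$ (the latter also accounting for the leakage of $\widehat{A}_{k}=\widehat{A}_{k}\widehat{\Pi}_k$ outside $\mathrm{Vect}(V_1,\ldots,V_k)$), one controls $\|(\widehat{A}_{k}-A_k)\Delta_n\|$ by the operator norms of these pieces times the relevant norm of $\Delta_n$; a concentration bound on $\widehat{\Gamma}_n-\Gamma$ then converts the restriction $k\leq \bar{k}_n\leq n^{1/4}/\log^4(n)$ of Assumption~${\bf B'.3}$ into the required smallness, the residual fluctuation contributing the term $C(\gamma)/\log^2(n)$.

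\textbf{Combination and main difficulty.}
Intersecting the three good events — $\overline{\mathcal{A}}_n$, $\{\|A_k\Delta_n\|^2\geq \sigma^2 k/\log^2 n\}$, and the perturbation event — one obtains $\|R\|\leq (x_{n,k}/2)\|A_k\Delta_n\|$, hence $\|\sqrt{n}\widehat{A}_{k}\Delta_n\|^2\leq (1-x_{n,k})^{-1}\|\sqrt{n}A_k\Delta_n\|^2$; a union bound over the complements yields precisely $\mathbb{P}[\mathcal{A}_n]+C(\gamma)/\log^2(n)+(\sqrt{e}/\log n)^{k}$. I expect the third step to be the main obstacle: the perturbation $R$ must be dominated \emph{relative} to $A_k\Delta_n$, a quantity which may itself be atypically small, so the crude bound $\|R\|\leq \|\widehat{A}_{k}-A_k\|_\infty\|\Delta_n\|$ is too lossy. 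The delicate point is to track the eigenfunction perturbation with the correct $k$ and $\log n$ dependence (using that the eigengaps enter only through $\overline{\mathcal{A}}_n$ and that, at the noise scale $\sigma$, $\Delta_n$ is essentially concentrated on the first coordinates), so that the absolute bound $\|\sqrt{n}R\|^2\lesssim \sigma^2 n/(k\log^6 n)$ holds with only a $C(\gamma)/\log^2(n)$ failure probability.
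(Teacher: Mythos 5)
Your reduction and your treatment of the oracle term match the paper's proof exactly: the elementary inequality converting the event $\bigl\{\|\sqrt{n}\widehat{A}_{k}\Delta_n\|^2\geq (1-x_{n,k})^{-1}\|\sqrt{n}A_k\Delta_n\|^2\bigr\}$ into a lower bound on the ratio $\|\sqrt{n}(\widehat{A}_{k}-A_k)\Delta_n\|/\|\sqrt{n}A_k\Delta_n\|$, and the bound $\mathbb{P}[\|\sqrt{n}A_k\Delta_n\|^2\leq \sigma^2 k/\log^2 n]\leq C/\log^2(n)+(\sqrt{e}/\log n)^k$ obtained from Lemma \ref{lemma_1} together with the $\chi^2$ lower-tail estimate, are both the paper's steps, and you correctly trace each term of the final bound to its source. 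The problems are in your third step, which is where the content of the lemma actually lies. First, an arithmetic slip: combining $\|R\|\leq (x_{n,k}/2)\|A_k\Delta_n\|$ with $\|\sqrt{n}A_k\Delta_n\|^2\geq \sigma^2 k/\log^2 n$ and $x_{n,k}=1/(k\log^2 n)$ yields the absolute target $\|\sqrt{n}(\widehat{A}_{k}-A_k)\Delta_n\|^2\lesssim \sigma^2/(k\log^6 n)$, not $\sigma^2 n/(k\log^6 n)$; your stated target is weaker by a factor of $n$, and an argument calibrated to it (which would indeed be nearly trivial) does not close the proof.

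Second, and more substantively, the bound on the perturbation term is never established: you correctly observe that $\|\widehat{A}_{k}-A_{k}\|_\infty\|\Delta_n\|$ is too lossy, but the replacement you sketch (``a concentration bound on $\widehat{\Gamma}_n-\Gamma$'' plus a decomposition into eigenvalue and eigenfunction errors) does not by itself reach the scale $\sigma^2/(k\log^6 n)$. The mechanism in the paper is different and essential: since $\widehat{A}_{k}-A_k$ depends on ${\bf X}$ only while, under $H_0$, $\Delta_n=n^{-1}\sum_i\langle X_i,\cdot\rangle\epsilon_i$ with $\boldsymbol{\epsilon}$ independent of ${\bf X}$, the cross terms vanish and $\mathbb{E}[\|(\widehat{A}_{k}-A_k)\Delta_n\|^2\mathbf{1}_{\overline{\mathcal{A}}_n}]=\frac{\sigma^2}{n}\mathbb{E}[\|(\widehat{A}_{k}-A_k)X_1\|^2\mathbf{1}_{\overline{\mathcal{A}}_n}]$; this conditional second-moment identity is what buys the indispensable factor $1/n$. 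The remaining expectation is then expanded into traces and reduced to $\mathbb{E}[\mathrm{tr}\{\Gamma_k^{-1/2}(\Gamma_k^{1/2}-\widehat{\Gamma}_{n,k}^{1/2})\}\mathbf{1}_{\overline{\mathcal{A}}_n}]$, which is exactly the object controlled by the contour-integral perturbation argument of Lemma \ref{lemlem1} (bound of order $k^3\log^2(k\vee e)/n+k/\sqrt{n}$); only then do Markov's inequality at the threshold $\sigma^2/(16k\log^6 n)$ and Assumption ${\bf B'.3}$ produce the residual $C(\gamma)/\log^2(n)$. Without this identity and without invoking Lemma \ref{lemlem1}, your step (c) states what must be proved rather than proving it.
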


\begin{lemma}\label{lemma_3}
Uniformly over all $k\leq \bar{k}_n$, we have
\begin{eqnarray*}
 \mathbb{P}\left[\left|\frac{\|{\bf Y}-\widehat{\Pin}_k {\bf Y}\|_n^2}{n\sigma^2} -
1\right|\geq \frac{k\log^2 (n)}{n}+ 8\sqrt{\frac{\log\log n}{n}}\right]\leq \frac{C}{\log^2(n)}+ \frac{C'}{\sqrt{n}}
\ .
\end{eqnarray*}
\end{lemma}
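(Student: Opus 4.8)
The plan is to place myself under $H_0$, where $\theta=0$ forces ${\bf Y}=\boldsymbol{\epsilon}$, so that the quantity to control is $\|(\mathrm{Id}-\widehat{\Pin}_k)\boldsymbol{\epsilon}\|_n^2/(n\sigma^2)$. Since $\widehat{\Pin}_k$ is an orthogonal projector, Pythagoras gives $\|(\mathrm{Id}-\widehat{\Pin}_k)\boldsymbol{\epsilon}\|_n^2=\|\boldsymbol{\epsilon}\|_n^2-\|\widehat{\Pin}_k\boldsymbol{\epsilon}\|_n^2$, and I would split the centred deviation as $\big(\|\boldsymbol{\epsilon}\|_n^2/(n\sigma^2)-1\big)-\|\widehat{\Pin}_k\boldsymbol{\epsilon}\|_n^2/(n\sigma^2)$. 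Because the projection term is nonnegative, the upper deviation is governed by the fluctuations of $\|\boldsymbol{\epsilon}\|_n^2$ alone, whereas a lower deviation exceeding $k\log^2(n)/n+8\sqrt{\log\log n/n}$ forces either $\|\boldsymbol{\epsilon}\|_n^2$ to drop by $8\sigma^2\sqrt{n\log\log n}$ or $\|\widehat{\Pin}_k\boldsymbol{\epsilon}\|_n^2$ to exceed $k\sigma^2\log^2 n$. The bad event is thus contained in the union of a ``bulk'' event for $\sum_i\epsilon_i^2$ and a ``projection'' event, which I bound separately; note that this decomposition never invokes $\widehat{k}^{KL}=k$, so the event $\mathcal{A}_n$ of Lemma~\ref{lemme_convergence_valeurs_propres_empirique} is not needed here.

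For the projection term I would condition on ${\bf X}$: then $\widehat{\Pin}_k$ is a deterministic projector of rank $\widehat{k}^{KL}\le k$, and $\boldsymbol{\epsilon}$ is independent of it with i.i.d. coordinates, so $\mathbb{E}\big[\|\widehat{\Pin}_k\boldsymbol{\epsilon}\|_n^2\mid{\bf X}\big]=\sigma^2\,\mathrm{tr}(\widehat{\Pin}_k)=\sigma^2\widehat{k}^{KL}\le k\sigma^2$. A single use of Markov's inequality then gives $\mathbb{P}\big[\|\widehat{\Pin}_k\boldsymbol{\epsilon}\|_n^2\ge k\sigma^2\log^2 n\mid{\bf X}\big]\le 1/\log^2 n$, and integrating over ${\bf X}$ produces the $C/\log^2(n)$ contribution. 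This first-moment bound is exactly what dictates the width $k\log^2(n)/n$: inflating the conditional mean $k/n$ by a factor $\log^2 n$ is the cheapest way to make the resulting tail summable against the $\simeq\log n$ scales of $\mathcal{K}_n$ in the proof of Theorem~\ref{thrm_niveau}.

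The delicate part is the bulk term, for which I must show $\mathbb{P}\big[\,|\sum_{i=1}^n(\epsilon_i^2-\sigma^2)|\ge 8\sigma^2\sqrt{n\log\log n}\,\big]\le C/\log^2(n)+C'/\sqrt n$. The summands $\zeta_i=\epsilon_i^2-\sigma^2$ are i.i.d. and centred, but a plain Chebyshev bound only yields $C/\log\log n$ at this threshold, which is far too weak since the bulk event is common to all $k$ and must ultimately be $\le C/\log n$. I would therefore reach for a Fuk--Nagaev/Bernstein-type inequality, decomposing the tail of $\sum_i\zeta_i$ into a sub-Gaussian piece $\exp\!\big(-c\,t^2/(n\,\mathrm{Var}\,\zeta)\big)$ and a heavy-tail polynomial piece governed by the higher moments of $\zeta$. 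At the threshold $t=8\sigma^2\sqrt{n\log\log n}$ the exponential piece equals $(\log n)^{-c'}$, and the constant $8$ is calibrated through ${\bf B.1}$ (which yields $\mathrm{Var}\,\zeta\le (C_2-1)\sigma^4$) so that $c'\ge 2$, delivering the $C/\log^2(n)$ term.

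The hard point is the polynomial piece, which must be pushed down to $C'/\sqrt n$. This is genuinely delicate: with only the fourth moment of $\epsilon$, $\zeta=\epsilon^2-\sigma^2$ has merely a finite variance, and in the borderline case where $\epsilon^2$ has a tail $\sim u^{-2}$ the single largest summand already makes the tail of order $1/\log\log n$, saturating Chebyshev. Obtaining the rate $1/\sqrt n$ therefore requires integrability of $\epsilon$ beyond the fourth moment (so that $\zeta$ has a moment of order strictly greater than two, making the Fuk--Nagaev polynomial term $\sum_i\mathbb{E}|\zeta_i|^p/t^p$ decay like $1/\sqrt n$); this moment control is the crux of the argument and the reason the $\sqrt{\log\log n}$ scaling is forced. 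Granting it, a final union bound---counting the $k$-free bulk event once and the conditional projection event uniformly over $k\le\bar{k}_n$---assembles the stated probability $C/\log^2(n)+C'/\sqrt n$.
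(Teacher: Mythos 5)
Your proof has the same architecture as the paper's: the Pythagoras identity $\|{\bf Y}-\widehat{\Pin}_k {\bf Y}\|_n^2=\|{\bf Y}\|_n^2-\|\widehat{\Pin}_k {\bf Y}\|_n^2$, the union bound over a $k$-free bulk event and a projection event, and the treatment of the projection term by computing $\mathbb{E}[\|\widehat{\Pin}_k\boldsymbol{\epsilon}\|_n^2\mid {\bf X}]=\sigma^2\,\mathrm{tr}(\widehat{\Pin}_k)=\sigma^2\hat{k}^{KL}\leq k\sigma^2$ and applying Markov at level $k\sigma^2\log^2 n$ are exactly what the paper does (and you are right that the event $\mathcal{A}_n$ plays no role here). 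The only genuine divergence is the bulk term $\sum_i(\epsilon_i^2-\sigma^2)$: the paper approximates the normalized sum by a Gaussian via Berry--Esseen (this is where its $C'/\sqrt{n}$ comes from) and then applies the Laurent--Massart $\chi^2$-type quantile bound with $x=(\log n)^{-16}$, giving $2x\leq C/\log^2(n)$; you instead invoke a Fuk--Nagaev inequality whose sub-Gaussian piece replaces the Gaussian quantile and whose polynomial piece replaces the Berry--Esseen error. The two routes are interchangeable and yield the same two-term bound.

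The moment obstruction you single out is real, and you are right not to sweep it under the rug: a Fuk--Nagaev polynomial term $n\,\mathbb{E}|\epsilon^2-\sigma^2|^p/t^p$ at $t\asymp \sigma^2\sqrt{n\log\log n}$ is of order $n^{1-p/2}$ and reaches $C'/\sqrt{n}$ only for $p\geq 3$, i.e.\ a sixth moment of $\epsilon$, while ${\bf B.1}$ only provides a fourth. But this is not a defect of your route relative to the paper's: the Berry--Esseen step of the paper, applied to $\sum_i\epsilon_i^2$, requires the third absolute moment of $\epsilon^2-\sigma^2$ (again a sixth moment of $\epsilon$), even though the constant it displays, $\mathbb{E}|\epsilon|^3/\sigma^3$, is the one appropriate for $\sum_i\epsilon_i$. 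So your proposal makes explicit, rather than creates, the extra integrability needed for the $C'/\sqrt{n}$ remainder; modulo that shared caveat the argument is complete and matches the intended proof.
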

Let us upper bound the rejection probability due to the statistic $\phi_k$ 
\begin{eqnarray*}
\nonumber
\mathbb{P}\left[\phi_k\left({\bf Y},{\bf X}\right)\geq k\bar{\F}_{k,n-k}^{-1}\left(\alpha/|\mathcal{K}_n|\right)\right]&\leq &
\mathbb{P}\left[\frac{\|\sqrt{n}\widehat{A}_{k}\Delta_n\| ^2}{\|{\bf
Y}
-\widehat{\Pin}_k {\bf Y}\|_n^2/n}\geq 
k\bar{\F}_{k,n-k}^{-1}\left(\alpha/|\mathcal{K}_n|\right)\right]
\end{eqnarray*}
by the three following probabilities
\begin{eqnarray*}
\lefteqn{\mathbb{P}\left[\frac{\|\sqrt{n}A_k\Delta_n\| ^2}{(1-x_{n,k})\sigma^2}\geq k\left(1- 8\sqrt{\frac{\log\log(n)}{n}}-\frac{k\log^2 (n)}{n}
\right)\bar{\F}_{k,n-k}^{-1}\left(\alpha/|\mathcal{K}_n|\right)\right]}&&\\&& +\mathbb{P}\left[\|\sqrt{n}\widehat{A}_{k}\Delta_n\| ^2 \geq (1- x_{n,k})^{-1}\|\sqrt{n}A_k\Delta_n\| ^2\right] \\ &&+ 
\mathbb{P}\left[\left|\frac{\|{\bf Y}-\widehat{\Pin}_k {\bf Y}\|_n^2}{n\sigma^2} -
1\right|\geq \frac{k\log^2 (n)}{n} + 8\sqrt{\frac{\log\log(n)}{n}}\right] \ .
\end{eqnarray*}
Gathering the above results, we obtain that this probability is upper bounded by 
\begin{equation} \label{inegalite_finale}
\frac{C(\gamma)}{\log^2(n)}+\left(\frac{\sqrt{e}}{\log(n)}\right)^k
+ \bar{\chi}_k\left[k\left(1-8\sqrt{\frac{\log\log n}{n}}-\frac{k\log^2 (n)}{n}-x_{n,k}\right)\bar{\F}_{k,n-k}^{-1}\left(\alpha/|\mathcal{K}_n|\right)\right]\ ,
\end{equation}
uniformly over all $k\leq \bar{k}_n$.

\begin{lemma}\label{lemma_bidouillage_proba} Writing $t= 8\sqrt{\frac{\log\log n}{n}}+\frac{k\log^2 (n)}{n}+\frac{1}{k\log^2(n)}$,  we have 
 for $n$ larger than some numerical constant
\begin{eqnarray*}
\bar{\chi}_k\left[k(1- t)
\bar{\F}_{k,n-k}^{-1}\left(\alpha/|\mathcal{K}_n|\right)\right]\leq    \frac{\alpha}{|\mathcal{K}_n|} \left(1+\frac{C(\alpha)}{\log(n)}\right)\ .
\end{eqnarray*}
\end{lemma}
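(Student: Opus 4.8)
The plan is to relate the tail of a chi-square with $k$ degrees of freedom evaluated at a slightly shrunken threshold $k(1-t)\bar{\F}_{k,n-k}^{-1}(\alpha/|\mathcal{K}_n|)$ to the tail at the unshrunken threshold, and to control the inflation factor that this shrinkage produces. First I would recall the elementary link between the Fisher and chi-square quantiles: since $\phi_k/k$ under the null behaves like a Fisher $(k,n-k)$ variable and the denominator $\|\boldsymbol{\epsilon}-\widehat{\Pin}_k\boldsymbol{\epsilon}\|_n^2/(n-k)$ concentrates around $\sigma^2$, the quantity $k\bar{\F}^{-1}_{k,n-k}(u)$ is, up to a $1+o(1)$ factor driven by the denominator, comparable to $\bar{\chi}_k^{-1}(u)$. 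So I would write $k\bar{\F}^{-1}_{k,n-k}(\alpha/|\mathcal{K}_n|)=\bar{\chi}_k^{-1}(\alpha/|\mathcal{K}_n|)(1+r_{n,k})$ with $r_{n,k}$ an explicitly controlled remainder of order at most $(k+\log(1/u))/n$, using standard Fisher-to-chi-square comparisons (this is the kind of bound already invoked implicitly in the definition of the tests).

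Next I would substitute this into the left-hand side and write
\[
\bar{\chi}_k\bigl[k(1-t)\bar{\F}^{-1}_{k,n-k}(\alpha/|\mathcal{K}_n|)\bigr]
=\bar{\chi}_k\bigl[(1-t)(1+r_{n,k})\bar{\chi}_k^{-1}(\alpha/|\mathcal{K}_n|)\bigr]\ .
\]
The heart of the matter is then to quantify how much the chi-square tail grows when its argument is multiplied by a factor $(1-\delta)$ slightly below $1$, where $\delta=1-(1-t)(1+r_{n,k})$. Writing $v=\bar{\chi}_k^{-1}(\alpha/|\mathcal{K}_n|)$ so that $\bar{\chi}_k(v)=\alpha/|\mathcal{K}_n|$, I would control the ratio $\bar{\chi}_k((1-\delta)v)/\bar{\chi}_k(v)$. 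The clean way is to differentiate the log-tail: using the density $f_k$ of the chi-square, $\frac{d}{du}\log\bar{\chi}_k(u)=-f_k(u)/\bar{\chi}_k(u)$, and the standard bound $f_k(u)/\bar{\chi}_k(u)\leq (u/2)$ for $u\geq k$ (equivalently, the known fact that $-\log\bar{\chi}_k(u)$ has derivative bounded by $u/2$). Integrating this derivative from $(1-\delta)v$ to $v$ gives
\[
\log\frac{\bar{\chi}_k((1-\delta)v)}{\bar{\chi}_k(v)}\leq \int_{(1-\delta)v}^{v}\frac{s}{2}\,ds\leq \frac{\delta v^2}{2}\ ,
\]
so the inflation factor is at most $\exp(\delta v^2/2)$. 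It then remains to show $\delta v^2$ is $O(1/\log n)$ uniformly over $k\leq\bar{k}_n$.

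This last verification is where the main obstacle lies, and it uses the precise form of $t$ and the dyadic bound $|\mathcal{K}_n|\leq C\log n$. The chi-square quantile satisfies $v=\bar{\chi}_k^{-1}(\alpha/|\mathcal{K}_n|)\leq k+C\bigl(\sqrt{k\log(|\mathcal{K}_n|/\alpha)}+\log(|\mathcal{K}_n|/\alpha)\bigr)$ by Laurent–Massart-type deviation bounds, so $v=O(k+\log\log n+\log(1/\alpha))$ and $v^2=O(k^2+(\log\log n)^2+\log^2(1/\alpha))$. Meanwhile $t= 8\sqrt{(\log\log n)/n}+k\log^2(n)/n+1/(k\log^2 n)$, and $\delta\leq t+|r_{n,k}|$. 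The three pieces of $\delta v^2$ must each be checked to be small: the term $\frac{k\log^2 n}{n}\cdot k^2$ is controlled because Assumption ${\bf B'.3}$ forces $k\leq\bar{k}_n\leq n^{1/4}/\log^4 n$, so $k^3\log^2 n/n\leq \log^{-10}n$; the term $\frac{1}{k\log^2 n}\cdot v^2$ is controlled since $v^2/k=O(k)$ so $v^2/(k\log^2 n)=O(k/\log^2 n)=O(1/\log n)$; and the $\sqrt{(\log\log n)/n}\cdot v^2$ term is dominated likewise under ${\bf B'.3}$. Hence $\delta v^2/2=O(1/\log n)$, giving $\exp(\delta v^2/2)\leq 1+C(\alpha)/\log n$, and multiplying back by $\bar{\chi}_k(v)=\alpha/|\mathcal{K}_n|$ yields the claim. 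The delicate book-keeping is to carry all constants uniformly in $k\leq\bar{k}_n$ and to confirm that the slow dyadic growth of $|\mathcal{K}_n|$ keeps the logarithmic factors benign; I would isolate the bound on $v$ as a preliminary inequality to keep the final estimate transparent.
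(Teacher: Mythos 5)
Your overall strategy (convert the Fisher quantile to a chi-square quantile, then control the inflation of the chi-square tail under a small perturbation of its argument) matches the paper's, but the quantitative input you use for the second step is too weak, and your final bookkeeping breaks down because of it. You bound the hazard rate of the $\chi^2_k$ distribution by $f_k(u)/\bar{\chi}_k(u)\leq u/2$ and integrate it over $[(1-\delta)v,v]$, which forces you to show $\delta v^2=O(1/\log n)$. That is false uniformly over $k\in\mathcal{K}_n$: with $v=\bar{\chi}_k^{-1}(\alpha/|\mathcal{K}_n|)\asymp k$ for large $k$, the term $1/(k\log^2 n)$ inside $t$ gives $\delta v^2\gtrsim v^2/(k\log^2 n)\asymp k/\log^2 n$, which is \emph{not} $O(1/\log n)$ once $k\gg\log n$ --- and $k$ ranges up to $\bar{k}_n\asymp n^{1/4}/\log^4 n$. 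Your claim that $O(k/\log^2 n)=O(1/\log n)$ is exactly where the argument fails; for $k=\bar{k}_n$ the exponent $\delta v^2/2$ in your inflation factor actually diverges.

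The repair is to use the sharp hazard bound: by integration by parts, $f_{\chi_k}(x)/\bar{\chi}_k(x)\leq 1/2$, an absolute constant rather than $x/2$. This is what the paper does: it first converts the multiplicative shrinkage into an \emph{additive} shift of the quantile of size $u=C(\alpha)/\log n$ (the dangerous contribution is $v/(k\log^2 n)\asymp 1/\log^2 n$, which is harmless once only one power of $v$ appears), and then uses $\bar{\chi}_k(x-u)\leq\bar{\chi}_k(x)\left(1+\frac{u}{2}e^{u/2}\right)$. With the constant hazard bound your own integral yields $\log\left[\bar{\chi}_k((1-\delta)v)/\bar{\chi}_k(v)\right]\leq\delta v/2$ instead of $\delta v^2/2$, and the verification $\delta v=O(1/\log n)$ then goes through for all $k\leq\bar{k}_n$. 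Separately, your first step (the Fisher-to-chi-square quantile comparison with remainder $r_{n,k}$) is asserted rather than proved; the paper derives it concretely, at the cost of replacing the level $\alpha/|\mathcal{K}_n|$ by $\alpha/|\mathcal{K}_n|+1/n$, by controlling the deviation of $X_{n-k}/(n-k)$ around $1$ --- this part of your sketch is fixable but should not be left implicit.
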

The proof of this technical lemma is postponed to Appendix \ref{section_technique}. We conclude by combining \eqref{inegalite_finale} with Lemma \ref{lemma_bidouillage_proba} and taking an union bound over all $k\in\mathcal{K}_n$ (recall that $|\mathcal{K}_n|\leq \log(n)$).
%
\end{proof}

\begin{proof}[Proof of Theorem \ref{thrm_niveau_T.alpha.k}]
Define $t= 8\sqrt{\log\log n/n}+k\log^2(n)/n+1/k\log^2(n)$. 
Gathering Lemmas \ref{lemma_1}, \ref{lemma_2}, and \ref{lemma_3} as in the proof of Theorem \ref{thrm_niveau} and relying an Condition ${\bf B.3}$, we derive an upper bound analogous to \eqref{inegalite_finale}
\begin{equation*}
\mathbb{P}\left[\phi_k({\bf Y},{\bf X})\geq 
k\bar{\F}_{k,n-k}^{-1}\left(\alpha\right)\right] \leq \bar{\chi}_k\left[k\left(1-t\right)\bar{\F}_{k,n-k}^{-1}\left(\alpha\right)\right]+
\frac{C(\gamma)}{\log(n)}\ ,
\end{equation*}
 for $n$ large enough. Applying the following inequality (proved in Appendix \ref{section_technique}) allows us to conclude.
\begin{lemma}\label{lemma_bidouillage_proba2}
  For $n$ larger than some numerical constant, we have
\begin{equation*}
\bar{\chi}_k\left[k\left(1-t\right)\bar{\F}_{k,n-k}^{-1}\left(\alpha\right)\right]\leq    \alpha \left(1+\frac{C(\alpha)}{\log(n)}\right)\ .
\end{equation*}
\end{lemma}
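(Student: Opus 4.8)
The plan is to track the multiplicative slack $(1-t)$ through the exact relation between the Fisher and $\chi^2$ tails. Write $q_F:=k\bar{\F}_{k,n-k}^{-1}(\alpha)$ and let $D:=\chi^2_{n-k}/(n-k)$ be independent of a $\chi^2_k$ variable, so that $kF_{k,n-k}=\chi^2_k/D$. By definition of the quantile, $\alpha=\P(kF_{k,n-k}>q_F)=\E_D[\bar{\chi}_k(q_FD)]$. Denoting by $f_k$ the density of the $\chi^2_k$ law and setting $P:=\bar{\chi}_k((1-t)q_F)$ (the quantity to bound), I would first write
\[P-\alpha=\E_D\big[\bar{\chi}_k((1-t)q_F)-\bar{\chi}_k(q_FD)\big]=\E_D\Big[\int_{(1-t)q_F}^{q_FD}f_k(x)\,dx\Big].\]

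Next I would discard the contribution of $\{D\le 1-t\}$, on which the signed integral is nonpositive. On $\{D>1-t\}$, provided $(1-t)q_F$ lies to the right of the mode of $f_k$ (so $f_k$ is nonincreasing there), the integral is at most $q_F(D-1+t)\,f_k((1-t)q_F)$. A Mills-type bound — following from the elementary identity $\bar{\chi}_k(u)=2f_k(u)+\bar{\chi}_{k-2}(u)$ for $k\ge2$ (integration by parts) and from the Gaussian Mills ratio for $k=1$ — gives $f_k((1-t)q_F)\le \bar{\chi}_k((1-t)q_F)=P$. Taking expectations and using $\E_D[(D-1+t)_+]\le \tfrac12\E_D|D-1|+t\le (2(n-k))^{-1/2}+t$ (recall $\var(D)=2/(n-k)$), I obtain $P-\alpha\le \delta P$ with $\delta:=q_F\big[(2(n-k))^{-1/2}+t\big]$, hence $P\le \alpha/(1-\delta)$.

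It then remains to show $\delta\le C(\alpha)/\log n$, for which two ingredients are needed. First, a crude two-sided control of $q_F$: splitting $\alpha=\E_D[\bar{\chi}_k(q_FD)]$ on $\{D\ge 1/2\}$ and using that the lower tail $\P(D<1/2)=\P(\chi^2_{n-k}<(n-k)/2)$ is exponentially small (Laurent--Massart) together with the standing assumption $\alpha\ge e^{-\sqrt n}$ yields $q_F\le 2\bar{\chi}_k^{-1}(\alpha/2)\le C(\alpha)(k+1)$; splitting instead on $\{D\le1\}$ and using $\P(D\le1)>1/2$ gives $q_F\ge \bar{\chi}_k^{-1}(2\alpha)$, which for $\alpha$ bounded away from $1$ and $n$ large places $(1-t)q_F$ beyond the mode, justifying the monotonicity used above. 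Plugging $q_F\le C(\alpha)(k+1)$ into $\delta$ and invoking Assumption ${\bf B.3}$, namely $k\le n^{1/4}/\log^4 n$, the dominant term is $q_F t\asymp (k+1)t$, and the explicit form of $t$ gives $kt=8k\sqrt{\log\log n/n}+k^2\log^2(n)/n+1/\log^2(n)\le 3/\log^2 n$, while $(k+1)(2(n-k))^{-1/2}\le Cn^{-1/4}$. Thus $\delta\le C(\alpha)/\log^2 n$, and for $n$ large $\delta\le1/2$, whence $P\le\alpha(1+2\delta)\le\alpha(1+C(\alpha)/\log n)$.

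The crux of the argument is precisely this final control of $\delta$: the multiplicative relaxation $t$ is multiplied by the Fisher quantile scale $q_F$, which is of order $k$, so one must show that $kt$ stays of order $1/\log^2 n$. This is exactly where Assumption ${\bf B.3}$ and the precise shape of $t$ enter — the seemingly large term $1/(k\log^2 n)$ in $t$ is tailored so that, after multiplication by $k$, it contributes only $1/\log^2 n$, uniformly in $k$. Finally, the companion Lemma \ref{lemma_bidouillage_proba} would be proved identically, replacing $\alpha$ by $\alpha/|\mathcal{K}_n|$ throughout, which remains $\ge e^{-2\sqrt n}$ for large $n$ since $|\mathcal{K}_n|\le\log n$.
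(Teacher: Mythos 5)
Your proof is correct, and it rests on the same two pillars as the paper's argument --- the concentration of $\chi^2_{n-k}/(n-k)$ around $1$ and a Mills-ratio bound $f_k\le \bar{\chi}_k$ evaluated at the quantile, combined with the observation that $q_F\asymp_\alpha k$ so that $q_F\,t\lesssim 1/\log^2 n$ thanks to ${\bf B.3}$ and the tailored last term $1/(k\log^2 n)$ of $t$ --- but it is organized genuinely differently. The paper first proves the quantile comparison $k\bar{\F}^{-1}_{k,n-k}(\alpha)\ge \bar{\chi}_k^{-1}(\alpha+1/n)/(1+4\sqrt{\log n/n})$, converts the multiplicative slack $(1-t)$ into an \emph{additive} shift of size $C(\alpha)/\log n$ of the $\chi^2$ quantile via $\bar{\chi}_k^{-1}(u)\le k+2\sqrt{k\log(1/u)}+2\log(1/u)$, and finishes with the local relative-continuity estimate $\bar{\chi}_k(x-u)\le\bar{\chi}_k(x)\left[1+\tfrac{u}{2}e^{u/2}\right]$. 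You never invert a quantile: starting from the exact identity $\alpha=\E_D\left[\bar{\chi}_k(q_FD)\right]$ you bound the difference $P-\alpha$ by an integral and arrive at the self-bounding inequality $P\le\alpha/(1-\delta)$, which is arguably cleaner (one expectation computation instead of two quantile manipulations). The price is that you need $f_k$ to be nonincreasing beyond $(1-t)q_F$; your justification via $q_F\ge\bar{\chi}_k^{-1}(2\alpha)$ really requires $\alpha$ bounded away from $1/2$ rather than from $1$ (otherwise $\bar{\chi}_k^{-1}(2\alpha)$ can fall below the mode $k-2$), which is harmless here since $\alpha$ is a test level --- and in the companion Lemma \ref{lemma_bidouillage_proba} the relevant level $\alpha/|\mathcal{K}_n|$ is even smaller --- and could in any case be bypassed by bounding $f_k$ by its maximum, of order $k^{-1/2}$, which still yields $\delta\lesssim 1/\log^2 n$. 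Both your proof and the paper's share the same small blemish that the bound $f_k\le\bar{\chi}_k$ (respectively $f_k/\bar{\chi}_k\le 1/2$ in the paper) requires a separate check for $k=1$ at small arguments, which is immaterial at the quantiles involved.
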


\end{proof}

\begin{proof}[Proof of Lemma \ref{lemma_2}]
From $\left\Vert b\right\Vert ^{2}-\left\Vert a\right\Vert ^{2}=2\left\langle
a,b-a\right\rangle +\left\Vert b-a\right\Vert ^{2}$, we get%
\[\frac{\left\Vert b\right\Vert ^{2}-\left\Vert a\right\Vert
^{2}}{\left\Vert a\right\Vert ^{2}}\leq\frac{\left\Vert
b-a\right\Vert }{\left\Vert a\right\Vert }\left(  2+\frac{\left\Vert
b-a\right\Vert }{\left\Vert a\right\Vert }\right)\ .\]
Since $x_{n,k}< 1$ for $n\geq 3$, it follows that 
\begin{eqnarray*}
 \mathbb{P}\left[ \left\Vert \sqrt{n}  \widehat{A}_{k}%
\Delta_{n}\right\Vert^2    \geq \frac{\left\Vert \sqrt{n}A_{k}\Delta
_{n}\right\Vert^2}{1-x_{n,k}} \right]\leq  \mathbb{P}\left[  \frac{\left\Vert \sqrt{n}\left(  \widehat{A}_{k}%
-A_{k}\right)  \Delta_{n}\right\Vert  }{\left\Vert \sqrt{n}A_{k}\Delta
_{n}\right\Vert }\geq 
 \frac{x_{n,k}}{4}\right]\nonumber\\ 
\leq \mathbb{P}\left[  \left\Vert \sqrt{n}\left(  \widehat{A}_{k}%
-A_{k}\right)  \Delta_{n}\right\Vert\geq \frac{\sigma\sqrt{k}x_{n,k}}{4\log(n)}
 \right]
+\mathbb{P}\left[
\left\Vert \sqrt{n}A_{k}\Delta_{n}\right\Vert  \leq\frac{\sqrt{k}\sigma}{\log(n)%
}\right]\ .
\end{eqnarray*}
By Lemma 11.1 in \cite{V10}, we know that for any $0<x<1$ and any integer $d\geq 1$, $\mathbb{P}\left[\chi^2(d)\leq de^{-1}x^{2/d}\right]\leq x$.
We get from Lemma \ref{lemma_1} and the last deviation inequality  that
$$\mathbb{P}\left[
\left\Vert \sqrt{n}A_{k}\Delta_{n}\right\Vert  \leq\frac{\sigma\sqrt{k}}{\log(n)%
}\right]\leq \frac{C}{\log^2 (n)}+\left(\frac{\sqrt{e}}{\log(n)}\right)^k\ ,$$
uniformly over all $k\leq \bar{k}_n$. Let us turn to the other term. By Markov inequality and by definition of $x_{n,k}$, the first probability $\mathbb{P}[  \Vert \sqrt{n}(  \widehat{A}_{k}%
-A_{k})  \Delta_{n}\Vert\geq \frac{\sigma\sqrt{k}x_{n,k}}{4\log(n)} ]$ is smaller than 
\begin{eqnarray*}
\frac{16k\log^6(n) }{\sigma^2}\mathbb{E}\left[\left\Vert \sqrt{n}\left(
\widehat{A}_{k}-A_{k}\right)  \Delta_{n}\right\Vert  ^{2}\mathbf{1}_{\overline{\mathcal{A}}_n}\right]+ \mathbb{P}\left[\mathcal{A}_n\right]\ .
\end{eqnarray*}
In order to conclude, we only need to bound $\mathbb{E}[\Vert \sqrt{n}(  \widehat{A}%
_{k}-A_{k})  \Delta_{n}\Vert  ^{2}\mathbf{1}_{\overline{\mathcal{A}}_n}]$. If we prove 
\begin{equation}\label{eq:equation_objective}
 \mathbb{E}\left[\left\Vert \sqrt{n}\left(  \widehat{A}%
_{k}-A_{k}\right)  \right)\Delta_{n}\Vert^{2}\mathbf{1}_{\overline{\mathcal{A}}_n}\right]\leq C(\gamma)\left[\frac{\bar{k}_n^3\log^2(n)}{n} + \frac{\bar{k}_n}{\sqrt{n}}\right] \ ,
\end{equation}
then we get \[\mathbb{P}\left[  \left\Vert \sqrt{n}\left(  \widehat{A}_{k}%
-A_{k}\right)  \Delta_{n}\right\Vert\geq \frac{\sigma\sqrt{k}x_{n,k}}{4\log(n)} \right]\leq \frac{C(\gamma)}{\log^2(n)}+ \mathbb{P}\left[\mathcal{A}_n\right] \ ,\]
by Assumption ${\bf B'.3}$. Thus, it only remains to prove \eqref{eq:equation_objective}.

 Noticing that $\widehat{A}_{k}-A_{k}$ only
depends on the $X_{i}$'s, we derive that 
\begin{align*}
\mathbb{E}\left[\left\Vert \left(  \widehat{A}_{k}-A_{k}\right)  \Delta
_{n}\right\Vert  ^{2}\mathbf{1}_{\overline{\mathcal{A}}_n}\right]  &  =\frac{1}{n}\mathbb{E}\left[\left\Vert \left(  \widehat
{A}_{k}-A_{k}\right)  X_{1}{\boldsymbol{\varepsilon}}_{1}\right\Vert  ^{2}\mathbf{1}_{\overline{\mathcal{A}}_n}\right]\\ &=\frac{\sigma^{2}%
}{n}\mathbb{E}\left[\left\Vert \left(  \widehat{A}_{k}-A_{k}\right)   X_{1}%
\right\Vert  ^{2}\mathbf{1}_{\overline{\mathcal{A}}_n}\right]\\
&  =\frac{\sigma^{2}}{n}\mathbb{E}\left[  \left(\left\Vert \widehat{A}_{k}%
X_{1}\right\Vert  ^{2}+\left\Vert A_{k}X_{1}\right\Vert  ^{2}-2\left\langle
\widehat{A}_{k}X_{1},A_{k}X_{1}\right\rangle \right) \mathbf{1}_{\overline{\mathcal{A}}_n}\right]\ .
\end{align*}
We deal with each term separately:
\begin{eqnarray*}
\mathbb{E}\left[\Vert \widehat{A}_{k}X_{1}\Vert  ^{2}\mathbf{1}_{\overline{\mathcal{A}}_n}\right]&=&\mathbb{E}\left[
\mathrm{tr}\left(  \widehat{A}_{k}\left(  X_{1}\otimes X_{1}\right)
\widehat{A}_{k}\right)\mathbf{1}_{\overline{\mathcal{A}}_n} \right]  =\mathbb{E}\left[  \mathrm{tr}\left(
\widehat{A}_{k}\widehat{\Gamma}_n\widehat{A}_{k}\right) \mathbf{1}_{\overline{\mathcal{A}}_n}\right]  \\ & =&\mathbb{E}\left[
\mathrm{tr}\widehat{\Pi}_{k}\mathbf{1}_{\overline{\mathcal{A}}_n}\right] \leq k\mathbb{P}\left[\overline{\mathcal{A}}_n\right]\ ,\\ 
\mathbb{E}\left[\left\Vert A_{k}X_{1}\right\Vert  ^{2}\mathbf{1}_{\overline{\mathcal{A}}_n}\right]&=&\mathbb{E}\left[  \mathrm{tr}\left(
A_{k}\widehat{\Gamma}_nA_{k}\right) \mathbf{1}_{\overline{\mathcal{A}}_n}\right] \\& =&\mathbb{E}\left[\mathrm{tr}\left(  A_{k}
\Gamma  A_{k}\right)  \mathbf{1}_{\overline{\mathcal{A}}_n}\right]+\mathbb{E}\left[\mathrm{tr}\left(  A_{k}\left(\widehat{\Gamma}_n-
\Gamma\right)  A_{k}\right)  \mathbf{1}_{\overline{\mathcal{A}}_n}\right] \\ &=  &\mathbb{E}\left[\mathrm{tr}\Pi_{k}\mathbf{1}_{\overline{\mathcal{A}}_n}\right]- \mathbb{E}\left[\mathrm{tr}\left(  A_{k}\left(\widehat{\Gamma}_n-
\Gamma\right)  A_{k}\right)  \mathbf{1}_{\mathcal{A}_n}\right]\\ & \leq& k\mathbb{P}\left[\overline{\mathcal{A}}_n\right]+ \sqrt{\mathbb{E}\left[\mathrm{tr}^2\left(  A_{k}\left(\widehat{\Gamma}_n-
\Gamma\right)  A_{k}\right)\right]}\sqrt{\mathbb{P}\left[\mathcal{A}_n\right]}\ ,\\
\mathbb{E}\left[\left\langle \widehat{A}_{k}X_{1},A_{k}X_{1}\right\rangle \mathbf{1}_{\overline{\mathcal{A}}_n}\right]  &
=&\mathbb{E}\left[  \mathrm{tr}\left(  \widehat{A}_{k}\widehat{\Gamma}_nA_{k}\right)\mathbf{1}_{\overline{\mathcal{A}}_n}
\right]  =\mathbb{E}\left[\mathrm{tr}\left(  \Gamma_{k}^{-1/2}\widehat{\Gamma}_{n,k}^{1/2}\right)\mathbf{1}_{\overline{\mathcal{A}}_n}\right] \\ &=&k\mathbb{P}(\overline{\mathcal{A}}_n)+\mathbb{E}\left[\mathrm{tr}\left\{  \Gamma_{k}^{-1/2}\left(  \widehat{\Gamma
}_{n,k}^{1/2}-\Gamma_{k}^{1/2}\right) \mathbf{1}_{\overline{\mathcal{A}}_n} \right\}\right]\ .
\end{eqnarray*}
It follows that %
\begin{eqnarray}\nonumber
\mathbb{E}\left[\left\Vert \sqrt{n}\left(  \widehat{A}_{k}-A_{k}\right)  \Delta
_{n}\right\Vert  ^{2}\mathbf{1}_{\overline{\mathcal{A}}_n}\right]&\leq& 2\sigma^{2}\mathbb{E}\left[\mathrm{tr}\left\{  \Gamma_{k}%
^{-1/2}\left(  \Gamma_{k}^{1/2}-\widehat{\Gamma}_{n,k}^{1/2}\right) \right\}\mathbf{1}_{\overline{\mathcal{A}}_n} \right]\\&&+ \sigma^2\sqrt{\mathbb{E}\left[\mathrm{tr}^2\left(  A_{k}\left(\widehat{\Gamma}_n-
\Gamma\right)  A_{k}\right)\right]}\sqrt{\mathbb{P}\left[\mathcal{A}_n\right]}\ .\label{eq_defi_esp}
\end{eqnarray}

\begin{lemma}\label{lemlem1}
Under  Assumptions ${\bf B.1}$ and ${\bf B.2}$, we have for all $n\geq 1$,
\begin{equation}
\mathbb{E}\left[\mathrm{tr}\left\{ \Gamma_{k}^{-1/2}\left(  \Gamma_{k}%
^{1/2}-\widehat{\Gamma}_{n,k}^{1/2}\right)  \right\}\mathbf{1}_{\overline{\mathcal{A}}_n} \right]  \leq C(\gamma) \frac
{k^{3} [\log^2 (k)\vee 1]}{n}+C'\frac{k}{\sqrt{n}}\label{S2}%
\end{equation}
uniformly over all $k\leq \bar{k}_n$.
\end{lemma}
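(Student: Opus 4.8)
The plan is to express the operator square-root difference through the resolvent integral representation
\[
\widehat{\Gamma}_{n,k}^{1/2}-\Gamma_k^{1/2}=\frac{1}{\pi}\int_0^{\infty}(\Gamma_k+sI)^{-1}(\widehat{\Gamma}_{n,k}-\Gamma_k)(\widehat{\Gamma}_{n,k}+sI)^{-1}\,s^{1/2}\,ds\ ,
\]
which is valid even though both operators are rank-deficient, since for each fixed $s>0$ the resolvents are bounded. Writing $D:=\widehat{\Gamma}_{n,k}-\Gamma_k$ and using that $\Gamma_k^{-1/2}(\Gamma_k+sI)^{-1}=\sum_{i=1}^k\lambda_i^{-1/2}(\lambda_i+s)^{-1}V_i\otimes V_i$ is supported on $\mathrm{Vect}(V_1,\dots,V_k)$ and stays bounded near $s=0$, I would first check that the trace of the integrand is absolutely integrable (it decays like $s^{-3/2}$ at infinity and is $O(s^{-1/2})$ near the origin), so that
\[
\mathrm{tr}\!\left\{\Gamma_k^{-1/2}\big(\widehat{\Gamma}_{n,k}^{1/2}-\Gamma_k^{1/2}\big)\right\}=\frac{1}{\pi}\int_0^{\infty}\mathrm{tr}\!\left[\Gamma_k^{-1/2}(\Gamma_k+sI)^{-1}D(\widehat{\Gamma}_{n,k}+sI)^{-1}\right]s^{1/2}\,ds\ ,
\]
the quantity to be bounded being the negative of this, which I will control in absolute value.

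Next I would isolate the exact first-order contribution in $D$ by inserting the resolvent identity $(\widehat{\Gamma}_{n,k}+sI)^{-1}=(\Gamma_k+sI)^{-1}-(\Gamma_k+sI)^{-1}D(\widehat{\Gamma}_{n,k}+sI)^{-1}$ once. The leading piece integrates explicitly: using $\int_0^{\infty}s^{1/2}(\lambda+s)^{-2}ds=\tfrac{\pi}{2}\lambda^{-1/2}$ together with the cyclicity of the trace, it equals $\tfrac12\sum_{i=1}^k\lambda_i^{-1}\langle V_i,DV_i\rangle=\tfrac12\sum_{i=1}^k\lambda_i^{-1}(\langle V_i,\widehat{\Gamma}_{n,k}V_i\rangle-\lambda_i)$. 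Because $\widehat{\Gamma}_n$ is block-diagonal in the empirical eigenbasis, $\langle V_i,\widehat{\Gamma}_{n,k}V_i\rangle-\langle V_i,\widehat{\Gamma}_nV_i\rangle=-\langle w_i,\widehat{\Gamma}_nw_i\rangle$ with $w_i:=(I-\widehat{\Pi}_k)V_i$, so the first-order term splits into a genuinely linear, mean-zero part $\tfrac12\sum_i\lambda_i^{-1}\langle V_i,(\widehat{\Gamma}_n-\Gamma)V_i\rangle$ and a part that is quadratic in the eigenprojection perturbation $w_i$.

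For the mean-zero part I would write $\mathbf{1}_{\overline{\mathcal{A}}_n}=1-\mathbf{1}_{\mathcal{A}_n}$: its full expectation vanishes since $\mathbb{E}[\widehat{\Gamma}_n]=\Gamma$, so only $-\mathbb{E}[\tfrac12\sum_i\lambda_i^{-1}\langle V_i,(\widehat{\Gamma}_n-\Gamma)V_i\rangle\mathbf{1}_{\mathcal{A}_n}]$ survives. A Cauchy--Schwarz step bounds this by $\tfrac12(\mathbb{E}[(\sum_i\lambda_i^{-1}\langle V_i,(\widehat{\Gamma}_n-\Gamma)V_i\rangle)^2])^{1/2}\,\mathbb{P}[\mathcal{A}_n]^{1/2}$; expanding the square and using Assumption ${\bf B.1}$ to bound the (co)variances of the $\langle X,V_i\rangle^2$ by $C\lambda_i\lambda_{i'}/n$ gives a second moment of order $k^2/n$, hence a contribution $\leq C'k/\sqrt{n}$ after using $\mathbb{P}[\mathcal{A}_n]\leq 1$. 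This is where the $C'k/\sqrt n$ term originates.

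The remaining terms --- the second-order remainder $-\tfrac1\pi\int_0^\infty\mathrm{tr}[\Gamma_k^{-1/2}(\Gamma_k+sI)^{-1}D(\Gamma_k+sI)^{-1}D(\widehat{\Gamma}_{n,k}+sI)^{-1}]s^{1/2}ds$ and the quadratic piece $-\tfrac12\sum_i\lambda_i^{-1}\langle w_i,\widehat{\Gamma}_nw_i\rangle$ --- are at least quadratic in $\widehat{\Gamma}_n-\Gamma$ and are therefore controlled on $\overline{\mathcal{A}}_n$, where Assumptions ${\bf B.2}$ and ${\bf B.3}$ guarantee, via Lemma \ref{lemme_convergence_valeurs_propres_empirique} and the resolvent bounds of Section \ref{section_perturb}, that the empirical eigenvalues remain within half a spectral gap of the true ones and that $(\widehat{\Gamma}_{n,k}+sI)^{-1}$ obeys the same bounds as $(\Gamma_k+sI)^{-1}$ up to constants. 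Expanding $w_i$ and $D$ in the eigenbasis and using $\mathbb{E}[\langle V_l,(\widehat{\Gamma}_n-\Gamma)V_i\rangle^2]\leq C\lambda_l\lambda_i/n$ reduces everything to double sums of the form $\tfrac1n\sum_{i\leq k}\sum_{l>k}\lambda_l^2(\lambda_i-\lambda_l)^{-2}$ and $\tfrac1n\sum_{i\leq k}\sum_{l>k}\lambda_l(\lambda_i-\lambda_l)^{-1}$; the monotonicity of $j\lambda_j(\log^{1+\gamma}j\vee1)$ postulated in ${\bf B.2}$ lower-bounds the gaps $\lambda_i-\lambda_l$ and turns these sums into the bound $C(\gamma)k^3(\log^2k\vee1)/n$. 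The main obstacle is precisely this last step: obtaining a bound that is uniform in $s\in(0,\infty)$ and in $k\leq\bar{k}_n$ for the second-order terms requires carefully tracking the accumulation of inverse-gap factors and invoking the sharp perturbation estimates of Section \ref{section_perturb}; crudely controlling the gaps through the single smallest one, $\lambda_k-\lambda_{k+1}$, is what produces the (possibly non-optimal) power $k^3$ and, through ${\bf B.2}$, the $\log^2k$ factor.
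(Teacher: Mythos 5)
Your opening moves are sound and genuinely different from the paper's: the integral representation $\widehat{\Gamma}_{n,k}^{1/2}-\Gamma_k^{1/2}=\tfrac1\pi\int_0^\infty(\Gamma_k+sI)^{-1}(\widehat{\Gamma}_{n,k}-\Gamma_k)(\widehat{\Gamma}_{n,k}+sI)^{-1}s^{1/2}ds$ is algebraically valid for these rank-$k$ operators, the first-order term does evaluate to $\tfrac12\sum_{i\le k}\lambda_i^{-1}\langle V_i,DV_i\rangle$, and your treatment of the mean-zero part (restriction to $\mathcal{A}_n$ plus Cauchy--Schwarz and ${\bf B.1}$) correctly produces the $C'k/\sqrt n$ term, in the same way the paper gets it from $\mathrm{tr}((\widehat{\Gamma}_n-\Gamma)\pi_j)/\lambda_j$. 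For contrast, the paper never touches the square root's nonlinearity: it uses the one-sided elementary bound $\mathrm{tr}[\Gamma_k^{-1/2}(\Gamma_k^{1/2}-\widehat{\Gamma}_{n,k}^{1/2})]\le\sum_{j\le k}(1-\langle V_j,\widehat V_j\rangle^2)+\sum_{j\le k}|\widehat\lambda_j-\lambda_j|/\lambda_j$ (obtained by discarding the nonnegative off-diagonal terms of $\sum_{j,l}\sqrt{\widehat\lambda_j/\lambda_l}\langle V_l,\widehat V_j\rangle^2$), and then bounds these two quantities by Riesz contour integrals over the circles $\mathcal{B}_j$.

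The genuine gap is in your last paragraph, which is precisely where the rate $k^3\log^2(k)/n$ must be earned. You assert that the second-order remainder $\int_0^\infty\mathrm{tr}[\Gamma_k^{-1/2}(\Gamma_k+sI)^{-1}D(\Gamma_k+sI)^{-1}D(\widehat{\Gamma}_{n,k}+sI)^{-1}]s^{1/2}ds$ and the quadratic piece $\tfrac12\sum_i\lambda_i^{-1}\langle w_i,\widehat{\Gamma}_nw_i\rangle$ ``reduce to'' gap-weighted double sums $\tfrac1n\sum_{i\le k}\sum_{l>k}\lambda_l^2(\lambda_i-\lambda_l)^{-2}$, but no spectral gap $\lambda_i-\lambda_l$ appears anywhere in the real resolvents $(\Gamma_k+sI)^{-1}$ or $(\widehat{\Gamma}_{n,k}+sI)^{-1}$: the gaps can only enter through a second-order perturbation bound on $\|w_i\|^2=1-\|\widehat\Pi_kV_i\|^2$ and on $\langle V_l,\widehat V_j\rangle$, which is exactly the estimate one is trying to establish. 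The tools you invoke do not directly supply it for your objects: Lemma \ref{perturb1} and the event $\mathcal{E}_{j,n}$ concern $T_n(z)=(zI-\Gamma)^{-1/2}(\widehat{\Gamma}_n-\Gamma)(zI-\Gamma)^{-1/2}$ for $z$ on the complex circles $\mathcal{B}_j$ and for the \emph{full} operators, whereas your remainder involves $D=\widehat{\Gamma}_{n,k}-\Gamma_k$ and the resolvents of the \emph{truncated} operators at real $s>0$; transferring the estimates requires controlling the truncation errors $\widehat{\Gamma}_{n,k}-\widehat{\Gamma}_n$ and $\Gamma_k-\Gamma$ on the relevant subspaces, and verifying integrability of the many kernel/range cross terms near $s=0$ (some of which individually scale like $s^{-1/2}$ and cancel only after using that $\widehat{\Gamma}_{n,k}$ annihilates $(I-\widehat\Pi_k)\mathcal H$). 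None of this is carried out. You also invoke ${\bf B.3}$, which is not among the hypotheses of Lemma \ref{lemlem1} and is not used in the paper's proof. The architecture could probably be completed, but as written the step that produces the main term $C(\gamma)k^3(\log^2k\vee1)/n$ is asserted rather than proved.
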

Lemma \ref{lemlem1} is the core argument to control the behavior of the statistic. Its proof relies on perturbation theory and is   postponed to Section \ref{section_perturb}. Let us compute the last term
\begin{eqnarray*}
\mathbb{E}\left[\mathrm{tr}^2\left(  A_{k}\left(\widehat{\Gamma}_n-
\Gamma\right)  A_{k}\right)\right] &= &\mathbb{E}\left[\left(\sum_{j=1}^k\left(\sum_{i=1}^n\frac{[\boldsymbol{\eta}_i^{(j)}]^2}{n}-1\right)\right)^2\right] \\
&\leq & \frac{k^2}{n}\sup_{j\geq 1}\var\left[(\eta^{(j)})^2\right]\leq C\frac{k^2}{n}\ ,
\end{eqnarray*}
by Assumption ${\bf B.1}$. Combining this bound with (\ref{eq_controle_an}), we get 
\begin{equation}\label{eq_fin_esp}
\sqrt{\mathbb{E}\left[\mathrm{tr}^2\left(  A_{k}\left(\widehat{\Gamma}_n-
\Gamma\right)  A_{k}\right)\right]}\sqrt{\mathbb{P}\left[\mathcal{A}_n\right]}\leq C(\gamma)\frac{\bar{k}_n^{5/2}\log(\bar{k}_n\vee e)}{n}\ . 
\end{equation}
Gathering Lemma  \ref{lemlem1} with  (\ref{eq_defi_esp}), and (\ref{eq_fin_esp}) allows us to prove \eqref{eq:equation_objective}. 
\end{proof}

\subsection{Proofs of the type II error bounds}

\begin{proof}[Proof of Proposition \ref{prop_puissance_comparaison}]
This proof follows the same steps as the proof of Proposition 3.2 in~\cite{VV10}.
\end{proof}

We first derive Theorem  \ref{thrm_power_KL_fixed_multiple} and then explain how to adapt the arguments for Theorem \ref{thrm_power_KL_fixed}.

\begin{proof}[Proof of Theorem \ref{thrm_power_KL_fixed_multiple}]
Arguing as in the beginning of the proof of Theorem \ref{thrm_niveau}, we can replace $\hat{k}^{KL}$ by $k$ in the definition of the statistic (\ref{definition_procedure_KL}). 
Consider some $k\in\mathcal{K}_n$ and take $n\geq 8$, the numerator of $\phi_{k}({\bf Y},{\bf X})$ (\ref{definition_phi_KL_alternative}) is lower bounded as follows 
\begin{equation*}
\|\sqrt{n}\widehat{A}_{k}\Delta_n \| ^2\geq \|\sqrt{n}A_{k}\Delta_n \|^2 \left[1-(\sqrt{k}\log(n))^{-1}\right] - \sqrt{k}\log(n)\|\sqrt{n}(A_k -\widehat{A}_{k})\Delta_{n} \| ^2\ ,
\end{equation*}
since $2ab\leq a^2+b^2$. Observe that $\Delta_n=  \widehat{\Gamma}_n\theta + \Delta_{n,1}$, where $\Delta_{n,1}=\sum_{i=1}^n\langle X_i,.\rangle \epsilon_i/n$.
 The proof is based on the two main following lemmas.
\begin{lemma}\label{lemma_A1}
For any $\beta\in (0,1)$, we have
\begin{eqnarray*}
\left\|\sqrt{n}A_k\Delta_n\right\| ^2\geq
k\sigma^2+ \frac{n}{5}\|\Gamma^{1/2}_k\theta\| ^2-2\sigma^2\sqrt{k\log\left(\frac{2}{\beta}\right)}-10\sigma^2\log\left(\frac{2}{\beta}\right)\ ,
\end{eqnarray*}
with probability larger than  $1- \beta/2-C/\log(n)$ uniformly over all $k\leq \bar{k}_n$.
\end{lemma}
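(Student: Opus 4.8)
The plan is to separate noise from signal via $\Delta_n=\widehat{\Gamma}_n\theta+\Delta_{n,1}$, so that $\sqrt{n}A_k\Delta_n=S+N$ with the $\mathbf{X}$-measurable part $S:=\sqrt{n}A_k\widehat{\Gamma}_n\theta$ and the part $N:=\sqrt{n}A_k\Delta_{n,1}$ that is centered given $\mathbf{X}$. Using $\langle X_i,V_j\rangle=\sqrt{\lambda_j}\eta_i^{(j)}$, both have simple coordinates in $(V_j)$: $N=\sum_{j=1}^k\xi_jV_j$ with $\xi_j=\tfrac1{\sqrt n}\sum_i\eta_i^{(j)}\epsilon_i$, and $S=\sum_{j=1}^kS_jV_j$ with $S_j=\tfrac1{\sqrt n}\sum_i\eta_i^{(j)}\langle X_i,\theta\rangle$. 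The two facts that organize the proof are $\mathbb{E}[\xi_j^2]=\sigma^2$ and $\mathbb{E}[S_j]=\sqrt{n\lambda_j}\langle V_j,\theta\rangle$, so that $\sum_{j\le k}(\mathbb{E}S_j)^2=n\|\Gamma_k^{1/2}\theta\|^2$ (equivalently $A_k\Gamma\theta=\Gamma_k^{1/2}\theta$) and hence $\mathbb{E}\|S\|^2\ge n\|\Gamma_k^{1/2}\theta\|^2$.

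Expanding $\|S+N\|^2=\|N\|^2+\|S\|^2+2\langle S,N\rangle$, I would treat the three terms separately. For $\|N\|^2=\sum_{j\le k}\xi_j^2$, I would invoke Lemma \ref{lemma_1} applied with $\theta=0$ (i.e. $\Delta_n$ replaced by $\Delta_{n,1}$), which shows that the law of $\|N\|^2/\sigma^2$ is within $C/\log^2 n$ in cumulative distribution of a $\chi^2_k$. Combined with the Laurent--Massart lower tail $\mathbb{P}(\chi^2_k\le k-2\sqrt{kx})\le e^{-x}$ taken at $x=\log(2/\beta)$, this yields $\|N\|^2\ge k\sigma^2-2\sigma^2\sqrt{k\log(2/\beta)}$ off an event of probability at most $\beta/2+C/\log^2 n$. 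This is the source of both the exact $k\sigma^2$ term and the $\sqrt{k\log(2/\beta)}$ correction, and the Berry--Esseen error is precisely what forces the unavoidable $C/\log n$ in the final probability.

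For the remaining terms I would use the elementary split $2|\langle S,N\rangle|\le \tfrac45\|S\|^2+\tfrac54\,\langle S,N\rangle^2/\|S\|^2$, so that $\|S\|^2+2\langle S,N\rangle\ge \tfrac15\|S\|^2-\tfrac54\,\langle S,N\rangle^2/\|S\|^2$; this is where the constant $1/5$ originates. Conditionally on $\mathbf{X}$ the cross term is a linear form $\langle S,N\rangle=\tfrac1{\sqrt n}\sum_i\epsilon_i g_i$ with $g_i=\sum_{j\le k}S_j\eta_i^{(j)}$; on the good event $\overline{\mathcal{A}}_n$ of Lemma \ref{lemme_convergence_valeurs_propres_empirique} the empirical Gram matrix $\widehat{M}_{jj'}=\tfrac1n\sum_i\eta_i^{(j)}\eta_i^{(j')}$ is close to the identity, which gives $\sum_i g_i^2\le 2n\|S\|^2$ and hence $\langle S,N\rangle^2/\|S\|^2\le 2W^2$, where $W=\sum_i\epsilon_i\widetilde g_i$ is a unit-weight ($\sum_i\widetilde g_i^2=1$) linear form in the noise with conditional variance $\sigma^2$. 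A Berry--Esseen comparison of $W/\sigma$ with a standard Gaussian (the weights $\widetilde g_i$ being uniformly small under ${\bf B.1}$ and ${\bf B.4}$) controls the upper $\chi^2_1$ tail and yields $\tfrac54\langle S,N\rangle^2/\|S\|^2\le 10\sigma^2\log(2/\beta)$ up to a further $C/\log n$, producing the last term of the bound.

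It then remains to lower bound $\tfrac15\|S\|^2$ by $\tfrac n5\|\Gamma_k^{1/2}\theta\|^2$; since $\mathbb{E}\|S\|^2\ge n\|\Gamma_k^{1/2}\theta\|^2$, I would show that $\|S\|^2$ stays above a fixed fraction of its mean with probability $1-C/\log n$ by a Chebyshev bound on $\sum_{j\le k}(S_j^2-\mathbb{E}S_j^2)$, whose variance is controlled through ${\bf B.1}$, ${\bf B.4}$ and the dimension restriction ${\bf B'.3}$ (the case $\Gamma_k^{1/2}\theta=0$ being trivial, as then the signal term vanishes and the claim follows from the noise bound alone). Collecting the three estimates and budgeting the failure probabilities gives the statement with probability $1-\beta/2-C/\log n$, uniformly over $k\le\bar k_n$ since every estimate above is uniform in $k$. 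The main obstacle is the joint treatment of the cross term $\langle S,N\rangle$ together with the random design: both $S$ and $N$ carry the empirical eigenstructure of $\widehat{\Gamma}_n$, so one cannot simply condition the noise to be Gaussian and must instead route every non-Gaussian, data-dependent quantity through the Berry--Esseen estimate of Lemma \ref{lemma_1} and the spectral concentration event $\overline{\mathcal{A}}_n$. This is exactly what caps the accuracy at the $C/\log n$ level and prevents retaining the full $\|S\|^2$ rather than the fixed fraction $\tfrac15\|S\|^2$.
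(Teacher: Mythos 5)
Your decomposition $\sqrt{n}A_k\Delta_n=S+N$ with term-by-term control of $\|N\|^2$, $\|S\|^2$ and $\langle S,N\rangle$ is a genuinely different route from the paper's. The paper applies a single multivariate Berry--Esseen comparison (Bentkus) to the joint vector of noise coordinates $\epsilon\,\eta^{(j)}/\sigma$ and signal-fluctuation coordinates $([\eta^{(j)}]^2-1)/\sqrt{\var([\eta^{(j)}]^2)}$, replaces $\|\sqrt{n}A_k\Delta_n\|^2$ by a Gaussian surrogate $W$, and then invokes the deviation inequality for \emph{non-central} $\chi^2$ variables (Eq.~18 of \cite{baraud03}), which delivers the form $k\sigma^2+\tfrac45V\sigma^2-2\sigma^2\sqrt{k\log(2/\beta)}-10\sigma^2\log(2/\beta)$ in one application with a single $\beta/2$ budget; the constant $1/5$ arises as $\tfrac45\cdot\tfrac14$ from lower-bounding the non-centrality $V$. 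Your treatment of $\|N\|^2$ (Lemma \ref{lemma_1} at $\theta=0$ plus the $\chi^2_k$ lower tail) is sound and is indeed the source of the $k\sigma^2-2\sigma^2\sqrt{k\log(2/\beta)}$ terms.

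The step that fails is $\tfrac15\|S\|^2\ge\tfrac n5\|\Gamma_k^{1/2}\theta\|^2$. You need $\|S\|^2\ge\sum_{j\le k}(\mathbb{E}S_j)^2$, but each $S_j$ is approximately Gaussian around $\mathbb{E}S_j$, so $\|S\|^2$ falls below $\sum_j(\mathbb{E}S_j)^2$ with probability close to $1/2$ in nondegenerate cases; no Chebyshev argument at level $C/\log n$ can give this. Retreating to ``a fixed fraction of the mean'' does not save the step either, because $\var(S_j)=\var\big(\eta^{(j)}\langle X,\theta\rangle\big)\asymp\|\Gamma^{1/2}\theta\|^2$ involves the \emph{full} norm of $\theta$, including its components on $V_{k+1},V_{k+2},\dots$; Chebyshev then yields at best $\|S\|^2\ge c\,n\|\Gamma_k^{1/2}\theta\|^2-C\,k\log n\,\|\Gamma^{1/2}\theta\|^2$, and when $\|\Gamma_k^{1/2}\theta\|\ll\|\Gamma^{1/2}\theta\|$ (signal concentrated beyond mode $k$) the variance of $\|S\|^2$ is not small relative to $(\mathbb{E}\|S\|^2)^2$ and the correction term is not absorbable into the statement, which carries no $\|\Gamma^{1/2}\theta\|^2$ term. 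The paper's passage to the non-central $\chi^2$ is precisely what avoids this: there the signal coordinates of $W$ fluctuate \emph{multiplicatively}, by $\sqrt{\lambda_j}\langle\theta,V_j\rangle\sqrt{\var([\eta^{(j)}]^2)}\,u_j$ around the mean $\sqrt{n\lambda_j}\langle\theta,V_j\rangle$, so $V\ge\tfrac n4\|\Gamma_k^{1/2}\theta\|^2/\sigma^2$ holds with exponentially small failure probability. Two lesser issues: you spend $\beta/2$ twice (once on the $\chi^2_k$ lower tail, once on the cross-term tail), so your total failure probability is of order $\beta+C/\log n$ rather than $\beta/2+C/\log n$; and the Gram-matrix bound $\sum_ig_i^2\le2n\|S\|^2$ is not a consequence of $\overline{\mathcal{A}}_n$ as defined (which controls eigenvalue perturbations, not $\|A_k(\widehat{\Gamma}_n-\Gamma)A_k\|_{\infty}$) and requires a separate moment computation.
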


\begin{lemma}\label{lemme_principal_puissance} Assume that ${\bf B.1}$--${\bf B'.3}$ hold. For any   $n\geq 1$ we have
\begin{eqnarray*}\nonumber
\mathbb{P}\left[\|\sqrt{n}(\widehat{A}_k-A_k)\widehat{\Gamma}_n\theta\| \geq \frac{\sqrt{n}\|\Gamma^{1/2}\theta\|}{k^{1/4}\log(n)} \right]&\leq& \frac{C(\gamma)}{\log(n)}
\\
\mathbb{P}\left[\|\sqrt{n}(\widehat{A}_k-A_k)\Delta_{n,1}\| \geq \frac{\sigma}{\log(n)} \right]&\leq &\frac{C(\gamma)}{\log(n)}  \ ,
 \end{eqnarray*}
uniformly over all $k\leq \bar{k}_n$.
\end{lemma}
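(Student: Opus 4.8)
The plan is to prove both inequalities by the same two-step scheme. First I would restrict to the event $\overline{\mathcal{A}}_n$ on which the empirical eigenstructure is well-behaved, paying only $\mathbb{P}(\mathcal{A}_n)\leq C(\gamma)/\log^2 n$ by Lemma \ref{lemme_convergence_valeurs_propres_empirique}; on $\overline{\mathcal{A}}_n$ one has $\widehat{k}^{KL}=k$ and the relative eigengaps are controlled. Then I would bound the remaining probability by Chebyshev's inequality, i.e. by a second moment computed on $\overline{\mathcal{A}}_n$. Since the two thresholds scale like $\sqrt{n}\,\|\Gamma^{1/2}\theta\|/(k^{1/4}\log n)$ and $\sigma/\log n$, it suffices to establish $\mathbb{E}[\|\sqrt{n}(\widehat{A}_k-A_k)\widehat{\Gamma}_n\theta\|^2\mathbf{1}_{\overline{\mathcal{A}}_n}]\leq C(\gamma)\, n\|\Gamma^{1/2}\theta\|^2/(\sqrt{k}\log^3 n)$ and $\mathbb{E}[\|\sqrt{n}(\widehat{A}_k-A_k)\Delta_{n,1}\|^2\mathbf{1}_{\overline{\mathcal{A}}_n}]\leq C(\gamma)\sigma^2/\log^3 n$; Assumption ${\bf B'.3}$ ($\bar{k}_n\leq n^{1/4}/\log^4 n$) will then leave ample room.

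The second inequality is almost immediate. As $\Delta_{n,1}=\frac1n\sum_i\langle X_i,\cdot\rangle\epsilon_i$ has exactly the form taken by $\Delta_n$ under $H_0$, and $\widehat{A}_k-A_k$ depends only on ${\bf X}$, I would condition on ${\bf X}$ and use that the $\epsilon_i$ are centered, independent of ${\bf X}$, and of variance $\sigma^2$ to get $\mathbb{E}[\|\sqrt{n}(\widehat{A}_k-A_k)\Delta_{n,1}\|^2\mathbf{1}_{\overline{\mathcal{A}}_n}]=\sigma^2\,\mathbb{E}[\|(\widehat{A}_k-A_k)X_1\|^2\mathbf{1}_{\overline{\mathcal{A}}_n}]$. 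This is precisely the quantity already bounded in \eqref{eq:equation_objective} during the proof of Lemma \ref{lemma_2}, namely $\leq C(\gamma)\sigma^2(\bar{k}_n^3\log^2 n/n+\bar{k}_n/\sqrt{n})$, which under ${\bf B'.3}$ is $o(\sigma^2/\log^3 n)$. Chebyshev then closes the second bound.

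For the first inequality the key algebraic facts are $\widehat{A}_k\widehat{\Gamma}_n=\widehat{\Gamma}_{n,k}^{1/2}$ and $A_k\Gamma=\Gamma_k^{1/2}$ (immediate from the spectral definitions, as in Section \ref{section_proof_lien_cardot}). Writing $\widehat{\Gamma}_n=\Gamma+(\widehat{\Gamma}_n-\Gamma)$, these give the decomposition $(\widehat{A}_k-A_k)\widehat{\Gamma}_n\theta=(\widehat{\Gamma}_{n,k}^{1/2}-\Gamma_k^{1/2})\theta-A_k(\widehat{\Gamma}_n-\Gamma)\theta$, and, using $\widehat{\Gamma}_{n,k}^{1/2}=\widehat{\Pi}_k\widehat{\Gamma}_n^{1/2}$ and $\Gamma_k^{1/2}=\Pi_k\Gamma^{1/2}$, the first piece splits further as $(\widehat{\Pi}_k-\Pi_k)\Gamma^{1/2}\theta+\widehat{\Pi}_k(\widehat{\Gamma}_n^{1/2}-\Gamma^{1/2})\theta$. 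The purpose of this arrangement is that every summand now carries the factor $\Gamma^{1/2}\theta$ rather than $\theta$: the leading term is controlled by $\|\widehat{\Pi}_k-\Pi_k\|_{\infty}\|\Gamma^{1/2}\theta\|$, while in the second term the projection $\widehat{\Pi}_k$ restricts $\widehat{\Gamma}_n^{1/2}-\Gamma^{1/2}$ to the top $k$ directions where $\widehat{\lambda}_j^{1/2}\gtrsim\lambda_k^{1/2}$. The remaining term $A_k(\widehat{\Gamma}_n-\Gamma)\theta$ I would treat by a direct moment computation in the Karhunen--Lo\`eve basis, $\mathbb{E}\|A_k(\widehat{\Gamma}_n-\Gamma)\theta\|^2=\sum_{j\leq k}\lambda_j^{-1}\mathbb{E}[\langle V_j,(\widehat{\Gamma}_n-\Gamma)\theta\rangle^2]\leq C\,k\|\Gamma^{1/2}\theta\|^2/n$ under ${\bf B.1}$, in the spirit of the trace computations in the proof of Lemma \ref{lemma_2}.

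It then remains to bound the operator factors $\|\widehat{\Pi}_k-\Pi_k\|_{\infty}$ and the contribution of $\widehat{\Gamma}_n^{1/2}-\Gamma^{1/2}$ on the range of $\widehat{\Pi}_k$, on $\overline{\mathcal{A}}_n$. This is where the perturbation theory of Section \ref{section_perturb} enters: the resolvent and eigenprojector estimates underlying Lemmas \ref{lemme_convergence_valeurs_propres_empirique} and \ref{lemlem1} yield, on $\overline{\mathcal{A}}_n$, a bound of order $C(\gamma)k^3\log^2 n/n$ for the square-root difference in the relevant norm, hence $\mathbb{E}[\|(\widehat{\Gamma}_{n,k}^{1/2}-\Gamma_k^{1/2})\theta\|^2\mathbf{1}_{\overline{\mathcal{A}}_n}]\leq C(\gamma)(k^3\log^2 n/n)\|\Gamma^{1/2}\theta\|^2$. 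I expect this perturbation estimate to be the main obstacle: the difficulty is to control the square-root and eigenprojector differences while keeping the $\|\Gamma^{1/2}\theta\|$-scaling, since a naive operator-norm bound produces a $\|\theta\|$ together with an $\lambda_k^{-1/2}$ edge amplification and is far too lossy, forcing one to exploit the relative-eigengap control supplied by $\overline{\mathcal{A}}_n$. Collecting the three contributions gives $\mathbb{E}[\|(\widehat{A}_k-A_k)\widehat{\Gamma}_n\theta\|^2\mathbf{1}_{\overline{\mathcal{A}}_n}]\leq C(\gamma)(k^3\log^2 n/n+k/n)\|\Gamma^{1/2}\theta\|^2$; since ${\bf B'.3}$ forces $k^{7/2}\log^5 n\leq n$, this is $\leq C(\gamma)\|\Gamma^{1/2}\theta\|^2/(\sqrt{k}\log^3 n)$, and Chebyshev produces the stated $C(\gamma)/\log n$ bound.
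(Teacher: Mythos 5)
Your treatment of the second inequality is correct and coincides with the paper's: $\Delta_{n,1}$ is exactly the $\Delta_n$ of the null case, so the bound \eqref{eq:equation_objective} established in the proof of Lemma \ref{lemma_2} applies verbatim, and Markov plus ${\bf B'.3}$ finishes.

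For the first inequality there is a genuine gap. Your decomposition
$(\widehat{A}_k-A_k)\widehat{\Gamma}_n\theta=(\widehat{\Pi}_k-\Pi_k)\Gamma^{1/2}\theta+\widehat{\Pi}_k(\widehat{\Gamma}_n^{1/2}-\Gamma^{1/2})\theta-A_k(\widehat{\Gamma}_n-\Gamma)\theta$ is algebraically fine, and the first and third pieces can indeed be handled as you indicate. But the middle piece is exactly the obstacle you flag, and you do not close it: you assert that the estimates behind Lemmas \ref{lemme_convergence_valeurs_propres_empirique} and \ref{lemlem1} yield $\mathbb{E}[\|(\widehat{\Gamma}_{n,k}^{1/2}-\Gamma_k^{1/2})\theta\|^2\mathbf{1}_{\overline{\mathcal{A}}_n}]\leq C(\gamma)(k^3\log^2 n/n)\|\Gamma^{1/2}\theta\|^2$, but Lemma \ref{lemlem1} controls the \emph{trace} $\mathrm{tr}\{\Gamma_k^{-1/2}(\Gamma_k^{1/2}-\widehat{\Gamma}_{n,k}^{1/2})\}$, not the action of $\widehat{\Gamma}_{n,k}^{1/2}-\Gamma_k^{1/2}$ on the particular vector $\theta$ measured against $\|\Gamma^{1/2}\theta\|$. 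The term $\widehat{\Pi}_k\widehat{\Gamma}_n^{1/2}\theta=\sum_{j\le k}\widehat{\lambda}_j^{1/2}\langle\widehat{V}_j,\theta\rangle\widehat{V}_j$ involves the raw coordinates $\langle\widehat{V}_j,\theta\rangle$, and since only $\sum_l\lambda_l\langle V_l,\theta\rangle^2$ is controlled (the ratio $\|\theta\|/\|\Gamma^{1/2}\theta\|$ is unbounded over $\mathcal{H}$), no operator-norm or eigenprojector estimate alone converts this into a multiple of $\|\Gamma^{1/2}\theta\|$. So the key quantitative step of your argument is missing, not merely technical.

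The paper's route avoids the issue entirely: it writes $\|(\widehat{A}_k-A_k)\widehat{\Gamma}_n\theta\|\leq\|(\widehat{A}_k-A_k)\widehat{\Gamma}_n^{1/2}\|_{HS}\,\|\widehat{\Gamma}_n^{1/2}\theta\|$, so the $\theta$-dependence is carried by $\|\widehat{\Gamma}_n^{1/2}\theta\|$, which has expectation exactly $\|\Gamma^{1/2}\theta\|^2$ and is therefore at most $\log(n)\|\Gamma^{1/2}\theta\|^2$ outside an event $\mathcal{U}_n$ of probability $1/\log(n)$ by Markov (this is where the extra $\log n$ and part of the final $C(\gamma)/\log n$ come from). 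The operator factor is then a pure trace, $\|(\widehat{A}_k-A_k)\widehat{\Gamma}_n^{1/2}\|_{HS}^2=\mathrm{tr}\,(\widehat{A}_k-A_k)\widehat{\Gamma}_n(\widehat{A}_k-A_k)$, which collapses via $\mathrm{tr}\,\widehat{A}_k\widehat{\Gamma}_n\widehat{A}_k=\mathrm{tr}\,\widehat{\Pi}_k$, $\mathrm{tr}\,\widehat{A}_k\widehat{\Gamma}_nA_k=\mathrm{tr}\,\widehat{\Gamma}_{n,k}^{1/2}\Gamma_k^{-1/2}$, to precisely the quantity bounded by Lemma \ref{lemlem1}. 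If you want to salvage your decomposition you would need a new perturbation estimate for $\widehat{\Pi}_k(\widehat{\Gamma}_n^{1/2}-\Gamma^{1/2})$ acting on $\theta$ in the $\Gamma^{1/2}$-weighted sense; the Cauchy--Schwarz factorization is the cleaner and intended fix.
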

Lemma \ref{lemma_A1} is based on a multivariate Berry-Esseen inequality and is proved in Appendix \ref{section:berry_essen}. The second lemma proceeds from the same kind of arguments as Lemma \ref{lemma_2}. Thus, its proof is postponed to Appendix \ref{section_technique}. We get by gathering Lemmas \ref{lemma_A1} and \ref{lemme_principal_puissance} and since $\sqrt{k}\log(n)\geq 2$ for $n\geq 8$,
\begin{eqnarray*}
\lefteqn{\|\sqrt{n}\widehat{A}_{k}\Delta_n \| ^2 \geq k\sigma^2  - 3\sigma^{2}\frac{\sqrt{k}}{\log(n)}- 2n\frac{\|\Gamma^{1/2}\theta\|^2}{\log(n)}} \\ &&+ C _1 n\|\Gamma_k^{1/2}\theta\| ^2- C_2\sigma^2\left(\sqrt{k\log\left(\frac{2}{\beta}\right)}+ \log\left(\frac{2}{\beta}\right)\right)\ ,
 \end{eqnarray*}
 with probability larger than $1- \beta/2 -C(\gamma)/\log(n)$. Next, we use a rough control of the denominator, proved in Section \ref{section_technique}.

\begin{lemma}[Control of the denominator]\label{lemma_A3} We have
\begin{eqnarray*}
 \frac{\|{\bf Y}-\widehat{\Pin}_k{\bf Y}\|_n^2}{n-k} \leq \sigma^2\left[1+ C\left(\frac{k}{n}+\sqrt{\frac{\log(n)}{n}}\right)\right]+ C'\|\Gamma^{1/2}\theta\|^2/\beta\ ,
\end{eqnarray*}
with probability larger than $1-2/\log(n)-\beta/4$.
\end{lemma}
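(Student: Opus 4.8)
The plan is to throw away the projector and control the three natural pieces of $\|{\bf Y}\|_n^2$. Since $I-\widehat{\Pin}_k$ is an orthogonal projection, we have the crude bound $\|{\bf Y}-\widehat{\Pin}_k{\bf Y}\|_n^2\leq \|{\bf Y}\|_n^2$, and this is the \emph{only} place where the random operator $\widehat{\Pin}_k$ enters (so there is no need to replace $\hat{k}^{KL}$ by $k$ or to invoke the event $\mathcal{A}_n$). Writing ${\bf m}=(\langle X_i,\theta\rangle)_{1\leq i\leq n}$ for the signal vector, so that ${\bf Y}={\bf m}+\boldsymbol{\epsilon}$, I would then expand
\[\|{\bf Y}\|_n^2=\|\boldsymbol{\epsilon}\|_n^2+2\langle {\bf m},\boldsymbol{\epsilon}\rangle_n+\|{\bf m}\|_n^2\]
and bound each summand on a suitable high-probability event.

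First I would treat the two diagonal terms, which are routine. Since $\E[\|\boldsymbol{\epsilon}\|_n^2]=n\sigma^2$ and $\mathrm{Var}(\epsilon^2)\leq \E[\epsilon^4]\leq C_2\sigma^4$ by ${\bf B.1}$, Chebyshev's inequality gives $\|\boldsymbol{\epsilon}\|_n^2\leq n\sigma^2(1+C\sqrt{\log(n)/n})$ outside an event of probability at most $1/\log(n)$. Likewise $\E[\|{\bf m}\|_n^2]=n\|\Gamma^{1/2}\theta\|^2$, so Markov's inequality yields $\|{\bf m}\|_n^2\leq (4n/\beta)\|\Gamma^{1/2}\theta\|^2$ outside an event $A^c$ with $\P(A^c)\leq \beta/4$. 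The passage from the factor $n$ to the announced factor $n-k$, together with the appearance of the $k/n$ term, is then purely arithmetic: because $k\leq n/2$ one has $n\leq (n-k)(1+2k/n)$ and $n\leq 2(n-k)$, which turn $n\sigma^2(1+C\sqrt{\log n/n})$ into $(n-k)\sigma^2[1+C(k/n+\sqrt{\log n/n})]$ and the signal term into $(n-k)C'\|\Gamma^{1/2}\theta\|^2/\beta$.

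The delicate point, and the step I expect to be the main obstacle, is the cross term $2\langle {\bf m},\boldsymbol{\epsilon}\rangle_n$: a deterministic Cauchy--Schwarz estimate is too lossy, as it would either inflate the leading $\sigma^2$ by a constant factor or multiply $\|\Gamma^{1/2}\theta\|^2$ by a positive power of $n$. Instead I would exploit that, conditionally on ${\bf X}$, the vector ${\bf m}$ is fixed while $\boldsymbol{\epsilon}$ is centred and independent, so the cross term is conditionally centred with conditional variance $4\sigma^2\|{\bf m}\|_n^2$. Applying Chebyshev's inequality conditionally on ${\bf X}$ and integrating over the event $A$ on which $\|{\bf m}\|_n^2\leq (4n/\beta)\|\Gamma^{1/2}\theta\|^2$, I would get $\P[\,2\langle {\bf m},\boldsymbol{\epsilon}\rangle_n\geq t_0,\ A\,]\leq 16 n\sigma^2\|\Gamma^{1/2}\theta\|^2/(\beta t_0^2)$, and the choice $t_0=4\sigma\|\Gamma^{1/2}\theta\|\sqrt{n\log(n)/\beta}$ makes this at most $1/\log(n)$. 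A single use of $2ab\leq a^2+b^2$ then splits $t_0$ into a piece of order $\sigma^2\sqrt{n\log n}$ and a piece of order $(n/\beta)\|\Gamma^{1/2}\theta\|^2\sqrt{\log n/n}$, both absorbed into the two terms of the target bound. Collecting the three failure events ($A^c$, the deviation of $\|\boldsymbol{\epsilon}\|_n^2$, and the cross-term deviation inside $A$) and using that $A^c$ is counted only once, the asserted inequality holds outside a set of probability at most $\beta/4+2/\log(n)$, which is exactly the claim.
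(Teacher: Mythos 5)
Your proposal is correct and follows essentially the same route as the paper's proof: the crude bound $\|{\bf Y}-\widehat{\Pin}_k{\bf Y}\|_n^2\leq\|{\bf Y}\|_n^2$, the three-term expansion, Chebyshev for $\|\boldsymbol{\epsilon}\|_n^2$, Markov for the signal term, and a second-moment bound for the cross term, with the same $\beta/4+2/\log(n)$ accounting. The only (immaterial) difference is that you handle the cross term by conditioning on ${\bf X}$ and restricting to the event $A$, whereas the paper applies Chebyshev unconditionally with variance $n\sigma^2\|\Gamma^{1/2}\theta\|^2$ and then the same $2ab\leq a^2+b^2$ split.
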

Since  $\sqrt{\log(2/\beta)}\geq 1/\log(n)$ and $C_1\geq 2/\log(n)$ for $n$ large enough,
we derive from the previous results that with probability larger than $1-3\beta/4-C(\gamma)/\log(n)$, the statistic 
$\phi_k({\bf Y},{\bf X}) $ is lower bounded by
\begin{equation}\label{eq:lower_phi_k_alternative}
  \frac{k\sigma^2+n\|\Gamma_k^{1/2}\theta\| ^2nC'_1  - C'_2\sigma^2\left(\sqrt{k\log(1/\beta)}+ \log(1/\beta)\right)- 2\frac{n}{\log(n)} \|(\Gamma^{1/2}-\Gamma_k^{1/2})\theta\|^2  }{\sigma^2\left[1+ C\left(\frac{k}{n}+\sqrt{\frac{\log(n)}{n}}\right)\right]+ C'\|\Gamma^{1/2}\theta\|^2/\beta}\ . 
\end{equation}
By Lemma 1 in \cite{baraud03}, we can upper bound the quantile of Fisher distribution
\begin{equation}
 k\bar{\F}^{-1}_{k,n-k}(\alpha/|\mathcal{K}_n|)\leq k + C \left[\sqrt{k\log\left(\frac{|\mathcal{K}_n|}{\alpha}\right)}+\log\left(\frac{|\mathcal{K}_n|}{\alpha}\right)\right]\ ,\label{eq:quantile}
\end{equation}
since we assume that $\log(|\mathcal{K}_n|/\alpha)\leq \log(n)+\log(1/\alpha)\leq 2\sqrt{n}$.  Comparing the lower bound \eqref{eq:lower_phi_k_alternative} with \eqref{eq:quantile} allows us to conclude. We refer to Appendix \ref{section_technique} for the details.  
\end{proof}

\begin{proof}[Proof of Theorem \ref{thrm_power_KL_fixed}]
We have shown in the last proof that 
$\phi_k({\bf Y},{\bf X}) $ is lower bounded by \eqref{eq:lower_phi_k_alternative}
with probability larger than $1-3\beta/4-C(\gamma)/\log(n)$. By Lemma 1 in \cite{baraud03}, we upper bound the quantile of Fisher distribution
$$ k\bar{\F}^{-1}_{k,n-k}(\alpha)\leq k + C \left[\sqrt{k\log\left(1/\alpha\right)}+\log\left(1/\alpha\right)\right]\ ,$$
since $\log(1/\alpha)\leq \sqrt{n}$. Comparing these two bounds leads us to the desired result.
\end{proof}

\section{Arguments based on perturbation theory}\label{section_perturb}

\subsection{Preliminary facts}\label{section_prelim_perturbation}
 
Roughly speaking, several results
mentioned below are based on an extension of the classical residue formula on the complex
plane (see Rudin~\cite{rudin}) to analytic functions still defined on the
complex plane but with values in the space of operators. We refer to Dunford and Schwartz~\cite[Chapter VII.3]{dunford} or to Gohberg et al.~\cite{gohberg,gohberg2} for an introduction to functional calculus for operators related with Riesz integrals.  Let us denote  $\mathcal{B}_{j}$ the oriented circle of the complex plane
with center $\lambda_{j}$ and radius $\delta_{j}/2$ where $\delta_j$ is defined by
\begin{eqnarray}\label{definition_delta_j}
\delta_{j}%
=\min\left\{  \lambda_{j}-\lambda_{j+1},\lambda_{j-1}-\lambda_{j}\right\}\ .
\end{eqnarray}
The open domain whose boundary is $\mathcal{C}_{k}:= \cup_{j=1}^{k}\mathcal{B}_{j}$ is not connected but we
can apply the functional calculus for bounded operators (see Dunford and Schwartz~\cite[Section VII.3, Definitions 8 and 9]{dunford}
). Using this formalism it is easy to
prove the following formulas :%
\begin{align}
\Pi_{k}   =\frac{1}{2\pi\iota}\int_{\mathcal{C}_{k}}\left(
zI-\Gamma\right)  ^{-1}dz\quad \text{ and }\quad 
\Gamma^{1/2}_k   =\frac{1}{2\pi\iota}\int_{\mathcal{C}_{k}}z^{1/2}\left(
zI-\Gamma\right)  ^{-1}dz.\nonumber
\end{align}

The same is true with the random operator $\widehat{\Gamma}_n$, but the contour $\mathcal{C}%
_{k}$ must be replaced by its random counterpart $\widehat{\mathcal{C}}%
_{k}=\bigcup_{j=1}^{k\wedge \mathrm{Rank}(\widehat{\Gamma}_n)}\widehat{\mathcal{B}}_{j}$ where each $\widehat
{\mathcal{B}}_{j}$ is a random ball of the complex plane with center
$\widehat{\lambda}_{j}$ and a radius $\widehat{\delta}_{j}/2=\min\{  \widehat{\lambda}_{j}-\widehat{\lambda}_{j+1},\widehat{\lambda}_{j-1}-\widehat{\lambda}_{j}\}$. We start with some lemmas.

\begin{lemma}
\label{DomEigen}Assume that for some $\gamma>0$,  the sequence $\left(  j\lambda_{j}\log^{1+\gamma
}(j\vee 2)\right)  _{j\in\mathbb{N}^*}$ decreases. Then, we have
\[
\sum_{j\geq1,j\neq k}\frac{\lambda_{j}}{\left\vert \lambda_{k}-\lambda
_{j}\right\vert }\leq C(\gamma)k\left[\log k\vee 1\right]\ .
\]
\end{lemma}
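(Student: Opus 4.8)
The plan is to reduce the whole estimate to a purely deterministic inequality about the increasing sequence $m_j := j\log^{1+\gamma}(j\vee 2)$. Writing $b_j := j\lambda_j\log^{1+\gamma}(j\vee 2)$, so that $\lambda_j = b_j/m_j$, the hypothesis says exactly that $(b_j)$ is non-increasing, while $(m_j)$ is strictly increasing. The key first step is the pointwise comparison
\[
\frac{\lambda_j}{|\lambda_k-\lambda_j|}\ \leq\ \frac{m_k}{|m_k-m_j|}\ ,\qquad j\neq k\ .
\]
To obtain it for $j>k$ (so $m_j>m_k$ and $b_j\leq b_k$), I would write $\lambda_k-\lambda_j = b_k/m_k - b_j/m_j \geq b_j(1/m_k-1/m_j) = b_j(m_j-m_k)/(m_jm_k)$ and divide $\lambda_j=b_j/m_j$ by this lower bound, the factor $b_j$ cancelling. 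The case $j<k$ is symmetric, now using $b_j\geq b_k$ and $m_j<m_k$.

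Granted this comparison, it remains to bound the deterministic quantity
\[
\Sigma_k:=\sum_{j\geq 1,\,j\neq k}\frac{m_k}{|m_k-m_j|}=\sum_{j<k}\frac{m_k}{m_k-m_j}+\sum_{j>k}\frac{m_k}{m_j-m_k}\ ,
\]
and here the only inputs needed are the increments of $(m_j)$. With $L_j:=\log^{1+\gamma}(j\vee 2)$ increasing, one checks $m_l-m_{l-1}\geq L_{l-1}$, whence $m_k-m_j\geq (k-j)L_j$ for $j<k$ and $m_j-m_k\geq (j-k)L_k$ for $j>k$. I would split each sum into a near and a far regime. In the near-right regime the $L_k$ cancels, $m_k/(m_j-m_k)\leq k/(j-k)$, so $\sum_{k<j\le 2k}m_k/(m_j-m_k)\leq k\sum_{i=1}^{k}i^{-1}\leq C\,k\log k$; in the near-left regime $k/2\le j<k$ one bounds the summand by $kL_k/((k-j)L_j)$ and uses $L_j\ge L_{\lceil k/2\rceil}$ together with the boundedness of $L_k/L_{\lceil k/2\rceil}$ in $k$ to again produce a harmonic sum $\leq C(\gamma)\,k\log k$. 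For the far-left regime $j<k/2$ one uses $m_k-m_j\ge m_k-m_{k/2}\ge m_k/2$ (since $m_{k/2}=(k/2)L_{k/2}\le (k/2)L_k$), bounding that part by $\leq k$.

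The decisive contribution is the far-right tail $\sum_{j>2k}m_k/(m_j-m_k)$: here $m_j=jL_j>2kL_j\geq 2kL_k=2m_k$ gives $m_j-m_k>m_j/2$, so the tail is at most $2m_k\sum_{j>2k}1/(jL_j)=2kL_k\sum_{j>2k}j^{-1}\log^{-(1+\gamma)}j$. This is exactly where the assumption $\gamma>0$ is indispensable, since the series converges: comparison with $\int_{2k}^{\infty}x^{-1}\log^{-(1+\gamma)}x\,dx=(\gamma\log^{\gamma}(2k))^{-1}$ yields a bound $C(\gamma)\,kL_k/\log^{\gamma}k=C(\gamma)\,k\log k$. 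Collecting the four pieces gives $\Sigma_k\leq C(\gamma)\,k(\log k\vee 1)$, the case $k=1$ (only the summable $j>1$ terms survive) being absorbed into the constant. I expect the main obstacle to be the singularity of the summand at $j=k$: the terms with $j$ close to $k$ cannot be controlled through the size of $m_j$ but only through its increments, and it is precisely the interplay between the $\log$-sized increments near $k$ (which generate the factor $\log k$) and the $\gamma$-summable tail far from $k$ that makes the stated bound both valid and sharp.
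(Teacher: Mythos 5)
Your proof is correct and follows essentially the same route as the paper's: both reduce the problem to the monotone weight sequence $m_j=j\log^{1+\gamma}(j\vee 2)$, extract a harmonic sum of order $k\log k$ from the indices near $k$, and control the tail $j>2k$ via the convergence of $\sum_j j^{-1}\log^{-(1+\gamma)}j$ through the same integral comparison. The only cosmetic difference is that you state a single unified pointwise bound $\lambda_j/|\lambda_k-\lambda_j|\le m_k/|m_k-m_j|$ and consequently need a near/far split on the left of $k$ as well, whereas the paper treats all of $j<k$ at once using the (implied) monotonicity of $j\lambda_j$ alone.
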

For any positive integer $j$, let us define the event \[\mathcal{E}_{j,n}:=\left\{  \sup_{z\in\mathcal{B}%
_{j}}\left\Vert \left(  zI-\Gamma\right)  ^{-1/2}\left(  \widehat{\Gamma}_{n}%
-\Gamma\right)  \left(  zI-\Gamma\right)  ^{-1/2}\right\Vert _{\infty}%
\geq1/2\right\}  .\]
\begin{lemma} \label{perturb1} 
Suppose that Assumption ${\bf B.1-2}$ holds. For any $j\geq 1$,
We have the two following bounds 
\begin{align*}
\mathbb{E}\sup_{z\in\mathcal{B}_{j}}\left\Vert \left(  zI-\Gamma\right)
^{-1/2}\left(  \widehat{\Gamma}_{n}-\Gamma\right)  \left(  zI-\Gamma\right)
^{-1/2}\right\Vert _{HS}^{2}  & \leq\frac{C(\gamma)}{n}\left[  j(\log j\vee 1)\right]  ^{2}\ ,\\
\mathbb{P}\left(  \mathcal{E}_{j,n}\right)    & \leq\frac{C(\gamma)}{n}\left[  j(\log
j\vee 1)\right]  ^{2}\ .%
\end{align*}
\end{lemma}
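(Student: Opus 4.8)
The plan is to reduce both inequalities to a single second-moment computation performed in the eigenbasis $(V_a)_{a\geq 1}$ of $\Gamma$. Write $W=\widehat{\Gamma}_n-\Gamma$ and $M(z)=(zI-\Gamma)^{-1/2}W(zI-\Gamma)^{-1/2}$. Since $(zI-\Gamma)^{-1/2}=\sum_a(z-\lambda_a)^{-1/2}V_a\otimes V_a$ is diagonal in this basis, the entries factorize as $\langle V_a,M(z)V_b\rangle=(z-\lambda_a)^{-1/2}(z-\lambda_b)^{-1/2}W_{ab}$, where $W_{ab}=\langle V_a,WV_b\rangle$ is \emph{independent of $z$} (only the modulus $|z-\lambda_a|^{-1}$ matters, so the branch of the square root is irrelevant). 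Because the operator norm is dominated by the Hilbert--Schmidt norm, $\|M(z)\|_\infty\leq\|M(z)\|_{HS}$, both stated bounds follow once I control $\mathbb{E}\sup_{z\in\mathcal{B}_j}\|M(z)\|_{HS}^2$: the first bound is exactly this quantity, and the second comes from it by Markov's inequality applied to $\mathbb{P}(\sup_{z\in\mathcal{B}_j}\|M(z)\|_{HS}^2\geq 1/4)$.

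First I would compute the entries explicitly. Plugging the Karhunen--Lo\`eve expansion $X_i=\sum_a\sqrt{\lambda_a}\,\eta_i^{(a)}V_a$ into $\widehat{\Gamma}_n=n^{-1}\sum_iX_i\otimes X_i$ gives $W_{ab}=\sqrt{\lambda_a\lambda_b}\,\xi_{ab}$ with $\xi_{ab}=n^{-1}\sum_{i=1}^n(\eta_i^{(a)}\eta_i^{(b)}-\delta_{ab})$. The summands are i.i.d. in $i$ and centered (the $\eta^{(a)}$ are uncorrelated with unit variance, so $\mathbb{E}[\eta^{(a)}\eta^{(b)}]=\delta_{ab}$), hence $\mathbb{E}|\xi_{ab}|^2=n^{-1}\mathrm{Var}(\eta^{(a)}\eta^{(b)})\leq n^{-1}\mathbb{E}[(\eta^{(a)})^2(\eta^{(b)})^2]$, and by Cauchy--Schwarz together with Assumption ${\bf B.1}$ this is at most $C_1/n$ uniformly in $a,b$. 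Thus the moment control uses only the fourth-moment bound.

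Next I would absorb the supremum over the contour via the factorization. As $\xi_{ab}$ is free of $z$ and all prefactors are nonnegative,
\[
\sup_{z\in\mathcal{B}_j}\|M(z)\|_{HS}^2=\sup_{z\in\mathcal{B}_j}\sum_{a,b}\frac{\lambda_a\lambda_b}{|z-\lambda_a|\,|z-\lambda_b|}|\xi_{ab}|^2\leq\sum_{a,b}c_ac_b|\xi_{ab}|^2,\qquad c_a:=\sup_{z\in\mathcal{B}_j}\frac{\lambda_a}{|z-\lambda_a|}.
\]
Taking expectations and using $\mathbb{E}|\xi_{ab}|^2\leq C_1/n$ yields $\mathbb{E}\sup_{z\in\mathcal{B}_j}\|M(z)\|_{HS}^2\leq (C_1/n)\big(\sum_ac_a\big)^2$, so everything reduces to proving $\sum_ac_a\leq C(\gamma)\,j(\log j\vee 1)$. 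For $z\in\mathcal{B}_j$ one has $|z-\lambda_j|=\delta_j/2$, while for $a\neq j$ the definition of $\delta_j$ gives $|\lambda_a-\lambda_j|\geq\delta_j$, whence $|z-\lambda_a|\geq|\lambda_a-\lambda_j|-\delta_j/2\geq|\lambda_a-\lambda_j|/2$; therefore $c_a\leq 2\lambda_a/|\lambda_a-\lambda_j|$ for $a\neq j$, and these off-diagonal terms sum to at most $C(\gamma)\,j(\log j\vee 1)$ by Lemma~\ref{DomEigen}.

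The main obstacle is the diagonal term $c_j=2\lambda_j/\delta_j$, which Lemma~\ref{DomEigen} does not directly cover. I would handle it by splitting on which neighbor realizes $\delta_j=\min\{\lambda_{j-1}-\lambda_j,\lambda_j-\lambda_{j+1}\}$. If $\delta_j=\lambda_{j-1}-\lambda_j$, then $\lambda_j/\delta_j\leq\lambda_{j-1}/|\lambda_{j-1}-\lambda_j|$, a single summand of Lemma~\ref{DomEigen} at index $j$. If $\delta_j=\lambda_j-\lambda_{j+1}$, then $\lambda_j/\delta_j=\lambda_j/|\lambda_{j+1}-\lambda_j|$ is a single summand of Lemma~\ref{DomEigen} applied at index $j+1$, bounded by $C(\gamma)(j+1)(\log(j+1)\vee 1)\leq C(\gamma)\,j(\log j\vee 1)$. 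In either case $c_j\leq C(\gamma)\,j(\log j\vee 1)$, so $\sum_ac_a\leq C(\gamma)\,j(\log j\vee 1)$ and the first bound follows; the second is then immediate from Markov's inequality and $\|M(z)\|_\infty\leq\|M(z)\|_{HS}$.
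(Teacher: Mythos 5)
Your proof is correct and follows essentially the same route as the paper: expand the Hilbert--Schmidt norm in the eigenbasis of $\Gamma$, use ${\bf B.1}$ to get $\mathbb{E}\langle(\widehat{\Gamma}_n-\Gamma)V_a,V_b\rangle^2\leq C\lambda_a\lambda_b/n$, bound $|z-\lambda_a|\geq|\lambda_j-\lambda_a|/2$ for $a\neq j$ and $|z-\lambda_j|=\delta_j/2$ on the contour, and conclude with Lemma~\ref{DomEigen} (your treatment of the diagonal term $\lambda_j/\delta_j$ by reading it off as a single summand of Lemma~\ref{DomEigen} at index $j$ or $j+1$ matches the paper's bound $\lambda_j^2/\delta_j^2\leq\lambda_j^2/|\lambda_j-\lambda_{j+1}|^2+\lambda_j^2/|\lambda_{j-1}-\lambda_j|^2$). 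The deduction of the second bound via $\|\cdot\|_\infty\leq\|\cdot\|_{HS}$ and Markov is also exactly what the paper does.
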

The proof of Lemma \ref{DomEigen} (resp. \ref{perturb1}) is postponed to Appendix \ref{section_technique} (\ref{section_perturb_technical}).
\subsection{Proof of Lemma \ref{lemlem1}}
In order to upper bound this expectation, we set $\widehat{\lambda}_j=0$ for any $j> \mathrm{Rank}(\widehat{\Gamma}_n)$. We have
\begin{align}
\mathrm{tr}\left[  \Gamma_{k}^{-1/2}\left(  \Gamma_{k}^{1/2}-\widehat{\Gamma
}_{n,k}^{1/2}\right)  1_{\overline{\mathcal{A}}_{n}}\right]    &
=k1_{\overline{\mathcal{A}}_{n}}-\sum_{j=1}^{k}\sum_{l=1}^{k}\sqrt
{\frac{\widehat{\lambda}_{j}}{\lambda_{l}}}\left\langle V_{l},\widehat{V}%
_{j}\right\rangle^{2} 1_{\overline{\mathcal{A}}_{n}}\nonumber\\
& \leq k1_{\overline{\mathcal{A}}_{n}}-\sum_{j=1}^{k}\sqrt{\frac
{\widehat{\lambda}_{j}}{\lambda_{j}}}\left\langle V_{j},\widehat{V}%
_{j}\right\rangle ^{2}1_{\overline{\mathcal{A}}_{n}}\nonumber\\
& \leq\sum_{j=1}^{k}\left(  1-\left\langle V_{j},\widehat{V}_{j}\right\rangle 
^{2}\right)  1_{\overline{\mathcal{A}}_{n}}+\sum_{j=1}^{k}\left\vert
\sqrt{\frac{\widehat{\lambda}_{j}}{\lambda_{j}}}-1\right\vert 1_{\overline
{\mathcal{A}}_{n}}\nonumber\\  
& \leq\sum_{j=1}^{k}\left(  1-\left\langle V_{j},\widehat{V}_{j}\right\rangle 
^{2}\right)  1_{\overline{\mathcal{A}}_{n}}+\sum_{j=1}^{k}\frac{|\widehat{\lambda}_j-\lambda_j|}{\lambda_j} 1_{\overline
{\mathcal{A}}_{n}}\ ,\nonumber%
\end{align}
where the last equation follows from the upper bound $|\sqrt{1+x}-1|\leq |x|$ for any $x\geq -1$. Observe that under the event $\overline{\mathcal{A}}_n$, $(\widehat{\lambda}_j-\lambda_j)/\lambda_j\leq 1/2$. Applying Lemma \ref{perturb1}, we obtain the following bound
\begin{eqnarray}
\lefteqn{\mathbb{E}\left[\mathrm{tr}\left[  \Gamma_{k}^{-1/2}\left(  \Gamma_{k}^{1/2}-\widehat{\Gamma
}_{n,k}^{1/2}\right)  1_{\overline{\mathcal{A}}_{n}}\right]\right]}\nonumber&&\\&  &\leq \sum_{j=1}^k
\mathbb{E}\left[\left(  1-\left\langle V_{j},\widehat{V}_{j}\right\rangle 
^{2}\right)  1_{\overline{\mathcal{A}}_{n}\cap\overline{\mathcal{E}}_{j,n}}\right]+\sum_{j=1}^{k}\mathbb{E}\left[\frac{|\widehat{\lambda}_j-\lambda_j|}{\lambda_j} 1_{\overline
{\mathcal{A}}_{n}\cap\overline{\mathcal{E}}_{j,n}}\right]\nonumber  + \sum_{j=1}^k \frac{3}{2}\mathbb{P}\left[\mathcal{E}_{j,n}\right]\\ 
&&\leq \sum_{j=1}^k
\mathbb{E}\left[\left(  1-\left\langle V_{j},\widehat{V}_{j}\right\rangle 
^{2}\right)  1_{\overline{\mathcal{A}}_{n}\cap\overline{\mathcal{E}}_{j,n}}\right]+\sum_{j=1}^{k}\mathbb{E}\left[\frac{|\widehat{\lambda}_j-\lambda_j|}{\lambda_j} 1_{\overline
{\mathcal{A}}_{n}\cap\overline{\mathcal{E}}_{j,n}}\right] + C(\gamma)\frac{k^3(\log^2(k)\vee 1)}{n}\ .\nonumber\\ \label{trace}
\end{eqnarray}
In the sequel,  $\pi_{j}$ stands for  the orthogonal projector associated to the single $j-th$ eigenvector $V_j$ while $\widehat{\pi}_{j}$ refers to its empirical counterpart. Applying functional calculus tools  for linear
operators, we get for any $1\leq j\leq k$
\begin{align*}
1-\left\langle V_{j},\widehat{V}_{j}\right\rangle  ^{2} &
=  \left\langle V_{j},V_{j}\right\rangle  ^{2}-\left\langle
V_{j},\widehat{V}_{j}\right\rangle  ^{2}  =\left\langle
\left( \pi_{j} -\widehat{\pi}_{j}\right)  V_{j},V_{j}\right\rangle  \\
&  =\frac{1}{2\pi \iota}\left[\int_{\mathcal{B}_{j}}\left\langle \left(  zI-\Gamma\right)  ^{-1}V_j,V_j\right\rangle dz- \int_{\hat{\mathcal{B}}_{j}}\left\langle\left(  zI-\widehat{\Gamma}
_{n}\right)  ^{-1}V_{j},V_{j}\right\rangle  dz\right]\ ,
\end{align*}
which looks like the definition of $\Pi_{k}$ given in the first paragraph of Section \ref{section_prelim_perturbation} (note that only the contour changed). Under the event $\overline{\mathcal{A}}_n$, $\widehat{\lambda}_j$ lies inside the circle $\mathcal{B}_j$. In fact,  $(zI-\widehat{\Gamma}_{n})^{-1}$ has only one pole inside the circle $\mathcal{B}_j$ at $z=\widehat{\lambda}_j$. As a consequence, we have almost surely
\[\int_{\hat{\mathcal{B}}_{j}}\left\langle\left(  zI-\widehat{\Gamma}
_{n}\right)  ^{-1}V_{j},V_{j}\right\rangle  dz\mathbf{1}_{\overline{\mathcal{A}}_n}= \int_{\mathcal{B}_{j}}\left\langle\left(  zI-\widehat{\Gamma}
_{n}\right)  ^{-1}V_{j},V_{j}\right\rangle  dz\mathbf{1}_{ \overline{\mathcal{A}}_n}\ ,\]
so that 
\begin{equation}\label{eq_integration}
\left(1-\left\langle V_{j},\widehat{V}_{j}\right\rangle  ^{2}\right)\mathbf{1}_{\overline{\mathcal{E}}_{j,n}\cap \overline{\mathcal{A}}_n}=	 \frac{1}{2\pi \iota}\int_{\mathcal{B}_{j}}\left\langle \left\{\left(  zI-\Gamma\right)  ^{-1}-\left(  zI-\widehat{\Gamma}_n\right)  ^{-1}\right\}V_j,V_j\right\rangle dz \mathbf{1}_{\overline{\mathcal{E}}_{j,n}\cap \overline{\mathcal{A}}_n}\ . 
\end{equation}
Working out  this integral, we get
\begin{eqnarray*}
 \lefteqn{\int_{\mathcal{B}_{j}}\left\langle \left\{\left(  zI-\Gamma\right)  ^{-1}-\left(  zI-\widehat{\Gamma}_n\right)  ^{-1}\right\}V_j,V_j\right\rangle dz}&&\\
&  =&-\int_{\mathcal{B}_{j}}\left\langle \left(  zI-\Gamma\right)  ^{-1}\left(
\widehat{\Gamma}_n-\Gamma\right)  \left(  zI-\Gamma\right)  ^{-1}V_{j},V_{j}%
\right\rangle  \\ 
&&-\int_{\mathcal{B}_{j}}\left\langle \left(  zI-\widehat{\Gamma}_n\right)
^{-1}\left(  \widehat{\Gamma}_n-\Gamma\right)  \left(  zI-\Gamma\right)  ^{-1}\left(
\widehat{\Gamma}_n-\Gamma\right)  \left(  zI-\Gamma\right)  ^{-1}V_{j},V_{j}%
\right\rangle  dz\ .
\end{eqnarray*}
The first term is $\int_{\mathcal{B}_j}(z-\lambda_j)^{-2}\langle (\widehat{\Gamma}_n-\Gamma)V_j,V_j\rangle dz$. Thus, it is null almost surely by the Cauchy integration theorem. Define 
$S_{n}\left(  z\right)  =\left(  zI-\Gamma\right)  ^{1/2}(
zI-\widehat{\Gamma}_n)  ^{-1}\left(  zI-\Gamma\right)  ^{1/2}$ and
$T_{n}\left(  z\right)  =\left(  zI-\Gamma\right)  ^{-1/2}(
\widehat{\Gamma}_n-\Gamma)  \left(  zI-\Gamma\right)  ^{-1/2} $. %
For any fixed $z$, we have $S_{n}\left(  z\right)
=\left[  I-T_{n}\left(  z\right)  \right]  ^{-1}$. Thus, it comes from (\ref{eq_integration}) that 
\begin{eqnarray}
 \lefteqn{\mathbb{E} \left[\left(1-\left\langle V_{j},\widehat{V}_{j}\right\rangle  ^{2}\right)\mathbf{1}_{\overline{\mathcal{E}}_{j,n}\cap \overline{\mathcal{A}}_n}\right]}\nonumber&&\\ &\leq &   \mathbb{E}\left[\left\vert\frac{1}{2\pi \iota}\int_{\mathcal{B}_{i}}\left\langle \left(
zI-\Gamma\right)  ^{-1/2}S_{n}\left(  z\right)  %
T_{n}^{2}\left(  z\right)  \left(  zI-\Gamma\right)  ^{-1/2}V_{j}%
,V_{j}\right\rangle  \mathbf{1}_{\overline{\mathcal{E}}_{j,n}\cap \mathcal{A}_n}dz\right\vert\right] \nonumber\\
&  \leq & C\delta_{j}\mathbb{E}\left[\sup_{z\in\mathcal{B}_{j}}\left\{  \left\Vert
S_{n}\left(  z\right)  \right\Vert _{\infty}\mathbf{1}_{\overline{\mathcal{E}}_{j,n}%
}\left\Vert T_{n}\left(  z\right)  \right\Vert _{\infty}^{2}\left\Vert \left(
zI-\Gamma\right)  ^{-1}\right\Vert _{\infty}\right\}\right]  \nonumber\\
&  \leq &C\delta_{j}\sup_{z\in\mathcal{B}_{j}}\left\Vert \left(  zI-\Gamma
\right)  ^{-1}\right\Vert _{\infty}\mathbb{E}\left[\sup_{z\in\mathcal{B}_{j}}\left\Vert T_{n}\left(  z\right)  \right\Vert _{\infty}^{2}\right] \leq C(\gamma)\frac{j^2(\log^2(j)\vee 1)}{n}\ ,\label{RF3}
\end{eqnarray}
since $\sup_{z\in\mathcal{B}_{j}}\left\Vert \left(  zI-\Gamma\right)
^{-1}\right\Vert _{\infty}\leq2\delta_{i}^{-1}$, $\sup_{z\in\mathcal{B}_{j}}\left\Vert S_{n}\left(
z\right)  \right\Vert _{\infty}\mathbf{1}_{\overline{\mathcal{E}}_{j,n}}\leq 2$ and $\mathbb{E}[\sup_{z\in\mathcal{B}_{i}}\left\Vert T_{n}\left(
z\right)  \right\Vert _{\infty}^{2}]\leq \frac{C(\gamma)}{n}j^2(\log^2(j)\vee 1)$  by Lemma \ref{perturb1}. Hence, we obtain an upper bound for the first term in (\ref{trace})
\begin{equation}\label{eq_majoration_principale_lemme}
\mathbb{E}\left[ \left(k-\sum_{j=1}^{k}\left\langle V_{j},\widehat{V}%
_{j}\right\rangle  ^{2}\right)\mathbf{1}_{\overline{\mathcal{A}}_n}\right] \leq C(\gamma)\frac{k^3(\log^2(k)\vee 1)}{n}\ .
\end{equation}

Turning to the second term in (\ref{trace}), we only provide a sketch of the proof since the approach is the same as the first term in (\ref{trace}).  We have
\[
\widehat{\lambda}_{j}-\lambda_{j}=\mathrm{tr}\left(  \widehat{\Gamma}_{n}\widehat{\pi
}_{j}-\Gamma\pi_{j}\right)  =\mathrm{tr}\left(  \widehat{\Gamma}_{n}\left(  \widehat
{\pi}_{j}-\pi_{j}\right)  \right)  +\mathrm{tr}\left(  \left(  \widehat{\Gamma}
_{n}-\Gamma\right)  \pi_{j}\right)\ ,
\]
so that 
\begin{equation}
 \frac{|\widehat{\lambda}_j-\lambda_j|}{\lambda_j}\mathbf{1}_{\overline{\mathcal{A}}_n\cap \overline{\mathcal{E}}_{j,n}}\leq \frac{\left|\mathrm{tr}\left(  \widehat{\Gamma}_{n}\left(  \widehat
{\pi}_{j}-\pi_{j}\right)  \right)\right|}{\lambda_j}\mathbf{1}_{\overline{\mathcal{A}}_n\cap \overline{\mathcal{E}}_{j,n}}+ \frac{\left|\mathrm{tr}\left(  \left(  \widehat{\Gamma}
_{n}-\Gamma\right)  \pi_{j}\right)\right|}{\lambda_j}\mathbf{1}_{\overline{\mathcal{A}}_n\cap \overline{\mathcal{E}}_{j,n}}\ .\label{eq_majoration_1_ratio_vp}
\end{equation}
The second term in this decomposition is bounded as follows 
\begin{equation}
\mathbb{E}\left[\frac{\left\vert \mathrm{tr}\left(  \left(  \widehat{\Gamma}_{n}%
-\Gamma\right)  \pi_{j}\right)  \right\vert}{\lambda_j}\mathbf{1}_{\overline{\mathcal{A}}_n\cap \overline{\mathcal{E}}_{j,n}}\right]\leq \mathbb{E}\left[\frac{\left\vert \left\langle
\left(  \widehat{\Gamma}_{n}-\Gamma\right)    V_{j}  ,V_{j}\right\rangle 
\right\vert}{\lambda_j}\right] \leq\frac{1}{\sqrt{n}}\ .\label{eq_majoration_2_ratio_vp}
\end{equation}
 We turn to $\mathbb{E}\left[
\left\vert \mathrm{tr}\left(  \widehat{\Gamma}_{n}\left(  \widehat{\pi}_{j}-\pi
_{j}\right)  \right)  \right\vert \mathbf{1}_{\overline{\mathcal{A}}_n\cap \overline{\mathcal{E}}_{j,n}}\right] $ and we use the same method as above for
bounding $(  1-\langle V_{j},\widehat{V}_{j}\rangle ^{2})  $. 
\begin{align*}
\lefteqn{\frac{1}{\lambda_j}\mathrm{tr}\left[  \widehat{\Gamma}_{n}\left(  \widehat{\pi}_{j}-\pi_{j}\right)
\right]  \mathbf{1}_{\overline{\mathcal{A}}_n\cap \overline{\mathcal{E}}_{j,n}}} &\\  & =\frac{1}{\lambda_j2\pi\iota}\mathrm{tr}\left[  \int_{\mathcal{B}_{j}}\widehat{\Gamma}_{n}\left(
zI-\widehat{\Gamma}_{n}\right)  ^{-1}\left(  \widehat{\Gamma}_{n}-\Gamma\right)  \left(
zI-\Gamma\right)  ^{-1}dz\right] \mathbf{1}_{\overline{\mathcal{A}}_n\cap \overline{\mathcal{E}}_{j,n}}  \\
& =\frac{1}{\lambda_j2\pi\iota}\mathrm{tr}\left[  \int_{\mathcal{B}_{j}}\left(  \widehat{\Gamma}_{n}\left(
zI-\widehat{\Gamma}_{n}\right)  ^{-1}-\Gamma\left(  zI-\Gamma\right)  ^{-1}\right)
\left(  \widehat{\Gamma}_{n}-\Gamma\right)  \left(  zI-\Gamma\right)  ^{-1}dz\right]\mathbf{1}_{\overline{\mathcal{A}}_n\cap \overline{\mathcal{E}}_{j,n}}   \\
& =\frac{1}{\lambda_j2\pi\iota}\mathrm{tr}\left[  \int_{\mathcal{B}_{j}}z\left(  zI-\widehat{\Gamma}
_{n}\right)  ^{-1}\left(  \widehat{\Gamma}_{n}-\Gamma\right)  \left(  zI-\Gamma\right)
^{-1}\left(  \widehat{\Gamma}_{n}-\Gamma\right)  \left(  zI-\Gamma\right)
^{-1}dz\right]  \mathbf{1}_{\overline{\mathcal{A}}_n\cap \overline{\mathcal{E}}_{j,n}} \\
& =\frac{1}{\lambda_j2\pi\iota}\mathrm{tr}\left[  \int_{\mathcal{B}_{j}}z\left(  zI-\Gamma\right)
^{-1/2}S_{n}\left(  z\right)  T_{n}^{2}\left(  z\right)  \left(
zI-\Gamma\right)  ^{-1/2}dz\right]\mathbf{1}_{\overline{\mathcal{A}}_n\cap \overline{\mathcal{E}}_{j,n}} \ .
\end{align*}
From the upper bound
\begin{eqnarray*}
\lefteqn{\left|\mathrm{tr}\left[\left(  zI-\Gamma\right)
^{-1/2}S_{n}\left(  z\right)  T_{n}^{2}\left(  z\right)  \left(
zI-\Gamma\right)  ^{-1/2}\right]\right|}&&\\ &&\leq  \|(zI-\Gamma)^{-1/2}S_nT_n\|_{HS} \|T_n(zI-\Gamma)^{-1/2}\|_{HS}\leq \|(zI-\Gamma)^{-1}\|_{\infty}\|S_n\|_{\infty}\|T_n\|^2_{HS}\ ,
\end{eqnarray*}
we derive as in the proof of (\ref{eq_majoration_principale_lemme})%
\begin{align*}
\lefteqn{\frac{1}{\lambda_j}\mathbb{E}\left[\left\vert \mathrm{tr}\left(  \widehat{\Gamma}_{n}\left(  \widehat{\pi}%
_{j}-\pi_{j}\right)  \right)  \right\vert \mathbf{1}_{\overline{\mathcal{A}}_n\cap \overline{\mathcal{E}}_{j,n}} \right]}&\\ & \leq \frac{C}{\lambda_j}\mathbb{E}\left[\int_{\mathcal{B}_{j}%
}\left\vert z\right\vert \left\Vert \left(  zI-\Gamma\right)  ^{-1}\right\Vert
_{\infty}\left\Vert S_n(z)\right\Vert_{\infty}\left\Vert T_{n}\left(  z\right)  \right\Vert _{HS}^{2}dz\mathbf{1}_{\overline{\mathcal{E}}_{j,n}}\right]\\
& \leq C\mathbb{E}\left[\sup_{z\in\mathcal{B}_{j}}\left\Vert T_{n}\left(
z\right)  \right\Vert _{HS}^{2}\mathbf{1}_{\overline{\mathcal{E}}_{j,n}}\right] \leq C(\gamma)\frac{j^{2}(\log^{2}j\vee 1)}{n}.
\end{align*}
Gathering (\ref{eq_majoration_1_ratio_vp}) and (\ref{eq_majoration_2_ratio_vp}) with this last bound, we get
\[
\sum_{j=1}^{k}\mathbb{E}\left[\left\vert \frac{\widehat{\lambda}_{j}-\lambda_{j}%
}{\lambda_{j}}\right\vert\mathbf{1}_{\overline{\mathcal{A}}_n\cap \overline{\mathcal{E}}_{j,n}}\right] \leq C(\gamma) \frac{k^{3}(\log^{2}(k)\vee 1)}{n}+ C'\frac
{k}{\sqrt{n}},
\]
Combining this last bound with (\ref{trace}) and (\ref{eq_majoration_principale_lemme}) allows us to conclude.

\section*{Acknowledgements} The research of N. Verzelen is partly supported by the french Agence Nationale
de la Recherche (ANR 2011 BS01 010 01 projet Calibration).
We would like to thank two anonymous referees for their insightful remarks that lead us to significantly improve the presentation of the paper.

\bibliographystyle{acmtrans-ims}
\bibliography{estimation}

\vfill\eject

\appendix
\numberwithin{equation}{section}
\section{Power under Gaussian Noise}\label{sec:power:gaussian}

\subsection{Power of $T_{\alpha,k}$}

\begin{prop}[Power under Gaussian errors]\label{prte_power_KL_gaussian} 
There exists positive constants $C$, $C_1(\beta)$, and $C_2$ such that the following holds.
Suppose that $\alpha\geq \exp(-n/20)$, $\beta\geq C/n$ and that Assumptions ${\bf B.1}$ and ${\bf A.1}$ are true.
Then, $\mathbb{P}_{\theta}(T_{\alpha,k}>0)\geq 1-\beta$ for any $\theta$  satisfying 
\begin{equation}\label{definition_puissance_complete_gaussien}
\|\Gamma^{1/2}\theta\| ^2 \geq 
C_1(\beta)\left(\lambda_{k+1}+\sum_{j\geq k+1}\frac{\lambda_j}{\sqrt{n}}\right)\|\theta\| ^2 + 
C_2\frac{\sigma^2}{n}\left[\sqrt{k\log\left(\frac{\log n}{\alpha \beta}\right)}+ \log\left(\frac{\log n}{\alpha \beta	}\right)\right]\ .
\end{equation} 
\end{prop}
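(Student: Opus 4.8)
The plan is to condition on $\mathbf{X}$ and exploit Assumption ${\bf A.1}$ to reduce the whole computation to an exact noncentral Fisher analysis, thereby bypassing both the Berry--Esseen step and the fine perturbation theory used in the proof of Theorem~\ref{thrm_power_KL_fixed}. Conditionally on $\mathbf{X}$ the projector $\widehat{\Pin}_k$ is deterministic and $\mathbf{Y}=\mathbf{m}+\boldsymbol{\epsilon}$, with $\mathbf{m}=(\langle X_i,\theta\rangle)_{1\le i\le n}$ and $\boldsymbol{\epsilon}\sim\mathcal{N}(0,\sigma^2 I_n)$. Hence $\sigma^{-2}\|\widehat{\Pin}_k\mathbf{Y}\|_n^2$ and $\sigma^{-2}\|\mathbf{Y}-\widehat{\Pin}_k\mathbf{Y}\|_n^2$ are two \emph{independent} noncentral $\chi^2$ variables with $\hat{k}^{KL}$ and $n-\hat{k}^{KL}$ degrees of freedom and noncentralities $\mu=\sigma^{-2}\|\widehat{\Pin}_k\mathbf{m}\|_n^2$ and $\nu=\sigma^{-2}\|(I-\widehat{\Pin}_k)\mathbf{m}\|_n^2$. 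Applying identity~\eqref{definition_phi_KL_alternative} to the noiseless vector $\mathbf{Y}=\mathbf{m}$ (so that $\Delta_n=\widehat{\Gamma}_n\theta$) rewrites the numerator noncentrality as $\mu=n\sigma^{-2}\|\widehat{\Gamma}_{n,\hat{k}^{KL}}^{1/2}\theta\|^2$, while $\nu\le\sigma^{-2}\|\mathbf{m}\|_n^2=n\sigma^{-2}\|\widehat{\Gamma}_n^{1/2}\theta\|^2$.

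Next I would control the conditional power of the noncentral Fisher test as a function of $\mu$. Standard deviation bounds for noncentral $\chi^2$ variables lower bound the numerator by $\sigma^2(\mu+\hat{k}^{KL})-C\sigma^2\sqrt{(\mu+\hat{k}^{KL})\log(1/\beta)}$ and upper bound the residual sum of squares by $\sigma^2[(n-\hat{k}^{KL})+\nu]+C\sigma^2\big(\sqrt{(n+\nu)\log(1/\beta)}+\log(1/\beta)\big)$, each on a conditional event of probability at least $1-\beta/4$; the $\nu$-term reproduces the denominator inflation already met in Lemma~\ref{lemma_A3} and is responsible for the $\beta$-dependence of the leading constant $C_1(\beta)$. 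Combined with the quantile estimate $\hat{k}^{KL}\bar{\F}^{-1}_{\hat{k}^{KL},n-\hat{k}^{KL}}(\alpha)\le\hat{k}^{KL}+C[\sqrt{\hat{k}^{KL}\log(1/\alpha)}+\log(1/\alpha)]$ (Lemma~1 of~\cite{baraud03}, valid since $\alpha\ge e^{-n/20}$), the conditional rejection probability is at least $1-\beta$ as soon as $\mu\ge C(\hat{k}^{KL}+\sqrt{\hat{k}^{KL}\log\tfrac1{\alpha\beta}}+\log\tfrac1{\alpha\beta})$ up to the $\nu$-correction. Since $\log(\log n/(\alpha\beta))\ge\log(1/(\alpha\beta))$, the variance term of~\eqref{definition_puissance_complete_gaussien} follows a fortiori.

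The heart of the matter is translating the \emph{empirical} noncentrality $\mu$ into the population signal $\|\Gamma^{1/2}\theta\|^2$, and this is exactly where the perturbation machinery of Section~\ref{section_perturb} is replaced by elementary operator inequalities that hold under ${\bf B.1}$ alone. I would write $\|\widehat{\Gamma}_{n,\hat{k}^{KL}}^{1/2}\theta\|^2\ge\|\widehat{\Gamma}_n^{1/2}\theta\|^2-\widehat{\lambda}_{\hat{k}^{KL}+1}\|\theta\|^2$. On the one hand, $\|\widehat{\Gamma}_n^{1/2}\theta\|^2=\tfrac1n\sum_i\langle X_i,\theta\rangle^2$ concentrates around $\|\Gamma^{1/2}\theta\|^2$ with relative fluctuation of order $n^{-1/2}$ by a second-moment computation using $\sup_j\mathbb{E}[(\eta^{(j)})^4]\le C_1$, contributing only a $(1-C/\sqrt n)$ factor on the main term. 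On the other hand, the Courant--Fischer min-max principle gives, for $\hat{k}^{KL}\le k$, the restricted Weyl bound $\widehat{\lambda}_{\hat{k}^{KL}+1}\le\widehat{\lambda}_{k+1}\le\lambda_{k+1}+\|(I-\Pi_k)(\widehat{\Gamma}_n-\Gamma)(I-\Pi_k)\|_\infty$, and under ${\bf B.1}$ the tail-restricted perturbation obeys $\mathbb{E}\|(I-\Pi_k)(\widehat{\Gamma}_n-\Gamma)(I-\Pi_k)\|_{HS}^2\le C(\sum_{j\ge k+1}\lambda_j)^2/n$. These two facts produce precisely the two additive bias contributions $\lambda_{k+1}\|\theta\|^2$ and $(\sum_{j\ge k+1}\lambda_j/\sqrt n)\|\theta\|^2$ of~\eqref{definition_puissance_complete_gaussien}.

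Finally I would assemble the three steps, arranging the exceptional events (the conditional chi-square deviations and the $\mathbf{X}$-measurable concentration events of the third step, the latter handled by Markov's inequality, which is where the $\beta$-dependence of $C_1(\beta)$ and the hypothesis $\beta\ge C/n$ enter) so that their total probability is at most $\beta$, and then integrate over $\mathbf{X}$. The main obstacle is the third step: because ${\bf B.2}$--${\bf B.3}$ are \emph{not} assumed, neither Lemma~\ref{lemlem1} nor any eigenvector-localisation estimate is available, so the argument must rest entirely on the trace and Courant--Fischer inequalities above. The delicate point is to bound $\widehat{\lambda}_{\hat{k}^{KL}+1}$ by $\lambda_{k+1}$ plus a genuinely \emph{tail}-restricted fluctuation $\sum_{j\ge k+1}\lambda_j/\sqrt n$, rather than the full-trace fluctuation $\sum_{j\ge1}\lambda_j/\sqrt n$; the compression by $I-\Pi_k$ in the min-max bound is exactly what achieves this refinement.
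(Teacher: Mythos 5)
Your proposal follows essentially the same route as the paper's proof: condition on $\mathbf{X}$, invoke the power bound for the fixed-design Fisher test (the paper cites Theorem~1 of Baraud et al., which is precisely the noncentral-$\chi^2$ deviation argument you unpack), bound the bias by $\widehat{\lambda}_{\hat{k}^{KL}+1}\|\theta\|^2\le\widehat{\lambda}_{k+1}\|\theta\|^2$, control $\widehat{\lambda}_{k+1}\le\lambda_{k+1}+\|\Pi_k^{\perp}(\widehat{\Gamma}_n-\Gamma)\Pi_k^{\perp}\|_{HS}$ via the min-max principle and the tail-restricted second-moment bound under ${\bf B.1}$, and concentrate $\langle\theta,\widehat{\Gamma}_n\theta\rangle$ around $\|\Gamma^{1/2}\theta\|^2$ by Chebychev, which is indeed where $\beta\ge C/n$ enters. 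One slip worth flagging: your assembled rejection condition $\mu\ge C(\hat{k}^{KL}+\sqrt{\hat{k}^{KL}\log\frac{1}{\alpha\beta}}+\log\frac{1}{\alpha\beta})$ carries a spurious additive $\hat{k}^{KL}$, which taken literally would force a variance term of order $k\sigma^2/n$ rather than $\sqrt{k\log(\cdot)}\,\sigma^2/n$ and would not yield \eqref{definition_puissance_complete_gaussien}; it is also inconsistent with your own preceding bounds, since the $\hat{k}^{KL}$ in the numerator's mean $\sigma^2(\mu+\hat{k}^{KL})$ cancels against the leading $\hat{k}^{KL}$ in the Fisher quantile, leaving only the $\sqrt{\hat{k}^{KL}\log(1/(\alpha\beta))}+\log(1/(\alpha\beta))$ terms (plus the $\nu$-correction you already account for).
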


\noindent 
\begin{remark}
If this result requires very weak assumptions on the process $X$ (only a fourth moment assumption), the bound \eqref{definition_puissance_complete_gaussien} is slightly looser than \eqref{definition_puissance_complete} in Theorem \ref{thrm_power_KL_fixed} because 
\[\|(\Gamma^{1/2}-\Gamma_k^{1/2})\theta\| ^2\leq \lambda_{k+1}\|\theta\| ^2\ .\]
\end{remark}

\subsection{Power of $T_{\alpha}^{(1)}$}

A similar result holds for $T_{\alpha}^{(1)}$.

\begin{prop}\label{prte_power_KL_gaussian_multiple} 
There exists positive constants $C$, $C_1(\beta)$, and $C_2$ such that the following holds.
Suppose that $\alpha\geq \exp(-n/20)$, $\beta\geq C/n$ and that Assumptions ${\bf B.1}$ and ${\bf A.1}$ are true.
Then, $\mathbb{P}_{\theta}(T_{\alpha}^{(1)}>0)\geq 1-\beta$ for any $\theta$  satisfying 
\begin{equation*}
\|\Gamma^{1/2}\theta\| ^2 \geq \inf_{k\in\mathcal{K}_n}
C_1(\beta)\left(\lambda_{k+1}+\sum_{j\geq k+1}\frac{\lambda_j}{\sqrt{n}}\right)\|\theta\| ^2 + 
C_2\frac{\sigma^2}{n}\left[\sqrt{k\log\left(\frac{\log n}{\alpha \beta}\right)}+ \log\left(\frac{\log n}{\alpha \beta	}\right)\right]\ .
\end{equation*} 
\end{prop}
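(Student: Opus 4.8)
The plan is to deduce this multiple-testing bound from the single-test estimate of Proposition \ref{prte_power_KL_gaussian} by exploiting the Bonferroni structure of $T_\alpha^{(1)}$. By Definition \ref{definition_KL_test} under Procedure $P_1$, the statistic $T_\alpha^{(1)}$ is positive as soon as a single term in the supremum is positive, so that for every fixed $k\in\mathcal{K}_n$ we have the pathwise inclusion of events
\[
\left\{\phi_k({\bf Y},{\bf X}) > \hat{k}^{KL}\bar{\F}^{-1}_{\hat{k}^{KL},n-\hat{k}^{KL}}(\alpha/|\mathcal{K}_n|)\right\}\subseteq \left\{T_\alpha^{(1)} > 0\right\}\ .
\]
The left-hand event is exactly the rejection region of the parametric test $T_{\alpha/|\mathcal{K}_n|,k}$, whence $\mathbb{P}_\theta(T_\alpha^{(1)}>0)\geq \mathbb{P}_\theta(T_{\alpha/|\mathcal{K}_n|,k}>0)$ for every $k\in\mathcal{K}_n$.

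First I would bound the right-hand side by applying Proposition \ref{prte_power_KL_gaussian} with the nominal level $\alpha$ replaced by the corrected level $\alpha/|\mathcal{K}_n|$; this is legitimate since Assumptions ${\bf A.1}$ and ${\bf B.1}$ are in force and $|\mathcal{K}_n|\leq\log(n)$. It yields $\mathbb{P}_\theta(T_{\alpha/|\mathcal{K}_n|,k}>0)\geq 1-\beta$ as soon as
\[
\|\Gamma^{1/2}\theta\|^2\geq C_1(\beta)\left(\lambda_{k+1}+\sum_{j\geq k+1}\frac{\lambda_j}{\sqrt n}\right)\|\theta\|^2 + C_2\frac{\sigma^2}{n}\left[\sqrt{k\log\left(\frac{|\mathcal{K}_n|\log n}{\alpha\beta}\right)} + \log\left(\frac{|\mathcal{K}_n|\log n}{\alpha\beta}\right)\right]\ .
\]
Because $|\mathcal{K}_n|\leq\log(n)$, one has $\log(|\mathcal{K}_n|\log n/(\alpha\beta))\leq\log(\log^2 n/(\alpha\beta))\leq 2\log(\log n/(\alpha\beta))$, so the variance term is dominated, up to a factor absorbed into $C_2$, by the one appearing in the statement; the bias term is unchanged since it does not depend on the level. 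Thus, for each $k\in\mathcal{K}_n$, the power of $T_\alpha^{(1)}$ exceeds $1-\beta$ whenever $\|\Gamma^{1/2}\theta\|^2$ dominates the $k$-th right-hand side of the target bound.

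Finally, since this implication holds for every $k\in\mathcal{K}_n$ and $\mathcal{K}_n$ is finite, the condition $\|\Gamma^{1/2}\theta\|^2\geq\inf_{k\in\mathcal{K}_n}(\cdots)$ means that some $k^*\in\mathcal{K}_n$ realizes the minimum and satisfies its individual condition, so $\mathbb{P}_\theta(T_\alpha^{(1)}>0)\geq\mathbb{P}_\theta(T_{\alpha/|\mathcal{K}_n|,k^*}>0)\geq 1-\beta$, which is the claim. The only genuinely delicate point is the transfer of the hypotheses of Proposition \ref{prte_power_KL_gaussian} to the level $\alpha/|\mathcal{K}_n|$: the constraint $\alpha\geq\exp(-n/20)$ serves only to keep $\log(1/\alpha)=O(n)$, and since $\log(|\mathcal{K}_n|/\alpha)\leq\log(1/\alpha)+\log\log n$ is still $O(n)$, the proposition applies after at most enlarging the numerical constant in the exponent. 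Should one prefer not to invoke it as a black box, the same conclusion follows by re-running its proof verbatim with $\alpha$ replaced by $\alpha/|\mathcal{K}_n|$, the level entering only through the Fisher-quantile control (Lemma 1 in \cite{baraud03}) and through $\log(1/\alpha)$ terms, both controlled identically as long as $\log(|\mathcal{K}_n|/\alpha)=O(n)$.
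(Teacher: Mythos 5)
Your proof is correct, but it is organized in the opposite direction from the paper's. The paper proves this multiple-testing proposition \emph{first} and directly: conditionally on ${\bf X}$ the procedure $T^{(1)}_{\alpha}$ is exactly a Baraud-type multiple test, so Theorem 1 of \cite{baraud03} is invoked wholesale to get $\mathbb{P}_{\theta}(T_{\alpha}^{(1)}>0)\geq 1-\beta/2$ whenever $n\langle\theta,\widehat{\Gamma}_n\theta\rangle\geq\inf_{k\in\mathcal{K}_n}\Delta(\theta,k,{\bf X})$; the statement is then de-randomized by a Chebychev bound $\langle\theta,\widehat{\Gamma}_n\theta\rangle\geq\mathbb{E}[\langle X,\theta\rangle^2]/2$ (using the fourth-moment control from ${\bf B.1}$) and a Markov/Hilbert--Schmidt bound $\widehat{\lambda}_{k+1}\leq\lambda_{k+1}+C(\beta)\sum_{j\geq k+1}\lambda_j/\sqrt{n}$. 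Proposition \ref{prte_power_KL_gaussian} is afterwards obtained as the one-test simplification. You instead go single-test $\to$ multiple-test: the pathwise inequality $T_{\alpha}^{(1)}\geq T_{\alpha/|\mathcal{K}_n|,k}$ for each $k\in\mathcal{K}_n$, followed by Proposition \ref{prte_power_KL_gaussian} at the corrected level, with $\log(|\mathcal{K}_n|\log n/(\alpha\beta))\leq 2\log(\log n/(\alpha\beta))$ absorbed into $C_2$. This is logically sound since the single-test proposition is proved independently, and your reduction makes the Bonferroni price completely elementary and explicit; what you lose is that Proposition \ref{prte_power_KL_gaussian} cannot quite be used as a black box, because $\alpha\geq\exp(-n/20)$ does not imply $\alpha/|\mathcal{K}_n|\geq\exp(-n/20)$ — you correctly flag this and note that the level enters only through $\log(1/\alpha)=O(n)$ terms, so re-running the argument with the corrected level costs nothing. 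One further small point both routes gloss over: the supremum in \eqref{definition_procedure_KL} is restricted to $k\leq\mathrm{Rank}(\widehat{\Gamma}_n)$, so your event inclusion formally requires the optimizing $k$ to satisfy this constraint; this is the same edge case the paper handles only implicitly via the remark that $\hat{k}^{KL}<k$ forces $\widehat{\lambda}_{\hat{k}^{KL}+1}=0$.
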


\subsection{Proofs of Propositions \ref{prte_power_KL_gaussian} and \ref{prte_power_KL_gaussian_multiple}}\label{section:remaining_proofs}

We first prove Proposition \ref{prte_power_KL_gaussian_multiple} and then adapt the arguments to Proposition \ref{prte_power_KL_gaussian}.

\begin{proof}[Proof of Proposition \ref{prte_power_KL_gaussian_multiple}]
Let us first work conditionally to ${\bf X}$. In this case, the design ${\bf X}$ and the projection $\widehat{\Pin}_k$  are considered as fixed. Thus, the statistic $T^{(1)}_{\alpha}$ is analogous to the procedure of Baraud et al.~\cite{baraud03}. By Theorem 1 in \cite{baraud03}, we have $\mathbb{P}_{\theta}(T_{\alpha}^{(1)}>0)\geq 1-\beta/2$ if $\theta$ satisfies $n\langle\theta,\widehat{\Gamma}_n\theta\rangle  \geq \inf_{k\in\mathcal{K}_n}\Delta(\theta,k,{\bf X})$ where $\Delta(\theta,k,{\bf X})$ is defined by 
\begin{eqnarray}\label{definition_delta}
 C_1 \langle\theta,\widehat{\Pi}^{\perp}_{\hat{k}^{KL}} \widehat{\Gamma}_n\theta\rangle + C_2\sqrt{\hat{k}^{KL}\log\left(\frac{2\log n}{\alpha \beta	}\right)}\sigma^2+ C_3\log\left(\frac{2\log n}{\alpha \beta	}\right)\sigma^2\ .
\end{eqnarray}
since $\alpha\geq \exp(-n/20)$, $\beta\geq C/n$.
We have $n\langle \theta,\widehat{\Gamma}_n\theta\rangle =\sum_{i=1}^n \langle  X_i,\theta\rangle ^2$. 
By Assumption ${\bf B.1}$, we get 
$$\mathbb{E}[\langle X,\theta\rangle ^4]=\mathbb{E}\left[\left(\sum_{j=1}^{\infty} \sqrt{\lambda_j}\theta_j \eta^{(j)}\right)^4\right]\leq C\langle \theta,\Gamma \theta\rangle ^4\ .$$
Applying Chebychev's inequality, we have 
\begin{equation}\label{eq2_type2_gaussien}
 \langle \theta,\widehat{\Gamma}_n\theta\rangle \geq  \mathbb{E}[\langle X,\theta\rangle ^2]/2\text{, with probability larger than }1-\beta/4\text{ as long as }\beta\geq C/n\ .
\end{equation}

Let us fix some $k\in\mathcal{K}_n$. We have $\langle \theta,\widehat{\Pi}^{\perp}_{\hat{k}^{KL}} \widehat{\Gamma}_n\theta\rangle^2 \leq \|\theta\| ^2\widehat{\lambda}_{\hat{k}^{KL}+1}$. Observe that $\hat{k}^{KL}+1 < k+1$ only if $\widehat{\lambda}_{\hat{k}^{KL}+1}=0$. Consequently, we also have $\langle \theta,\widehat{\Pi}^{\perp}_{\hat{k}^{KL}} \widehat{\Gamma}_n\theta\rangle \leq \|\theta\| ^2\widehat{\lambda}_{k+1}$.

To conclude it is sufficient to provide an upper bound of $\widehat{\lambda}_{k+1}$ with high probability.
By definition of $\widehat{\lambda}_{k+1}$, we have 
$$\widehat{\lambda}_{k+1}= \inf_{W,\ \textrm{Codim(W)=k}}\sup_{z\in W^{\perp},\ \|z\| =1}\langle z,\widehat{\Gamma}_nz\rangle \leq \sup_{z\in \text{Vect}(V_{k+1},...),\  \|z\| =1}\langle z,\widehat{\Gamma}_nz\rangle  ,$$
implying that 
$$\widehat{\lambda}_{k+1}\leq \|\Pi^\perp_{k}\widehat{\Gamma}_n\Pi^\perp_{k}\|_{\infty}\leq \lambda_{k+1}+ \|\Pi^\perp_{k}(\Gamma-\widehat{\Gamma}_n)\Pi^\perp_{k}\|_{\infty}\leq \lambda_{k+1} + \|\Pi^\perp_{k}(\Gamma-\widehat{\Gamma}_n)\Pi^\perp_{k}\|_{HS}\ . $$
Hence, it is sufficient to bound the Hilbert Schmidt norm $\|\Pi^\perp_{k}(\Gamma-\widehat{\Gamma}_n)\Pi^\perp_{k}\|_{HS}$ in probability.
 By Jensen's inequality, we have $\mathbb{E}[\|\Pi^\perp_{k}(\Gamma-\widehat{\Gamma}_n)\Pi^\perp_{k}\|_{HS}]\leq \mathbb{E}[\|\Pi^\perp_{k}(\Gamma-\widehat{\Gamma}_n)\Pi^\perp_{k}\|^2_{HS}]^{1/2}$ and simple calculations lead to 
\[\mathbb{E}[\|\Pi^\perp_{k}(\Gamma-\widehat{\Gamma}_n)\Pi^\perp_{k}\|^2_{HS}]= \frac{1}{n}\mathbb{E}[\|\Pi^\perp_{k}\Gamma\Pi^\perp_{k}-(\Pi^\perp_{k}X)\otimes (\Pi^\perp_{k}X)]\|^2_{HS}]\ .\]
By Assumption ${\bf B.1}$, we conclude that 
 $$\mathbb{E}[\|\Pi^\perp_{k}\Gamma\Pi^\perp_{k}-(\Pi^\perp_{k}X)\otimes (\Pi^\perp_{k}X)]\|^2_{HS}]\leq \mathbb{E}[\|\Pi^\perp_{k}X\|^4 ]\leq C(\sum_{j\geq k+1}\lambda_j)^2\ .$$
By Markov inequality, we conclude that 
$\widehat{\lambda}_{k+1}\leq \lambda_k+C(\beta)\sum_{j\geq k+1}\frac{\lambda_j}{\sqrt{n}}$ with probability larger than $1-\beta/4$. Gathering this probability bound with (\ref{definition_delta}) and (\ref{eq2_type2_gaussien}), we derive that ${P}_{\theta}(T_{\alpha}^{(1)}>0)\geq 1-\beta$ if $\theta$ satisfies for some $k\in\mathcal{K}_n$, 
\[\|\Gamma^{1/2}\theta\| ^2 \geq \frac{C_1}{n}\|\theta\| ^2 \left[\lambda_k+ C(\beta)\sum_{j\geq k+1}\lambda_j/\sqrt{n}\right]+ C_2\frac{\sigma^2}{n}\left[\sqrt{k\log\left(\frac{2\log n}{\alpha \beta	}\right)}+\log\left(\frac{2\log n}{\alpha \beta	}\right)\right]\ .\]

\end{proof}

\begin{proof}[Proof of Proposition \ref{prte_power_KL_gaussian}]
As in the previous proof, we apply Theorem 1 in \cite{baraud03} except that there is now only one test (instead of $|\mathcal{K}_n|$ tests). We have $\mathbb{P}_{\theta}(T_{\alpha,k}>0)\geq 1-\beta/2$ if $\theta$ satisfies 
\[n\langle\theta,\widehat{\Gamma}_n\theta\rangle \geq C_1 \langle\theta,\widehat{\Pi}^{\perp}_{\hat{k}^{KL}} \widehat{\Gamma}_n\theta\rangle + C_2\sqrt{\hat{k}^{KL}\log\left(\frac{2\log n}{\alpha \beta	}\right)}\sigma^2+ C_3\log\left(\frac{2\log n}{\alpha \beta	}\right)\sigma^2\ .\]
Furthermore, we have shown that
\begin{eqnarray*}
\langle \theta,\widehat{\Gamma}_n\theta\rangle &\geq&  \mathbb{E}[\langle X,\theta\rangle ^2]/2\\
\langle \theta,\widehat{\Pi}^{\perp}_{\hat{k}^{KL}} \widehat{\Gamma}_n\theta\rangle&\leq &\|\theta\| ^2\left[ \lambda_k+C(\beta)\sum_{j\geq k+1}\frac{\lambda_j}{\sqrt{n}}\right]\ .
\end{eqnarray*}
 with probability larger than $1-\beta/2$. Gathering these three bounds leads to the desired result.
\end{proof}

\section{Proofs of the minimax lower bounds}\label{section_proof_minimax}

\begin{proof}[Proof of Proposition \ref{prop_minoration}]
For any dimension $k\geq 1$, we define $r_k^2=C(\alpha,\beta)\frac{\sqrt{k}}{n}\wedge 
\lambda_k a_k^2R^2$, where the constant $C(\alpha,\beta)$ will be fixed later. For any $\theta\in \text{Vect}(V_1,\ldots, V_k)$ such that $\|\Gamma^{1/2}\theta\| ^2/\sigma^2\leq r_k^2$, we have 
$$\sum_{j=1}^k\frac{\langle \theta,V_j\rangle^2}{a_j^2}\leq \frac{1}{\lambda_ka_k^2}\sum_{j=1}^k \langle \theta,V_j\rangle^2\lambda_i\leq \frac{r_k^2\sigma^2}{a_k^2\lambda_k}\leq R^2\sigma^2$$
since $r_k^2\leq \lambda_k a_k^2R^2$ and since the $\lambda_ja^2_j$'s are 
non increasing. As a consequence,  
\begin{eqnarray*} 
\left\{\theta\in \text{Vect}(V_1,\ldots, V_k),\ \|\Gamma^{1/2}\theta\| ^2/\sigma^2=r^2_k\right\}\subset \left\{\theta\in \mathcal{E}_a(R),\ \|\Gamma^{1/2}\theta\| ^2/\sigma^2\geq r^2_k\right\}\ . 
\end{eqnarray*} 
Since $X$ is a centered Gaussian process, $(\langle X,V_1\rangle ,\ldots, \langle X,V_k\rangle )$ is a centered Gaussian vector. Assuming that $\theta$ belongs to $\text{Vect}(V_1,\ldots, V_k)$ and  that $(V_1,\ldots, V_k)$ is known, the functional linear model translates as a linear Gaussian model with Gaussian design as studied in \cite{VV10}:
$$Y=\sum_{j=1}^k \langle X,V_j\rangle \langle\theta,V_j\rangle +\epsilon\ .$$
By Proposition 4.2 in \cite{VV10}, there exists a constant $C(\alpha, \beta)$, such that 
for any test $T$ of level $\alpha$, we have
\begin{eqnarray*}
 \boldsymbol{\beta}\left[T; \left\{\theta\in \text{Vect}(V_1,\ldots, V_k)  , \sigma>0,\  \|\Gamma^{1/2}\theta\| ^2 \geq C(\alpha,\beta)\frac{\sqrt{k}}{n}\sigma^2\right\}\right]\geq \beta\ .
\end{eqnarray*}
Gathering this last  bound for all $k\geq 1$ allows us to conclude.
\end{proof}
\begin{proof}[Proof of Proposition \ref{prop_minoration_adaptation}]
 As in the last proof, we shall adapt results for the Gaussian linear regression model with Gaussian design.
Let $k^*_n(R) \in \mathbb{N}^*$ be an integer 
that achieves the supremum of $\tilde{r}^2_{k}=C(\alpha,\beta)\sqrt{k\log\log(k\vee 3) }/n\wedge R^2a_k^2\lambda_k $. We note as in the last proof that 
for any $R>0$ and $k^*_n(R)$ in $\mathbb{N}^*$, 
\begin{eqnarray*} 
\left\{\theta\in \text{Vect}(V_1,\ldots, V_{k^*_n(R)}),\ \frac{\|\Gamma^{1/2}\theta\| ^2}{\sigma^2}=\tilde{r}^2_{k^*_n(R)}\right\}\subset \left\{\theta\in \mathcal{E}_a(R),\ \frac{\|\Gamma^{1/2}\theta\| ^2}{\sigma^2}\geq \tilde{r}^2_{k^*_n(R)}\right\}.\
\end{eqnarray*} 
Thus, we obtain
\begin{eqnarray*} 
\lefteqn{\bigcup_{k\geq 1}\left\{\theta\in \text{Vect}(V_1,\ldots, V_{k}),\ 
\frac{\|\theta\| ^2}{\var(Y)-\|\theta\| ^2}= C(\alpha,\beta)\sqrt{k\log\log(k\vee 3) }/n\right\}}&&\\&\subset& 
\bigcup_{R>0}\left\{\theta\in \text{Vect}(V_1,\ldots, V_{k^*_n(R)})\ \frac{\|\Gamma^{1/2}\theta\| ^2}{\sigma^2}= r^2_{k^*_n(R)}\right\} 
\\  &\subset&  \bigcup_{R>0}\left\{\theta\in \mathcal{E}_a(R),\ \frac{\|\Gamma^{1/2}\theta\| ^2}{\sigma^2}\geq r^2_{D^*(R)} \right\}\ .
\end{eqnarray*}
Hence, we only have to provide a minimax lower bound for simultaneously testing over a family of nesting linear spaces. Letting $p$ go to infinity in Proposition 5.5 in \cite{VV10}, we obtain that
$$\boldsymbol{\beta}\left[\bigcup_{k\geq 1}\left\{\theta\in \text{Vect}(V_1,\ldots, V_{k})\ ,
\frac{\|\Gamma^{1/2}\theta\| ^2}{\var(Y)-\|\Gamma^{1/2}\theta\| ^2}= C(\alpha,\beta)\sqrt{k\log\log(k\vee 3) }/n\right\}\right]\geq \beta\ ,$$
which allows us to conclude.

\end{proof}

\section{Proofs based on Berry-Esseen type inequalities}\label{section:berry_essen}

\begin{proof}[Proof of Lemma \ref{lemma_1}]
Let us fix some $k>0$. For any $1\leq j\leq k$, we have
\begin{eqnarray*}
\sqrt{n} \langle\Delta_n,A_kV_j\rangle  =
\frac{1}{\sqrt{n}}\sum_{i=1}^n\langle
X_i,\frac{V_j}{\sqrt{\lambda_j}}\rangle {\epsilon}_i = \frac{1}{\sqrt{n}}\sum_{i=1}^n
\boldsymbol{\eta}_{i}^{(j)}\epsilon_i
\end{eqnarray*}
For any $1\leq j_1<j_2\leq k$ and $1\leq i\leq j$, the random variables
$\boldsymbol{\eta}_{i}^{(j_1)}\epsilon_i$ and $\boldsymbol{\eta}_{i}^{(j_2)}\epsilon_i$ are uncorrelated.
By the central limit theorem, we conclude that
$\|\sqrt{n}A_k\Delta_n\| ^2/\sigma^2$ converges in distribution towards a
$\chi^2(k)$ random variable, at least when $k$ is fixed.\\

In order to precisely control the tails of $\|\sqrt{n}A_k\Delta_n\| ^2$, the
central limit theorem is not sufficient. We need
a Berry-Esseen type inequality. 
Let us call $W_i$ the vector of size $k$ whose $j$-th component is
$\boldsymbol{\eta}_{i}^{(j)}\epsilon_i$. We note $\|W_i\|_k$ its Euclidean norm. By
Assumption ${\bf B.1}$, we have 
\begin{eqnarray*}
 \mathbb{E}\left[\|W_i\|_k^3\right]\leq
k^{3/2}\mathbb{E}\left[\epsilon^4\right]^{3/4}\sup_ { 1\leq j\leq
k}\mathbb{E}\left[(\eta^{(j)})^4\right]^{3/4}\ .
\end{eqnarray*}
Applying the second part of Theorem 1.1 in
Bentkus \cite{bentkus03}, we obtain 
\begin{eqnarray*}
 \sup_{x>0}\left|\mathbb{P}\left(\|\sqrt{n}A_k\Delta_n\| ^2\geq
x\right)- \bar{\chi}_k(x/\sigma^2)\right|\leq C
\frac{k^{3/2}}{\sqrt{n}}\frac{\mathbb{E}\left[\epsilon^4\right]^{3/4}}{\sigma^3}
\sup_ { 1\leq j\leq k}\mathbb{E}\left[(\eta^{(j)})^4\right]^{3/4} \ .
\end{eqnarray*}
 We conclude by applying Assumption ${\bf B'.3}$.

\end{proof}

\begin{proof}[Proof of Lemma \ref{lemma_A1}]

As explained in the proof of Lemma \ref{lemma_1},  $\sqrt{n}/\sigma A_k\Delta_{n,1}$
converges to a Gaussian process whose covariance operator $\Sigma_k$ is defined by  $\Sigma_k=
\sum_{j=1}^k\langle V_j,.\rangle V_j$. For $j=1,\ldots, k$, we  define $\xi_j= (\lambda_j^{1/2}\sqrt{\var([\eta^{(j)}]^2)}\langle\theta,V_j\rangle )^{-1}$ if $\langle\theta,V_j\rangle^2 \neq 0$ and $\xi_j=0$ else. Consider the operator $D_k=\sum_{j=1}^k \xi_j\langle V_j,.\rangle V_j$. 
For any $j=1,\ldots, k$ such that $\xi_j\neq 0$, we have 
$$\sqrt{n}\langle D_kA_k\widehat{\Gamma}_n\theta,V_j\rangle -\sqrt{\frac{n}{\var([\eta^{(j)}]^2)}}=\frac{\sum_{i=1}^n[\boldsymbol{\eta}_i^{(j)}]^2-1}{\sqrt{n\var([\eta^{(j)}]^2)}}\ .$$
As a consequence, $\sqrt{n}(D_kA_k\widehat{\Gamma}_n\theta-D_k\Gamma_k^{1/2}\theta)$ converges in distribution towards a Gaussian process whose covariance operator $\Sigma'_k$ is defined by $\Sigma'_k=\sum_{j=1}^k \langle V_j,.\rangle V_j\mathbf{1}_{\xi_j\neq 0}$. Furthermore, the processes $\sqrt{n}/\sigma A_k\Delta_{n,1}$ and $\sqrt{n}(D_kA_k\widehat{\Gamma}_n\theta-D_k\Gamma_k^{1/2}\theta)$ are asymptotically independent. Let us consider the random vector $Z$ of size $k':=k+\# \{j\in\{1,\ldots,k\}: \xi_j\neq 0\}$ such that
$Z_j=\epsilon/\sigma \eta^{(j)}$ if $j=1,\ldots, k$ and $Z_j=([\eta^{(j)}]^2-1)/\sqrt{\var([\eta^{(j)}]^2)}$ if $j>k$.
Let us upper bound  $\mathbb{E}[\|Z\|^3_{k'}]$
\begin{eqnarray*}
 \mathbb{E}[\|Z\|^3_{k'}]\leq C k^{3/2}\left(\frac{\mathbb{E}[\epsilon^4]^{3/4}}{\sigma^3}\max_{1\leq j\leq k}\mathbb{E}\left[(\eta^{(j)})^4\right]^{3/4}\vee \max_{1\leq j\leq k}\mathbb{E}\left[(\eta^{(j)})^8\right]^{3/4}\right)\ .
\end{eqnarray*}
We note ${\bf Z}_1,\ldots, {\bf Z}_n$ the $n$ observations of the vector $Z$, based on $\boldsymbol{\eta}_i^{(j)}$ and $\boldsymbol{\epsilon}_i$ for $i=1,\ldots, n$.
By Assumptions ${\bf B.1}$ and  ${\bf B.4}$, we can  apply the Berry-Esseen type inequality of
Bentkus (Theorem 1.1 in \cite{bentkus03}) in dimension $k'$. For any convex set $\mathcal{A}$, we obtain 
\begin{eqnarray*}
 \lefteqn{\left|\mathbb{P}\left(\sum_{i=1}^n\frac{{\bf Z}_i}{\sqrt{n}}\in\mathcal{A}\right)- \mathbb{P}\left[\mathcal{N}_{k'}(0,I_{k'})\in \mathcal{A}\right]\right|
\leq C\frac{k^{7/4}}{\sqrt{n}}}&&\\&\hspace{3cm}&\times \left(\frac{\mathbb{E}[\epsilon^4]^{3/4}}{\sigma^3}\max_{1\leq j\leq k}\mathbb{E}\left[(\eta^{(j)})^4\right]^{3/4}\vee \max_{1\leq j\leq k}\mathbb{E}\left[(\eta^{(j)})^8\right]^{3/4}\right)\ .
\end{eqnarray*}
 Moreover, this last quantity is smaller than $Cn^{-1/16}\log^{-7}(n)$ uniformly over all $k\leq \bar{k}_n$ by Assumption ${\bf B'.3}$.
Consider a standard Gaussian vector $(u_1,\ldots , u_{2k})$. We  define the  random vector $W$ by
$$W= \sum_{j=1}^k \left(\sqrt{n\lambda_j}\langle\theta,V_j\rangle + \sqrt{\lambda_j}\langle\theta,V_j\rangle \sqrt{\var([\eta^{(j)}]^2)} u_j+\sigma u_{j+k}\right)^2\ .$$
We derive from the definition of $W$ and the previous Berry-Esseen inequality that 
\begin{eqnarray*}
\sup_{x>0}\left|\mathbb{P}\left(\left\|\sqrt{n}A_{k,n}\left(\widehat{\Gamma}_n\theta+\Delta_{n,1}\right)\right\| ^2\geq
x\right)- \mathbb{P}(W\geq x)\right| \leq \frac{C}{\log^7 (n)}\ . 
\end{eqnarray*}
Conditionally to $(u_1,\ldots, u_k)$, $W/\sigma^2$ follows a non-central $\chi^2$ distribution with $k$ degrees of freedom and non-centrality parameter \[V:= \sum_{j=1}^k \left(\sqrt{n\lambda_j}\langle \theta,V_j\rangle + \sqrt{\lambda_j}\langle\theta,V_j\rangle \sqrt{\var([\eta^{(j)}]^2)} u_j\right)^2/\sigma^2\ .\] By a deviation inequality on non-central $\chi^2$ distributions (e.g. Eq.18 in \cite{baraud03}), we derive that, conditionally to $(u_1,\ldots, u_k)$,
$$W\geq k\sigma^2 +\frac{4}{5}V\sigma^2-2\sigma^2\sqrt{k\log(2/\beta)}-10\sigma^2\log(2/\beta)\ ,$$
with probability larger than $1-\beta/2$.
The non-centrality parameter $V$ is a polynomial function of independent normal variables. Applying a deviation inequality for normal variables, we derive that  $V\geq n/4\|\Gamma^{1/2}_k\theta\| ^2/\sigma^2$ with probability larger than $1-\sum_{j=1}^k \exp\left[-n\var([\eta^{(j)}]^2)/8\right]$. All in all, we conclude that 
$$\left\|\sqrt{n}A_{k,n}\left(\widehat{\Gamma}_n\theta+\Delta^2_n\right)\right\| ^2\geq k\sigma^2+ \frac{n}{5}\|\Gamma^{1/2}_k\theta\| ^2-2\sigma^2\sqrt{k\log(2/\beta)}-10\sigma^2\log(2/\beta)\ ,$$
with probability larger than $1-\beta/2-C/\log^7(n)- n\exp[-C'n]$.

\end{proof}

\section{Remaining proofs based on perturbation theory}\label{section_perturb_technical}

\subsection{Proof of Lemma \ref{perturb1}}

The second bound straightforwardly follows from the first bound by Markov inequality.
Fix $z\in\mathcal{B}_{j}$. We have%
\begin{align*}
&  \left\Vert \left(  zI-\Gamma\right)  ^{-1/2}\left(  \widehat{\Gamma}_{n}%
-\Gamma\right)  \left(  zI-\Gamma\right)  ^{-1/2}\right\Vert _{HS}^{2}\\
&  =\sum_{l=1}^{+\infty}\sum_{k=1}^{+\infty}\left\langle \left(
zI-\Gamma\right)  ^{-1/2}\left(  \widehat{\Gamma}_{n}-\Gamma\right)  \left(
zI-\Gamma\right)  ^{-1/2} V_{l}  ,V_{k}\right\rangle  ^{2}%
=\sum_{l,k=1}^{+\infty}\frac{\left\langle \left(  \widehat{\Gamma}_{n}-\Gamma\right)
V_{l}  ,V_{k}\right\rangle  ^{2}}{\left\vert z-\lambda
_{l}\right\vert \left\vert z-\lambda_{k}\right\vert }.%
\end{align*}
Since for $z=\lambda_{j}+\frac{\delta_{j}}{2}e^{\iota\theta}\in\mathcal{B}%
_{j}$ and $i\neq j$
\[
\left\vert z-\lambda_{i}\right\vert =\left\vert \lambda_{j}-\lambda_{i}%
+\frac{\delta_{j}}{2}e^{\iota\theta}\right\vert \geq\left\vert \lambda
_{j}-\lambda_{i}\right\vert -\frac{\delta_{j}}{2}\geq\left\vert \lambda
_{j}-\lambda_{i}\right\vert /2,
\]
we have
\begin{align*}
\sum_{l,k=1}^{+\infty}\frac{\left\langle \left(  \widehat{\Gamma}_{n}-\Gamma\right)
 V_{l}  ,V_{k}\right\rangle  ^{2}}{\left\vert z-\lambda
_{l}\right\vert \left\vert z-\lambda_{k}\right\vert }
&  \leq4\sum_{\substack{l,k=1,\\l,k\neq j}}^{+\infty}\frac{\left\langle
\left(  \widehat{\Gamma}_{n}-\Gamma\right)   V_{l}  ,V_{k}\right\rangle 
^{2}}{\left\vert \lambda_{j}-\lambda_{l}\right\vert \left\vert \lambda
_{j}-\lambda_{k}\right\vert }+2\sum_{\substack{k=1,\\k\neq j}}^{+\infty}%
\frac{\left\langle \left(  \widehat{\Gamma}_{n}-\Gamma\right)    V_{j}
,V_{k}\right\rangle  ^{2}}{\delta_{j}\left\vert \lambda_{j}-\lambda
_{k}\right\vert }\\ &+\frac{\left\langle \left(  \widehat{\Gamma}_{n}-\Gamma\right)  
V_{j} ,V_{j}\right\rangle  ^{2}}{\delta_{j}^{2}}\ .
\end{align*}
Applying Assumption {\bf B.1}, we derive
\begin{eqnarray*}
 \mathbb{E}\left[\sum_{l,k=1}^{+\infty}\frac{\left\langle \left(  \widehat{\Gamma}_{n}-\Gamma\right)
V_{l}  ,V_{k}\right\rangle  ^{2}}{\left\vert z-\lambda
_{l}\right\vert \left\vert z-\lambda_{k}\right\vert }
\right]&\leq& \frac{C}{n}\left[\sum_{\substack{l,k=1,\\l,k\neq j}}^{+\infty}\frac{\lambda_k\lambda_l}{|\lambda_j-\lambda_k|)(|\lambda_j-\lambda_k|)}+ \sum_{\substack{k=1,\\k\neq j}}^{+\infty}
\frac{\lambda_k\lambda_j}{\delta_j|\lambda_j-\lambda_k|}+ \frac{\lambda_j^2}{\delta_j^2}\right]\\
&\leq & \frac{C'}{n}\left[\left(\sum_{k\geq 1,\ k\neq j}^{\infty}\frac{\lambda_k}{|\lambda_k-\lambda_j|}\right)^2+ \frac{\lambda^2_j}{|\lambda_j-\lambda_{j+1}|^2}+\frac{\lambda^2_j}{|\lambda_{j-1}-\lambda_{j}|^2}\right]\ .
\end{eqnarray*}
Applying Lemma \ref{DomEigen} and Assumption ${\bf B.2}$ allows us to conclude.

\subsection{Proof of Lemma \ref{lemme_convergence_valeurs_propres_empirique}}

For any $2\leq j\leq \bar{k}_n$, we define $\delta'_j:=\max(\lambda_j-\lambda_{j+1}, \lambda_{j-1}-\lambda_j)$. Then, we build an oriented circle $\mathcal{B}'_j$ on the complex plane of radius $(\delta'_j-\delta_j)/4$ in such a way that any real number between $(\lambda_j+\lambda_{j+1})/2$ and $(\lambda_j+\lambda_{j-1})/2$ is either inside  $\mathcal{B}_j$ or $\mathcal{B}'_j$.
See Figure \ref{fig-contour} for an example of $\mathcal{B}_j$ and $\mathcal{B}'_j$.\\

\begin{figure}[hptb]
\begin{center}
{\includegraphics[width=11cm,angle=0]{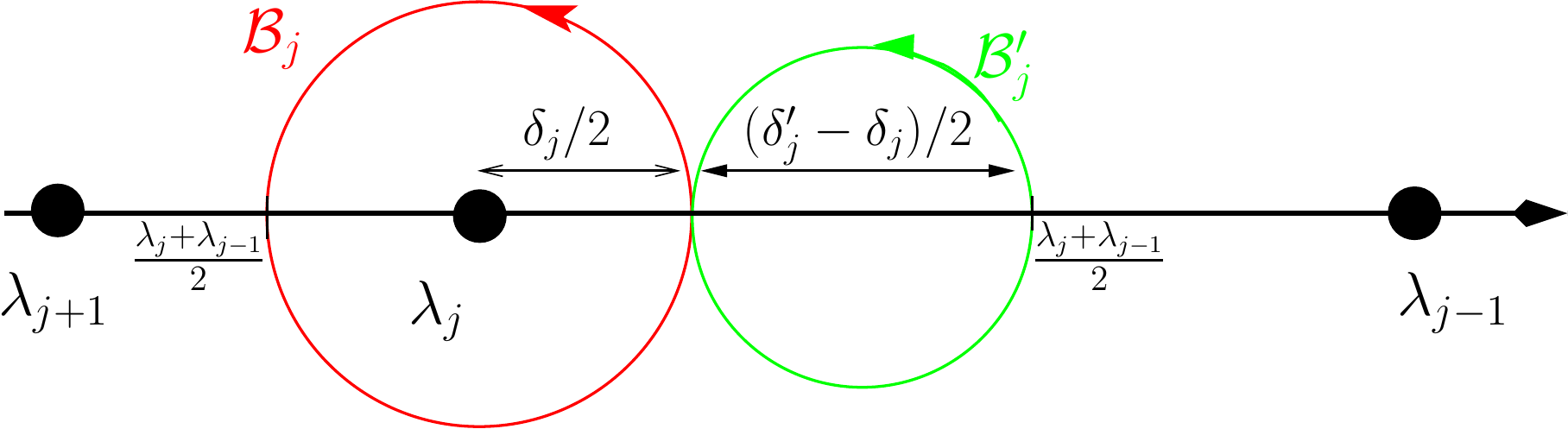}}
\caption{Contours $\mathcal{B}_j$ and $\mathcal{B}'_j$}\label{fig-contour}
\end{center}
\end{figure}

Lemma \ref{lemme_convergence_valeurs_propres_empirique} is a straightforward consequence of the two following lemmas. Let us define $T_n(z)=( zI-\Gamma ) ^{-1/2}(\widehat{\Gamma}_{n}-\Gamma)( zI-\Gamma ) ^{-1/2} $ and $S_{n}( z)=( zI-\Gamma ) ^{1/2}( zI-\widehat{\Gamma}_{n})
^{-1}( zI-\Gamma ) ^{1/2}$.
\begin{lemma}\label{lemma00}
 We have $\mathcal{A}_{n}\subset \mathcal{E}_{n}\cup \mathcal{E}'_n \cup\left\{\widehat{\lambda}_1\geq \frac{3\lambda_1-\lambda_2}{2}\right\}$,
where 
\begin{eqnarray*}
 \mathcal{E}_n&:=&\left\{ \sup_{1\leq j\leq \bar{k}_n}\sup_{z\in \mathcal{B}_{j}}\left\Vert T_{n}\left(
z\right) \right\Vert _{\infty }\geq 0.5\right\}\ ,\quad\quad
 \mathcal{E}'_n:=\left\{ \sup_{2\leq j\leq \bar{k}_n}\sup_{z\in \mathcal{B}'_{j}}\left\Vert T_{n}\left(
z\right) \right\Vert _{\infty }\geq 0.5\right\}\ .\\ 
\end{eqnarray*}
\end{lemma}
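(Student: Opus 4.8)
The plan is to prove the inclusion by contraposition: I will work on the complementary event
\[
\mathcal{G} := \overline{\mathcal{E}}_n \cap \overline{\mathcal{E}'}_n \cap \left\{\widehat{\lambda}_1 < \tfrac{3\lambda_1-\lambda_2}{2}\right\}
\]
and show that $\mathcal{G}\subseteq \overline{\mathcal{A}}_n$, i.e.\ that $|\widehat{\lambda}_j-\lambda_j|<\delta_j/2$ holds simultaneously for all $1\le j\le\bar{k}_n$. First I would record that \emph{no empirical eigenvalue lies on the contours}: on $\mathcal{G}$ we have $\|T_n(z)\|_\infty<1/2<1$ for every $z\in\mathcal{B}_j$ ($1\le j\le\bar{k}_n$) and every $z\in\mathcal{B}'_j$ ($2\le j\le\bar{k}_n$), so the Neumann series shows $I-T_n(z)$ is invertible with inverse $S_n(z)$; since $zI-\widehat{\Gamma}_n=(zI-\Gamma)^{1/2}\bigl(I-T_n(z)\bigr)(zI-\Gamma)^{1/2}$, the operator $zI-\widehat{\Gamma}_n$ is then invertible on these contours, so $\widehat{\Gamma}_n$ has no eigenvalue on $\mathcal{B}_j$ or $\mathcal{B}'_j$.

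Next I would \emph{count the enclosed eigenvalues by a homotopy argument}. Interpolating $\Gamma_t:=(1-t)\Gamma+t\widehat{\Gamma}_n$ for $t\in[0,1]$, the identity $zI-\Gamma_t=(zI-\Gamma)^{1/2}\bigl(I-tT_n(z)\bigr)(zI-\Gamma)^{1/2}$ together with $\|tT_n(z)\|_\infty\le\|T_n(z)\|_\infty<1$ shows that each contour stays in the resolvent set of $\Gamma_t$ throughout the homotopy. Hence the Riesz projector $P_j(t)=\frac{1}{2\pi\iota}\int_{\mathcal{B}_j}(zI-\Gamma_t)^{-1}\,dz$ is norm-continuous in $t$, and since the rank of a norm-continuous family of projectors is locally constant, $\mathrm{rank}\,P_j(1)=\mathrm{rank}\,P_j(0)$. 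As $\mathcal{B}_j$ encloses exactly the single eigenvalue $\lambda_j$ of $\Gamma$, I conclude that $\widehat{\Gamma}_n$ has exactly one eigenvalue inside $\mathcal{B}_j$; the same argument applied to $\mathcal{B}'_j$ gives count zero, because $\mathcal{B}'_j$ encloses no eigenvalue of $\Gamma$.

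Finally I would carry out the \emph{localization and ordering}. The circles are designed so that the real sections of $\mathcal{B}_j\cup\mathcal{B}'_j$ cover the Voronoi cell $I_j:=\bigl((\lambda_j+\lambda_{j+1})/2,(\lambda_{j-1}+\lambda_j)/2\bigr)$, the only uncovered points being the junction $\lambda_j\pm\delta_j/2$, which sit on $\partial\mathcal{B}_j$ or $\partial\mathcal{B}'_j$. These cells tile $\bigl((\lambda_m+\lambda_{m+1})/2,(3\lambda_1-\lambda_2)/2\bigr)$. Since the empirical eigenvalues are real and avoid the contours, and since no empirical eigenvalue exceeds $(3\lambda_1-\lambda_2)/2$ on $\mathcal{G}$, every empirical eigenvalue above $(\lambda_m+\lambda_{m+1})/2$ must lie strictly inside some $\mathcal{B}_j$ with $j\le m$ (the $\mathcal{B}'_j$ contain none); combined with the counts of the previous step this yields that \emph{exactly $m$} empirical eigenvalues exceed $(\lambda_m+\lambda_{m+1})/2$, for each $1\le m\le\bar{k}_n$. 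Applying this to consecutive $m$ pins $\widehat{\lambda}_m$ into the cell $I_m$, and because $\widehat{\lambda}_m$ can lie neither inside $\mathcal{B}'_m$ (count zero) nor on a contour, it must lie in $\mathcal{B}_m\cap\mathbb{R}$, i.e.\ $|\widehat{\lambda}_m-\lambda_m|<\delta_m/2$.

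The hard part will be this last step: converting the purely topological contour counts into a statement about the \emph{ordered} eigenvalues $\widehat{\lambda}_m$. The care lies in checking that the Voronoi cells tile the relevant segment of $\mathbb{R}$, that the only gaps in the $\mathcal{B}_j\cup\mathcal{B}'_j$ cover are contour points forbidden to eigenvalues, and that the largest eigenvalue cannot escape above $\mathcal{B}_1$ — which is precisely why the extra event $\{\widehat{\lambda}_1\ge(3\lambda_1-\lambda_2)/2\}$ is needed in the statement. The boundary case $j=1$ (no $\lambda_0$, hence no $\mathcal{B}'_1$, with $\mathcal{B}_1\cap\mathbb{R}$ already equal to the truncated cell) and the degenerate case $\delta'_j=\delta_j$ are routine and would be handled separately.
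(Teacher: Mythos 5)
Your proposal is correct, and it arrives at the same intermediate structure as the paper's proof (no empirical eigenvalue on any contour; exactly one inside each $\mathcal{B}_j$; none inside each $\mathcal{B}'_j$; $\widehat{\lambda}_1$ below the top of $\mathcal{B}_1\cap\R$), but it establishes the two counting claims by a genuinely different device. The paper argues by contradiction: if the count inside some $\mathcal{B}_{j^{\ast}}$ were $0$ or $\geq 2$, the Riesz projectors would satisfy $\|\pi_{\widehat{W}_{j^{\ast}}}-\pi_{j^{\ast}}\|_{\infty}\geq 1$, and it then bounds the contour integral of $(zI-\widehat{\Gamma}_n)^{-1}(\widehat{\Gamma}_n-\Gamma)(zI-\Gamma)^{-1}$ by $\sup_z\|S_n(z)\|_{\infty}\|T_n(z)\|_{\infty}$ together with $\|S_n(z)\|_{\infty}\leq(1-\|T_n(z)\|_{\infty})^{-1}$ to force $\|T_n\|_{\infty}\geq 1/2$ somewhere on the contour; for the circles $\mathcal{B}'_j$ it needs an extra Neumann-series expansion so that the terms $\int_{\mathcal{B}'_j}(zI-\Gamma)^{-1/2}T_n^k(z)(zI-\Gamma)^{-1/2}dz$ vanish by Cauchy's theorem. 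Your homotopy $\Gamma_t=(1-t)\Gamma+t\widehat{\Gamma}_n$, combined with the factorization $zI-\Gamma_t=(zI-\Gamma)^{1/2}(I-tT_n(z))(zI-\Gamma)^{1/2}$, obtains both counts at once from the local constancy of the rank of a norm-continuous family of Riesz projectors; it is cleaner, treats $\mathcal{B}_j$ and $\mathcal{B}'_j$ symmetrically, and only requires $\|T_n(z)\|_{\infty}<1$ rather than $<1/2$, so it yields a marginally stronger statement. You are also more explicit than the paper about the final localization step: the paper simply asserts that the four events imply $\overline{\mathcal{A}}_n$, whereas you spell out the Voronoi-cell tiling and the counting of ordered eigenvalues, and you correctly identify why the extra event $\{\widehat{\lambda}_1\geq(3\lambda_1-\lambda_2)/2\}$ must be carved out. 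The only delicate points are the junction/boundary points of the covering and the degenerate case $\delta'_j=\delta_j$, which you flag and which the paper glosses over as well.
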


\begin{lemma}\label{lemma01}
Under Assumptions ${\bf B.1}$ and ${\bf B.2}$, we have \begin{eqnarray*}
\mathbb{P}\left(\mathcal{E}_n\right)\leq C_1(\gamma)\frac{\bar{k}^3_n\log^2(\bar{k}_n\vee e)}{n},\quad
 \mathbb{P}\left(\mathcal{E}'_n\right)\leq C_2(\gamma)\frac{\bar{k}^3_n\log^2(\bar{k}_n\vee e)}{n},\quad
\mathbb{P}\left[\widehat{\lambda}_1\geq \frac{3\lambda_1-\lambda_2}{2}\right]\leq \frac{C_3(\gamma)}{n}\ .
\end{eqnarray*}

\end{lemma}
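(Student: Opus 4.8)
The plan is to treat the three estimates separately: the first two follow from the perturbation bound of Lemma \ref{perturb1} (and its analogue for the auxiliary contours $\mathcal{B}'_j$) combined with a union bound over $j$, while the third follows from Weyl's inequality and a crude second-moment control of $\widehat{\Gamma}_n-\Gamma$.

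First I would bound $\mathbb{P}(\mathcal{E}_n)$. Since $\mathcal{E}_n=\bigcup_{j=1}^{\bar{k}_n}\mathcal{E}_{j,n}$, where $\mathcal{E}_{j,n}$ is the event introduced just before Lemma \ref{perturb1}, a union bound together with the probability estimate of Lemma \ref{perturb1} gives
\[
\mathbb{P}(\mathcal{E}_n)\leq \sum_{j=1}^{\bar{k}_n}\mathbb{P}(\mathcal{E}_{j,n})\leq \frac{C(\gamma)}{n}\sum_{j=1}^{\bar{k}_n}\big[j(\log j\vee 1)\big]^2\ .
\]
As $j\mapsto [j(\log j\vee 1)]^2$ is nondecreasing, the sum is at most $\bar{k}_n\big[\bar{k}_n(\log\bar{k}_n\vee 1)\big]^2=\bar{k}_n^3\log^2(\bar{k}_n\vee e)$, which yields the first inequality.

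The bound on $\mathbb{P}(\mathcal{E}'_n)$ is obtained identically, once the analogue of Lemma \ref{perturb1} is proved for the contours $\mathcal{B}'_j$, namely $\mathbb{E}\sup_{z\in\mathcal{B}'_j}\|T_n(z)\|_{HS}^2\leq C(\gamma)[j(\log j\vee 1)]^2/n$. This is where the main work lies. The only geometric inputs in the proof of Lemma \ref{perturb1} are the comparison $|z-\lambda_i|\geq |\lambda_j-\lambda_i|/2$ valid for $z\in\mathcal{B}_j$ and $i\neq j$, together with $\sup_{z\in\mathcal{B}_j}\|(zI-\Gamma)^{-1}\|_\infty\leq 2\delta_j^{-1}$. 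The obstacle is therefore to check that $\mathcal{B}'_j$, which is centered in the larger of the two gaps adjacent to $\lambda_j$ and has radius $(\delta'_j-\delta_j)/4$, stays uniformly separated from the spectrum: for $z\in\mathcal{B}'_j$ one has $|z-\lambda_j|$ and $|z-\lambda_{j\pm 1}|$ of order $\delta'_j$, and more generally $|z-\lambda_i|\geq c\,|\lambda_j-\lambda_i|$ for all $i$ and $\sup_{z\in\mathcal{B}'_j}\|(zI-\Gamma)^{-1}\|_\infty\leq C\delta_j^{-1}$. Granting these comparisons, the expansion of $\|T_n(z)\|_{HS}^2$ into $\sum_{l,k}\langle(\widehat{\Gamma}_n-\Gamma)V_l,V_k\rangle^2/(|z-\lambda_l|\,|z-\lambda_k|)$ and the use of Assumption ${\bf B.1}$ and Lemma \ref{DomEigen} reproduce the bound of Lemma \ref{perturb1}; the union bound and the same summation then give the second inequality.

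Finally, for the third bound note that $(3\lambda_1-\lambda_2)/2=\lambda_1+\delta_1/2$. By Weyl's inequality, $\widehat{\lambda}_1-\lambda_1\leq \|\widehat{\Gamma}_n-\Gamma\|_\infty\leq \|\widehat{\Gamma}_n-\Gamma\|_{HS}$, so Markov's inequality gives
\[
\mathbb{P}\Big[\widehat{\lambda}_1\geq \lambda_1+\frac{\delta_1}{2}\Big]\leq \frac{4}{\delta_1^2}\,\mathbb{E}\big[\|\widehat{\Gamma}_n-\Gamma\|_{HS}^2\big]\leq \frac{4}{\delta_1^2\,n}\,\mathbb{E}\big[\|X\|^4\big]\ ,
\]
where the last step uses $\mathbb{E}\|\widehat{\Gamma}_n-\Gamma\|_{HS}^2=\tfrac1n\,\mathbb{E}\|X\otimes X-\Gamma\|_{HS}^2\leq \tfrac1n\,\mathbb{E}\|X\|^4$. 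Expanding $\|X\|^4=\sum_{l,k}\lambda_l\lambda_k(\eta^{(l)})^2(\eta^{(k)})^2$ in the KL basis and applying Cauchy--Schwarz together with Assumption ${\bf B.1}$ gives $\mathbb{E}[\|X\|^4]\leq C_1\big(\sum_l\lambda_l\big)^2=C_1(\mathrm{tr}\,\Gamma)^2$. Since $\mathrm{tr}\,\Gamma$ and $\delta_1$ are finite positive quantities controlled through Assumption ${\bf B.2}$, the right-hand side is at most $C_3(\gamma)/n$, which completes the proof.
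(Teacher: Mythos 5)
Your proof is correct and follows essentially the same route as the paper: a union bound over $j$ combined with Lemma \ref{perturb1} (and its verbatim analogue for the contours $\mathcal{B}'_j$, whose separation from the spectrum you rightly identify as the only point to check) for the first two bounds, and Weyl's inequality plus a Hilbert--Schmidt second-moment bound and Markov for the third. The one detail to make explicit in the last step is that the constant depends on $\gamma$ alone: Assumption ${\bf B.2}$ gives $2\lambda_2\leq\lambda_1$, hence $\delta_1\geq\lambda_1/2$, and $\sum_j\lambda_j\leq C(\gamma)\lambda_1$ (e.g.\ via Lemma \ref{DomEigen} with $k=1$), so that $\left(\mathrm{tr}\,\Gamma/\delta_1\right)^2\leq C(\gamma)$ rather than merely being finite.
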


\begin{proof}[Proof of Lemma \ref{lemma00}]
Suppose that the four following events hold: 1) $\widehat{\Gamma}_n$ has no eigenvalue on all the contours $\mathcal{B}_j$ and $\mathcal{B}'_j$. 2) For each $1\leq j\leq \bar{k}_n$, $\widehat{\Gamma}_n$ has exactly one eigenvalue inside the circle $\mathcal{B}_j$. 3) For each $2\leq j\leq \bar{k}_n$, $\widehat{\Gamma}_n$ has no eigenvalue inside the circle $\mathcal{B}'_j$. 4) $\widehat{\lambda}_1< (3\lambda_1-\lambda_2)/2$. In such a case, the event $\overline{\mathcal{A}}_n$ is true. As a consequence, $\mathcal{A}_n$ is included in the union of the four following events denoted $\mathcal{D}_1$, $\mathcal{D}_2$, $\mathcal{D}_3$ and $\mathcal{D}_4$.
\begin{itemize}
\item For some $1\leq j\leq \bar{k}_n$, $\widehat{\Gamma}_n$ has an eigenvalue that lies on the contours $\mathcal{B}_j$ and $\mathcal{B}'_j$.
 \item For some $1\leq j\leq \bar{k}_n$, $\widehat{\Gamma}_n$ has  either $0$ or more than $2$ eigenvalues inside the circle $\mathcal{B}_j$.
\item For some $2\leq j\leq \bar{k}_n$, $\widehat{\Gamma}_n$ has  at least $1$ eigenvalue inside the circle $\mathcal{B}'_j$.
\item $\widehat{\lambda}_1\geq  (3\lambda_1-\lambda_2)/2$.
\end{itemize}
We shall prove that $\mathcal{D}_1\subset \mathcal{E}_n\cup \mathcal{E}'_n$, that $\mathcal{D}_2\setminus \mathcal{D}_1\subset \mathcal{E}_n$ and that $\mathcal{D}_3\setminus \mathcal{D}_1 \subset \mathcal{E}'_n$. \\

\noindent 
{\bf Event $\mathcal{D}_1$}. Assume that an eigenvalue of $\widehat{\Gamma}_n$ lies exactly on some contour $\mathcal{B}_{j}\cup\mathcal{B}'_j$. Let us call $\widehat{\lambda}$ such an eigenvalue and $\widehat{V}$ a corresponding eigenvector. We have 
\begin{eqnarray*}
 T_n(\widehat{\lambda})(\widehat{\lambda} I-\Gamma)^{1/2}\widehat{V}&=& (\widehat{\lambda} I -\Gamma)^{-1/2}(\widehat{\Gamma}_n-\Gamma)\widehat{V}\\ & =& (\widehat{\lambda} I -\Gamma)^{-1/2}(\widehat{\lambda} I -\Gamma) \widehat{V}=  (\widehat{\lambda}I-\Gamma)^{1/2}\widehat{V}\ .
\end{eqnarray*}
Since $\widehat{\lambda}$ is not an eigenvalue of $\Gamma$, we have $(\widehat{\lambda}I-\Gamma)^{1/2}\widehat{V}\neq 0$
so that $\sup_{z\in \mathcal{B}_{j}\cup\mathcal{B}'_j}\left\Vert T_{n}\left(
z\right) \right\Vert _{\infty }\geq 1$. Hence, $\mathcal{D}_1\subset \mathcal{E}_{n}\cup \mathcal{E}'_{n}$.\\

\noindent 
{\bf Event $\mathcal{D}_2\setminus \mathcal{D}_1$}. Assume that $\mathcal{D}_2\setminus \mathcal{D}_1$ is true. It follows that for some $1\leq j^{\ast}\leq \bar{k}_n$ the operator $(2\pi\iota)^{-1}\int_{\mathcal{B}_{j^{\ast }}}( zI-\widehat{\Gamma}_{n}) ^{-1}dz$ is an orthogonal projector $\pi_{\widehat{W}_{j^{\ast}}}$ on a space $\widehat{W}_{j^{\ast}}$ of dimension different from one. In contrast, $(2\pi\iota)^{-1}\int_{\mathcal{B}_{j^{\ast }}}( zI-\Gamma) ^{-1}dz$ is the orthogonal projector $\pi_{j^{\ast}}$ on $V_{j^{\ast}}$. Consider
\begin{equation*}
\frac{1}{2\pi\iota}\int_{\mathcal{B}_{j^{\ast }}}\left[ \left( zI-\widehat{\Gamma}_{n}\right) ^{-1}-\left(
zI-\Gamma \right) ^{-1}\right] dz=\pi_{\widehat{W}_{j^{\ast}}}-\pi _{j^{\ast }}\ .
\end{equation*}%
If  $\mathrm{dim}(\widehat{W}_{j^{\ast}})=0$, then $\|\pi_{\widehat{W}_{j^{\ast}}}-\pi _{j^{\ast }}\|_{\infty}=1$. If $\mathrm{dim}(\widehat{W}_{j^{\ast}})\geq 2$, then there exists a vector $\widehat{V}$ in $\widehat{W}_{j^{\ast}}$ such that $\pi _{j^{\ast }} \widehat{V}=0$. As a consequence, we have $\|\pi_{\widehat{W}_{j^{\ast}}}-\pi _{j^{\ast }}\|_{\infty}\geq 1$. 
 For any $z\in \mathcal{B}_{j^{\ast}}$, $S_n(z)$ is well defined since no eigenvalue of $\widehat{\Gamma}_n$ lies on $\mathcal{B}_{j^{\ast}}$.
It follows that
\begin{align}
1& \leq \frac{1}{2\pi}\int_{\mathcal{B}_{j^{\ast
}}}\left\Vert \left( zI-\widehat{\Gamma}_{n}\right) ^{-1}\left( \widehat{\Gamma}_{n}-\Gamma
\right) \left( zI-\Gamma \right) ^{-1}\right\Vert _{\infty }dz \nonumber\\
& \leq \frac{1}{2\pi}\int_{\mathcal{B}_{j^{\ast }}}\left\Vert \left( zI-\widehat{\Gamma}_{n}\right)
^{-1}\left( zI-\Gamma \right) ^{1/2}T_{n}(z)\left( zI-\Gamma \right)
^{-1/2}\right\Vert _{\infty }dz \nonumber\\
& \leq\frac{1}{2\pi}\int_{\mathcal{B}_{j^{\ast }}}\left\Vert \left( zI-\Gamma
\right) ^{1/2}\left( zI-\widehat{\Gamma}_{n}\right) ^{-1}\left( zI-\Gamma \right)
^{1/2}\right\Vert _{\infty }\left\Vert T_{n}\left( z\right) \right\Vert
_{\infty }\left\Vert \left( zI-\Gamma \right) ^{-1/2}\right\Vert _{\infty
}^{2}dz \nonumber\\
& \leq  \sup_{z\in \mathcal{B}_{j^{\ast }}}\left\Vert S_{n}\left( z\right) \right\Vert
_{\infty }\left\Vert T_{n}\left( z\right) \right\Vert _{\infty }\ ,\label{eq_majoration_infini}
\end{align}%
since $\Vert \left( zI-\Gamma \right)^{-1}\Vert _{\infty
}\leq 2/\delta_j$.
 Moreover, we have
$S_{n}\left( z\right) \left( I-T_{n}\left( z\right) \right) =I$. We can assume that $\sup_{z\in\mathcal{B}_{j^{\ast}}}\left\Vert T_{n}\left(
z\right) \right\Vert _{\infty }<0.9$, otherwise $\mathcal{E}_{n}$ is true. Then, we have $\left\Vert S_{n}\left(
z\right) \right\Vert _{\infty }\leq ( 1-\left\Vert T_{n}\left( z\right) \right\Vert _{\infty })^{-1}$. Gathering this bound with (\ref{eq_majoration_infini})
leads to
$\sup_{z\in \mathcal{B}_{j^{\ast }}}\left\Vert T_{n}\left( z\right) \right\Vert
_{\infty }\geq 0.5$, 
which allows us to conclude that $\mathcal{D}_2\setminus \mathcal{D}_1\subset \mathcal{E}_n$.\\

\noindent 
{\bf Event $\mathcal{D}_3\setminus \mathcal{D}_1$}.  Assume that $\mathcal{D}_3\setminus \mathcal{D}_1$ is true. 
Arguing as for $\mathcal{D}_2$, we derive that for some $2\leq j^{\ast}\leq \bar{k}_n$, we have $\delta'_{j^{\ast}}>\delta_{j^{\ast}}$ and
\begin{eqnarray}\label{majoration_norme_op}
 \frac{1}{2\pi}\left\Vert\int_{\mathcal{B}'_{j^{\ast
}}} \left( zI-\widehat{\Gamma}_{n}\right) ^{-1}\left( \widehat{\Gamma}_{n}-\Gamma
\right) \left( zI-\Gamma \right) ^{-1}dz\right\Vert _{\infty }\geq 1\ .
\end{eqnarray}
We have proved above that 
\[( zI-\widehat{\Gamma}_{n}) ^{-1}( \widehat{\Gamma}_{n}-\Gamma
) ( zI-\Gamma ) ^{-1} =(zI-\Gamma)^{-1/2} S_n(z)T_n(z)(zI-\Gamma)^{-1/2}dz\ ,\]
where $S_n(z)=(I-T_n(z))^{-1}$ is well defined for any $z\in\mathcal{B}'_{j^{\ast
}}$. By a straightforward induction, we get for any positive integer $p$
\begin{eqnarray*}
 \lefteqn{\int_{\mathcal{B}'_{j^{\ast
}}} \left( zI-\widehat{\Gamma}_{n}\right) ^{-1}\left( \widehat{\Gamma}_{n}-\Gamma
\right) \left( zI-\Gamma \right) ^{-1}dz
= \sum_{k=1}^{p}\int_{\mathcal{B}'_{j^{\ast}}}(zI-\Gamma)^{-1/2} T_n^k(z)(zI-\Gamma)^{-1/2}dz}&& \\ 
&\hspace{5cm}&+ \int_{\mathcal{B}'_{j^{\ast}}}(zI-\Gamma)^{-1/2}  S_n(z)T_n^p(z)(zI-\Gamma)^{-1/2}dz \ .
\end{eqnarray*}
Observe that each integral $\int_{\mathcal{B}'_{j^{\ast}}}(zI-\Gamma)^{-1/2} T_n^k(z)(zI-\Gamma)^{-1/2}dz$ is zero since the operator $(zI-\Gamma)^{-1/2}$ has no pole inside $\mathcal{B}'_{j^{\ast}}$. Assume  that the event $\mathcal{E}'_{n}$ does not hold. Then, we can bound $\|S_n(z)\|_{\infty}$ by $(1-\|T_n(z)\|_{\infty})^{-1}$ as above. As a consequence, we obtain  that for any positive integer $p$ ,
\begin{eqnarray*}
 \lefteqn{\frac{1}{2\pi}\left\Vert\int_{\mathcal{B}'_{j^{\ast
}}} \left( zI-\widehat{\Gamma}_{n}\right) ^{-1}\left( \widehat{\Gamma}_{n}-\Gamma
\right) \left( zI-\Gamma \right) ^{-1}dz\right\Vert _{\infty}}\\ &\hspace{2cm}&\leq  \frac{1}{2\pi}\int_{\mathcal{B}'_{j^{\ast}}}\left\Vert \left( zI-\Gamma \right) ^{-1/2}\right\Vert _{\infty
}^{2}\frac{\|T_n(z)\|^p_{\infty}}{1-\|T_n(z)\|_{\infty}} dz\leq  \frac{\delta'_j-\delta_j}{2^{p}\delta_j}\ .
\end{eqnarray*}
Taking $p$ large enough in this last upper bound contradicts (\ref{majoration_norme_op}). Thus, $\left(\mathcal{D}_3\setminus \mathcal{D}_1\right) \cap \overline{\mathcal{E}}'_n=\emptyset$, which allows us to conclude.
\end{proof}

\begin{proof}[Proof of Lemma \ref{lemma01}]
The first bound is a straightforward consequence of Lemma \ref{perturb1} since $\mathcal{E}_n=\cup_{j\in\mathcal{K}_n}\mathcal{E}_{j,n}$. The second bound proceeds from the same approach as Lemma \ref{perturb1}.

Let us turn to the third bound. By Weyl's theorem, (e.g. Theorem 4.3.1 in \cite{horn}), we have $|\widehat{\lambda}_1-\lambda_1|\leq \|\widehat{\Gamma}_n-\Gamma\|_{\infty}$ so that 
\begin{eqnarray*}
 \mathbb{P}\left[\widehat{\lambda}_1\geq \frac{3\lambda_1-\lambda_2}{2}\right]&\leq& \mathbb{P}\left[|\widehat{\lambda}_1-\lambda_1|\geq \frac{\lambda_1-\lambda_2}{2}\right]\leq \mathbb{P}\left[\|\widehat{\Gamma}_n-\Gamma\|_{\infty}\geq \frac{\lambda_1-\lambda_2}{2}\right]\\
&\leq &\mathbb{P}\left[\|\widehat{\Gamma}_n-\Gamma\|_{HS}\geq \frac{\lambda_1-\lambda_2}{2}\right]\leq \frac{4}{(\lambda_1-\lambda_2)^2}\mathbb{E}\left[\|\widehat{\Gamma}_n-\Gamma\|_{HS}^2\right]\ .
\end{eqnarray*}
We have 
\[\mathbb{E}\left[\|\widehat{\Gamma}_n-\Gamma\|_{HS}^2\right]=\sum_{k,l=1}^{\infty}\mathbb{E}\left[\langle (\widehat{\Gamma}_n-\Gamma)V_k,V_l\rangle^2\right]\leq \frac{C}{n}\left(\sum_{k=1}^{\infty}\lambda_k\right)^2\ ,\]
by Assumption {\bf B.1}. By Assumption {\bf B.2},  $2\lambda_2\leq \lambda_1$. Applying  Lemma \ref{DomEigen}, we get
\begin{eqnarray*}
\mathbb{P}\left[\widehat{\lambda}_1\geq \frac{3\lambda_1-\lambda_2}{2}\right]&\leq& \frac{C}{n}\left(\frac{\sum_{k=1}^{\infty}\lambda_k}{\lambda_1}\right)^2\leq \frac{C(\gamma)}{n}\ .
\end{eqnarray*}

\end{proof}

\section{Proofs of technical details}\label{section_technique}

\begin{proof}[Proof of Lemma \ref{lemma_3}]
 
We have $\|{\bf Y}- \widehat{\Pin}_k {\bf Y}\|_n^2=\|{\bf Y}\|_n^2 - \|\widehat{\Pin}_k {\bf Y}\|_n^2$.
By the Central limit Theorem, the classical Berry-Esseen inequality, and a classical deviation inequality of $\chi^2$ random variables (e.g. Lemma 1 in \cite{Laurent00}), we get
\begin{eqnarray}
\mathbb{P}\left[\left|\frac{\|{\bf Y}\|_n^2}{n\sigma^2} -1\right|\geq 2\sqrt{\frac{\log(1/x)}{n}}+ 2\frac{\log(1/x)}{n} \right]\leq
2x+
C\frac{\mathbb{E}(|\epsilon|^3)}{\sigma^3\sqrt{n}}\ ,\label{majoration_y}
\end{eqnarray}
for any $x>0$.
Let us compute the expectation of $\|\widehat{\Pin}_k  {\bf Y}\|_n^2$. 
\begin{eqnarray*}
\mathbb{E}\left[\|\widehat{\Pin}_k  {\bf Y}\|_n^2\right]
& = & \mathbb{E}\left[\mathbb{E}\left\{\|\widehat{\Pin}_{k}{\bf
Y}\|_n^2|{\bf X}\right\}\right] = \mathbb{E}\left[\mathbb{E}\left[tr[{\bf Y}^*\widehat{\Pin}_{k}{\bf
Y}]|{\bf X}\right]\right]\nonumber\\
&= & \sigma^2 \mathbb{E}\left[tr\left[\widehat{\Pin}_{k}\right]\right]\leq \sigma^2 k
\end{eqnarray*}
Applying Markov inequality to $\|\widehat{\Pin}_k  {\bf Y}\|_n^2$ and gathering this deviation inequality with (\ref{majoration_y}), we conclude that  
\begin{eqnarray*}
 \mathbb{P}\left[\left|\frac{\|{\bf Y}-\widehat{\Pin}_k{\bf Y}\|_n^2}{n\sigma^2} -
1\right|\geq \frac{ k\log^2 (n)}{n}+ 8\sqrt{\frac{\log\log n}{n}}\right]\leq \frac{3}{\log^2(n)}+\frac{C}{\sqrt{n}}\ , 
\end{eqnarray*}
uniformly over all $k\leq \bar{k}_n$.

\end{proof}

\begin{proof}[Details of the proof of Theorem \ref{thrm_power_KL_fixed_multiple}]
Here, we provide  some details on the comparison between the lower bound \eqref{eq:lower_phi_k_alternative} and the quantile \eqref{eq:quantile}.
By Assumption ${\bf B'.3}$, we derive that $\phi_k({\bf Y},{\bf X}) - k\bar{\F}^{-1}_{k,n-k}(\alpha/|\mathcal{K}_n|)$ is positive with probability larger than $1-3\beta/4-C(\gamma)\log^{-1}(n)$ if 
\begin{eqnarray*}
\lefteqn{C'_1n\|\Gamma_k^{1/2}\theta\|^2 - C'_2\sigma^2\left(\sqrt{k\log(1/\beta)}+ \log(1/\beta)\right)-\frac{2n}{\log(n)}\|(\Gamma^{1/2}-\Gamma_k^{1/2})\theta\|^2}&&\\ &\geq& C_3\sigma^2 \left[\sqrt{k\log\left(\frac{|\mathcal{K}_n|}{\alpha}\right)}+\log\left(\frac{|\mathcal{K}_n|}{\alpha}\right)+ \frac{k^2}{n}+ k\sqrt{\frac{\log(n)}{n}}\right]\\ & & +\frac{C_4}{\beta}\|\Gamma^{1/2}\theta\| ^2 \left[k\vee \log\left(\frac{|\mathcal{K}_n|}{\alpha}\right)\right]\ .
\end{eqnarray*}
Since $\log(|\mathcal{K}_n|/\alpha)\leq 2\sqrt{n}$, $k\leq n^{1/4}$ and $\beta\geq C(\gamma)/\log(n)$, we derive that for $n$ larger than a numerical quantity, $\phi_k({\bf Y},{\bf X}) - k\bar{\F}^{-1}_{k,n-k}(\alpha/|\mathcal{K}_n|)$ is positive with probability larger than $1-3\beta/4-C(\gamma)\log^{-1}(n)$ if 
\begin{eqnarray*}
 \|\Gamma^{1/2}\theta\| ^2\geq C_1\|(\Gamma^{1/2}-\Gamma_k^{1/2})\theta\| ^2 + \sigma^2\frac{C_2}{n}\left[ \sqrt{k\log\left(\frac{|\mathcal{K}_n|}{\alpha\beta}\right)}+\log\left(\frac{|\mathcal{K}_n|}{\beta\alpha}\right)\right]\ .
\end{eqnarray*}

\end{proof}

\begin{proof}[Proof of Lemma \ref{lemme_principal_puissance}]
We have shown in the proof of Lemma \ref{lemma_2} that 
\begin{equation*}
 \mathbb{E}\left[\left\Vert \sqrt{n}\left(  \widehat{A}%
_{k}-A_{k}\right)  \Delta_{n,1}\right\Vert^{2}\mathbf{1}_{\overline{\mathcal{A}}_n}\right]\leq C(\gamma)\left[\frac{\bar{k}_n^3\log^2(n)}{n} + \frac{\bar{k}_n}{\sqrt{n}}\right] \ ,
\end{equation*}
Gathering this bound with Markov inequality and Assumptions ${\bf B'.3}$ allows us to derive the second lower bound of Lemma \ref{lemme_principal_puissance}. 
Focusing on the first bound, we shall prove the following stronger result. For any $x>0$, $k\leq \bar{k}_n$ and any $n\geq 1$, 
\begin{eqnarray}\nonumber
\lefteqn{ \mathbb{P}\left[\|\sqrt{n}(\widehat{A}_k-A_k)\widehat{\Gamma}_n\theta\| \geq x \right]\leq 
 \mathbb{P}[\mathcal{A}_n]+ \frac{C'}{\log(n)}}&& \nonumber\\
 &\hspace{1cm}&+ C(\gamma)\frac{n\log(n)}{x^2}\|\Gamma^{1/2}\theta\| ^2 \left[\frac{k^{3} \log^2 (k\vee e)}{n}\vee \frac{k}{\sqrt{n}}\vee \frac{\bar{k}^{5/2}_n\log(\bar{k}_n\vee e)}{n}\right]\ .\label{eq_1_principal}
\end{eqnarray}
If we take $x=\|\Gamma^{1/2}\theta\|\sqrt{n}/(k^{1/4}\log(n))$ in this inequality and if we combine it with Lemma \ref{lemme_convergence_valeurs_propres_empirique} and  Assumption ${\bf B'.3}$, we recover the conclusion of 
Lemma \ref{lemme_principal_puissance}.\\

\medskip

Define the event $\mathcal{U}_n:=\{\|\widehat{\Gamma}_n^{1/2}\theta\|^2>\log(n)\|\Gamma^{1/2}\theta\| ^2\}$. Since 
\[\mathbb{E}\left[\|\widehat{\Gamma}_n^{1/2}\theta\|^2\right]=\frac{1}{n}\mathbb{E}\left[\sum_{i=1}^n \langle  {\bf X}_{i},\theta
\rangle  ^{2}\right]=\|\Gamma^{1/2}\theta\| ^2\ ,\]
we derive
\begin{equation}\label{eq_majoration_premier_terme_puissance'}
 \mathbb{P}\left[\mathcal{U}_n\right] \leq \frac{1}{\log(n)}\ . 
\end{equation}
We  bound  $\mathbb{P}[  \Vert (
\widehat{A}_{k}-A_{k})  \widehat{\Gamma}_n\theta\Vert  \geq x]$ as follows 
\begin{eqnarray}
 \mathbb{P}\left[  \left\Vert \left(  \widehat{A}_{k}%
-A_{k}\right) \right. \right. \!\! \!\! \!\! \!\!  && \!\! \!\! \!\! \!\!
\left. \left. \widehat{\Gamma}_n\theta\right\Vert  \geq x\right]\leq \mathbb{P}\left[  \left\{\left\Vert \left(  \widehat{A}_{k}%
-A_{k}\right)  \widehat{\Gamma}_n\theta\right\Vert  \geq x\right\}\cap \overline{\mathcal{U}_n}\cap\overline{\mathcal{A}}_n\right] + \mathbb{P}\left[\mathcal{U}_n\cup \mathcal{A}_n\right]\nonumber\\
&\leq &\frac{1}{x^2}\mathbb{E}\left[\left\Vert \left(  \widehat{A}_{k}%
-A_{k}\right)  \widehat{\Gamma}_{n}\theta\right\Vert  ^{2}\mathbf{1}_{\overline{\mathcal{U}_n}\cap\overline{\mathcal{A}}_n}\right]+ \mathbb{P}\left[\mathcal{U}_n\cup\mathcal{A}_n\right]\nonumber\\ 
 &\leq&\frac{1}{x^{2}}\mathbb{E}\left[\left\Vert \left(  \widehat{A}_{k}%
-A_{k}\right)  \widehat{\Gamma}_{n}^{1/2}\right\Vert _{HS}^{2}\left\Vert
\widehat{\Gamma}_{n}^{1/2}\theta\right\Vert ^{2} \mathbf{1}_{\overline{\mathcal{U}_n}\cap\overline{\mathcal{A}}_n}\right]+ \mathbb{P}\left[\mathcal{U}_n\cup\mathcal{A}_n\right]\nonumber\\
&\leq & \frac{\log(n)}{x^2}\|\Gamma^{1/2}\theta\| ^2\mathbb{E}\left[\left\Vert \left(  \widehat{A}_{k}%
-A_{k}\right)  \widehat{\Gamma}_{n}^{1/2}\right\Vert _{HS}^{2}\mathbf{1}_{\overline{\mathcal{A}}_n}\right]+ \mathbb{P}\left[\mathcal{U}_n\cup\mathcal{A}_n\right]\ .\nonumber\label{eq_majoration_premier_terme_puissance}
 \end{eqnarray}
 As a consequence, we have to investigate 
\begin{align*}
\left\Vert \left(  \widehat{A}_{k}-A_{k}\right)  \widehat{\Gamma}_{n}%
^{1/2}\right\Vert _{HS}^{2}  &  =\mathrm{tr}\left(  \widehat{A}_{k}%
-A_{k}\right)  \widehat{\Gamma}_{n}\left(  \widehat{A}_{k}-A_{k}\right) \\
&  =\mathrm{tr}\widehat{A}_{k}\widehat{\Gamma}_{n}\widehat{A}_{k}%
-\mathrm{tr}\widehat{A}_{k}\widehat{\Gamma}_{n}A_{k}-\mathrm{tr}A_{k}%
\widehat{\Gamma}_{n}\widehat{A}_{k}+\mathrm{tr}A_{k}\widehat{\Gamma}_{n}A_{k}\ .%
\end{align*}
Arguing as in the proof of Lemma \ref{lemma_2}, we take the expectation
\begin{eqnarray*}
\lefteqn{\mathbb{E}\left[\left\Vert \left(  \widehat{A}_{k}-A_{k}\right)  \widehat{\Gamma
}_{n}^{1/2}\right\Vert _{HS}^{2}\mathbf{1}_{\overline{\mathcal{A}}_n}\right] } &&\\ &  =&\mathbb{E}\left[\mathrm{tr}\widehat{\Pi}%
_{k}\mathbf{1}_{\overline{\mathcal{A}}_n}\right]+\mathbb{E}\left[\mathrm{tr}\Pi_k\mathbf{1}_{\overline{\mathcal{A}}_n}\right]+\mathbb{E}\left[\mathrm{tr}\left(A_k(\widehat{\Gamma}_n-\Gamma)A_k\right)\mathbf{1}_{\overline{\mathcal{A}}_n}\right] \\
& & - \; 2\mathbb{E}\left[\mathrm{tr}\widehat{\Gamma}_{n,k}%
^{1/2}\Gamma_{k}^{-1/2}\mathbf{1}_{\overline{\mathcal{A}}_n}\right]\\
&  \leq & 2\mathbb{E}\left[ \left\{ k-\mathrm{tr}\widehat{\Gamma}_{n,k}^{1/2}\Gamma
_{k}^{-1/2}\right\}\mathbf{1}_{\overline{\mathcal{A}}_n}\right]+ \sqrt{\mathbb{E}\left[\mathrm{tr}^2\left(A_k(\widehat{\Gamma}_n-\Gamma)A_k\right)\right]}\sqrt{\mathbb{P}[\mathcal{A}_n]}\\ &\leq &2\mathbb{E}\left[\mathrm{tr}\left\{  \Gamma_{k}^{-1/2}\left(  \Gamma_{k}%
^{1/2}-\widehat{\Gamma}_{n,k}^{1/2}\right)\right\}\mathbf{1}_{\overline{\mathcal{A}}_n}  \right]+ \sqrt{\mathbb{E}\left[\mathrm{tr}^2\left(A_k(\widehat{\Gamma}_n-\Gamma)A_k\right)\right]}\sqrt{\mathbb{P}[\mathcal{A}_n]}\ .
\end{eqnarray*}
These expectations have already been upper bounded in (\ref{S2}) and (\ref{eq_fin_esp}).
Thus,  we derive
\[
\mathbb{E}\left[\left\Vert \left(  \widehat{A}_{k}-A_{k}\right)  \widehat{\Gamma
}_{n}^{1/2}\right\Vert _{HS}^{2}\mathbf{1}_{\overline{\mathcal{A}}_n}\right]\leq C(\gamma)\left[\frac{k^{3} \log^2 (k\vee e)}{n}\vee \frac{k}{\sqrt{n}}\vee \frac{\bar{k}^{5/2}_n\log(\bar{k}_n\vee e)}{n}\right]%
\ . \] 
Gathering this last bound with (\ref{eq_majoration_premier_terme_puissance'}) and (\ref{eq_majoration_premier_terme_puissance}) allows us derive the desired inequality 
\eqref{eq_1_principal}.
\end{proof}

\begin{proof}[Proof of Lemma \ref{lemma_A3}]
Observe that $\|{\bf Y}- \widehat{\Pin}_k{\bf Y}\|_n^2\leq \|{\bf Y}\|_n^2= \|\boldsymbol{\epsilon}\|_n^2+ 2\sum_{i=1}^n\boldsymbol{\epsilon}_i\langle {\bf X}_i,\theta\rangle + \sum_{i=1}^n\langle {\bf X}_i,\theta\rangle^2$. By Assumption ${\bf B.1}$ and Tchebychev inequality, $\|\boldsymbol{\epsilon}\|_n^2\leq \sigma^2 (n+C\sqrt{n\log(n)})$ with probability larger than $1-1/\log(n)$. Tchebychev inequality also tells us that $\sum_{i=1}^n\epsilon_i\langle {\bf X}_i,\theta\rangle\leq \sqrt{n\log(n)}(\sigma^2+\|\Gamma^{1/2}\theta\|^2)$ with probability larger than $1-1/\log(n)$. Furthermore, we apply Markov inequality to derive that $\sum_{i=1}^n\langle {\bf X}_i,\theta\rangle^2\leq 4n\|\Gamma^{1/2}\theta\|^2/\beta$ with probability larger than $1-\beta/4$.
 Since $k\leq n/2$, we conclude that 
\begin{eqnarray*}
 \frac{\|{\bf Y}-\widehat{\Pin}_k{\bf Y}\|_n^2}{n-k} \leq \sigma^2\left[1+ C\left(\frac{k}{n}+\sqrt{\frac{\log(n)}{n}}\right)\right]+ C'\|\Gamma^{1/2}\theta\|^2/\beta\ ,
\end{eqnarray*}
with probability larger than $1-2/\log(n)-\beta/4$.
\end{proof}

\begin{proof}[Proof of Lemma \ref{lemma_bidouillage_proba}]
Define $t= 8\sqrt{\frac{\log\log n}{n}}+\frac{k\log^2 (n)}{n}+\frac{1}{k\log^2(n)}$.
First, we use the following bound that will be proved at the end of the proof:
\begin{equation}\label{equation_controle_quantile}
k\bar{\F}_{k,n-k}^{-1}\left(\alpha/|\mathcal{K}_n|\right)\geq \bar{\chi}_k^{-1}\left(\frac{\alpha}{|\mathcal{K}_n|}+\frac{1}{n}\right)/\left(1+4\sqrt{\frac{\log(n)}{n}}\right)\ .
\end{equation}
As a consequence, we have 
\begin{eqnarray*}
 \bar{\chi}_k\left[k\left(1-t\right)\bar{\F}_{k,n-k}^{-1}\left(\frac{\alpha}{|\mathcal{K}_n|}\right)\right]\leq \bar{\chi}_k\left[\frac{\left(1-t\right)\bar{\chi}_k^{-1}\left(\frac{\alpha}{|\mathcal{K}_n|}+\frac{1}{n}\right)}{1+4\sqrt{\frac{\log(n)}{n}}}\right]\ . 
\end{eqnarray*}
Since for any $0<u<1$ and any integer $k\geq 1$, we have $\bar{\chi}_{k}^{-1}(u)\leq k +2\sqrt{\log(1/u)k}+2\log(1/u)$ (e.g. Lemma 1 in~\cite{Laurent00}), it follows from Assumption ${\bf B'.3}$ that 
\begin{eqnarray}\nonumber
 \lefteqn{\frac{\left(1-t\right)\bar{\chi}_k^{-1}\left(\frac{\alpha}{|\mathcal{K}_n|}+\frac{1}{n}\right)}{1+4\sqrt{\frac{\log(n)}{n}}}}&&\nonumber \\ &\geq&  \bar{\chi}_k^{-1}\left(\frac{\alpha}{|\mathcal{K}_n|}+\frac{1}{n}\right) - C(\alpha)\left[k\vee  \log(n)\right]\left[\sqrt{\frac{\log(n)}{n}}\vee \frac{k\log^2 (n)}{n}\vee \frac{1}{k\log^2(n)}\right]\
\nonumber \\ &\geq &  \bar{\chi}_k^{-1}\left(\frac{\alpha}{|\mathcal{K}_n|}+\frac{1}{n}\right) - \frac{C(\alpha)}{\log(n)}
\ .\nonumber
\end{eqnarray}
 Let us note $f_{\chi_k}(x)$ the density at $x$ of a $\chi^2$ random variable with $k$ degrees of freedom. Consider some positive numbers $x$ and $u$ such that $x\geq u$.
\begin{eqnarray*}
 \frac{\bar{\chi}_k(x-u)}{\bar{\chi}_k(x)}&=& 1 +
\frac{\mathbb{P}\left(x-u\leq \chi^2(k)\leq
x\right)}{\bar{\chi}_k(x)}
\leq 1+ u\frac{\sup_{t\in [x-u;x]}f_{\chi_k}(t)}{\bar{\chi}_k(x)} \leq  1+ue^{u/2}\frac{f_{\chi_k}(x)}{\bar{\chi}_k(x)}\ ,
\end{eqnarray*}
since $f_{\chi_k}(x)= x^{k/2-1}e^{-x/2}/[2^{k/2}\Gamma(k/2)]$. By integration
by part, one observes that $f_{\chi_k}(x)/\bar{\chi}_k(x)\leq 1/2$. As a consequence, we have 
$\bar{\chi}_k(x-u)\leq \bar{\chi}_k(x)[1+u/2e^{u/2}]$ for any $u\leq x$.  This upper bound also holds when $u> x$.
\begin{eqnarray*}
\bar{\chi}_k\left[\bar{\chi}_k^{-1}\left(\frac{\alpha}{|\mathcal{K}_n|}+\frac{1}{n}\right)- \frac{C(\alpha)}{\log(n)}\right]&\leq& \bar{\chi}_k\left[\bar{\chi}_k^{-1}\left(\frac{\alpha}{|\mathcal{K}_n|}+\frac{1}{n}\right)\right] \left[1+\frac{C_2(\alpha)}{\log(n)}\right]\\ &\leq&   \frac{\alpha}{|\mathcal{K}_n|} \left(1+\frac{C_3(\alpha)}{\log(n)}\right)\ ,
\end{eqnarray*}
which allows us to derive the desired result.

To finish the proof, we need to prove (\ref{equation_controle_quantile}). Let $X_k$ and $X_{n-k}$ respectively denote two independent random variables that follow a $\chi^2$ distribution with $k$ and $n-k$ degrees of freedom. Moreover we define $F_{k,n-k}$ as $X_k(n-k)/(X_{n-k}k)$. Since $\bar{\chi}_{k}^{-1}(u)\leq k +2\sqrt{\log(1/u)k}+2\log(1/u)$, we have 
\begin{eqnarray*}
\frac{\alpha}{|\mathcal{K}_n|}&= &  \mathbb{P}\left[X_k\geq \bar{\chi}_k^{-1}\left(\frac{\alpha}{|\mathcal{K}_n|}+\frac{1}{n}\right)\right]-\frac{1}{n}\\ &\leq& \mathbb{P}\left[kF_{k,n-k}\geq \frac{\bar{\chi}_k^{-1}\left(\frac{\alpha}{|\mathcal{K}_n|}\right)}{1+4\sqrt{\frac{\log(n)}{n}}}\right]+ \mathbb{P}\left[\frac{X_{n-k}}{n-k}\geq 1+4\sqrt{\frac{\log(n)}{n}}\right]- \frac{1}{n}\\ &\leq& \mathbb{P}\left[kF_{k,n-k}\geq \frac{\bar{\chi}_k^{-1}\left(\frac{\alpha}{|\mathcal{K}_n|}\right)}{1+4\sqrt{\frac{\log(n)}{n}}}\right]\ ,
\end{eqnarray*}
since $4\sqrt{\log(n)/n}\geq 2\sqrt{\log(n)/(n-k)}+2\log(n)/(n-k)$ for $k\leq n/2$ and $n$ large enough. We
conclude  that $k\bar{\F}_{k,n-k}^{-1}\left(\alpha/|\mathcal{K}_n|\right)\geq \bar{\chi}_k^{-1}\left(\frac{\alpha}{|\mathcal{K}_n|}+\frac{1}{n}\right)/\left(1+4\sqrt{\frac{\log(n)}{n}}\right)$ for $k\leq n/2$ and $n$ large enough.
\end{proof}

\begin{proof}[Proof of Lemma \ref{lemma_bidouillage_proba2}]
 Arguing as above, we get 
\[k\bar{\F}_{k,n-k}^{-1}\left(\alpha \right)\geq \bar{\chi}_k^{-1}\left(\alpha+\frac{1}{n}\right)/\left(1+4\sqrt{\frac{\log(n)}{n}}\right)\ .\]
Applying the inequality $\bar{\chi}_{k}^{-1}(u)\leq k +2\sqrt{\log(1/u)k}+2\log(1/u)$ for any $0<u<1$ and Condition ${\bf B.3}$, we get 
\begin{eqnarray*}
\bar{\chi}_k\left[k\left(1-t\right)\bar{\F}_{k,n-k}^{-1}\left(\alpha\right)\right]&\leq & \bar{\chi}_k \left[\bar{\chi}_k^{-1}\left(\alpha+\frac{1}{n}\right)\frac{1- t}{1+4\sqrt{\log(n)/n}}\right]\\
&\leq &\bar{\chi}_k \left[\bar{\chi}_k^{-1}\left(\alpha+\frac{1}{n}\right)- \frac{C(\alpha)}{\log(n)}\right]\\
&\leq &\alpha\left(1+\frac{C(\alpha)}{\log(n)}\right)\ , 
\end{eqnarray*}
where we have used $\bar{\chi}_k(x-u)\leq \bar{\chi}_k(x)[1+u/2e^{u/2}]$ in the last inequality.

\end{proof}

\begin{proof}[Proof of Lemma \ref{DomEigen}]

Since $\left(  j\lambda_{j}\right)  _{j\in\mathbb{N}}$ is  a decreasing
sequence  $j\lambda_{j}\geq k\lambda_{k}$ for $k>j$. Hence, 
we get \[\sum_{j=1}^{k-1}\frac{\lambda_{j}}{ \lambda_{j}-\lambda_{k}}
\leq k\sum_{j=1}^{k-1}(k-j)^{-1}=k\sum_{j=1}^{k-1}j^{-1}\ .\] Similarly
$\sum_{j=k+1}^{2k}\lambda_{j}/\left(  \lambda_{k}-\lambda_{j}\right)  \leq
k\sum_{j=k+1}^{2k}(j-k)^{-1}=k\sum_{j=1}^{k}j^{-1}$. Now we focus on 
$\sum_{j\geq2k+1}\lambda_{j}/\left(  \lambda_{k}-\lambda_{j}\right) $.
The assumption on the eigenvalues implies that for $j\geq k$,  \[k\log^{1+\gamma}(k\vee 2)\left(  \lambda_{k}-\lambda
_{j}\right)  \geq\left(  j\log^{1+\gamma}j-k\log^{1+\gamma
}(k\vee 2)\right)  \lambda_{j}\ .\] Thus, we get \[\frac{\lambda_{j}}{  \lambda_{k}-\lambda
_{j}}  \leq\left[  \left(  j\log^{1+\gamma}j/k\log^{1+\gamma}(k\vee 2)\right)
-1\right]  ^{-1}\] and 
\begin{align*}
\sum_{j\geq2k+1}\frac{\lambda_{j}}{  \lambda_{k}-\lambda_{j}}   &
\leq\int_{2k}^{+\infty}\left[  \frac{x\log^{1+\gamma}x}{k\log^{1+\gamma
}(k\vee 2)}  -1\right]  ^{-1}dx\ .
\end{align*}
For $x\geq2k$, we have 
\[
\frac{x\log^{1+\gamma}x}{k\log^{1+\gamma}(k\vee 2)}\geq\frac{2k\log^{1+\gamma}%
2k}{k\log^{1+\gamma}(k\vee 2)}\geq2\ ,
\]
so that 
\[
\frac{x\log^{1+\gamma}x}{k\log^{1+\gamma}(k\vee 2)}-1\geq\frac{1}{2}\frac
{x\log^{1+\gamma}x}{k\log^{1+\gamma}(k\vee 2)}\ .
\]
It follows that 
\begin{eqnarray*}
\int_{2k}^{+\infty}\left[  \frac{  x\log^{1+\gamma}x}{k\log^{1+\gamma}(k\vee 2)}
-1\right]  ^{-1}dx & \leq & 2k\log^{1+\gamma}(k\vee 2)\int_{2k}^{+\infty}\frac{dx}{
x\log^{1+\gamma}x}  \\
& \leq & \frac{2k\log^{1+\gamma}(k\vee 2)}{\gamma\log^{\gamma}%
2k}\leq\frac{2k\log (k\vee 2)}{\gamma}\ . %
\end{eqnarray*}
All in all, we conclude that 
\begin{eqnarray*}
\sum_{j=1,j\neq k}^{\infty}\frac{\lambda_{j}}{  |\lambda_{k}-\lambda_{j}|}   &
\leq & \frac{2k\log (k\vee 2)}{\gamma} + 2\sum_{j=1}^k\frac{1}{j} \ .
\end{eqnarray*}

\end{proof}


\end{document}